\title{Projective Wellorders and the Nonstationary Ideal} \author{Stefan Hoffelner}
\date{29.6.2016}
\begin{document}

\newtheorem{thm}{Theorem}
\newtheorem{Claim}[thm]{Claim}
\newtheorem{Fact}[thm]{Fact}
\newtheorem{Proposition}[thm]{Proposition}
\newtheorem{Definition}[thm]{Definition}
\newtheorem{Assumption}[thm]{Assumption}
\newtheorem{Lemma}[thm]{Lemma}
\newtheorem{Question}{Question}

\newcommand{\ZFC}{\mathsf{ZFC}}
\newcommand{\ZF}{\mathsf{ZF}}
\newcommand{\ZFP}{\mathsf{ZF}^-}
\newcommand{\AC}{\mathsf{AC}}
\newcommand{\PFA}{\mathsf{PFA}}
\newcommand{\BPFA}{\mathsf{BPFA}}
\newcommand{\MRP}{\mathsf{MRP}}
\newcommand{\MM}{\mathsf{MM}}

\newcommand{\card}{\hbox{card}}
\newcommand{\Card}{\hbox{Card}}
\newcommand{\hcard}{\hbox{hcard}}
\newcommand{\Reg}{\hbox{Reg}}
\newcommand{\GCH}{\mathsf{GCH}}
\newcommand{\CH}{\mathsf{CH}}
\newcommand{\Ord}{\hbox{Ord}}
\newcommand{\Range}{\hbox{Range}}
\newcommand{\Dom}{\hbox{Dom}}
\newcommand{\Ult}{\hbox{Ult}}
\newcommand{\I}{\hbox{I}}
\newcommand{\II}{\hbox{II}}

\newcommand{\mouseM}{\mathcal{M}}
\newcommand{\mouseN}{\mathcal{N}}
\newcommand{\mouseP}{\mathcal{P}}
\newcommand{\treeT}{\mathcal{T}}

\newcommand{\forceQ}{\mathbb{Q}}
\newcommand{\forceP}{\mathbb{P}}
\newcommand{\forceR}{\mathbb{R}}
\newcommand{\seal}{\mathbb{S}(\vec{S})}

%Stat fuer die everywhere stationary class S
\newcommand{\Stat}{\mathcal{S}}
\newcommand{\NSA}{\hbox{NS}_{\omega_1} \upharpoonright A} \newcommand{\NS}{\hbox{NS}_{\omega_1}}

\maketitle

\begin{abstract}
We show that, under the assumption of the existence of $M_1^{\#}$, 
there exists a model on which the restricted nonstationary ideal $\NSA$ is $\aleph_2$-saturated, for $A$ a stationary
co-stationary subset of $\omega_1$, while the full nonstationary ideal $\NS$
can be made $\Delta_1$ definable with $K_{\omega_1}$ as a parameter. Further 
we show, again under the assumption of the existence of $M_1^{\#}$ that there is a model of set theory such that 
$\NS$ is $\aleph_2$-saturated and such that there is lightface $\Sigma^1_4$-definable well-order on the reals.
This result is optimal in the presence of a measurable cardinal.
\end{abstract}

\newpage
\begin{abstract}
 Unter der Annahme der Existenz von $M_1^{\#}$ wird ein mengentheoretisches Modell von $\ZFC$ konstruiert in dem 
das nonstation\"are Ideal $\NS$ auf $\omega_1$ saturiert ist und in dem eine $\Sigma_4^{1}$-definierbare
Wohlordnung auf den reellen Zahlen m\"oglich ist. Dieses Resultat ist optimal sobald eine messbare Kardinalzahl 
in dem Universum angenommen wird. Desweiteren wird, wieder unter der Annahme der
Existenz von $M_1^{\#}$ ein mengentheoretisches Modell von $\ZFC$ konstruiert in welchem das auf eine beliebige, vorher fixierte
station\"are, co-station\"are Teilmenge $A \subset \omega_1$ eingeschr\"ankte nonstation\"are Ideal $\NSA$ saturiert ist,
w\"ahrend $\NS$ selbst $\Delta_1$-definierbar ist mit $K_{\omega_1}$ als einzigem Parameter.

\end{abstract}

\section*{Introduction}

This thesis deals with two fundamental notions in set theory: definability and the saturation of the nonstationary ideal.
The significance of the first for set theoretic investigations was already known to the early
descriptive set theorists of the Russian and Polish schools who realized that the continuum hypothesis 
has a positive answer when looking at analytic (i.e. easily describable) sets only. Twenty years later K. G\"odel showed 
how immensely powerful the notion of definability can be when used to investigate the set theoretic universe itself
via his introduction of the constructible universe $L$. The quest of finding $L$-like universes which
at the same time allow large cardinal properties is still one of the central open problems
in set theory and is just another testimony of the importance of the concept.

The investigation of the saturation of the nonstationary ideal on $\omega_1$, $\NS$
has a very interesting history as well. K. Kunen showed that 
given a huge cardinal there is a model in which $\NS$ is $\aleph_2$-saturated, a result
which caused a considerable amount of attention as supercompact
cardinals served as an informal upper bound for all natural
consequences of large cardinal axioms, and hugeness is considerably stronger. Using 
a generic ultrapower argument he even argued that hugeness is the exact consistency strength of
the assertion ``$\NS$ is saturated''. Taking stationary subsets as conditions in a poset 
and picking a generic filter one arrives at a $V$-ultrafilter, which can be used to 
form the ultrapower $Ult(V,G)$ in $V[G]$. The resulting embedding $j: V \rightarrow M$ 
has $\omega_1$ as critical point and $M^{<j(\omega_1)} \subset M$ does hold, 
which resembles the definition of hugeness.
This intuition however turned out to be flawed as
by the work of M. Foreman, M. Magidor and S. Shelah
the forcing axiom $\MM$ (whose consistency follows from a supercompact cardinal) implies
that $\NS$ is $\aleph_2$-saturated, which was later improved by S. Shelah
who showed that already a Woodin cardinal suffices to construct a model
in which $\NS$ is saturated. 

A natural question to ask is whether the saturation of $\NS$ is consistent with $\CH$.
The original proof of Shelah for the saturation of $\NS$ does not indicate an answer
to this question. H. Woodin however, in \cite{W}, proved that if there is a measurable cardinal and
if $\NS$ is saturated then $\CH$ fails, in fact there exists a definable counterexample to 
$\CH$. This impressive result indicates that there might be a surprising connection between 
the statements of $\CH$ and $\NS$ being saturated. 

Definability enters the picture in the following way. By a result of G. Hjorth (see \cite{Hjorth}), a $\undertilde{\Sigma}_3^{1}$-definable 
wellorder of the reals together with the assumption that every real has a sharp implies $\CH$. An investigation 
of the possibility of a set theoretic universe where $\NS$ is saturated and the reals admit a projectively
definable wellorder will therefore help to clarify the actual connection between $\CH$ and the saturation of $\NS$.
One of the two main theorems of this thesis is the following: 
\begin{thm}
 Assume that $M_1^{\#}$ exists. Then there is a set theoretic universe in which
$\NS$ is saturated and there is a (lightface) $\Sigma^1_4$-definable wellorder on the reals.
\end{thm}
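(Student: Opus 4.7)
The plan is to start from the canonical inner model obtained from $M_1^{\#}$, fix the Woodin cardinal $\delta$ living in it, and construct the desired universe by a carefully arranged two-component forcing iteration of length $\delta$: one component seals $\NS$ in Shelah's style, while the other codes a prescribed wellorder of the reals into the stationary/nonstationary pattern of a ground-model sequence of pairwise disjoint stationary subsets of $\omega_1$. Passing to the generic extension by the Levy collapse of $\delta$ to $\omega_2$ at the end will yield the model.

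First I would fix, in the ground model $L[M_1^{\#}]$, a definable sequence $\langle S_\alpha : \alpha < \omega_1 \rangle$ of pairwise disjoint stationary subsets of $\omega_1$ together with a canonical bookkeeping of the reals that will appear along the iteration. The reverse-Easton iteration $\langle \forceP_\alpha, \dot{\forceQ}_\alpha : \alpha < \delta \rangle$ will alternate between a sealing step (Shelah's forcing relative to the Woodin, which makes $\NS$ saturated in its extension) and a coding step. The coding step, at an appropriate stage assigned to a real $x$ via the bookkeeping, shoots clubs through the complements of exactly those $S_\alpha$ with $\alpha$ coding a digit of $x$, so that the membership of these stationary sets in $\NS$ records $x$. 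By choosing these clubs through generic conditions which are proper (or subproper), one preserves $\omega_1$ and the stationary sets not currently being killed.

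For the complexity analysis, the wellorder $<_W$ in the final model is defined by saying that $x <_W y$ iff there exist a coding stage for $x$ and a coding stage for $y$, witnessed by the stationary/nonstationary pattern of the $S_\alpha$'s, such that the stage for $x$ precedes the stage for $y$ in a fixed ground-model wellorder. Because ``$S$ is stationary on $\omega_1$'' is $\Sigma_1$ over $H_{\omega_2}$ and the canonical ground-model parameters can be identified via the sharp inherited from $M_1^{\#}$, the resulting definition sits at $\Sigma^1_4$, meeting the bound that is optimal by Hjorth's result.

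The hardest step, I expect, will be verifying that the coding iteration neither destroys the Woodinness of $\delta$ required by the sealing argument nor kills the saturation property produced by the sealing step. This requires showing that each coding forcing is semi-proper and that the full iteration is semi-proper; the delicate point is that we are deliberately changing the $\NS$-status of infinitely many stationary sets, and we must do so while the sealing argument, which after all depends on a generic ultrapower into a model closed under $<j(\omega_1)$-sequences, continues to apply. I expect this to go through by choosing the coding so that, at each stage, only stationary sets from a pre-fixed thin sequence are affected, and by arranging the sealing steps to occur cofinally in $\delta$, so that after the final iterate every stationary set in $V[G]$ has already been ``sealed in'' past some intermediate stage. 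A secondary obstacle will be showing that the ground-model parameters used in the definition of $<_W$ are themselves definable at the required level; here one uses that $K^{M_1^{\#}}$, together with the sharp, is $\Sigma^1_3$ in a uniform way, which keeps the overall complexity of $<_W$ at $\Sigma^1_4$.
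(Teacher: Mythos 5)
Your proposal has two gaps that I think are fatal as written, and both are exactly the points the paper spends most of its effort on.

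First, the coding apparatus is structurally incompatible with saturation. A fixed ground-model sequence $\langle S_\alpha : \alpha < \omega_1\rangle$ of pairwise disjoint stationary sets gives you at most $\aleph_1$ many ``slots'': the final stationary/nonstationary pattern of the $S_\alpha$'s is a single subset of $\omega_1$ and can therefore record at most $\aleph_1$ many reals. But in the final model the continuum is $\aleph_2$ (indeed, by Woodin's theorem, $\NS$ saturated together with a measurable already refutes $\CH$), so most reals are never coded and $<_W$ is not a wellorder of all the reals. You cannot repair this by lengthening the sequence to $\omega_2$ many pairwise (almost) disjoint stationary sets, because such a family is precisely an antichain of length $\aleph_2$ in $P(\omega_1)/\NS$ — the object whose existence you are trying to rule out. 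This is why the paper does not code into $\NS$ at all in this theorem: it uses the Caicedo--Velickovic oscillation coding, which attaches each real to a \emph{triple of ordinals below $\omega_2$} (so there are enough codes) via \emph{proper} forcings that do not touch the ideal, and reserves the $\NS$-pattern coding for the other main theorem, where only the restricted ideal $\NSA$ is kept saturated.

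Second, even granting the coding, your complexity analysis does not go through. ``$S_\alpha$ is nonstationary'' is $\Sigma_1$ over $H_{\omega_2}$ with the parameter $S_\alpha$, which is a subset of $\omega_1$, not a real; a projective formula quantifies only over reals, so a pattern of stationarity among uncountable sets is prima facie third-order, not $\Sigma^1_4$. The whole point of the paper's localization machinery — forcing a set $Y_{x,y}\subset\omega_1$ such that every suitable \emph{countable} transitive model containing $Y_{x,y}\cap\omega_1^M$ already witnesses $x<y$, then compressing $Y_{x,y}$ into a single real $r_{x,y}$ by almost disjoint coding over a definable $M_1$-family, and using the $\Pi^1_2$-definable cofinal set of countable $M_1$-initial segments to recover the ladder system — is to convert the $H_{\omega_2}$-statement into one of the form $\exists r\,\forall M(\Pi^1_2\rightarrow\Pi^1_2)$, which is where the $\Sigma^1_4$ bound actually comes from. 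Your proposal has no analogue of this step. (Smaller issues: stationarity is $\Pi_1$, not $\Sigma_1$, over $H_{\omega_2}$; the ground model should be $M_1$ itself rather than $L[M_1^{\#}]$, since it is inside $M_1$ that $\delta$ is Woodin with $\diamondsuit$; and there is no final Levy collapse — $\delta$ becomes $\omega_2$ through the interleaved collapses of the iteration itself.)
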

Very roughly speaking its proof takes Shelah's argument for making $\NS$ saturated and
adds forcings which will code reals by triples of ordinals, using a technique developed by
A. Caicedo and B. Velickovic in \cite{CV}. Twisting their original definition 
makes a further localization forcing possible which codes all the relevant information
for the wellorder in one real in such a way that any suitable, countable model can work with it, 
thus witnessing the relation.

This result is also interesting from a second perspective. It is often a challenging
task to consider set theoretic universes with certain features, usually obtained assuming some large cardinal,
and additionally equip them with definability properties. One can find a lot of
literature devoted to this type of investigations such as \cite{H}, \cite{FF}, and \cite{CF}.
Though very diverse in their goals
and means these results have in common that the ground model from which they start is $L$ or some 
slight extension of it. This approach will not work when investigating possible models for ``$\NS$ saturated
and definable wellorders on the reals''. By a result of R. Jensen and J. Steel the statement 
``$\NS$ is saturated'' is equiconsistent with the existence of a Woodin cardinal, thus one is compelled
to work in inner models which will differ from $L$ quite substantially in certain ways and render many
of the usual arguments harder if not impossible. So this thesis can be seen as an attempt of 
finding coding methods which still work for stronger inner models, making the definability of certain sets
possible. This also applies for the second result of this thesis.

The second theorem deals with the definability of $\NS$ in the presence of a normal, saturated ideal
on $\omega_1$. 

\begin{thm}
 Assume that $M_1^{\#}$ exists and that $A\subset \omega_1$ is a stationary, co-stationary set.
Then there is a model of $\ZFC$ in which the nonstationary ideal restricted to subsets of $A$ is saturated while
the full ideal
$\NS$ is $\Delta_1$ definable using $\omega_1$ as a parameter. 
\end{thm}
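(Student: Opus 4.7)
The plan is to run a two-stranded semiproper iteration of length $\omega_2$ (the Woodin cardinal coming from $M_1^{\#}$) in which one strand seals antichains of stationary subsets of $A$, while the other codes information about the stationarity of arbitrary subsets of $\omega_1$ into the stationary structure of $\omega_1\setminus A$. The partition between $A$ and its complement is essential: the sealing happens strictly on $A$, the coding strictly on $A^c$, so the two procedures are orthogonal and do not compete for the same stationary resources.

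Concretely, I would begin by fixing a partition $\langle T_\xi : \xi < \omega_1 \rangle$ of $\omega_1 \setminus A$ into pairwise disjoint stationary sets, chosen canonically from the core model $K$ associated to $M_1^{\#}$ so that the partition is uniformly definable from $\omega_1$ in any generic extension. The iteration then alternates: at sealing stages I apply Shelah's sealing forcing to the current maximal antichain in $\NSA$, exactly as in his proof of the consistency of saturation from a Woodin cardinal. At coding stages I pick up, via a bookkeeping function, a pair $(B,\xi)$ with $B\subset\omega_1$ and $\xi<\omega_1$, and either preserve $T_\xi$ (if $B$ is to be stationary) or shoot a club through $\omega_1\setminus T_\xi$ (if $B$ is to be nonstationary), so that in the final model $B$ is stationary iff $T_{\xi(B)}$ is stationary. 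The bookkeeping is arranged so that every $B\subset\omega_1$ is linked via a canonical assignment $B\mapsto\xi(B)$ extracted from a $K$-wellorder that is absolutely definable from $\omega_1$.

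This yields the desired $\Delta_1$-definability of $\NS$ with parameter $\omega_1$: nonstationarity is always $\Sigma_1(\omega_1)$ via the existence of a disjoint club, while stationarity of $B$ becomes $\Sigma_1(\omega_1)$ because the coding supplies, for each stationary $B$, a local first-order witness extractable from the canonical $K$-structure and the index $\xi(B)$. Meanwhile $\NSA$ becomes saturated by exactly the analysis underlying Shelah's original construction, since every maximal antichain in $\NSA$ appearing in the final model is sealed at some intermediate stage by the bookkeeping.

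The main obstacle will be verifying mutual preservation. Since the sealing forcing acts only on stationary subsets of $A$, the sets $T_\xi\subset A^c$ and the clubs shot through $\omega_1\setminus T_\xi$ are automatically preserved. The delicate direction is showing that the club-shooting coding, confined to $\omega_1\setminus A$, remains sufficiently proper (indeed $A$-proper) and preserves semiproperness of the surrounding iteration, so that Shelah's framework continues to apply through all $\omega_2$ stages. Pinning down the bookkeeping so that the assignment $B\mapsto\xi(B)$ is absolute enough to remain uniformly $\Sigma_1$-definable after all further iteration stages is the second point needing care; together these two verifications are where I expect the technical heart of the argument to lie.
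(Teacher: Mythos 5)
Your overall architecture (seal antichains on $A$, put the coding machinery on material orthogonal to $A$, exploit the co-stationarity of $A$ for preservation) matches the spirit of the paper, but the coding mechanism you propose does not work, for two independent reasons.

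First, a counting and absoluteness problem. A partition $\langle T_\xi : \xi<\omega_1\rangle$ of $\omega_1\setminus A$ gives you only $\omega_1$ many coding slots, each carrying one bit (``$T_\xi$ stationary or not''), whereas the final model contains $2^{\aleph_1}\geq\aleph_2$ many sets $B\subset\omega_1$ whose stationarity status must be coded; the assignment $B\mapsto\xi(B)$ cannot be injective, so distinct $B$'s with opposite status collide on the same $T_\xi$. Moreover the sets $B$ you must handle are generic over $K$, so no $K$-wellorder can assign them indices in an absolute, $\Sigma_1(\omega_1)$-recoverable way. The paper avoids both issues by letting the $\diamondsuit$-sequence on the Woodin cardinal $\delta$ catch a \emph{name} for each stationary $S$ at some stage $\alpha<\delta$, and by coding the entire characteristic function $\chi_S$ (not one bit) into an $\omega_1$-block of canonical $K$-trees starting at the $K$-cardinal $\alpha$; there are $\delta=\aleph_2$ many such blocks, and the witness for the $\Sigma_1$ side is an existentially quantified set $Y_\alpha\subset\omega_1$ from which any suitable countable model can decode $\alpha$ and the pattern --- no globally definable index function is needed.

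Second, and more fundamentally, your reduction is circular in complexity: you reduce ``$B$ is stationary'' to ``$T_{\xi(B)}$ is stationary,'' but the latter is itself a $\Pi_1$ statement, so nothing has been gained towards a $\Sigma_1$ (hence $\Delta_1$) definition of stationarity. The coding target must be a property that is $\Delta_1$-recognizable by countable models; this is exactly why the paper uses trees $T_\alpha(\xi)=((\alpha^{+(\xi+1)})^{<(\alpha^{+\xi})})^K$ that either carry a generic cofinal branch (a $\Sigma_1$ witness) or are specialized off so that no $\omega_1$-preserving outer model can ever have such a branch (again witnessed by a $\Sigma_1$ object, the specializing function), followed by a localization forcing making the pattern readable from $Y_\alpha\cap\omega_1^M$ inside any nicely collapsing countable model. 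You would also need the converse soundness direction --- that \emph{only} characteristic functions of stationary sets ever get written --- which the paper arranges by filling all unused blocks with default patterns; your sketch has no analogue of this. Finally, note that in the paper $A^c$ is not the coding medium at all: its role is that the sealing forcings are $A^c$-proper and the coding forcings are $\Stat$-proper, so the class $\Stat(A^c)$ of nicely collapsing models stays stationary and the countable-support iteration remains stationary set preserving.
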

Its proof starts again with Shelah's argument of making $\NS$ saturated, and adds forcings which will
write the characteristic functions of any stationary subset of $\omega_1$ into a pattern of canonically
definable trees which have a cofinal branch or not. These codings are also used
in \cite{CF} to show that $\BPFA$ and a $\Sigma_4^{1}$ wellorder on the reals
are simultaneously possible. We define a suitable class of models which is stationary and
the fact that throughout the iteration there is always the stationary
complement of $A$ turns out to be crucial when ensuring that this class remains stationary along the way
of the iteration. Again we will force to localize the relevant information in such a way that
already any suitable model can read it off from a subset of $\omega_1$ only, yielding the 
desired definability of $\NS$.

This theorem grew out of an attempt to answer a question in \cite{FL}, where it is asked 
whether one could improve Woodin's result of $Con(\ZFC +$ ``there are infinitely many Woodin cardinals'')
implies $Con(\ZFC +$ ``$\NS$ is saturated + $\NS$ is $\Delta_1$-definable``), and the techniques of our proof were initially intended
to solve the problem for having the full $\NS$ saturated. It turned out however that
one faces big problems as the stationary class of nice models will lose its stationarity
during the iteration as its factors are semiproper only. This raised a serious conflict
with desired properties of our core model which served as the ground model during the iteration,
and so we decided to tackle the case for the restricted ideal first.

We end this introduction with a short outline of how this thesis is organized.
The first chapter very shortly defines the most important tools which are used in the proofs
of the two main theorems. $RCS$-iterations and inner model theory play a
fundamental role in the theorems so they are introduced. These belong to the more technical and abstract
parts of set theory, but we hope that the reader can fully understand the thesis
without being an expert in these subfields as long as he keeps in mind a couple
of properties which, in the inner model case, we tried to state as additional
axioms. Also a proof of Shelah's result of $Con(\ZFC +$ ''there is a Woodin cardinal``)
implies $Con(\ZFC +$ ''$\NS$ is saturated``) is included as it is the starting point for 
our investigations.

In the second chapter we prove Theorem 2 and in the third chapter we prove Theorem 1.
They can be read independently of each other.

\tableofcontents

\chapter{Preliminaries}

\section{$\Stat$-semiproperness and RCS-iterations} This section is a short reminder of the main definitions and properties
of notions centered around the fundamental concept of properness.
As is well known the definition of a proper notion of forcing was singled out by S. Shelah in the eighties.
Proper forcings represent a class of forcings which is closed under iterations
 with countable support and which contain all the forcings which have the ccc and which are $\omega$-closed.
There is a weaker version of properness which Shelah called semiproper which still enjoys
a lot of the nice properties of properness which however can not be iterated with countable support in a semiproperness preserving way.
Nevertheless there is a generalization of countable support iterations, the so-called revised countable
support iteration (RCS-iteration), which yields, applied to semiproper factors, always a semiproper notion of forcing.
For our purpose a further generalization of semiproperness is needed, namely the so called $\Stat$-semiproper forcings.
All these notions will be introduced now, the missing proofs can be found e.g. in M. Viale's notes \cite{Viale} or
in S. Shelah's \cite{S}, the definition I use can be found in Schlindwein's paper \cite{Sch}.

\begin{Definition}
A notion of forcing $\forceP$ is called proper if and only if there is a regular cardinal $\lambda$ such that
$\lambda > 2^{|\forceP|}$
and there is a closed unbounded $C \subset [H_{\lambda}]^{\omega}$ of elementary submodels $M \prec (H_{\lambda}, \in, <,..)$
(where $<$ defines a well-order. on $H_{\lambda}$ and the dots should indicate further possible second order parameters)
such that for every condition $p \in \forceP \cap M$ there exists a $q \le p$ such that $q$ is $(M,\forceP$)-generic; the latter
meaning that every maximal antichain $A \subset \forceP$, $A \in M$ induces a predense subset $A \cap M$ below the condition $q$.
\end{Definition}

It is very well known that there are other ways of defining what it means for a condition $q \in \forceP$
to be $(M, \forceP)$-generic, e.g. it is equivalent to the assertion that for every name for an ordinal 
$\dot{\alpha} \in M$, $q \Vdash \dot{\alpha} \in M$. Also for a partial
 order $\forceP$ to be proper is equivalent to preserve stationary subsets 
of $[\lambda]^{\omega}$ for every uncountable cardinal $\lambda$.

The next notion is crucial in the proof of the first main result.

\begin{Definition}
A class $\Stat$ consisting of elements $M \in [H_{\theta}]^{\omega}$ for arbitrary cardinals $\theta$
is called everywhere stationary if the following two things hold: \begin{enumerate}
\item for all regular cardinals $\theta$, $\Stat \cap [H_{\theta}]^{\omega}$ is a stationary subset of $[H_{\theta}]^{\omega}$
\item $\Stat$ is closed under truncation meaning that for all regular, uncountable cardinals $\theta$ and all $M \in \Stat$
$M \cap H_{\theta} \in \Stat$.
\end{enumerate}

\end{Definition}

Now we can introduce a generalized form of proper forcing:

\begin{Definition}
A notion of forcing $\forceP$ is $\Stat$-proper for an everywhere stationary class $\Stat$ if there exists a regular, uncountable
cardinal $\theta$ such that $\theta > 2^{|\forceP|}$ and there is a club $C \subset [H_{\theta}]^{\omega}$ such that
for every $M \in C \cap \Stat$ and every $p \in \forceP \cap M$ there is a $q \le p $ which is $(M,\forceP)$-generic.
\end{Definition}

This definition can also be used if $\Stat$ is not a class but a stationary subset of some $[H(\theta)]^{\omega}$.
It is well known due to Shelah that $\Stat$-properness is almost as good as full properness, I.e. it is preserved
under countable support iterations, preserves stationary subsets 
$T \subset [H_{\theta} \cap V]^{\omega}$ as long as $T \subset \Stat$ and 
does not collapse $\aleph_1$. In particular this means that 
during an iteration of $\Stat$-proper forcings with countable support, $\Stat$ remains an everywhere stationary class in 
$[H_{\theta} \cap V]^{\omega}$.

Let us now turn to semiproperness:

\begin{Definition}
Let $\forceP$ be a partial order, then $\forceP$ is semiproper if and only if there is a cardinal $\theta > 2^{|\forceP|}$
and there is a club $C \subset [H_{\theta}]^{\omega}$ of elementary submodels $M \prec (H_{\theta}, \in, < ,...)$
such that every condition $p \in \forceP \cap M$ has an $(M, \forceP)$-semigeneric condition $q$ below it; and a
condition $q$ is $(M, \forceP)$-semigeneric if and only if whenever $\dot{\alpha}$ is a name for a countable
ordinal in $M$ then $q \Vdash \dot{\alpha} \in M$.
\end{Definition}

This generalizes in a straightforward way:

\begin{Definition}
Let $\Stat$ be an everywhere stationary class. A notion of forcing 
$\forceP$ is $\Stat$-semiproper if for a regular $\theta > 2^{|\forceP|}$ 
there is a club $C \subset [H_{\theta}]^{\omega}$ of elementary submodels 
$M \prec (H_{\theta}, \in, <, ...)$ such that for every $M \in C \cap \Stat$ 
and for every $p \in M$ there is a $q<p$ which is $(M, \forceP)$-semigeneric.
\end{Definition}
It is well known that in fact something stronger is true: \begin{Fact}
If $\Stat$ is everywhere stationary then $\forceP$ is $\Stat$ semiproper 
if and only if for every sufficiently large $\theta$ and every 
$M \prec H_{\theta}$, $M \in \Stat$
such that $p, \forceP \in M$ there is a $q< p$ such that $q$ is $(M, \forceP)$-semigeneric.
\end{Fact}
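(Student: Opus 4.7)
The backward direction is immediate: if every $M\in\Stat$ containing $p$ and $\forceP$ admits an $(M,\forceP)$-semigeneric extension of $p$, then the club $\{N\prec H_\theta : p,\forceP\in N\}$ trivially witnesses $\Stat$-semiproperness. All the content lies in the forward direction, which I would prove by the standard reflection trick that turns a ``club of $M$'' statement into an ``all $M$'' statement.

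So assume $\forceP$ is $\Stat$-semiproper, with witness a regular $\theta_0>2^{|\forceP|}$ and a club $C\subseteq[H_{\theta_0}]^\omega$. Let $\theta\geq\theta_0$ be any regular cardinal large enough that $\theta_0,C\in H_\theta$, and fix an arbitrary $M\prec H_\theta$ with $M\in\Stat$ and $p,\forceP\in M$. The first move is to relocate the witness inside $M$: the assertion ``$\forceP$ is $\Stat$-semiproper'' is first-order in the parameter $\forceP\in M$ and holds in $H_\theta$, so elementarity produces inside $M$ a regular $\theta_0'>2^{|\forceP|}$ and a club $C'\subseteq[H_{\theta_0'}]^\omega$ which witnesses the property. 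After relabelling, we may assume $\theta_0,C\in M$.

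The second step is the reflection. Any club in $[H_{\theta_0}]^\omega$ is the set of countable elementary substructures of $H_{\theta_0}$ closed under some function $F:[H_{\theta_0}]^{<\omega}\to H_{\theta_0}$; the existence of such an $F$ is first-order in the parameter $C$, hence by elementarity some such $F$ can be picked inside $M$. As $F\in M$, the set $M\cap H_{\theta_0}$ is closed under $F$, and Tarski--Vaught applied with $H_{\theta_0}\in M\prec H_\theta$ yields $M\cap H_{\theta_0}\prec H_{\theta_0}$; thus $M\cap H_{\theta_0}\in C$. Closure of $\Stat$ under truncation gives $M\cap H_{\theta_0}\in\Stat$, so by the defining property of the witness $C$ there is $q\leq p$ which is $(M\cap H_{\theta_0},\forceP)$-semigeneric.

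It remains to upgrade from $M\cap H_{\theta_0}$ to $M$. Given a $\forceP$-name $\dot\alpha\in M$ for a countable ordinal, standard nice-name coding yields an equivalent name $\dot\beta$ of hereditary cardinality at most $|\forceP|<\theta_0$, and by elementarity of $M$ we may take $\dot\beta\in M$, hence $\dot\beta\in M\cap H_{\theta_0}$. Semigenericity of $q$ with respect to $M\cap H_{\theta_0}$ forces $\dot\beta<(M\cap H_{\theta_0})\cap\omega_1=M\cap\omega_1$, and since $\dot\alpha$ and $\dot\beta$ are forced equal, $q\Vdash\dot\alpha\in M$, as needed. The only nontrivial point in this plan is the reflection showing $M\cap H_{\theta_0}\in C\cap\Stat$; everywhere-stationarity handles the $\Stat$ side, and the $C$ side is handled only because we first pulled the witness $(\theta_0,C)$ into $M$ via elementarity, which is what licenses a closure function for $C$ to live in $M$.
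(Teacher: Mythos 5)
Your proposal is correct and follows essentially the same route as the paper's proof: reflect a witness $(\theta_0,C)$ for $\Stat$-semiproperness into $M$ by elementarity (the paper takes the $<$-least such club), conclude $M\cap H_{\theta_0}\in C\cap\Stat$ using truncation-closure of $\Stat$, and transfer semigenericity from $M\cap H_{\theta_0}$ up to $M$ because $\theta_0>2^{|\forceP|}$. Your write-up merely makes explicit the nice-name argument that the paper compresses into its final sentence.
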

\begin{proof}
Let $\lambda > 2^{|\forceP|}$ then
there is a club $C \subset H_{\lambda}$ such that for every 
$N \in C$ and every $p \in N$ there is a stronger $q$ which is $(N,\forceP)$-semigeneric.
Let $\theta > \lambda$ and let $M \prec H_{\theta}$, $M \in \Stat$ 
and let $<$ be a fixed well-order. of $H_{\theta}$ then $M$ will have 
the $<$-least club $C$ witnessing the above as an element. 
Hence $M \cap H_{\lambda} \in C$ and as $M \in \Stat$ 
we get $(M \cap H_{\lambda}) \in \Stat \cap C$. 
Thus we find for every $p \in M \cap H_{\lambda}$ a $q<p$ which is $(M \cap H_{\lambda})$-semigeneric.
As $\lambda > 2^{|\forceP|}$, $q$ is also $(M, \forceP)$-semigeneric.

\end{proof}

A most important feature of proper notions of forcing 
is that iterating them with countable support results 
in a proper forcing again. (Its proof relies heavily 
on the fact that countable subsets in the extension 
can be covered by countable subsets of the groundmodel. 
This has as a consequence that whenever we break an 
iteration with countable support 
$(\forceP_{\alpha}, \dot{\forceQ}_{\alpha} \, : \, \alpha < \delta)$ into 
two halves $(\forceP_{\alpha}, \dot{\forceQ}_{\alpha} \, : \, \alpha < \beta)$ 
and the according tail forcing, 
the second half is still a countable support iteration)

Contrary to proper forcings, semiproperness will not be preserved 
when using a countable support iteration. In order to give an example 
illustrating this fact we have to be able to break an iteration 
$(\forceP_{\alpha}, \dot{\forceQ}_{\alpha} \, : \, \alpha < \delta)$ 
into two halves $(\forceP_{\alpha}, \dot{\forceQ}_{\alpha} \, : \, \alpha < \beta)$ 
for an ordinal $\beta < \delta$, and $\dot{\forceP}_{[\beta,\delta)}$, where 
the second half is an iteration over the ground model $V^{\forceP_{\beta}}$ of 
length $\delta - \beta$ with factors which correspond to the $\dot{\forceQ}_{\alpha}$.
Intuitively it is clear that such a construction should be possible, though on 
the one hand the factors $\dot{\forceQ}_{\alpha}$ are $\forceP_{\alpha}$-names of 
partial orders, while in the yet not explicitly defined iteration 
$\dot{\forceP}_{[\beta,\delta)}$ the factors of the iteration should be 
$\forceP_{\beta}$-names of $\dot{\forceP}_{[\beta,\alpha)}$-names of partial 
orders (where $\dot{\forceP}_{[\beta,\alpha)}$ is a $\forceP_{\beta}$-name for an iteration of length $\alpha - \beta$).
Further the appropriate definition of
the second half $\dot{\forceP}_{[\beta,\delta)}$
should have the consequence that the two step iteration $(\forceP_{\alpha}, \dot{\forceQ}_{\alpha} \, : \, \alpha < \beta)$ $\ast$
$\dot{\forceP}_{[\beta,\alpha)}$
is forcing equivalent to $(\forceP_{\alpha}, \dot{\forceQ}_{\alpha} \, : \, \alpha < \delta)$.
A precise analysis of the situation leads quickly
to a rather tedious definition:

\begin{Definition}
Let $(\forceP_{\beta}, \dot{\forceQ}_{\beta}  \, : \, \beta < \alpha)$ be a forcing iteration of length $\alpha$, let $\eta < \alpha$ then
we let $\dot{\forceP}_{[\eta, \alpha)}$ be a $\forceP_{\eta}$-name defined like this: \begin{enumerate}
\item[] $\forall p \in \forceP_{\eta}$ $\forall \dot{s} \in V^{\forceP_{\eta}}$ $( p \Vdash_{\eta} \dot{s} \in
\dot{\forceP}_{[\eta, \alpha)}$ if and only if $\forall q <_{\eta} p$ $\exists r \in \forceP_{\alpha}$ $(r \upharpoonright \eta
< q$ and $ r \upharpoonright \eta \Vdash_{\eta} r \upharpoonright [\eta, \alpha) = \dot{s} )$.
\end{enumerate}

\end{Definition}

Here the term $r \upharpoonright [\eta, \alpha)$ does not mean the 
usual check-name for the tail sequence as seen in $V^{\forceP_{\eta}}$, 
but rather the $\forceP_{\eta}$-name for a function with domain 
$[\eta, \alpha)$ and values at stage $\xi$ are
$\dot{\forceP}_{[\eta, \xi)}$-names according to the $\forceP_{\xi}$-name $r(\xi)$. 
Thus the above definition has the more readable characterization that for a 
$\forceP_{\eta}$-generic filter $G_{\eta}$ over $V$ we have that 
$$ V[G_{\eta}] \models \dot{\forceP}_{[\eta, \alpha)} = \{ p \upharpoonright [\eta, \alpha) \, : \, p \in \forceP_{\alpha}, \,
p \upharpoonright \eta \in G_{\eta} \} $$
With these clarified notions one can show that

\begin{Fact}
Every iteration
$(\forceP_{\beta}, \dot{\forceQ}_{\beta}  \, : \, \beta < \alpha)$ 
is forcing equivalent to the according two step iteration 
$(\forceP_{\beta}, \dot{\forceQ}_{\beta} \, : \, \beta < \eta)$ $\ast$ $\dot{\forceP}_{[\eta,\alpha)}$.
And the forcing  $\dot{\forceP}_{[\eta,\alpha)}$ is forced to be isomorphic to an iteration of length $\alpha- \eta$, i.e.
$$ 1 \Vdash_{\eta} \dot{\forceP}_{[\eta,\alpha)} \text{ is isomorphic to an iteration of length } \alpha- \eta$$
       
\end{Fact}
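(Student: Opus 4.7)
The plan is to prove both assertions simultaneously by induction on $\alpha \geq \eta$, with $\eta$ held fixed. The base case $\alpha = \eta$ is vacuous, taking $\dot{\forceP}_{[\eta,\eta)}$ to be the trivial one-condition forcing.

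For the successor step, suppose the claim holds at $\gamma \geq \eta$. Then by hypothesis $\forceP_\gamma$ is forcing equivalent to $\forceP_\eta \ast \dot{\forceP}_{[\eta,\gamma)}$, and $\dot{\forceP}_{[\eta,\gamma)}$ is forced to be an iteration of length $\gamma - \eta$. The factor $\dot{\forceQ}_\gamma$ is a $\forceP_\gamma$-name, and under this isomorphism it translates canonically into a $\forceP_\eta$-name for a $\dot{\forceP}_{[\eta,\gamma)}$-name of a partial order, call it $\dot{\forceQ}_\gamma^{\ast}$. Then $\forceP_{\gamma+1} = \forceP_\gamma \ast \dot{\forceQ}_\gamma$ is forcing equivalent to $\forceP_\eta \ast \bigl(\dot{\forceP}_{[\eta,\gamma)} \ast \dot{\forceQ}_\gamma^{\ast}\bigr)$, which extends the tail iteration by one factor to length $(\gamma+1) - \eta$.

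For limit $\alpha$, fix a $\forceP_\eta$-generic $G_\eta$ over $V$ and work in $V[G_\eta]$. By the preceding Definition, $\dot{\forceP}_{[\eta,\alpha)}^{G_\eta}$ equals $\{p \upharpoonright [\eta,\alpha) : p \in \forceP_\alpha, \, p \upharpoonright \eta \in G_\eta\}$. The inductive hypothesis produces a coherent sequence of tail forcings $\dot{\forceP}_{[\eta,\gamma)}^{G_\eta}$ for $\eta \leq \gamma < \alpha$ together with the natural restriction maps, and one has to verify that the tail obtained from $\forceP_\alpha$ coincides with the iteration-theoretic limit of this system, taken with respect to the support regime of the original iteration. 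Granting this, forcing equivalence is established by showing that the map sending $p \in \forceP_\alpha$ to the pair consisting of $p \upharpoonright \eta$ together with the canonical $\forceP_\eta$-name for $p \upharpoonright [\eta,\alpha)$ is a dense embedding into $\forceP_\eta \ast \dot{\forceP}_{[\eta,\alpha)}$. Order preservation is immediate from the componentwise ordering, while density is exactly the content of the biconditional in the Definition: any $(p_0, \dot{s})$ in the two-step iteration can be matched by some $r \in \forceP_\alpha$ with $r \upharpoonright \eta \leq p_0$ and $r \upharpoonright \eta \Vdash r \upharpoonright [\eta,\alpha) = \dot{s}$.

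The main obstacle is the limit case, specifically checking that the support condition governing the iteration is preserved under restriction. For countable support iterations this is essentially automatic, since restricting a condition with countable support in $V$ yields a tail with countable support in $V[G_\eta]$, and any tail $\dot{s}$ may be realized as $r \upharpoonright [\eta,\alpha)$ for some $r \in \forceP_\alpha$ by extending trivially outside the support of $\dot{s}$. For RCS iterations the bookkeeping is more delicate, because the revised support is defined via names for countable ordinals and is sensitive to the cofinal structure of the iteration; one must verify that the revised support of a restricted condition, as computed in $V[G_\eta]$ relative to the tail iteration, coincides with the restriction of the revised support of the original. This step requires a parallel induction alongside the main one, but the recursive nature of the RCS definition makes the verification go through.
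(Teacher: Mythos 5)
The paper states this Fact without proof, deferring to the literature (Shelah, Viale, Schlindwein), so there is no in-text argument to compare against; your induction is the standard one and its skeleton is sound: the successor step is associativity of two-step iteration, and the dense embedding $p \mapsto (p\upharpoonright\eta, \dot{s}_p)$ with density read off from the biconditional in the Definition is exactly how forcing equivalence is established at limits.

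One caveat about your limit case. You frame the obstacle as ``checking that the support condition governing the iteration is preserved under restriction'' and call this ``essentially automatic'' for countable support. That is precisely the point the paper warns about in the paragraph immediately following the Fact: the support regime is \emph{not} in general preserved. If $\delta \in (\eta,\alpha)$ has uncountable cofinality in $V$ but countable cofinality in $V[G_\eta]$, the original CS iteration took the direct limit at $\delta$, so the tail, viewed from $V[G_\eta]$, takes a direct limit at a stage of countable cofinality and is therefore no longer a CS iteration (this is the engine of the Namba counterexample the paper builds right afterwards). The Fact survives because it only claims the tail is isomorphic to \emph{an} iteration of length $\alpha-\eta$ --- i.e.\ at each limit stage the limit lies between the direct and the inverse limit --- not an iteration of the same support type. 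Your proof should verify only this weaker statement; as written, the sentence about realizing an arbitrary tail $\dot{s}$ ``by extending trivially outside the support of $\dot{s}$'' also quietly assumes that a name for a countable support can be covered by a ground-model countable set, which needs properness and is not available in general. Neither issue is fatal, because the tail forcing is by definition the set of restrictions of conditions of $\forceP_\alpha$, so you never actually need to realize tails from arbitrary supports; but you should restate the limit-case goal accordingly.
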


One must be careful with the second statement of the last fact. 
Although for an iteration $(\forceP_{\beta}, \dot{\forceQ}_{\beta}  \, : \, \beta < \alpha)$ the tail $\dot{\forceP}_{[\eta,\alpha)}$ 
can be seen as an iteration in $V^{\forceP_{\eta}}$, the type of the iteration can change.
As an example take a countable support iteration
$(\forceP_{\beta}, \dot{\forceQ}_{\beta}  \, : \, \beta < \alpha)$ and an intermediate
 stage $\eta < \alpha$ such that in $V^{\forceP_{\eta}}$ the cofinality of an ordinal 
$\delta < \alpha$, $\eta < \delta$ gets changed from uncountable to countable cofinality. 
Then the cut off iteration with ground model $V^{\forceP_{\eta}}$ will not be a countable 
support iteration anymore as at stage $\delta$, which has countable cofinality, 
nevertheless the direct limit is taken. This situation can not happen when the 
factors of the iteration are proper and a countable support iteration is taken, 
as in a generic extension obtained with a proper forcing all new countable sets 
of ordinals can be covered by old countable sets.
The above described phenomenon can be exploited to construct an example of 
a countable support iteration of forcings which does collapse $\omega_1$, 
yet its factors are semiproper. In order to change the cofinality of an 
uncountable regular cardinal to cofinality $\omega$ we use Namba's forcing:

\begin{Definition}
The partial order $Nm(\omega_2)$, the Namba forcing, 
is defined like this: conditions are perfect trees 
$T \subset (\omega_2)^{< \omega}$, where perfect 
means that every node has $\aleph_2$-many extensions. 
The ordering is given by $T_1 \le T_2$ if and only if $T_1 \subset T_2$.
\end{Definition}

One can show that under $\CH$ forcing with $Nm(\omega_2)$ 
preserves $\aleph_1$ and changes the cofinality of 
$(\omega_2)^{V}$ to $\omega$. Moreover if one assumes 
the existence of a measurable cardinal, then after forcing with an $\sigma$-closed, thus $\CH$ preserving notion of forcing
the Namba forcing is semiproper. Nevertheless countable support iterations of $\omega_1$-preserving
notions of forcing, starting with a Namba forcing
can collapse $\omega_1$:

Assume as a ground model a model where Namba forcing is semiproper 
and changes the cofinality of $\omega_2$ to $\omega$. 
Let $(\forceP_{\alpha}, \dot{\forceQ}_{\alpha} \, : \, \alpha < \omega_2 )$ 
be a countable support iteration of length $\omega_2$ such 
that the first step in the itertation $\forceP_0$ is the Namba forcing.
 Further assume that at each stage $\alpha < \omega_2$, $V^{\forceP_{\alpha}}$ 
thinks that $\dot{\forceQ}_{\alpha}$ is an antichain of size $\aleph_1$.
Work in $V^{\forceP_{0}}$, fix a cofinal function $f: \omega \rightarrow \omega_2$ and for each $\alpha < \omega_2$
a $\forceP_0$-name of an antichain $A_{\alpha} \subset \dot{\forceQ}_{\alpha}$ of size $\omega_1$ in $V^{\forceP_{\alpha}}$.
List $A_{\alpha} = (a^{\alpha}_{\beta} \, : \, \beta < \omega_1)$ and define a name for a function $\dot{g}$ as follows: \begin{enumerate}
\item[] Let $\dot{g}$ be chosen such that for every $n \in \omega$ and every $\beta < \omega_1$ the Boolean value
$\llbracket \dot{g}(n) = \beta \rrbracket = 1 \smallfrown a^{f(n)}_{\beta} \smallfrown 1$, I.e. at the $f(n)$-th
coordinate, the value is $a^{f(n)}_{\beta}$, while constantly $1$ everywhere else.

\end{enumerate}

Now $\dot{g}$ is forced to be a surjective function from $\omega$ to $\omega_1$. 
Indeed let $p \in \forceP_{\omega_2}$ be a condition with support $\gamma < \omega_2$ and let $\beta < \omega_1$ be an arbitrary ordinal.
Then there is
an $n \in \omega$ such that $f(n) > \gamma$ and so $p$ and $a^{f(n)}_{\beta}$ are compatible.
Therefore for every ordinal $\beta < \omega_1$ it is dense to be in the range of $\dot{g}$ and so it is surjective on $\omega_1$.

Thus one cannot hope in general that countable support iterations of semiproper 
forcings will preserve $\omega_1$. One has to iterate semiproper forcings in a 
more careful way if one wants to preserve $\omega_1$.
Obviously the above example relies on the fact that after the Namba forcing the 
cofinality of $\omega_2^V$ has changed to $\omega$, nevertheless at stage $\omega_2^V$, 
due to the definition of the countable support iteration, we take the direct limit of the 
previous factors, though we actually should have taken the inverse limit. If we allow for
 our conditions in the iteration
 $(\forceP_{\alpha}, \dot{\forceQ}_{\alpha} \, : \, \alpha < \omega_2 )$ $\forceP_{\beta}$-names 
of countable sets, instead of the usual countable support, thus using a revised countable support,
then the example above will not define a surjection of $\omega$ to $\omega_1$ anymore.
Indeed this modification will completely rule out the possibility of collapsing $\omega_1$ 
as revised countable support iterations of semiproper forcings result in a semiproper forcing again.

\begin{Definition}
Let $\forceP_{\beta}$ be a forcing for every $\beta < \alpha$, $\alpha$ a limit ordinal. Then $\forceP_{\alpha}$ 
is an RCS-limit (short for revised countable support) of   
$\forceP_{\beta}$, $\beta < \alpha$ if it is a subset of the inverse 
limit of the forcings $(\forceP_{\beta} \, :\, \beta < \alpha)$ such that each $ p \in \forceP_{\alpha}$ satisfies
\begin{itemize}
\item for each $q < p$ there is an ordinal $\gamma < \alpha$ and a 
$\forceP_{\gamma}$-condition $r$ such that 
$r \le q \upharpoonright \gamma$ and in the 
forcing $\forceP_{\gamma}$ it holds that
$r \Vdash_{\gamma} cf(\alpha)= \omega$ or for each 
$\beta \ge \gamma$ $p \upharpoonright [\gamma, \beta) \Vdash_{\forceP_{\gamma, \beta}} p(\beta)=1$ \end{itemize}
 
\end{Definition}

\begin{Fact}
Iterations with RCS-support whose factors are semiproper result in a semiproper forcing notion. Moreover
if we split an RCS iteration into two pieces then the tail iteration, as seen from the intermediate model
will look like an RCS iteration again. More precisely, if $(\forceP_{\alpha}, \dot{\forceQ}_{\alpha} \, : \, \alpha < \beta)$
is an RCS iteration then $1 \Vdash_{\gamma} \dot{\forceP}_{[\gamma , \beta)}$ is an RCS-itertaion, for every $\gamma < \beta$.
\end{Fact}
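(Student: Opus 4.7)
The plan is to prove both claims by simultaneous induction on the length $\beta$. The base case is vacuous and the successor case $\beta=\alpha+1$ reduces to the two-step iteration lemma: given a countable $M \prec H_\theta$ with $\forceP_{\alpha+1}, p \in M$ and $p=(p_0,\dot{p}_1)$, first use the induction hypothesis to obtain $q_0 \le p_0$ that is $(M,\forceP_\alpha)$-semigeneric, and then, forcing below $q_0$, use the forced semiproperness of $\dot{\forceQ}_\alpha$ inside $M[\dot{G}_\alpha]$ to extend $\dot{p}_1$ to an $(M[\dot{G}_\alpha],\dot{\forceQ}_\alpha)$-semigeneric name. Semigenericity of the resulting pair follows because every name in $M$ for a countable ordinal lies in $M[\dot{G}_\alpha]$.

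For the limit case, fix countable $M \prec H_\theta$, $p \in \forceP_\beta \cap M$, and set $\beta^{*}=\sup(M\cap\beta)$. If $cf(\beta^{*})=\omega$ in $V$, pick inside $M$ an increasing cofinal sequence $(\beta_n)_{n<\omega}\subset M\cap\beta$ together with enumerations of all maximal antichains of $\forceP_\beta$ that lie in $M$ and of all $\forceP_\beta$-names in $M$ for countable ordinals; then recursively construct a descending sequence $q_n \in \forceP_{\beta_n}$ such that each $q_n$ is $(M,\forceP_{\beta_n})$-semigeneric by the induction hypothesis, meets the $n$-th antichain, and decides the $n$-th name into $M$. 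Glue the $q_n$ into a thread $q$ in the inverse limit; the first clause of the RCS-limit definition, applied in $V^{\forceP_{\beta_0}}$ where $cf(\beta^{*})=\omega$, witnesses $q \in \forceP_\beta$ and yields the desired $(M,\forceP_\beta)$-semigeneric extension of $p$. The complementary situation $\beta \notin M$ with $cf(\beta)>\omega$ is handled by observing that $M\cap\beta$ is bounded by some $\gamma\in M$ and that $p\upharpoonright[\gamma,\beta)$ is forced to be trivial on all coordinates that $M$ sees, so that an $(M,\forceP_\gamma)$-semigeneric extension of $p\upharpoonright\gamma$, given by induction, suffices.

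The hard part is that some semiproper factor $\dot{\forceQ}_\eta$ with $\eta<\beta$ may change the cofinality of $\beta^{*}$ to $\omega$ in $V^{\forceP_{\eta+1}}$ even when $cf(\beta^{*})>\omega$ in $V$; in that case the thread-gluing argument of the previous paragraph has to be executed inside the intermediate extension rather than in $V$. This is precisely what the first clause of the RCS-limit definition licenses, and it is the reason why the second claim of the Fact must be proved in tandem with the first: the induction step needs to know that $\dot{\forceP}_{[\eta,\beta)}$ is itself an RCS iteration in $V^{\forceP_\eta}$ before it can legitimately recurse into that model. I would therefore establish the tail claim first, by directly unravelling the definition: for $G_\gamma$ generic, a limit $\delta \in (\gamma,\beta]$ and a tail condition $\bar{p}=p\upharpoonright[\gamma,\delta)$ coming from some $p \in \forceP_\beta$ with $p\upharpoonright\gamma \in G_\gamma$, any RCS-witness $(\gamma',r)$ for $p$ in the original iteration, chosen with $\gamma'\ge\gamma$, restricts to an RCS-witness for $\bar{p}$ in the tail, because both the cofinality clause and the triviality clause of the RCS-limit definition are absolute between $V^{\forceP_{\gamma'}}$ and the corresponding tail extension seen from $V[G_\gamma]$.
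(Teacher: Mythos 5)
The paper does not actually prove this Fact; it is quoted from the literature (Shelah's \emph{Proper and Improper Forcing}, Schlindwein, Viale--Audrito--Steila), so I can only judge your sketch on its own merits. The successor step and the overall architecture (simultaneous induction with the tail claim, proving the tail claim by unravelling the definition of $\dot{\forceP}_{[\gamma,\beta)}$) are the standard and correct skeleton. The limit case, however, contains gaps that are not cosmetic. First, since $M$ is countable, $\beta^{*}=\sup(M\cap\beta)$ \emph{always} has cofinality $\omega$ in $V$, so your case split is vacuous as stated; and in the ``complementary situation'' the claim that $M\cap\beta$ is bounded by some $\gamma\in M$ is false (no element of $M\cap\beta$ can bound $M\cap\beta$), nor is $p\upharpoonright[\gamma,\beta)$ forced trivial on $M$'s coordinates --- if some condition forces $cf(\beta)=\omega$, supports of conditions in $M$ may be cofinal in $M\cap\beta$.

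The more serious gap is in the fusion itself, where you have transplanted the properness argument verbatim. A condition $q_n\in\forceP_{\beta_n}$ cannot ``decide the $n$-th name into $M$'' when that name is a $\forceP_\beta$-name for a countable ordinal: its value depends on the tail generic, which is not determined at stage $n$. The standard repair is to carry along $\forceP_{\beta_n}$-names $\dot{p}_n$ for conditions of $\forceP_\beta$ below $p$, chosen in $M[\dot{G}_{\beta_n}]$, which decide the names and are forced into the final generic. But this is exactly where semiproperness diverges essentially from properness: a semigeneric $q_n$ only guarantees $M[G_{\beta_n}]\cap\omega_1=M\cap\omega_1$, not $M[G_{\beta_n}]\cap V=M$, so the auxiliary conditions $\dot{p}_n$ need not lie in $M$ and their supports can escape $\sup(M\cap\beta)$ (think of a Namba step pushing $\sup(M[G_{\beta_n}]\cap\beta)$ strictly above $\sup(M\cap\beta)$). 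Controlling this escaping support --- gluing along a \emph{name} for $\sup(M[\dot{G}]\cap\beta)$ and verifying that the resulting thread satisfies the RCS legality clause --- is precisely why revised countable support is needed at all and is the actual content of Shelah's theorem. Your sketch contains no mechanism for it, and acknowledging that ``the thread-gluing has to be executed inside the intermediate extension'' names the difficulty without resolving it; this is the step where several published proofs have historically gone wrong, and it cannot be waved through.
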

For $\Stat$-semiproper notions 
of forcing $\forceP$ we can still infer that they preserve stationary 
subsets of $\omega_1$ as long as for every stationary 
$S \subset \omega_1$, $\{ M \cap \omega_1 \, :\, M \in \Stat\} \cap S$ is stationary.
Indeed if $S \subset \omega_1$ is stationary, if $p \in \forceP$, 
and if $\dot{C}$ denotes a name of a club, then we pick an elementary 
submodel $M \in \Stat$ such that $M \cap \omega_1 \in S$ which contains $\dot{C}$.
Let $q < p$ be an $(M, \forceP)$-generic condition then $q \Vdash M \cap \omega_1 \in \dot{C}$ and
as $M \cap \omega_1 \in S$ we are done.

It is well known that properness is equivalent to the 
preservation of stationary subsets of $[\lambda]^{\omega}$. 
As a consequence $\Stat$-proper forcings preserve stationarity of
$\Stat$ in every $[H_{\theta} \cap V]^{\omega}$. The class $\Stat$ itself
will nevertheless lose its stationarity as soon as we only add one new element $x$ to the universe
as $\{ M \in H_{\theta} \, : \, x \in M \}$ will be a club disjoint from $\Stat$.
If we want to find a sufficient condition for a two step iteration $\forceP_0 \ast \forceP_1$ to be 
$\Stat$-proper, we therefore have to change $\Stat$ after the first stage.

\begin{Definition}
Let $S \subset [H_{\theta}]^{\omega}$ be stationary and let $\forceP$ be an 
arbitrary notion of forcing with $2^{|\forceP|} < \theta$. Then set $$ S[G] := \{ M[G] \, : \, \forceP \in M \in S \} $$
\end{Definition}

This set $S[G]$ will remain a stationary set in $V[G]$:

\begin{Lemma}
Let $S \subset [H_{\theta}]^{\omega}$ be stationary and let $\forceP$ be 
an arbitrary notion of forcing with $2^{|\forceP|} < \theta$ then $S[G]$ 
is a stationary subset of $V[G]$'s version of $[H_{\theta}]^{\omega}$.
\end{Lemma}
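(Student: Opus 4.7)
The plan is to take an arbitrary $\forceP$-name $\dot{C}$ for a club $C \subseteq [H_\theta^{V[G]}]^\omega$ in $V[G]$ and to produce some $N \in S$ with $\forceP \in N$ such that $N[G] \in C$. I would first fix a regular $\lambda > \theta$ large enough that $\forceP$ and a canonical $\forceP$-name $\dot{C}$ for the club both lie in $H_\lambda^V$. Using that stationarity in $[H_\theta]^\omega$ lifts to stationarity in $[H_\lambda]^\omega$ under the projection $M \mapsto M \cap H_\theta$, the next step is to pick in $V$ a countable elementary submodel $M \prec H_\lambda$ with $\forceP, \dot{C} \in M$ and such that $N := M \cap H_\theta \in S$. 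Since $2^{|\forceP|} < \theta$ gives $\forceP \in H_\theta$, we have $\forceP \in N$, so $N[G] \in S[G]$ directly from the definition.

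The core of the argument will be the identity
\[ M[G] \cap H_\theta^{V[G]} \;=\; N[G]. \]
The inclusion $\supseteq$ is immediate, since any $\forceP$-name in $N \subseteq H_\theta^V$ has an interpretation lying in both $H_\theta^{V[G]}$ and $M[G]$. For $\subseteq$, given $x = \tau^G \in M[G] \cap H_\theta^{V[G]}$, the nice-name argument, which rests on $2^{|\forceP|} < \theta$ (and hence $|\forceP| < \theta$), shows that $H_\lambda$ satisfies ``there exists a $\forceP$-name $\sigma \in H_\theta$ with $\Vdash \sigma = \tau$''; by the elementarity of $M$, such a $\sigma \in M \cap H_\theta = N$ exists, so $x = \sigma^G \in N[G]$.

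To conclude I would pass to $V[G]$ and use that $M[G] \prec H_\lambda^{V[G]}$ is a countable elementary submodel containing the club $C = \dot{C}^G$. The standard club-absorption fact --- that any countable elementary submodel $M' \prec H_\lambda^{V[G]}$ which contains a club $C$ in $[H_\theta^{V[G]}]^\omega$ satisfies $M' \cap H_\theta^{V[G]} \in C$ --- then yields $M[G] \cap H_\theta^{V[G]} \in C$, and combined with the identity above, $N[G] \in C \cap S[G]$. Since $\dot{C}$ was arbitrary, this establishes the stationarity of $S[G]$ in $V[G]$.

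The main obstacle is the identity $M[G] \cap H_\theta^{V[G]} = N[G]$, specifically its $\subseteq$ direction, which requires the nice-name argument and is precisely where the hypothesis $2^{|\forceP|} < \theta$ is essential. Once this identity is in hand, the remainder is a routine combination of lifting stationarity from $[H_\theta]^\omega$ to $[H_\lambda]^\omega$, the preservation of elementarity under forcing extensions, and the standard absorption of clubs by countable elementary submodels.
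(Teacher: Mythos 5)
Your proof is correct and follows essentially the same route as the paper: pick a countable $M$ elementary in some larger $H_\lambda$ (the paper uses $H_{\theta^+}$) with $\dot C\in M$ and $M\cap H_\theta\in S$, and observe that $M[G]\cap H_\theta^{V[G]}=(M\cap H_\theta)[G]$ lies in both $\dot C^G$ and $S[G]$. The only difference is that you supply the nice-name justification for the identity $M[G]\cap H_\theta^{V[G]}=(M\cap H_\theta)[G]$, which the paper asserts without proof.
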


\begin{proof}
Fix a name $\dot{C}$ for a club in $[H_{\theta}]^{\omega}$. Pick a countable $M \prec H_{\theta^+}$ such that $M$ contains $\dot{C}$
and such that $M \cap H_{\theta} \in S$. Then $\dot{C}^{G}$ is a club and is an element of $M[G]$, therefore
$M[G] \cap H_{\theta} = (M \cap H_{\theta})[G] \in \dot{C}^{G} $, yet $(M \cap H_{\theta})[G] \in S[G]$.
\end{proof}

Now this enables us to handle two step iterations:

\begin{Fact}
Let $S \subset H_{\theta}$ be stationary and assume that $\forceP$ is a 
forcing notion with $2^{|\forceP|} < \theta$ which is $S$-proper. 
Let $\forceQ$ be a notion of forcing in $V[G]$ which is $S[G]$-proper, then the iteration $\forceP \ast \forceQ$ is $S$-proper.
\end{Fact}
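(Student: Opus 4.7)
The plan is to prove $S$-properness of $\mathbb{P}\ast\dot{\mathbb{Q}}$ by mimicking the standard Shelah proof for iterating proper forcings, but threaded through the stationary class $S$ and its extension $S[G]$ to $V[G]$. Fix a large enough regular $\theta'$ so that $\mathbb{P}\ast\dot{\mathbb{Q}}$, the name $\dot{\mathbb{Q}}$, and all the witnessing data live in $H_{\theta'}$ and so that $2^{|\mathbb{P}\ast\dot{\mathbb{Q}}|}<\theta'$. I would then choose a club $C\subset[H_{\theta'}]^{\omega}$ such that for every $M\in C$ with $M\cap H_{\theta}\in S$ both the $S$-properness of $\mathbb{P}$ (via the Fact that $S$-semigenericity/genericity witnessed at $H_{\theta}$ lifts to elementary submodels of $H_{\theta'}$) and the relevant naming constructions are available.

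Given such an $M\in C$ with $M\cap H_{\theta}\in S$ and a condition $(p,\dot{q})\in(\mathbb{P}\ast\dot{\mathbb{Q}})\cap M$, the first step is to invoke the $S$-properness of $\mathbb{P}$ applied to $M$ (using that $M\cap H_{\theta}\in S$ and Fact 7) to obtain $p^{\ast}\leq p$ which is $(M,\mathbb{P})$-generic. The second step passes to $V[G]$ with $p^{\ast}\in G$. Here $\mathbb{P}\in M\cap H_{\theta}\in S$, hence by definition of $S[G]$ we have $(M\cap H_{\theta})[G]\in S[G]$, and by $(M,\mathbb{P})$-genericity of $p^{\ast}$ this coincides with $M[G]\cap H_{\theta}^{V[G]}$. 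Applying the $S[G]$-properness of $\dot{\mathbb{Q}}^{G}$ in $V[G]$ to the condition $\dot{q}^{G}\in M[G]$ produces a $q^{\ast}\leq\dot{q}^{G}$ which is $(M[G],\dot{\mathbb{Q}}^{G})$-generic. Using the maximal principle one lifts this construction back to $V$: there is a $\mathbb{P}$-name $\dot{q}^{\ast}$ such that $p^{\ast}\Vdash_{\mathbb{P}}\dot{q}^{\ast}\leq\dot{q}$ and $\dot{q}^{\ast}$ is $(M[\dot{G}],\dot{\mathbb{Q}})$-generic.

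The final step is to check that $(p^{\ast},\dot{q}^{\ast})$ is $(M,\mathbb{P}\ast\dot{\mathbb{Q}})$-generic; the cleanest route is the ordinal characterization. Let $\dot{\alpha}\in M$ be a $\mathbb{P}\ast\dot{\mathbb{Q}}$-name for an ordinal and suppose $(p^{\ast},\dot{q}^{\ast})\in G\ast H$. Then $\dot{\alpha}^{G\ast H}\in M[G\ast H]=M[G][H]$. Since $p^{\ast}$ is $(M,\mathbb{P})$-generic we have $M[G]\cap\mathrm{Ord}=M\cap\mathrm{Ord}$, and since $\dot{q}^{\ast G}$ is $(M[G],\dot{\mathbb{Q}}^{G})$-generic we have $M[G][H]\cap\mathrm{Ord}=M[G]\cap\mathrm{Ord}$. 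Composing, $\dot{\alpha}^{G\ast H}\in M$, which is the genericity we need.

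The main technical obstacle is the second step: verifying in $V$ that the model $M[G]$ to which we apply $S[G]$-properness is really the right one (i.e. that $M[G]\cap H_{\theta}^{V[G]}=(M\cap H_{\theta})[G]$ lies in $S[G]$ and is elementary in $H_{\theta'}^{V[G]}$), and then assembling the pointwise choice of $q^{\ast}$ into a single $\mathbb{P}$-name $\dot{q}^{\ast}$ below $p^{\ast}$. The identification of the truncated model uses $(M,\mathbb{P})$-genericity of $p^{\ast}$ together with $\mathbb{P}\in M\cap H_{\theta}$, while the name construction is a routine but careful application of the maximal principle — once both are handled, the ordinal-absorption argument of the final paragraph finishes the proof.
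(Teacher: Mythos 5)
The paper states this Fact without proof (it is quoted as well known), so there is nothing to compare against; judged on its own, your argument is correct and is exactly the standard Shelah-style two-step preservation proof, correctly relativized to $S$ and $S[G]$. The two points you flag as delicate are indeed the only ones needing care, and both go through: the identification $M[G]\cap H_{\theta}^{V[G]}=(M\cap H_{\theta})[G]$ uses the hypothesis $2^{|\forceP|}<\theta$ and $\forceP\in M\cap H_{\theta}$ (it is the same identification the paper itself uses in the proof of the preceding Lemma on the stationarity of $S[G]$), and the passage from the pointwise choice of $q^{\ast}$ in $V[G]$ to a name $\dot{q}^{\ast}$ with $p^{\ast}\Vdash\dot{q}^{\ast}\le\dot{q}$ is a routine maximality argument.
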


The last Proposition leads to

\begin{Fact}
Let $(\forceP_i, \dot{\forceQ}_i \, :\, i \in \lambda)$ be an RCS-iteration of forcings for which \begin{enumerate}
\item[] $1 \Vdash_i \forceQ_i$ is $\Stat[\dot{G}_{i}]$-semiproper \end{enumerate}
holds at each stage $i < \lambda$.
Then $\forceP_{\lambda}$ is an $\Stat$-semiproper forcing notion.
  
\end{Fact}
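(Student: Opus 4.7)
I will argue by induction on the length $\lambda$ of the iteration. At successor stages $\lambda = \alpha+1$ the argument reduces to the two-step iteration Fact stated above, with ``$\Stat$-proper'' replaced by ``$\Stat$-semiproper''; the proof of that replacement is essentially the same, because $(M,\forceP \ast \dot{\forceQ})$-semigenericity factors as $(M,\forceP)$-semigenericity followed, in the generic extension, by $(M[G],\dot{\forceQ})$-semigenericity, and the hypothesis that $\dot{\forceQ}_\alpha$ is forced to be $\Stat[\dot G_\alpha]$-semiproper says exactly that the second step can be performed whenever $M[G] \in \Stat[G]$, which itself holds by the earlier Lemma on $S[G]$.

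For limit $\lambda$ the task is, for a sufficiently large $\theta$, a model $M \in \Stat$ with $\forceP_\lambda, p \in M$, to produce $q \le p$ that is $(M,\forceP_\lambda)$-semigeneric. Since $M$ is countable, one fixes inside $M$ a cofinal $\omega$-sequence $\langle \alpha_n \rangle$ in $\sup(M\cap\lambda)$ as well as an enumeration $\langle \dot\beta_n \rangle$ of all $\forceP_\lambda$-names in $M$ for countable ordinals. The plan is then to recursively construct a descending sequence $p = p_0 \ge p_1 \ge \cdots$ in $\forceP_\lambda \cap M$ so that $p_n\upharpoonright\alpha_n$ is $(M,\forceP_{\alpha_n})$-semigeneric and $p_n$ forces $\dot\beta_k \in M$ for every $k<n$. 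The passage from $p_n$ to $p_{n+1}$ is carried out inside $M$: first invoke the inductive hypothesis for $\forceP_{\alpha_{n+1}}$ (whose tail $\dot{\forceP}_{[\alpha_n,\alpha_{n+1})}$ is again an RCS iteration by the splitting fact stated above, interpreted in the intermediate extension with the shifted class $\Stat[G_{\alpha_n}]$), then use the further tail $\dot{\forceP}_{[\alpha_{n+1},\lambda)}$ to decide $\dot\beta_n$ into $M$.

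The genuine obstacle, and the distinctive virtue of RCS support, is assembling the $p_n$'s into an honest condition $q \in \forceP_\lambda$. Coordinate by coordinate one takes $q(\gamma)$ to be the natural inverse limit of the $p_n(\gamma)$'s for $\gamma < \sup(M\cap\lambda)$, and $1$ otherwise. To check that $q$ is an RCS condition, one needs, for every $q' \le q$ and every $\gamma \in [\sup(M\cap\lambda),\lambda)$, a $\forceP_\gamma$-condition $r \le q'\upharpoonright\gamma$ that either forces $cf(\lambda)=\omega$ or forces the tail of $q$ above $\gamma$ to be trivial. The witness comes from the cofinal sequence $\langle\alpha_n\rangle$ chosen inside $M$: the $(M,\forceP_{\alpha_n})$-semigenericity of each $p_n\upharpoonright\alpha_n$ guarantees that $\sup(M\cap\lambda)$ still has countable cofinality in every extension below $q$, and the standard fusion argument of Shelah's original RCS iteration theorem converts this into the required $r$.

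Once $q$ is verified to be a condition, $(M,\forceP_\lambda)$-semigenericity is automatic from the bookkeeping: every name $\dot\beta_n \in M$ for a countable ordinal was forced into $M$ by some $p_{n+1}$ and hence by $q$. I expect the bulk of the technical work to lie in the verification of the RCS clause for $q$ at coordinates beyond $\sup(M\cap\lambda)$, and in keeping the inductive semigenericity constructions compatible with the shifting classes $\Stat[G_{\alpha_n}]$ at each intermediate stage; the $\Stat$-parameter itself adds no new difficulty over the classical theorem, thanks to the preservation lemma for $S[G]$ proved just above.
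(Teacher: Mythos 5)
The paper does not prove this Fact at all: it is quoted as a known result, with the proofs deferred to Shelah's \emph{Proper and Improper Forcing}, Schlindwein's paper on simplified RCS iterations, and Viale's notes. So your proposal cannot be compared with an argument in the text; it has to stand on its own, and as written it has two genuine gaps.

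First, the recursion step is misdescribed. You say the passage from $p_n$ to $p_{n+1}$ is ``carried out inside $M$,'' but an $(M,\forceP_{\alpha_{n+1}})$-semigeneric condition is essentially never an element of $M$ (it decides all names in $M$ for countable ordinals into $M$, which no single condition of $M$ can do unless the forcing is trivial). What can be kept inside $M$ is only a $\forceP_{\alpha_{n+1}}$-name for the tail $p_{n+1}\upharpoonright[\alpha_{n+1},\lambda)$, forced to lie in $M[\dot G_{\alpha_{n+1}}]$; the initial segments $p_{n+1}\upharpoonright\alpha_{n+1}$ live outside $M$ and must cohere with the previously chosen $p_n\upharpoonright\alpha_n$. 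This forces the induction to be run on a \emph{stronger} statement than ``$\forceP_\alpha$ is $\Stat$-semiproper'': roughly, given $\alpha<\gamma$, a condition $q_0\in\forceP_\alpha$ that is already $(M,\forceP_\alpha)$-semigeneric, and a $\forceP_\alpha$-name $\dot p\in M$ for a condition of the tail lying in $M[\dot G_\alpha]$, one must produce $q\le q_0$ in $\forceP_\gamma$ with $q\upharpoonright\alpha=q_0$, $q$ semigeneric, and $q$ forcing $\dot p$ into the generic. Your single-parameter induction hypothesis does not let you glue the steps together, and this two-parameter strengthening is precisely where the difficulty of the semiproper (as opposed to proper) iteration theorem sits: after forcing with $\forceP_{\alpha_n}$ below a merely semigeneric condition, $M[G_{\alpha_n}]\cap V$ need not equal $M$, so the tail names available to the next step are not under control without the strengthened hypothesis.

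Second, your final appeal to ``the standard fusion argument of Shelah's original RCS iteration theorem'' to produce the witness $r$ for the RCS clause is circular: the statement you are proving \emph{is} that theorem, relativized to $\Stat$. (The relativization itself is indeed harmless, as you say, via the $S[G]$ preservation lemma.) The verification that $q$ is an RCS condition is actually the easier part — $q$ is trivial above $\sup(M\cap\lambda)$, which already has cofinality $\omega$ in $V$ — but the construction of the coherent sequence $\langle p_n\rangle$ it is built from is the theorem's real content, and that is exactly the part your outline delegates back to the theorem being proved.
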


\section{The canonical inner Model with one Woodin Cardinal}

Constructing universes of set theory with definable wellorders on the reals 
becomes a reasonable task only when the underlying ground model already
satisfies a certain amount of definability which can be exploited.
In the presence of sufficiently 'small' large cardinals 
G\"odel's constructible universe $L$ is the ultimate candidate
for such a ground model, handing to the mathematician 
a lot of well studied tools to examine its structure thoroughly.
However as soon as we investigate properties of the set theoretic universe 
which are only achievable using stronger large cardinal hypotheses 
$L$ will not suffice anymore as these hypotheses are inconsistent with
$V=L$. The statement $\NS$ is $\aleph_2$-saturated is exactly one of such
properties which need a quite large cardinal, namely a Woodin cardinal,
whose existence contradicts $V=L$. We are thus compelled to
use a model which is not $L$, yet having some of its nice
properties, plus containing a Woodin cardinal. 
Of course Inner Model Theory equips the set theorist with such a model, named $M_1$ which 
shall be introduced here briefly. 

As Inner Model Theory relies heavily on a lot of 
nontrivial notions and definitions, whose introduction would
soak up a lot of space we will skip them and assume the reader is already familiar with
concepts such as 
\begin{itemize}
 \item the notion of an extender, its length and support, 
 ultrapower constructions and their relation to elementary embeddings of the universe,
 \item Jensen's fine structural hierarchy,
 \item the notion of acceptability,
 \item fine extender sequences,
 \item the defintion of active and passive premice,
 \item the notion of projecta, universality, solidity and soundness,
 \item iteration games played on sufficiently iterable premice, thus generating an iteration tree
 and the notion of branches in the iteration tree which drop in model or degree,
 \item the comparison process of sufficiently iterable premice,
 \item the definition of a Woodin cardinal,
 \item plus other notions we probably have forgot to mention.
\end{itemize}
One can find these concepts introduced e.g. in \cite{FSIT},
\cite{Steel1}, \cite{LS}.
It should be noted however that a reader who does not know any of these can skip to the end of the 
section while  
still being able to fully understand the thesis, keeping in mind a couple of 
properties of $M_1$ which are mentioned as a list of properties stated as Fact \ref{axioms}, and which can be seen 
as additional axioms we will use during our proofs.

Now to the definition of $M_1$: recall first that
a premouse $\mouseM$ is called tame whenever
its extenders do not overlap a local Woodin cardinal, 
i.e. whenever $E$ is an extender on the $\mouseM$ sequence 
and $\lambda= lh(E)$ then $$\mathcal{J}^{\mouseM}_{\lambda} \models \forall \xi > crit(E) (\xi \text{ is not Woodin}).$$
We define recursively a sequence of premice 
$\mouseN_{\xi}$ starting with $$\mouseN_0 := (V_{\omega}, \in, \emptyset, \emptyset).$$ 
Suppose now that the premouse $\mouseN_{\xi}$ has already been defined. Then we consider 
the $\omega$th core of $\mouseN_{\xi}$ and stop the construction if it does not exist.
If it exist we set $\mouseM_{\xi} := \mathcal{C}_{\omega} (\mouseN_{\xi})$ and split into cases:
\begin{enumerate}
 
\item if $\mouseM_{\xi} =(J^{\vec{E}}_{\gamma}, \in, \vec{E}, \emptyset)$ is passive and 
there is an extender $F^{\ast}$ over $V$, further an extender $F$ over $\mouseM_{\xi}$ and 
an ordinal $\nu < \gamma$ such that
$$V_{\nu + \omega} \subset Ult(V,F^{\ast} \text{ and } 
F\upharpoonright \nu = F^{\ast} \cap ([\nu]^{<\omega} \times J^{\vec{E}}_{\gamma}).$$
Assume further that the structure $(J^{\vec{E}}_{\gamma}, \in , \vec{E}, F)$ is a 
tame premouse then pick the least such $\nu$
and set $$\mouseN_{\xi+1} := (J^{\vec{E}}_{\gamma}, \in , \vec{E}, F)$$

\item if $\mouseM_{\xi}=(J^{\vec{E}}_{\gamma}, \in, \vec{E}, H)$ is not passive, (i.e. $H \ne \emptyset$) 
or there does not exist a background extender $F^{\ast}$, just continue in the $J$-hierarchy. 
Set $$\mouseN_{\xi+1} = (J^{\vec{E}^\smallfrown H}_{\gamma+1}, \in, \vec{E}^\smallfrown H, \emptyset).$$

\end{enumerate}
In the limit steps $\lambda$, we let $\omega \eta =lim$ $inf_{\xi < \lambda} (\rho_{\omega}^{+})^{\mouseM_{\xi}}$
and let $\mouseN_{\lambda}$ be the unique passive premouse $\mouseP$ of height $\omega \eta$ such that
for every $\beta < \eta$, $\mathcal{J}^{\mouseP}_{\beta}$ is the eventual value of 
$\mathcal{J}^{\mouseM_{\xi}}_{\beta}$ as $\xi \rightarrow \lambda$.

We can use the just described sequence of premice $$\mathbb{C}^t:=(\mouseN_{\xi} \, : \, \mouseN_{\xi} \text{ exists})$$
to define $M_1$, assuming the existence of a Woodin cardinal.
As a reminder let us introduce the notion of 1-smallness:
\begin{Definition}
Let $\mouseM$ be a premouse, then we say that $\mouseM$ is 1-small 
if the following holds: whenever $\lambda$ is the critical point 
of an extender on the $\mouseM$-sequence
then $$\mathcal{J}^{\mouseM}_{\lambda} \vDash \text{ there is no Woodin cardinal. }$$
\end{Definition}

Suppose first that for every $\xi$, $\mouseN_{\xi}$ is 1-small, then it is a result 
of J.Steel \cite{SteelManyWoodins} that for every $\xi < \Ord$, $\mouseN_{\xi}$ is defined and so is $\mouseN_{\infty}$. 
In this case we let $M_1$ be $\mouseN_{\infty}$ which is a class sized model which can be shown 
to contain exactly one Woodin cardinal.

Otherwise if there is a $\xi < \Ord$ such that $\mouseN_{\xi}$ is not 1-small then fix the 
least such $\xi$. $\mouseN_{\xi}$ is an active premouse. Let $M_1^{\#}$ be $\mathcal{C}_{\omega}(\mouseN_{\xi})$ 
and let $\mouseP$ be the result of iterating the last extender of $M_1^{\#}$ out of the universe. 
Set $M_1 := \mathcal{J}^{\mouseP}_{\infty}$, then again $M_1$ is a class sized model with one Woodin cardinal.
To summarize:
\begin{thm}
If there is a Woodin cardinal then in both cases $M_1$ is a class sized model with exactly one Woodin cardinal and
all its initial segments $\mathcal{J}^{M_1}_{\beta}$ are $\omega$-sound.
\end{thm}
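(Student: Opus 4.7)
The plan is to verify that the recursive construction $\mathbb{C}^t$ of the premice $\mouseN_\xi$ is well-defined through all ordinals and that the object we end up calling $M_1$ has the claimed features. First I would show by induction on $\xi$ that the $\omega$-th core $\mathcal{C}_\omega(\mouseN_\xi)$ always exists whenever $\mouseN_\xi$ is defined. This is the standard solidity-universality argument: assuming all proper initial segments of $\mouseN_\xi$ are $\omega$-sound (which holds inductively since clauses (1) and (2) either add a certified top extender to a core or merely extend the $J$-hierarchy on top of a passive structure), one uses the coherent-extender-sequence formalism together with the Dodd-Jensen lemma to show that standard parameters are solid and that $\mouseN_\xi$ is universal for its proper initial segments. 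The $\omega$-soundness of every $\mathcal{J}^{M_1}_\beta$ then follows because $M_1$ is a direct limit of $\omega$-sound premice along the hierarchy.

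Next I would split into the two cases of the definition. In the $1$-small case I would appeal directly to Steel's theorem in \cite{SteelManyWoodins}: under the hypothesis that a Woodin cardinal exists in $V$, the background extenders $F^*$ needed in clause (1) are available cofinally often, so the construction does not terminate for lack of certification, and the limit $\mouseN_\infty$ is a proper class premouse. To see that this class-sized model actually contains a Woodin cardinal, I would argue by contradiction: if $\mouseN_\infty$ satisfied ``there is no Woodin cardinal'' internally, one could compare $\mouseN_\infty$ against $K^c$-style initial segments constructed inside $V_\delta$ for $\delta$ a true Woodin cardinal of $V$, and use the reflection properties of $\delta$ to certify an extender on the $\mouseN_\infty$ sequence witnessing Woodinness. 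Uniqueness of the Woodin cardinal in $\mouseN_\infty$ comes from $1$-smallness: every extender on the sequence has its critical point above no local Woodin.

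In the non-$1$-small case, fix the least $\xi$ with $\mouseN_\xi$ not $1$-small; then $M_1^\# := \mathcal{C}_\omega(\mouseN_\xi)$ is an active, sound, $\omega_1$-iterable premouse carrying a top extender whose critical point sits above a Woodin cardinal in the sense of $M_1^\#$. Iterating this top extender out of the universe produces a proper-class inner model $\mouseP$ with a largest cardinal, and setting $M_1 := \mathcal{J}^{\mouseP}_\infty$ yields the desired class model. Exactly-one-Woodin follows from minimality of $\xi$ (no $\mouseN_\eta$ with $\eta<\xi$ was non-$1$-small, so nothing below the Woodin witnesses a second one) together with tameness (no overlapping extender can create an additional Woodin above).

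The main obstacle, I expect, is the first case: the verification that the $1$-small construction, while running through all of $\Ord$, actually produces a Woodin cardinal at all. This is precisely the hard content of Steel's theorem, resting on the interplay between background certificates coming from a genuine Woodin of $V$ and the internal reflection that forces a Woodin to appear on the $\mouseN_\xi$-sequence. The other pieces, proving $\omega$-soundness throughout and handing-off to the iterated-extender picture in the non-$1$-small case, are more routine and follow the template laid out in \cite{FSIT} and \cite{Steel1} once the existence of the sequence is secured.
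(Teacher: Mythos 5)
The paper offers no proof of this theorem at all: it is stated as a summary of the preceding construction, with the substance (convergence of the $1$-small construction and the appearance of a Woodin cardinal in $\mouseN_{\infty}$) deferred entirely to Steel's work in \cite{SteelManyWoodins} and \cite{FSIT}. Your outline locates the hard content in exactly the same place and sketches the standard surrounding fine-structural arguments correctly, so it is consistent with (and more detailed than) the paper's treatment; only minor imprecisions remain, e.g.\ a proper class iterate has no largest cardinal, and uniqueness of the Woodin rests on $1$-smallness of proper initial segments rather than on tameness.
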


We will always assume that $M_1^{\#}$ exists, i.e. $M_1^{\#}$ is the least mouse 
which is not 1-small and $M_1$ is the result of iterating away the last extender of $M_1^{\#}$. 

It is a wellknown fact that reals which are elements in a sufficiently
iterable premouse $\mouseM$ admit an easy (i.e. $\Delta^2_2$) definition
with a countable ordinal as parameter, using comparability of
mice. The formula for 
$r \in \mouseM$ just reads like this: $r$ is the $\alpha$-th
real in some sufficiently, i.e. $(\omega, \omega_1 +1)$-iterable premouse.
Thus the definability of reals in mice is tied to
the expressibility of 'sufficient' iterability conditions in the sense of iteration games.
For the model $M_1$, as was observed by
J. Steel in \cite{Steel2} an even easier definition is possible, due to a weakening
of the usual iteration game which is still enough to guarantee 
a certain amount of comparability. We say that 
a premouse $\mouseM$ is $\Pi^1_2$-iterable if player II has a 
winning strategy for $\mathcal{I}(\mouseM)$, where the latter denotes the 
new iteration game played on the premouse $\mouseM$.
Roughly speaking J. Steel showed that
\begin{itemize}
 \item if $\mouseM$ is a premouse which is embeddable into a model of the $\mathbb{C}^t$-sequence 
 then player II has a winning strategy for the $\mathcal{I}(\mouseM)$,
 \item if on the other hand II has a winning strategy for $\mathcal{I}(\mouseM)$
 played on the premouse $\mouseM$ then $\mouseM$ can be compared
 with any premouse $\mathcal{N}$ which embedds into an element of the $\mathbb{C}^t$-sequence,
 \item and finally the set of premice
 $\{ \mouseM \, : \,  \text{II has a winning strategy in } \mathcal{I}(\mouseM) \}$
 is $\Pi_1$ definable.
 \end{itemize}

We can use this to observe that a low complexity definition of countable (countable in $M_1$ that is)
initial segments of $M_1$ is possible in generic extensions $M_1[G]$ which preserve $\omega_1$. We consider
the set of countable premice which are $\Pi^1_{2}$-iterable, $\omega$-sound, 1-small, and which project to $\omega$.
$$B:= \{ \mouseM \text{ ctbl premouse} \, : \,\mouseM \text{ is } 
\Pi^{1}_2\text{-iterable}, \, \omega\text{-sound, } \text{1-small } \rho_{\omega}(\mouseM)= \omega\}.$$
If we are in a forcing extension $M_1[G]$
which preserves $\omega_1$ then using Shoenfield absoluteness, the set of countable 
premice which are initial segments of $M_1$ and which project to $\omega$ form a set 
of premice which are still $\Pi^1_{2}$-iterable in $M_1[G]$, $\omega$-sound and 1-small. 
If we consider in $M_1[G]$ an arbitrary element $\mouseM$ of $B$
and let $\mouseN = \mathcal{J}^{M_1}_{\eta}$, $\eta < \omega_1$ be an initial segment 
of $M_1$ which projects to $\omega$ then we can compare these two as $\mouseM$
is $\Pi_2^{1}$-iterable which suffices for comparison as mentioned above.
As both $\mouseM $ and $\mouseN$ are $\omega$-sound and $\omega$-projecting they actually 
do not move during the iteration and therefore we get that $\mouseM \trianglelefteq \mouseN$ 
or $\mouseN \trianglelefteq \mouseM$ must hold.
If we let the height of $\mouseN= \mathcal{J}^{M_1}_{\eta}$ vary we see that there is certainly 
an $\eta < \omega_1$ such that
$\mouseM \trianglelefteq \mouseN=\mathcal{J}^{M_1}_{\eta}$. Thus the set $B$ defines in $M_1[G]$ 
a set of initial segments of $M_1$, which is cofinal in that for every countable $\mouseM \trianglelefteq M_1$ 
there is an $\mouseN \in B$ such that $\mouseM \trianglelefteq \mouseN$. As $\Pi^{1}_2$-iterability 
is a $\Pi^{1}_2$-notion, $B$ is itself $\Pi_2^{1}$-definable. Thus we have shown: 
\begin{Lemma}
Let $M_1[G]$ be an $\omega_1$-preserving forcing extension of $M_1$. Then in $M_1[G]$ there is
$\Pi^{1}_2$-definable set $B$ of premice  which are of the form $\mathcal{J}^{M_1}_{\eta}$ for 
some $\eta< \omega_1$. $B$ is defined as
$$B:= \{ \mouseM \text{ ctbl premouse} \, : \,\mouseM \text{ is } \Pi^{1}_2\text{-iterable}, \, 
\omega\text{-sound} \text{ and projects to } \omega \},$$
and the set
$$\{ \eta < \omega_1 \, : \, \exists \mouseN \in B (\mouseN = \mathcal{J}^{M_1}_{\eta})\}$$ is cofinal in $\omega_1$.
\end{Lemma}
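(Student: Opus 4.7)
The plan is to verify the three clauses of the Lemma separately: that every element of $B$ is a countable initial segment of $M_1$, that cofinally many such segments lie in $B$, and that $B$ has the stated $\Pi^1_2$ complexity. Most of the argument amounts to combining Steel's comparison machinery for $\Pi^1_2$-iterable mice with Shoenfield absoluteness, so very little new work is needed beyond organizing the facts listed above.

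First I would argue that every $\mouseM \in B$ in $M_1[G]$ has the form $\mathcal{J}^{M_1}_{\eta}$ for some $\eta < \omega_1$. Fix such an $\mouseM$ and choose a countable $\eta < \omega_1$ above the ordinal height of $\mouseM$ with $\rho_\omega(\mathcal{J}^{M_1}_{\eta}) = \omega$; such $\eta$ exist cofinally in $\omega_1$ because $M_1$ projects to $\omega$ unboundedly below $\omega_1^{M_1}$. By the first bullet attributed to Steel, $\mathcal{J}^{M_1}_{\eta}$ embeds into a model of $\mathbb{C}^t$ and hence II wins $\mathcal{I}(\mathcal{J}^{M_1}_{\eta})$. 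Since $\mouseM$ is $\Pi^1_2$-iterable by hypothesis, the second bullet licenses a successful coiteration of $\mouseM$ and $\mathcal{J}^{M_1}_{\eta}$. Both are $\omega$-sound with projectum $\omega$, so neither side moves in the coiteration, yielding $\mouseM \trianglelefteq \mathcal{J}^{M_1}_{\eta}$ or conversely; the height choice rules out the second alternative, so $\mouseM = \mathcal{J}^{M_1}_{\xi}$ for some $\xi \le \eta$.

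For the cofinality claim, given $\alpha < \omega_1$ pick $\eta \in (\alpha, \omega_1)$ with $\rho_\omega(\mathcal{J}^{M_1}_{\eta}) = \omega$. In $M_1$ this premouse is $\omega$-sound, 1-small, and occurs in the $\mathbb{C}^t$-sequence, hence $\Pi^1_2$-iterable there. Since $\Pi^1_2$-iterability of a countable premouse is a $\Pi^1_2$ statement about the real coding it, Shoenfield absoluteness transports it from $M_1$ to $M_1[G]$, so $\mathcal{J}^{M_1}_{\eta} \in B$ in $M_1[G]$. For the complexity claim, being a countable premouse, being $\omega$-sound, and having $\rho_\omega = \omega$ are arithmetical in any coding real, and $\Pi^1_2$-iterability is $\Pi^1_2$ by the third bullet; the conjunction is then $\Pi^1_2$, as required.

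The main obstacle, such as it is, lies in checking that Steel's weakened iteration game really furnishes both the iterability needed for $\mathcal{J}^{M_1}_{\eta}$ to participate in the coiteration inside $M_1[G]$ and the movement control required for $\omega$-sound, $\omega$-projecting mice to remain fixed. Both are by now standard consequences of the three cited bullets, but the interface with Shoenfield absoluteness (used to port $\Pi^1_2$-iterability from $M_1$ into $M_1[G]$) deserves explicit mention, since it is the only ingredient that genuinely exploits the hypothesis that $G$ preserves $\omega_1$.
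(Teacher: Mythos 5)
Your proof is correct and follows essentially the same route as the paper: compare an arbitrary $\Pi^1_2$-iterable, $\omega$-sound, $\omega$-projecting premouse against a sufficiently tall $\omega$-projecting initial segment of $M_1$ (neither side moves, so it condenses to an initial segment), use Shoenfield absoluteness to carry $\Pi^1_2$-iterability of the $\mathcal{J}^{M_1}_{\eta}$ into $M_1[G]$ for cofinality, and read off the complexity from the fact that $\Pi^1_2$-iterability is a $\Pi^1_2$ notion. No substantive differences from the paper's argument.
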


Next we want turn to the concept of generic absoluteness. The motivation
for this will become clear later once we turn to the proofs of the thesis.
For now we just state that we would want our ground model $V$ 
to be definable in generic extensions $V[G]$, for 
$G$ a generic filter for a poset $\forceP$, i.e.
we need a first order formula $\Phi(x)$ such that 
the class $\{ x \,:\, V[G] \models \Phi(x) \}$ outputs $V$ again.
To achieve this we proceed indirectly using 
Steel's core model $K$. As its definition is 
very involved we will skip it (the interested reader can find it in \cite{Schimmerling}) and just state
that it is so defined that the
class $K^c$ is an iterate of $K$, or dually put 
$K$ is a Skolem hull of $K^c$. The proof
of its existence initially used a measurable
cardinal as well, but eventually
R. Jensen and J. Steel found a way around that 
additional hypotheses.
\begin{thm}
Assume that there is no transitive class model satisfying $\ZFC$ and ``there is a Woodin cardinal''. Then
there is a $\Sigma_2$-formula $\Phi(x)$ such 
that $$K:=\{ x \, : \, \Phi(x) \text{ holds} \}$$ is a 
class model of $\ZFC$, which is iterable. If $\forceP$ 
is a notion of forcing of set size and $G$ is a $V$-generic 
filter then $K^{V[G]} = K^{V}$, thus $K$ is absolute for set sized forcing extensions.
\end{thm}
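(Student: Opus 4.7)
The plan is to imitate (in rough outline) the Jensen--Steel construction of the core model below a Woodin cardinal. First I would build the background-certified construction $K^c$. Fix some regular uncountable $\Omega$ (in the class-sized version one formally runs the construction through all ordinals). Define recursively premice $\mouseN_\xi$ exactly as in the $\mathbb{C}^t$-sequence from Section 1.2, but now with a strengthened background condition: when extending passively to an active premouse, require that the extender $F$ on $\mouseM_\xi$ be certified by a background extender $F^\ast$ on $V$ that is countably complete, i.e. every countable sub-system of $F$ can be realized by an embedding into the target of $F^\ast$. Under the hypothesis that there is no inner model with a Woodin cardinal, this certification is strong enough to prove, by the main theorem of Steel's \emph{Core Model Iterability Problem}, that each $\mouseN_\xi$ is $(\omega,\Omega+1)$-iterable above its critical points; without a measurable one instead uses the Jensen--Steel refinement which replaces countable completeness with a suitable stability/thickness condition on the set of background certificates.

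Once $K^c$ is obtained as $\bigcup_\xi \mouseM_\xi$, the second step is to pass from $K^c$ to the true core model $K$. Here one defines $K$ as the transitive collapse of the hull of $K^c$ generated by so-called ``very sound'' or ``thick'' hulls: ordinals $\alpha$ that are stable in the sense that any two countable elementary substructures of $K^c|\alpha^+$ containing the same ordinals give rise, via comparison, to the same initial segments. The definition can be written as a $\Sigma_2$ formula: it quantifies existentially over set-sized pieces $\mouseM$ that look like initial segments of $K^c$ (a $\Sigma_1$-property using background certificates) and universally over countable hulls witnessing soundness (a $\Pi_1$ property using $\Pi^1_2$-style iterability as in the preceding lemma). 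The iterability of $K$ is inherited from that of $K^c$ via standard hull/coiteration arguments, and $K \models \ZFC$ is proved by the usual fine-structural arguments together with weak covering, which in turn is where the anti-Woodin hypothesis is used crucially.

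For generic absoluteness under set forcing $\forceP \in V$ with $G$ generic, I would argue as follows. Let $\lambda$ be regular with $|\forceP| < \lambda$. First, the $K^c$-construction carried out in $V[G]$ up to $\lambda$ produces a premouse $(K^c)^{V[G]}|\lambda$ whose countably complete background certificates in $V[G]$ can be pulled back to certificates in $V$, because forcing of size below $\lambda$ does not destroy countable completeness of extenders. Conversely, every $V$-certified extender remains certified in $V[G]$. By coiterating $K^V$ and $K^{V[G]}$ inside $V[G]$ one shows, using the universality of each and the fact that neither side moves on the common portion, that the coiteration is trivial on a club of ordinals; hull-stability then forces the two hulls to coincide. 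Finally, the $\Sigma_2$-definition evaluates to the same class in $V$ and in $V[G]$ because both iterability and the background-certification clauses are absolute between $V$ and its set-generic extensions.

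The hard part, by a wide margin, is the iterability of $K^c$ without assuming a measurable cardinal: this is precisely the Jensen--Steel theorem, and its proof depends on the delicate notion of stable thick classes of ordinals together with a non-trivial argument ruling out illfounded branches. In a proof draft I would treat this step as a black box, cite \cite{Schimmerling} (and the original Jensen--Steel manuscript) for the detailed construction, and focus the presentation on the two easier steps: the $\Sigma_2$-definability of $K$ from $K^c$, and the coiteration argument for generic absoluteness.
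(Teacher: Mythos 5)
The paper does not prove this theorem at all: it is stated as a black box, being the Jensen--Steel ``$K$ without the measurable'' theorem, with the reader referred to \cite{Schimmerling} and the surrounding literature. So there is no in-paper argument to compare against; the only question is whether your sketch is a faithful outline of the known proof, and whether it constitutes a proof in its own right. It is a reasonable outline, but it is not a proof: every load-bearing step --- the iterability of $K^c$ from the anti-Woodin hypothesis alone, weak covering, the precise formulation of thickness/stability needed to extract $K$ from $K^c$, and the coiteration argument for generic absoluteness --- is itself a substantial theorem that you defer to citations. You acknowledge this for the iterability step, but the same is true of the others, so the net mathematical content of your text is essentially the same as the paper's single citation.

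Two points of imprecision worth flagging. First, your description of the Jensen--Steel innovation as ``replacing countable completeness with a stability/thickness condition on the set of background certificates'' misplaces where the measurable is actually eliminated: the measurable $\Omega$ in Steel's original treatment is used to define $A_0$-thick classes and to make $\mathrm{Def}(W)$ well-behaved in the passage from $K^c$ to $K$, and to prove iterability of $K^c$; Jensen and Steel remove it by a new local iterability argument and a reorganization of the definition of $K$ via stacks of countably iterable, sound, ``strong'' mice, not merely by weakening the background condition. Second, in the generic absoluteness argument the direction that needs care is that $V[G]$ contains \emph{new} countable substructures of old extenders, so ``certificates pull back'' is not automatic; the standard proof instead works through the generic absoluteness of the local definition of $K$ (countable iterability is suitably absolute by an argument in the style of the $\Pi^1_2$-iterability discussion earlier in this chapter) together with universality of $K^V$ in $V[G]$. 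If you intend this as more than a citation, those are the two places where the sketch as written would not survive being expanded.
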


Note that as soon as we consider $M_1$, cut it at the Woodin cardinal
$\delta$ we are in the situation of the anti large cardinal assumption
and thus can build $K$ in $\mathcal{J}^{M_1}_{\delta}$.
The model we end up with is again $\mathcal{J}^{M_1}_{\delta}$ which
was proved by J.Steel (see \cite{Schindler2} for a proof of this).
Thus we do have a certain amount of generic absoluteness in 
$M_1$ namely there is a formula $\Phi(x)$ which
defines $K$ and thus $\mathcal{J}^{M_1}_{\delta}$
for notions of forcing which have size less than $M_1$'s Woodin
cardinal $\delta$.

We turn now to condensation of sufficiently iterable premice. 
The central result is the following which was proved by J. Steel 
and I. Neeman building on the works of Jensen, S.Friedman, Dodd, 
Mitchell and Schimmerling: 
\begin{thm}
Let $\mouseM$ be an $\omega$-sound, $(\omega, \omega_1, \omega_1 +1)$-iterable premouse.
Let $\pi :\mathcal{H} \rightarrow \mouseM$ be a fully elementary map such that its critical point $cp(\pi)=\rho_{\omega}^{\mathcal{H}}$.
Then either
\begin{enumerate}
\item $\mathcal{H}$ is a proper initial segment of 
$\mouseM$ \item there is an extender $E$ on the $\mouseM$-sequence 
such that $lh(E) = \rho_{\omega}^{\mathcal{H}}$ and $\mathcal{H}$ is a proper initial segment of $Ult_{0}(\mouseM, E)$
\end{enumerate}

\end{thm}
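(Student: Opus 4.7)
The plan is to prove this dichotomy by a coiteration plus copying argument. First I would coiterate $\mathcal{H}$ with $\mouseM$ inside $V$: iterability of $\mouseM$ is given, and iterability of $\mathcal{H}$ is obtained by pulling back $\mouseM$'s strategy along $\pi$, so the coiteration terminates and produces iteration trees $\treeT$ on $\mathcal{H}$ and $\mathcal{U}$ on $\mouseM$ whose last models $\mathcal{P}$ and $\mathcal{Q}$ are comparable, say $\mathcal{P}\trianglelefteq \mathcal{Q}$ (the other direction being symmetric).

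Next I would use $\omega$-soundness of $\mathcal{H}$, together with $\omega$-soundness of the relevant initial segments of $\mouseM$, to rule out drops in model or degree along either main branch. The key elementary observation is that any extender $E$ used on the $\treeT$-side must have critical point $\geq \rho_{\omega}^{\mathcal{H}}$: otherwise the rule of picking the least disagreement would force an extender on the $\mouseM$-side with critical point below $\rho_{\omega}^{\mathcal{H}} = cp(\pi)$, but $\mathcal{H}$ and $\mouseM$ agree below $cp(\pi)$, so no such disagreement can arise.

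The core step is to show that $\treeT$ is trivial. Suppose for contradiction that it uses some extender; copy $\treeT$ via $\pi$ to a tree $\pi\treeT$ on $\mouseM$ equipped with a final copy map $\pi^{\ast}\colon \mathcal{P}\to \mathcal{P}^{\ast}$. Now $\pi\treeT$ and $\mathcal{U}$ are two iterations of the iterable premouse $\mouseM$, so by uniqueness of the strategy they cohere on a common branch; combined with $\mathcal{P}\trianglelefteq \mathcal{Q}$, this forces either the $\treeT$-side to have been trivial after all, or the only disagreement accounted for by $\mathcal{U}$ to be resolved by a single extender $E$ from the $\mouseM$-sequence with $lh(E)=\rho_{\omega}^{\mathcal{H}}$. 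The main obstacle lies precisely in this bookkeeping with projecta and the standard parameter: one must verify that any genuinely nontrivial configuration on the $\mathcal{H}$-side would produce a proper initial segment of $\mouseM$ violating $\omega$-soundness, while being careful not to exclude the legitimate configuration that corresponds to case (2) of the conclusion.

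Once $\treeT$ has been shown to be trivial, a short case split on $\mathcal{U}$ finishes the argument. If $\mathcal{U}$ is also trivial then $\mathcal{H}\trianglelefteq \mouseM$, which is case (1). Otherwise the unique extender $E$ used by $\mathcal{U}$ has length $\rho_{\omega}^{\mathcal{H}}$ and critical point below $\rho_{\omega}^{\mathcal{H}}$, and a standard agreement computation identifies $\mathcal{H}$ with a proper initial segment of $Ult_{0}(\mouseM,E)$, placing us in case (2).
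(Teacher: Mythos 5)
First, a point of reference: the paper does not prove this theorem at all. It is quoted as a known result of Steel and Neeman (building on Jensen, Dodd, Mitchell, Schimmerling and others), and the paper only proves a corollary of it (Lemma \ref{condensation}). So your proposal cannot be compared with an in-paper argument; it has to stand on its own as a sketch of the standard condensation proof. Measured against that, the general direction (comparison, copying along $\pi$, soundness bookkeeping) is right, but two of your steps are genuinely wrong or missing.

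The first problem is your step asserting that every extender used on the $\treeT$-side has critical point $\geq \rho_{\omega}^{\mathcal{H}}$. The agreement of $\mathcal{H}$ and $\mouseM$ below $cp(\pi)$ controls the \emph{index} of the least disagreement, not the critical points of the extenders chosen there: an extender indexed at or above $\rho_{\omega}^{\mathcal{H}}$ can perfectly well have critical point below it, and indeed alternative (2) of the conclusion is produced by exactly such an extender ($lh(E)=\rho_{\omega}^{\mathcal{H}}$ forces $crit(E)<\rho_{\omega}^{\mathcal{H}}$). So this step both fails as stated and, if it were true, would erase case (2). The second and more serious problem is the ``core step.'' The appeal to ``uniqueness of the strategy'' so that $\pi\treeT$ and $\mathcal{U}$ ``cohere on a common branch'' is not an argument: $(\omega,\omega_1,\omega_1+1)$-iterability gives the existence of some strategy, not uniqueness, and two different normal trees on $\mouseM$ need not be related at all. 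What actually closes this gap in the standard proof is either the (weak) Dodd--Jensen property of a fixed strategy, or --- more commonly --- replacing your plain coiteration of $\mathcal{H}$ against $\mouseM$ by a comparison of the \emph{phalanx} $((\mouseM,\mathcal{H}),\rho_{\omega}^{\mathcal{H}})$ against $\mouseM$, whose iterability is what one really pulls back along $\pi$ together with the identity on the $\mouseM$-coordinate. The phalanx is needed precisely because a plain coiteration gives you no reason why the last model on the $\mathcal{H}$-side should sit directly above $\mathcal{H}$ itself; without that, the soundness/projectum computation you defer to ``bookkeeping'' cannot even get started. As written, the decisive portion of the proof is a placeholder resting on a false uniqueness claim, so the proposal has a genuine gap.
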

A useful observation is the following:
\begin{Lemma}\label{condensation}
Let $\mouseM$ be as in the theorem above. Consider the set $S$ of countable elementary submodels
of $\mouseM$ whose transitive collapses are proper initial segments of $\mouseM$.
Then $S$ is stationary in $[\mouseM]^{\omega}$.
\end{Lemma}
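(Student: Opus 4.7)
The plan is to apply the condensation theorem just stated to suitably chosen countable hulls of $\mouseM$. Fix an arbitrary club $C \subseteq [\mouseM]^{\omega}$; the task is to produce $X \in C$ whose Mostowski collapse is a proper initial segment of $\mouseM$.

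By $\omega$-soundness of $\mouseM$, fix its standard parameter $p$ and write $\rho := \rho_\omega^{\mouseM}$, so that $\mouseM = \mathrm{Hull}^{\mouseM}_\omega(\rho \cup \{p\})$. Pick a sufficiently large regular $\theta$ and a countable $Y \prec H_\theta$ with $\mouseM, p, C \in Y$; the usual closure arguments yield a stationary set of such $Y$ for which, writing $\alpha := Y \cap \omega_1$, the trace $X_\alpha := Y \cap \mouseM$ equals $\mathrm{Hull}^{\mouseM}_\omega(\alpha \cup \{p\})$, belongs to $C$, and satisfies $\alpha \notin X_\alpha$ (the last condition holds because $X_\alpha \subseteq Y$ while $\alpha \notin Y$).

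Let $\pi : \mathcal{H} \to X_\alpha \subseteq \mouseM$ be the inverse of the Mostowski collapse, which is a fully elementary embedding of premice. Since $\alpha \subseteq X_\alpha$ while $\alpha \notin X_\alpha$, we have $\mathrm{cp}(\pi) = \alpha$. Elementarity yields both $\pi(\rho_\omega^{\mathcal{H}}) = \rho$ and $\mathcal{H} = \mathrm{Hull}^{\mathcal{H}}_\omega(\alpha \cup \{\pi^{-1}(p)\})$; the first forces $\rho_\omega^{\mathcal{H}} \ge \alpha$ (otherwise $\pi$ would be the identity at $\rho_\omega^{\mathcal{H}}$, giving $\rho_\omega^{\mathcal{H}} = \rho > \alpha$, contradicting $\rho_\omega^{\mathcal{H}} < \alpha$), while the second gives $\rho_\omega^{\mathcal{H}} \le \alpha$. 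Hence $\mathrm{cp}(\pi) = \rho_\omega^{\mathcal{H}} = \alpha$, and the hypotheses of the preceding condensation theorem are in place.

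Condensation then delivers one of two alternatives: either (i) $\mathcal{H}$ is a proper initial segment of $\mouseM$, which is exactly what we want, or (ii) there is an extender $E$ on the $\mouseM$-sequence with $\mathrm{lh}(E) = \alpha$ and $\mathcal{H}$ a proper initial segment of $\Ult_0(\mouseM, E)$. The main obstacle is to exclude case (ii) for at least one admissible $\alpha$. Because extenders on a fine extender sequence have pairwise distinct lengths that are in particular cardinals of $\mouseM$ subject to further structural constraints imposed by the indexing, the set $T := \{\mathrm{lh}(E) : E \text{ on the } \mouseM\text{-sequence}\} \cap \omega_1$ is thin enough to be nonstationary in $\omega_1$; intersecting the stationary set of admissible $\alpha$ with $\omega_1 \setminus T$ produces an $\alpha$ for which case (i) holds, and thus $X_\alpha \in C \cap S$ as required.
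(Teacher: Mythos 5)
Your strategy -- apply the quoted condensation theorem directly to the trace $Y\cap\mouseM$ of a countable $Y\prec H_\theta$ -- is genuinely different from the paper's, but it has a gap at its central step. The identity $Y\cap\mouseM=\mathrm{Hull}^{\mouseM}_\omega(\alpha\cup\{p\})$, which is what gives you $\mathrm{cp}(\pi)=\rho_\omega^{\mathcal{H}}=\alpha$, holds only when $\rho_\omega^{\mouseM}=\omega_1$. For an uncountable $\mouseM$ one always has $\rho_\omega^{\mouseM}\ge\omega_1$, and in the typical case $\rho:=\rho_\omega^{\mouseM}>\omega_1$ (most initial segments $\mathcal{J}^{M_1}_{\eta}$ to which the lemma is applied do not project to $\omega_1$). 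Then every element of $Y\cap\mouseM$ is definable from parameters in $(Y\cap\rho)\cup\{p\}$, not from $\alpha\cup\{p\}$; since $\rho$ is definable in $\mouseM$ it lies in $Y$, so $\pi^{-1}(\rho)=\rho_\omega^{\mathcal{H}}$ has order type that of $Y\cap\rho$, which is strictly greater than $\alpha=\mathrm{cp}(\pi)$. The hypothesis $\mathrm{cp}(\pi)=\rho_\omega^{\mathcal{H}}$ of the condensation theorem therefore fails and your argument does not get off the ground outside the special case $\rho_\omega^{\mouseM}=\omega_1$. The paper avoids this by not applying the condensation theorem at all: it embeds the putative least counterexample club $C$ into a larger $\omega$-sound iterable $\mouseN\vartriangleright\mouseM$, takes a countable $\Sigma_1$-hull $P$ of $\{\mouseM\}$ in $\mouseN$ whose collapse is $\omega$-sound and projects to $\omega$, and uses a comparison argument (nothing moves on either side) to see that this collapse, and with it the collapse of $P\cap\mouseM$, is an initial segment -- contradicting $P\cap\mouseM\in C$.

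A second, smaller problem is your dismissal of alternative (ii). The assertion that $T=\{\mathrm{lh}(E):E\text{ on the }\mouseM\text{-sequence}\}\cap\omega_1$ is ``thin enough to be nonstationary'' is not justified and is not obviously true: a mouse can carry many extenders indexed below its $\omega_1$, and nothing in the indexing scheme by itself confines those indices to a nonstationary set. Ruling out the ultrapower alternative in arguments of this kind requires an actual structural argument about $\rho_\omega^{\mathcal{H}}$ (for instance, that it is a cardinal of $\mathcal{H}$ and agrees with $\mouseM$ up to that point, together with facts about what extender indices look like locally), or else one sidesteps the dichotomy entirely via comparison, as the paper does. As written, both the main step and the case analysis need repair.
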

\begin{proof}
Assume for a contradiction that $S$ is not stationary then 
there is a club $C$ which is $<$-least in the definable wellorder of the mouse, such that every 
element $H$ of it does not collapse to an initial segment of $\mouseM$. 
Let $\mouseN \vartriangleright \mouseM$ be a bigger
$\omega$-sound, $(\omega, \omega_1, \omega_1 +1)$-iterable premouse which sees that $C$ is the $<_{\mouseN}$-least such club. 
Let $P \prec_1 \mouseN$ be a countable $\Sigma_1$-substructure which
is $\omega$-sound, created as the $\Sigma_1$-hull of $\{ \mouseM \}$ in
$\mouseN$. Then $C \in P$ by elementarity and $P \cap [\mouseM]^{\omega} \in C$ as
$C$ is closed and unbounded. But now the collapse of $P \cap [\mouseM]^{\omega}$
is an initial segment of $\pi(P)$. Yet $\pi(P)$ does collapse nicely as a comparison argument shows.
Once we compare $\pi(P)$ with an $\omega$-sound, $\omega$-projecting initial segment of $\mouseM$
nothing will move during the comparison process and so $\pi(P)$ is an initial segment of $\mouseM$.
Thus $\pi(P) \cap [\mouseM]^{\omega}$ is an initial segment, yet $\pi(P) \cap [\mouseM]^{\omega} \in C$
which is a contradiction.      

\end{proof}

To summarize the above results we formulate an explicit list of 
axioms which $M_1$ satisfies and which a reader who does not know about
inner model theory should keep in mind when reading this thesis.

\begin{Fact}\label{axioms}
 $M_1$ satisfies the following list of axioms
\begin{enumerate}
 \item There is a $\Pi^1_{2}$-definable set of reals $I$
 whose elements are codes for countable initial segments of $M_1$. Moreover these codes
are cofinal meaning that for every countable initial segment $\mouseP$ of $M_1$ there is a code $c$ in $I$ such that
$\mouseN$ is an initial segment of $c$. Further this 
 set still works in all $\omega_1$-preserving forcing extensions
 $M_1[G]$ of $M_1$.
 \item In $M_1$ below its Woodin cardinal Steel's core model $K$ can be constructed
 and coincides with $\mathcal{J}^{M_1}_{\delta}$. The definition
 of $K$ is generically absolute for set forcing extensions of size less than $\delta$.
\item If $\mathcal{J}^{M_1}_{\eta}$ is an initial segment of $M_1$, hence 
      an $\omega$-sound, $(\omega, \omega_1, \omega_1 +1)$-iterable premouse then
      there is a stationary subset $S \subset [\mathcal{J}^{M_1}_{\eta}]^{\omega}$ such that every $H \in S$ condenses
      to an initial segment of $\mathcal{J}^{M_1}_{\eta}$.
\end{enumerate}

\end{Fact}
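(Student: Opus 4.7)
The plan is to observe that Fact \ref{axioms} is simply a consolidated restatement of three results already established in this section, so the proof consists of matching each clause to its source and spelling out that no additional work is required.

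For clause (1), I would take $I$ to be (a real coding of) the set
\[ B = \{ \mouseM \text{ ctbl premouse} \, : \, \mouseM \text{ is } \Pi^1_2\text{-iterable}, \, \omega\text{-sound, 1-small, } \rho_\omega(\mouseM)=\omega \}, \]
as introduced before Lemma on cofinal initial segments. Steel's analysis of $\mathcal{I}(\mouseM)$ gives the $\Pi^1_2$ definition directly, and the comparison argument carried out in that lemma shows that in any $\omega_1$-preserving extension $M_1[G]$ every member of $B$ is actually an initial segment of $M_1$, while the heights realized are cofinal in $\omega_1$. This is exactly what clause (1) asserts; I would just cite the lemma.

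For clause (2), I would appeal to the Jensen–Steel construction of $K$ recalled just above: under the relevant anti-large-cardinal hypothesis there is a $\Sigma_2$-formula $\Phi(x)$ defining a class model $K$ satisfying $K^{V[G]}=K^V$ for all set forcing $G$. Cutting $M_1$ at its Woodin cardinal $\delta$ puts us into precisely that hypothesis inside $\mathcal{J}^{M_1}_\delta$, and Steel's theorem (attributed above to Schindler's write-up) identifies the resulting $K$ with $\mathcal{J}^{M_1}_\delta$; generic absoluteness for forcings of size below $\delta$ is then immediate from the absoluteness of $\Phi$.

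For clause (3), this is exactly Lemma \ref{condensation}: every $\mathcal{J}^{M_1}_\eta$ is an $\omega$-sound, $(\omega,\omega_1,\omega_1+1)$-iterable premouse, so the stationarity of the collection of countable elementary submodels whose transitive collapses are proper initial segments is given by that lemma applied to $\mouseM = \mathcal{J}^{M_1}_\eta$.

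Since all three clauses are direct consequences of named results already in hand, the only thing one might worry about is making sure the hypotheses of the cited results are visibly satisfied (e.g.\ that $\mathcal{J}^{M_1}_\eta$ really is $(\omega,\omega_1,\omega_1+1)$-iterable, and that the anti-large-cardinal hypothesis really does hold inside $\mathcal{J}^{M_1}_\delta$); these checks are built into the construction of $M_1$ from $M_1^\#$ and so require no further argument. There is no genuine obstacle — the fact is essentially a bookkeeping statement collecting the axioms the rest of the thesis will use.
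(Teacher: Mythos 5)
Your proposal is correct and matches the paper's treatment exactly: the paper introduces Fact~\ref{axioms} with the words ``To summarize the above results we formulate an explicit list of axioms,'' and gives no separate proof, precisely because each clause is the preceding unnumbered Lemma on the $\Pi^1_2$-definable set $B$, the Jensen--Steel theorem on $K$ together with the identification $K=\mathcal{J}^{M_1}_\delta$, and Lemma~\ref{condensation}, respectively. Your identification of the sources and the observation that this is a bookkeeping statement is exactly right.
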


\section{How to make $\NS$ $\aleph_2$-saturated}

The investigation of the nonstationary ideal on a regular cardinal 
has a long history in set theory which is no wonder as stationarity 
represents one of its most fundamental notions.
The question of the length of antichains of stationary subsets modulo 
nonstationarity generated particular interest due to their central role in generic ultrapower arguments.

\begin{Definition}
Let $\kappa$ be a regular cardinal and $I$ an ideal on $\kappa$.
For a regular cardinal $\lambda$ we say that $I$ is $\lambda$ saturated 
if there are no antichains of length $\lambda$ in $P(\kappa) \backslash I$,
where antichains are meant to be modulo $I$-small intersections of their elements.
\end{Definition}
An equivalent way of saying that $I$ is $\lambda$-saturated 
is therefore the statement that the Boolean
algebra $P(\kappa) \slash I$ has the $\lambda$-cc, 
which highlights the importance of the notion in the context of generic ultrapowers where conditions are
elements of $I$-positive sets ordered by the
subset relation.

The question of possible values for antichains modulo $I$ is 
intimately tied with large cardinals as was already implicitly 
evident in S. Ulam's work on measurability, showing that for no $\lambda$ there is a $\lambda^+$-saturated ideal on $\lambda^+$.
Forty years later R. Solovay showed that
for a regular uncountable cardinal $\kappa$, every stationary 
$S \subset \kappa$ can be partitioned into $\kappa$ many stationary 
sets which implies that for every stationary $S\subset \kappa$ the 
restricted nonstationary ideal $\hbox{NS}_{\kappa} \upharpoonright S$ can not be $\kappa$-saturated.
Thus naturally the question arises
whether there are successor cardinals $\kappa$ such that the nonstationary ideal on $\kappa$ is $\kappa^+$-saturated.
Here a crucial difference between the nonstationary ideal on $\aleph_1$ and 
the nonstationary ideal on other regular $\kappa > \aleph_1$ shows up, which has its
deeper reasons in the trivial fact that below $\omega_1$ limit ordinals have only one possible cofinality,
while at bigger cardinals more possibilities occur.
It is a $\ZFC$-theorem of M. Gitik and S. Shelah that $\hbox{NS}_{\kappa}$ can 
not be $\kappa^+$-saturated for any $\kappa > \omega_1$, for $\kappa =\omega_1$ however, $\NS$ can be
$\aleph_2$-saturated, which was shown first by K. Kunen assuming the existence of a huge cardinal.

The ultimate solution to the problem of the consistency of the statement 
$\NS$ is $\aleph_2$-saturated from optimal large cardinal assumptions was eventually found by S. Shelah who showed
around 1985 that already a Woodin cardinal suffices for the consistency 
of $\NS$ being saturated. As this result and its proof are essential for our work we will
give the proof in this section in detail.
As a concluding remark we mention that  it was only in 2006 that 
R. Jensen and J. Steel proved that the assumption of a Woodin cardinal 
is in fact sharp in terms of consistency strength via showing that 
if the theory $\ZFC + \text{\textquotedblleft $\NS$ on } \omega_1 
\text{ is saturated\textquotedblright}$ is consistent then so 
is $\ZFC + \text{\textquotedblleft there is a Woodin cardinal \textquotedblright }$

We start with the preparations for Shelah's result.
The material in this section draws heavily from
R. Schindler's notes \cite{SchindlerNS} on the problem.
Recall first a couple of definitions and facts.

\begin{Definition}
Let A be an arbitrary set then a cardinal $\kappa$ is $A$-strong up to the cardinal $\delta$ iff $\forall \gamma < \delta \exists j :
V \rightarrow M$ which is elementary
such that
\begin{enumerate}
\item crit $j$ = $\kappa \land \gamma < j(\kappa)$ \item $V_{\kappa + \gamma} \subset M$
\item $A \cap V_{\kappa + \gamma} = j(A) \cap V_{\kappa + \gamma}$ \end{enumerate}
 
\end{Definition}

The following fact can be used to define Woodin cardinals:

\begin{Fact}
The following are equivalent
\begin{itemize}
\item $\delta$ is Woodin
\item For any $A\subset V_{\delta}$,
$$ \{ \alpha < \delta \,:\, \alpha \text{ is $A$-strong up to $\delta$ } \}$$ is stationary in $\delta$.
\end{itemize}

\end{Fact}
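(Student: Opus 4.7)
The direction from the stationarity condition to Woodinness is essentially immediate: stationarity of the set of $A$-strong cardinals below $\delta$ implies its nonemptiness, and the existence, for every $A \subset V_\delta$, of some $\alpha < \delta$ that is $A$-strong up to $\delta$ is a standard equivalent formulation of ``$\delta$ is Woodin''. The interest is therefore in the converse direction, which upgrades mere existence of $A$-strong cardinals to stationarity of their set.

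Assume $\delta$ is Woodin and fix $A \subset V_\delta$ together with a club $C \subset \delta$. The aim is to produce some $\alpha \in C$ which is $A$-strong up to $\delta$. My plan is to pack $A$, the club $C$, and the ``next element'' function $n_C(\xi) := \min(C \setminus (\xi+1))$ into a single subset $A^* \subset V_\delta$ via a fixed definable pairing on $V_\delta$:
\[
A^* \;=\; \{(0,x) : x \in A\} \cup \{(1,\gamma) : \gamma \in C\} \cup \{(2,\xi,\eta) : n_C(\xi)=\eta\},
\]
and then apply Woodinness to $A^*$ to obtain a $\kappa < \delta$ that is $A^*$-strong up to $\delta$ and, crucially, closed under $n_C$ in the sense that $n_C[\kappa] \subset \kappa$.

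Two things then need to be verified. First, $\kappa$ is $A$-strong up to $\delta$: every witnessing embedding $j: V \to M$ with $j(A^*) \cap V_{\kappa+\gamma} = A^* \cap V_{\kappa+\gamma}$ automatically yields $j(A) \cap V_{\kappa+\gamma} = A \cap V_{\kappa+\gamma}$, since $A$ is a component of $A^*$ recoverable via the absolute pairing. Second, $\kappa \in C$: closure of $\kappa$ under $n_C$ forces $C \cap \kappa$ to be cofinal in $\kappa$, and closedness of $C$ then gives $\kappa \in C$.

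The principal technical obstacle is precisely to ensure that the $A^*$-strong $\kappa$ can simultaneously be taken closed under $n_C$; the bare ``existence of $B$-strong cardinals'' form of Woodinness does not by itself deliver such closure, since even with full agreement on $V_{\kappa+\gamma}$ the values of $n_C$ on $\kappa$ might a priori escape above $\kappa$. The way around this is to invoke the equivalent $f$-function formulation of Woodinness: for every $f:\delta\to\delta$ and every $B\subset V_\delta$ there exists $\kappa < \delta$ with $f[\kappa] \subset \kappa$ and an elementary $j : V \to M$ of critical point $\kappa$ satisfying $V_{j(f)(\kappa)} \subset M$ and $j(B) \cap V_{j(f)(\kappa)} = B \cap V_{j(f)(\kappa)}$. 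Feeding in $f = n_C$ and $B = A^*$ produces the required $\kappa$ in a single step, and the remaining verifications go through as indicated above.
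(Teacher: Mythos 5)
The paper states this Fact without proof, so there is nothing to compare against on that side; I evaluate your plan on its own terms. The overall strategy --- code the club into the predicate, extract a strong cardinal for the combined predicate, and argue it lands in the club --- is the right one, and your first direction is fine. But the mechanism you chose for the converse has a genuine gap, and the obstacle that drove you to it is not actually there.

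The gap: a single application of the $f$-function formulation with $f=n_C$ and $B=A^*$ hands you one embedding $j$ with $V_{j(n_C)(\kappa)}\subset M$ and agreement of $A^*$ up to $V_{j(n_C)(\kappa)}$. That gives $\kappa\in C$, but it does \emph{not} show that $\kappa$ is $A$-strong up to $\delta$: that property quantifies over all $\gamma<\delta$ and requires embeddings of arbitrarily large strength below $\delta$, whereas you have produced exactly one embedding whose strength $j(n_C)(\kappa)$ you do not control. So the $\kappa$ you obtain is in $C$ but need not lie in the set whose stationarity you are proving. (Invoking the $f$-function formulation as a black box is also slightly delicate here, since its equivalence with the existence form is itself a nontrivial theorem in the same circle of ideas.)

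The obstacle you were routing around is illusory: the bare existence form already forces $C\cap\kappa$ to be unbounded in $\kappa$, with no need to close under $n_C$. Take $A^*=A\oplus C$ and let $\kappa$ be $A^*$-strong up to $\delta$; by decoding, $\kappa$ is $A$-strong up to $\delta$ with respect to \emph{every} $\gamma<\delta$. If $\kappa\notin C$, let $\eta=\min(C\setminus(\kappa+1))$ (it exists since $C$ is unbounded in $\delta$) and suppose toward a contradiction that $C\cap\kappa\subset\beta$ for some $\beta<\kappa$. Choose $\gamma\geq\eta$ and a witnessing $j\colon V\to M$ with $\mathrm{crit}(j)=\kappa$, $j(\kappa)>\gamma$ and $j(A^*)\cap V_{\kappa+\gamma}=A^*\cap V_{\kappa+\gamma}$; the agreement decodes to $j(C)\cap(\kappa+\gamma)=C\cap(\kappa+\gamma)$, so $\eta\in j(C)$ with $\kappa<\eta\leq\gamma<j(\kappa)$. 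On the other hand $j(C)\cap j(\kappa)=j(C\cap\kappa)\subset j(\beta)=\beta<\kappa$, a contradiction. Hence $C\cap\kappa$ is unbounded in $\kappa$ and, by closedness of $C$, $\kappa\in C$. This repairs your argument and removes the dependence on the $f$-function formulation entirely.
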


We will need a bit more, namely a Woodin cardinal with a $\diamondsuit$-sequence living below it:

\begin{Definition}
Let $\delta$ be a Woodin cardinal then we say that $\delta$ is Woodin 
with $\diamondsuit$ iff there is a sequence $(a_{\kappa} \, :\, \kappa < \delta)$ 
such that for each $\kappa$, $a_{\kappa} \subset V_{\kappa}$ and for every 
$A \subset V_{\delta}$ the set $$\{ \kappa < \delta \,:\,
 A \cap V_{\kappa}= a_{\kappa} \land \kappa \text{ is $A$-strong up to } 
\delta \}$$ is stationary in $\delta$.
 
\end{Definition}

In terms of consistency strength this adds 
nothing to being a Woodin cardinal. 
If $\delta$ is the Woodin cardinal 
and we force with $\delta$-Cohen forcing 
then in the resulting generic extension we 
have that $\delta$ is Woodin with $\diamondsuit$: 
indeed first note that $\delta$-Cohen forcing is 
the same as forcing with conditions of the form 
$(a_{\alpha} \, : \, \alpha < \kappa < \delta \, \land \, a_{\alpha} \subset V_{\alpha})$, ordered by end extension.
Assume now that there is a condition $p$ such 
that $$p \Vdash \tau \subset V_{\delta} \, \land \, \sigma \subset \delta \text{ is club in } \delta.$$ 
We have to show that there is a stronger condition $q < p$, $q = (a_{\alpha} \, : \, \alpha < \lambda)$ 
and a cardinal $\kappa < \delta$ for which

$$q \Vdash  \kappa \in \sigma \text{ is } \tau\text{-strong up to } \delta \, \land \tau \cap \kappa = a_{\kappa}.$$

We construct by recursion a descending sequence of conditions $(p_{\kappa} \, : \, \kappa < \delta )$
such that the length of each $p_{\kappa}$ is $\mu_{\kappa}$ and such that the following points are obeyed:
\begin{enumerate}
\item $\{ \mu_{\kappa} \,: \, \kappa < \delta\}$ is a club in $\delta$.
\item For every $\kappa$ there is some $C_{\kappa} \subset \mu_{\kappa}$ which is unbounded in $\mu_{\kappa}$
such that $p_{\kappa} \Vdash \sigma \cap \mu_{\kappa} = C_{\kappa}$ and consequently $p_{\kappa} \Vdash \mu_{\kappa} \in \sigma$.
\item For every $\kappa$ there is some $A_{\kappa} \subset V_{\kappa}$ such that $p_{\kappa} \Vdash \tau \cap V_{\mu_{\kappa}} = A_{\kappa}$
\item for every $\kappa$, $p_{\kappa} \Vdash a_{\mu_{\kappa}} = 
A_{\kappa}$ \item If $p_{\kappa +1} \nVdash $ $\kappa$ is $\tau$-strong up to $\delta$, then there is an
$\alpha < \mu_{\kappa +1}$ such that
$$p_{\kappa +1} \Vdash \kappa \text{ is not } \tau \text{-strong up to } \alpha $$ (Note here that $\kappa+1$
makes such a choice always possible. It is impossible for $\kappa$ limit) \end{enumerate}
That such a sequence exists is easily seen. Now set $A := \bigcup_{\kappa < \delta} A_{\kappa}$, and as $\delta$ is Woodin
and the set $\{ \mu_{\kappa} \,: \, \kappa < \delta\}$ is a club in $\delta$ we find a point $\kappa = \mu_{\kappa}$
such that $\kappa$ is $A$-strong up to $\delta$. Now pick the condition $q:= (a_{\lambda} \, : \, \lambda < \kappa +1)$
which by 2,3 and 4 satisfies
$$q \Vdash \kappa \in \sigma \land \tau \cap \kappa = a_{\kappa}.$$ 
To finish we need to show that also $q \Vdash \kappa \text{ is } \tau \text{-strong up to } 
\delta$. Assume for a contradiction the opposite then by property 5 there 
exists an $\alpha < \mu_{\kappa +1}$ such that $$p_{\kappa+1} \Vdash \kappa 
\text{ is not } \alpha \text{-strong up to } \delta.$$ But the elementary 
embedding $j: V \rightarrow M$ which witnesses that $\kappa$ is $A$-strong 
up to $\delta$ can be lifted to $j' : V[G] \rightarrow M'$, and by the 
$\delta$-closure of the forcing it will witness that $\kappa$ is $A$-strong up to $\delta$, a contradiction.

The usage of the $\diamondsuit$-sequence at the Woodin cardinal is crucial 
for the proof of the existence of a model where $\NS$ is $\aleph_2$-saturated.
We will use it as a guideline for the iteration. Whenever we hit a stage 
$\alpha$ such that the $\diamondsuit$-sequence $(a_{\beta} \, : \, \beta < \delta)$ 
at stage $\alpha$ is the name of a maximal antichain of stationary subsets of 
$\omega_1$ of length $\omega_2$ we want to change its length to 
$\aleph_1$, and further ensure that this maximal antichain remains maximal 
in all stationary subsets of $\omega_1$-preserving outer models. The next 
forcing notion does exactly what we demand:

\begin{Definition} Assume that $\vec{S}$ is an antichain of stationary 
subsets of $\omega_1$. Then the so called sealing forcing $\seal$ 
consists of conditions of the form $(p,c)$ where $p : \alpha+1 \rightarrow \vec{S}$ 
is a function and $c : \alpha + 1 \rightarrow \omega_1$ is a function with closed image and such that
$$ \forall \xi \le \alpha (c(\xi) \in \bigcup_{i \in \xi} p(i))$$ holds. We 
let $(q,d) < (p,c)$ if $q$ and $d$ end-extend $p$ and $c$ respectively.
\end{Definition}
It is well known that the sealing forcing $\seal$ is $\omega$-distributive 
and preserves all stationary subsets of $\vec{S}$, thus $\seal$ is stationary 
subsets of $\omega_1$ preserving if $\vec{S}$ is maximal.

We can also consider a stationary, co-stationary set $A \subset \omega_1$ 
and the nonstationary ideal restricted to subsets of $A$, $$\NSA := 
\{ B \subset A \,:\, B \text{ is stationary }\}.$$ It is natural to 
ask the same question we asked for the full ideal $\NS$, for the 
restricted version $\NSA$, namely whether it can be $\aleph_2$-saturated.
Surprisingly its positive answer has a simpler structure than for full $\NS$.
The reason behind this is that the set $A$ enables us to see that 
the according sealing forcings are particularly nice, namely $A^c$-proper,
which can be used to see that the usual proof for making $\NS$ saturated can be 
done even without adding reals.
\begin{Definition}
Assume that $A\subset \omega_1$ is a
stationary, co-stationary subset of $\omega_1$ and that $\vec{S}_{A}$ 
is an antichain of stationary subsets of $A$. Then we can also seal it 
off using the straightforward generalization, denoted by $\seal_A$ of 
the already introduced sealing forcing $\seal$. Conditions of $\seal_A$ 
are pairs $(p,c)$ where $p: \alpha+1 \rightarrow \vec{S}_A $ is a function 
and $c: \alpha+1 \rightarrow \omega_1$ is a function with range a closed subset of $\omega_1$ such that
$$ \forall \xi \le \alpha (c(\xi) \in A \rightarrow  c(\xi) \in \bigcup_{i \in \xi} p(i))$$

\end{Definition}

The idea now to force $\NS$ to be $\aleph_2$-saturated is to seal 
off one by one all the long antichains of stationary subsets with an iteration.
The first obstacle one immediately encounters is how we catch our 
tail during this process, for which the usage of the Woodin cardinal 
will be crucial. Another difficulty is the following:
One has to ensure that the iteration stays stationary set preserving 
to avoid utter chaos. The sealing forcing $\seal$ is stationary set
 preserving as long as $\vec{S}$ is maximal, but there does not 
exist a theory for iterations of stationary set preserving forcings
 in a stationary set preserving way. Thus we are compelled to work 
with semiproper notions of forcing and $RCS$-iterations instead.
We demand to only seal off a maximal antichain $\vec{S}$ when the forcing $\seal$ is
also semiproper. But this leaves us with the possibility of not sealing off every long antichain during the iteration.
That these difficulties still do not ruin the proof is outlined here in detail:
\begin{thm} Assume that $\delta$ is a Woodin cardinal with $\diamondsuit$. 
Then there exists a semiproper forcing $\forceP$ of size $\delta$ such 
that in $V[G]$, $\NS$ is $\aleph_2$-saturated and $\delta = \aleph_2$.
\end{thm}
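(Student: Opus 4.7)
The plan is to build an RCS-iteration $\forceP = \forceP_\delta$ of length $\delta$ whose nontrivial factors are sealing forcings, with the $\diamondsuit$-sequence $(a_\kappa : \kappa < \delta)$ as bookkeeping. Define $(\forceP_\alpha, \dot{\forceQ}_\alpha : \alpha < \delta)$ recursively, taking RCS-limits at limit stages: at stage $\alpha$, if $a_\alpha$ is a $\forceP_\alpha$-name for a maximal antichain $\vec{S}$ of stationary subsets of $\omega_1$ such that $\seal$ is semiproper in $V^{\forceP_\alpha}$, set $\dot{\forceQ}_\alpha := \seal$; otherwise let $\dot{\forceQ}_\alpha$ be a trivially semiproper padding (for example collapsing some $\mu < \delta$ to $\omega_1$) chosen so that $\delta$ will eventually become $\omega_2$.

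By Fact 1.11 the resulting $\forceP$ is semiproper, so $\omega_1$ is preserved in $V[G]$. Since $\delta$ is Mahlo, a standard reflection argument gives $\forceP$ the $\delta$-cc, so $\delta$ remains a cardinal; combined with the padding this yields $\delta = \aleph_2$ in $V[G]$.

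For saturation, suppose toward a contradiction that $\vec{A} = \langle A_\xi : \xi < \delta\rangle$ is a maximal antichain of stationary subsets of $\omega_1$ in $V[G]$, and fix a $\forceP$-name $\dot{\vec{A}}\subseteq V_\delta$. The $\diamondsuit$-hypothesis applied to $\dot{\vec{A}}$ supplies a stationary class of $\kappa < \delta$ with $a_\kappa = \dot{\vec{A}} \cap V_\kappa$ and $\kappa$ being $\dot{\vec{A}}$-strong up to $\delta$, witnessed by $j : V \to M$ with $V_{\kappa+\gamma} \subset M$ for arbitrarily large $\gamma < \delta$. Lift $j$ to $\bar{j} : V[G_\kappa] \to M[G^{\ast}]$ by a standard master-condition argument. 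By elementarity, at stage $\kappa$ of $j(\forceP)$ in $M$ the construction applies to $j(a_\kappa) = a_\kappa$ and produces the sealing forcing; by choosing $\gamma$ large enough to witness both maximality of $\vec{S} := a_\kappa^{G_\kappa}$ as an antichain and semiproperness of $\seal$, these properties reflect between $M[G^{\ast}]$ and $V[G_\kappa]$. Hence $\dot{\forceQ}_\kappa = \seal$ by construction, $\vec{S}$ gets sealed at stage $\kappa$, and because every later factor of the RCS iteration is stationary-preserving, $\vec{S}$ remains a maximal antichain in $V[G]$. Since $\vec{S} \subsetneq \vec{A}$, this contradicts the maximality of $\vec{A}$.

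The main obstacle lies in showing that $\seal$ is indeed semiproper in $V[G_\kappa]$ at the reflection point, so that the construction does not idly skip stage $\kappa$. RCS-iterations of semiproper forcings are only semiproper, so master-condition lifts of $j$ require delicate handling, and semiproperness is not in general absolute upwards. What makes it work is the full strength of $\dot{\vec{A}}$-strongness: for $\gamma$ sufficiently large, a semiproperness witness for $\seal$ lives inside $V_{\kappa+\gamma}\subseteq M$, so by elementarity such a witness exists in $M[G^{\ast}]$ and can be reflected down to $V[G_\kappa]$.
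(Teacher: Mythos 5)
Your setup of the iteration (diamond-guided RCS iteration, seal when semiproper, pad otherwise, $\delta$-cc from Mahloness) matches the paper. But the heart of your argument --- the claim that at the reflection point $\kappa$ the sealing forcing $\mathbb{S}(\vec{S}\upharpoonright\kappa)$ \emph{is} semiproper in $V[G_\kappa]$, established by ``reflecting a semiproperness witness'' from $M$ --- is a genuine gap, and it is precisely the part of the theorem that is hard. Whether the sealing forcing is used at stage $\kappa$ is decided inside $V[G_\kappa]$ (equivalently inside $M[G_\kappa]$, since $V_{\kappa+\gamma}\subset M$ and $j(\forceP)\upharpoonright\kappa=\forceP\upharpoonright\kappa$); elementarity of $j$ gives you no leverage here, because $M$ and $V$ compute stage $\kappa$ of the construction identically. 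If the sealing forcing fails to be semiproper in $V[G_\kappa]$, it fails in $M$ as well, and both constructions skip it. There is no ``semiproperness witness living in $V_{\kappa+\gamma}$'' to reflect unless you have already proved semiproperness, which is the thing at issue; semiproperness of $\seal$ does not follow from maximality of the antichain, and in general sealing forcings for maximal antichains are stationary set preserving but not semiproper.

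The paper's proof runs in the opposite logical direction. From the assumption that $\vec{A}$ is a long maximal antichain in $V[G]$, one observes that $\mathbb{S}(\vec{S}\upharpoonright\kappa)$ \emph{cannot} be semiproper at stage $\kappa$ (otherwise the construction would have sealed $\vec{S}\upharpoonright\kappa$, making it maximal in all later stationary-set-preserving extensions and contradicting that $\vec{A}$ properly extends it --- this is the one piece of your argument that is correct, just used in the contrapositive). The failure of semiproperness yields a condition $(p,c)$ and a stationary set $\bar{T}$ of countable $X\prec (H_{\kappa^+})^{V[G\upharpoonright\kappa]}$ admitting no semigeneric extension of $(p,c)$. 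The contradiction is then extracted by: collapsing $(H_{\kappa^+})^{V[G\upharpoonright\kappa]}$ to $\omega_1$ at stage $\kappa$ to get a surjection $f$ and a stationary $T\subseteq\omega_1$; using maximality of $\vec{A}$ in $V[G]$ to find $i_0$ with $T\cap A_{i_0}$ stationary (here $i_0$ may exceed $\kappa$, which is why the strongness of $\kappa$ with respect to the \emph{name} $\tau$ is needed, so that $j(\tau)\cap V_\lambda=\tau\cap V_\lambda$ and $S_{i_0}$ appears in $j(\vec{S}\upharpoonright\kappa)$); lifting $j$ and building a continuous chain $(X_i)$ closed under $f$ and $j^\ast$; and finally exhibiting, for some $X\in\bar{T}$ with $X\cap\omega_1\in T\cap A_{i_0}$, a semigeneric condition below $(p,c)$ in $j(\mathbb{S}(\vec{S}\upharpoonright\kappa))$, contradicting $j^\ast(X\cap H_{\kappa^+})\in j^\ast(\bar{T})$. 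None of this machinery appears in your proposal, and without it the proof does not close.
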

\begin{proof}

We use a $\diamondsuit$-sequence on $\delta$ to determine at each stage $\alpha< \delta$ which forcing to use.
Assume that during our iteration we have arrived at stage $\alpha$. Then let $\forceQ_{\alpha}$ be \begin{enumerate}
\item the sealing off forcing of a maximal antichain $\sigma^{G_{\alpha}}$ 
if the diamond sequence at stage $\alpha$ is the $\forceP_{\alpha}$-name 
$\sigma$ for a maximal antichain of stationary subsets of $\omega_1$ and the sealing forcing is semiproper.

\item the collapse of $2^{\aleph_2}$ to $\aleph_1$ else.
\end{enumerate}
At limit stages we use the $RCS$-limit. This already suffices.
Assume for a contradiction. that $\NS$ is not $\aleph_2$-saturated in 
$V[G]$, I.e. there is a maximal antichain $\vec{S}=(S_i : i< \omega_2)$ 
in $P(\omega_1)/ \NS$. Let $\tau$ be a $\mathbb{P}$-name for the sequence. 
As $V[G] \models \aleph_2 = \delta$ for our Woodin cardinal $\delta$, we 
claim that it is possible to find an inaccessible $\kappa$ below $\delta$ 
such that the following three properties hold: \begin{enumerate}
\item $\kappa$ is $\mathbb{P} \oplus \tau$-strong up to $\delta$ in 
$V$ \item $\kappa = \omega_2^{V[G\upharpoonright \kappa]}$ \item 
$\vec{S}\upharpoonright \kappa = (S_i \, : \, i < \kappa) = 
(\tau \cap V_{\kappa})^{G\upharpoonright \kappa}$ is the maximal antichain
in $V[G\upharpoonright \kappa]$ which is picked by the $\diamondsuit$-sequence at stage $\kappa$.
\end{enumerate}
This is clear as we can assume that our $\diamondsuit$-sequence 
lives on the stationary subset of inaccessible cardinals below $\delta$, 
and for all inaccessible $\kappa$ property $2$ automatically holds. Moreover the sets

$$C_1:=\{ \kappa < \delta \, :\, \vec{S}\upharpoonright \kappa = (S_i \, 
:\, i< \kappa)= (\tau \cap V_{\kappa})^{V[G\upharpoonright \kappa]} \}$$ and

$$C_2:=\{ \kappa < \delta \, : \forall \alpha < \kappa \forall S \in P(\omega_1) \cap V^{\mathbb{P_{\alpha}}} \text{ stationary }
\exists \bar{S} \in \vec{S}\upharpoonright
\kappa (S \cap \bar{S} \notin NS) \}$$
are both clubs, therefore hitting the stationary
set $T$ consisting of the points $\kappa < \delta$
where $\tau \cap V_{\kappa} = a_{\kappa}$ (remember: $a_{\kappa}$ is the 
$\kappa$-th element of the $\diamondsuit$-sequence $(a_{\alpha} : \alpha < \delta )$) 
and $\kappa$ is $\tau$-strong up to $\delta$. Thus if $\kappa$ is in the nonempty intersection
$C_1 \cap C_2 \cap T$ then $1$ and $2$ are satisfied, and the recursive definition 
of our forcing $\mathbb{P}$ yields that at stage
$\kappa$, as $a_{\kappa}= \tau \cap V_{\kappa}$, the sealing forcing $\mathbb{S} 
((\tau \cap V_{\kappa} )^{G\upharpoonright \kappa})$ is at least considered, and 
in order to show property $3$, it suffices to show that 
$ (\tau \cap V_{\kappa} )^{G\upharpoonright \kappa})= \bar{S}\upharpoonright \kappa$ 
is maximal in $V[G\upharpoonright \kappa]$. But this is clear as by the definition 
of RCS iteration and as $|\forceP_{\alpha}| < \kappa$ we take at inaccessible 
$\kappa$'s the direct limit of the $\mathbb{P_{\alpha}}$'s, thus each stationary 
$S\subset \omega_1$ in $V^{\mathbb{P_{\kappa}}}$ is already included in a 
$V^{\mathbb{P_{\alpha}}}$ for $\alpha < \kappa$. So we have ensured the 
existence of a $\kappa$ with all the 3, above stated properties.

Now the forcing $\mathbb{S}(\vec{S}\upharpoonright \kappa)$ can not be semiproper at stage $\kappa$, as otherwise we would have to
force with it, therefore killing the antichain $\vec{S}$. 
So there exists a condition $(p,c) \in \mathbb{S}(\vec{S}\upharpoonright \kappa)$ such that the set
\begin{multline*}
\bar{T}:= \{ X \prec (H_{\kappa^+})^{V[G\upharpoonright \kappa]} \,:\, |X|=\aleph_0 \land (p,c) \in X \land \nexists Y \supset X (Y \prec
(H_{\kappa^+})^{V[G\upharpoonright \kappa]} \\ \land |Y|= \aleph_0 \land (X \cap \omega_1 = Y \cap \omega_1) \land \exists (q,d) \le (p,c)
\, ((q,d) \text{ is Y-semigeneric }))\}.
\end{multline*}
is stationary in $V[G\upharpoonright \kappa]$, and by construction of our iteration, the $\kappa$-th forcing in $\mathbb{P}$ is
$Col(\omega_1, 2^{\aleph_2})$, so in $V[G\upharpoonright \kappa+1]$ 
there is a surjection $f: \omega_1 \rightarrow (H_{\kappa^+})^{V[G\upharpoonright \kappa]}$.
As $Col(\omega_1, 2^{\aleph{_2}})$ is proper the set $\bar{T}$ 
remains stationary in $V[G\upharpoonright \kappa+1]$ which 
implies that $$ T:= \{ \alpha < \omega_1 \, :\, f\text{\textquotedblright}
\alpha \in \bar{T} \land \alpha= f\text{\textquotedblright}\alpha \cap \omega_1\}$$
 is stationary in $V[G\upharpoonright \kappa+1]$. As the tail $\mathbb{P}_{[\kappa +2 , \delta)}$ remains semiproper, seen as an iteration
with $V[G\upharpoonright \kappa+1]$ as ground model, we can infer that $T$ remains stationary in $V[G]$
and hence there
exists an $i_0< \delta$ such that
$$(\ast\ast) \quad T \cap S_{i_0} \text{ is stationary in } V[G].$$

Let us shortly reflect the situation we are in. The idea is to find a model $X \in \bar{T}$ such that we $can$
find a $(X,\mathbb{S}(\vec{S}\upharpoonright \kappa))$-semigeneric condition $(q,d) < (p,c)$, thus arriving at a contradiction.
In order to do so we have to ensure that $\alpha = X \cap \omega_1$ is in 
some $S_i \in \vec{S}\upharpoonright \kappa$. As $\vec{S}$ was assumed to 
be maximal there is indeed an index $i_0 < \delta$ which is as desired, 
this index however might be bigger than $\kappa$. This is where the large 
cardinal assumption comes into play. We can find an elementary embedding $j: V \rightarrow M$ 
such that $j(\kappa) > i_0$, thus it seems that $j(\vec{S}\upharpoonright \kappa)$ is now long
enough to have $S_{i_0}$ as an element. But this is not correct as $\vec{S}$ was not assumed to be definable and therefore
$j{\vec{S}\upharpoonright \kappa)} \ne \vec{S}\upharpoonright j(\kappa)$. We have to use more than just the elementary embedding, namely
that Woodiness fixes even a predicate with $j$. Indeed if we let $\lambda > i_0$ such that $(\tau \cap
V_{\lambda})^{G\upharpoonright \lambda} = \vec{S}\upharpoonright\lambda$, and let $j: V\rightarrow M $ be such that
$j(\tau) \cap V_{\lambda} = \tau \cap V_{\lambda}$ then $j(\tau \cap V_{\kappa}) = j(\tau) \cap V_{j(\kappa)}$
and $V_{j(\kappa)}= V_{\lambda} \cup V_{j(\kappa)} – V_{\lambda}$, thus $j(\vec{S}\upharpoonright \kappa)=
j(\tau \cap V_{\kappa})^{G\upharpoonright \kappa})$ contains $(\tau \cap V_{\lambda})$ and thus $S_{i_0}$.

First let $\lambda < \delta$, $\lambda >$ max$(i_0, \kappa +1)$ be such that $(\tau \cap V_{\lambda})^{G\upharpoonright \lambda}= \vec{S}
\upharpoonright \lambda$, so we have $(\tau \cap V_{\lambda})^{G\upharpoonright \lambda} (i_0) = S_{i_0}$. As $\kappa$
was chosen to be $\mathbb{P}\oplus \tau$-strong up to $\delta$ we let $j: V \rightarrow M$ be an 
elementary embedding with critical point $\kappa$, such that $M$ is transitive, 
$M^{\kappa} \subset M$, $V_{\lambda + \omega} \subset M$, $j(\mathbb{P}) \cap V_{\lambda} = \mathbb{P} 
\cap V_{\lambda}$, and $j(\tau) \cap V_{\lambda} = \tau \cap V_{\lambda}$.

$H$ should denote the generic filter for the segment 
$(\mathbb{P}_{[\lambda +1, j(\kappa)]})^{M[G\upharpoonright \lambda]}$ of $j(\mathbb{P})$
over $M[G\upharpoonright \lambda]$. Then we lift $j$ 
to an elementary embedding $$j^{\ast} : V[G\upharpoonright \kappa] \rightarrow M[G\upharpoonright \lambda, H].$$ 
Notice that $(V_{\lambda + \omega})^{V[G\upharpoonright \lambda]} = (V_{\lambda+ \omega})^{M[G\upharpoonright \lambda]}$.

Now we let $(X_i \,:\, i < \omega_1) \in V[G\upharpoonright \kappa +1]$ be an increasing continuous 
chain of countable elementary substructures
of $(H_{j(\kappa)^{+}})^{M[G\upharpoonright \kappa +1]}$ with $\{ \tau \cap V_{\lambda}, i_0\} \subset X_0$ satisfying for all
$i < \omega_1$ the following three properties:
\begin{enumerate}
\item[(a)] $i \in X_{i+1}$
\item[(b)] $f$\textquotedblright $(X_i \cap \omega_1) \subset X_i$ \item[(c)] 
$j^{\ast}$\textquotedblright$(X_i \cap (H_{\kappa^{+}})^{V[G\upharpoonright \kappa]} \subset X_i$ \end{enumerate}

Let $\bar{G} := G \upharpoonright [\kappa +2, \lambda]$, then we have 
that $$ \{X_i[\bar{G}] \cap \omega_1 \, :\, i < \omega_1 \} \in V[G\upharpoonright \lambda]$$ 
is a club in $\omega_1$ so intersecting it with the stationary set defined in $(\ast\ast)$ 
we find some $i < \omega_1$ such that $X_i[\bar{G}] \cap \omega_1 = X_i \cap \omega_1 \in T \cap S_{i_0}.$

Write $X:=X_i, \alpha:= X \cap \omega_1$. As at stage $\kappa$ we had to force 
with the $\omega$-closed $Col(2^{\aleph_2}, \aleph_1)$
we know that $X \cap (H_{\kappa^{+}})^{V[G\upharpoonright \kappa]} \in V[G\upharpoonright \kappa]$. 
Remember that $f \in V[G\upharpoonright \kappa+1]$ was chosen as
a surjection of $\omega_1$ onto $(H_{\kappa^{+}})^{V[G\upharpoonright \kappa]}$, so as $\alpha \in T$ 
by definition of $T$ $f$\textquotedblright $\alpha
\in \bar{T}$ and $\alpha = f$\textquotedblright $\alpha \cap \omega_1$, and hence by (b) 
$$f\text{\textquotedblright}\alpha \subset X \cap (H_{\kappa^{+}})^{V[G\upharpoonright \kappa]} \in V[G\upharpoonright \kappa].$$ 
As $\alpha = f$\textquotedblright$\alpha \cap \omega_1$, $f$\textquotedblright$\alpha \in \bar{T}$ 
and $f$\textquotedblright$\alpha \subset X \cap (H_{\kappa^{+}})^{V[G\upharpoonright \kappa]}$ 
we get that $X \cap (H_{\kappa^{+}})^{V[G\upharpoonright \kappa]} \in \bar{T}$ and therefore

$$(\ast\ast\ast) \quad j^{\ast}(X \cap (H_{\kappa^{+}})^{V[G\upharpoonright \kappa]}) \in j^{\ast} (\bar{T}).$$

Note that our second generic $H$, denoting the generic filter for the segment 
$(\mathbb{P}_{[\lambda +1, j(\kappa)]})^{M[G\upharpoonright \lambda]}$ of $j(\mathbb{P})$
over $M[G\upharpoonright \lambda]$ has not been specified yet.
As the segment $(\mathbb{P}_{[\lambda+1, j(\kappa)]})^{M[G\upharpoonright \lambda]}$ 
of $j(\mathbb{P})$ over $M[G\upharpoonright \lambda]$
is semi-proper we have that there is a condition $q$ in the segment 
$(\mathbb{P}_{[\lambda +1, j(\kappa)]})^{M[G\upharpoonright \lambda]}$ of
$j(\mathbb{P})$ which is $(X[\bar{G}],\mathbb{P}_{[\lambda+1, j(\kappa)]})$-semigeneric. 
If we pick $H$ such that $q \in H$ then by semigenericity
of $q$ we obtain
$X[\bar{G},H] \cap \omega_1 = X[\bar{G}] \cap \omega_1 = X \cap \omega_1 = \alpha \in S_{i_0}= (\tau \cap V_{\lambda})^{G\upharpoonright
\lambda} (i_0) \in X[\bar{G}, H]$.
But also due to (c) we have that

$$j^{\ast} (X \cap (H_{\kappa^{+}})^{V[G\upharpoonright \kappa]})=  j^{\ast} \text{\textquotedblright} (X \cap
(H_{\kappa^{+}})^{V[G\upharpoonright \kappa]}) \subset X[\bar{G},H].$$

This gives us the desired contradiction as we can find an $(X[\bar{G},H],j(\mathbb{S}(\vec{S}\upharpoonright \kappa)))$-semigeneric
condition below $j(p,c)=(p,c)$. Indeed we can just list the countably many names for countable ordinals in $X[\bar{G},H]$ along with
conditions of $j(\mathbb{S}(\vec{S}\upharpoonright \kappa))$ deciding them below $(p,c)$ and let $(p',c') \in
j(\mathbb{S}(\vec{S}\upharpoonright \kappa))$ be just the condition with dom$(c')=$dom$(d')= \alpha + 1$, $c'(\alpha)=\alpha$ and
$p'(i) = S_{i_0}$ for some $i< \alpha$. So $X[\bar{G},H]$ together with $(p',c')<(p,c)$ witness that
$j^{\ast}(X \cap (H_{\kappa^{+}})^{V[G\upharpoonright \kappa]}) \notin j^{\ast} (\bar{T})$, contradicting $(\ast\ast\ast)$.

\end{proof}

\chapter{$\NS$ is $\Delta_1$-definable and $\NSA$ is saturated}
\section{Introduction}
In this section we want to give a proof of the following theorem

\begin{thm}
Assume that $M_1^{\#}$ exists, and let 
$A \in M_1$ be a stationary, co-stationary subset of $\omega_1$. Then there is a generic extension $M_1[G]$ via 
a set sized forcing such that in $M_1[G]$ 
$\NS$ is $\Delta_1$-definable in $H(\omega_3)$ with 
parameter $\omega_1$ and such that $\NSA$ is $\aleph_2$-saturated.
\end{thm}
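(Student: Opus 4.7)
The plan is to combine the Shelah-style saturation iteration (section 1.3) with the Caicedo--Velickovic coding technology, carried out over $M_1$ as ground model so that Fact \ref{axioms} is available. I work over $M_1$ with its Woodin cardinal $\delta$, preceded by a Cohen forcing at $\delta$ to arrange a $\diamondsuit$-sequence guiding the later RCS iteration. The key move is to restrict the sealing forcings to antichains of stationary subsets of $A$: a condition of $\mathbb{S}_A(\vec{S})$ has a closed range that only needs to meet $\vec{S}$ at the points lying in $A$, so one can always extend a condition by pressing past a countable closed set that hits the stationary complement $\omega_1\setminus A$. This buys $A^c$-properness of these sealing forcings rather than mere semiproperness, and gives us a distinguished everywhere stationary class $\Stat$ of countable submodels $M\prec H_\theta$ with $M\cap\omega_1\in\omega_1\setminus A$, intersected with the $M_1$-condensing submodels from Fact \ref{axioms}(3). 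This class is what the iteration will be designed to preserve.

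Next I define an RCS-iteration $(\forceP_\alpha,\dot{\forceQ}_\alpha:\alpha<\delta)$ whose factor at stage $\alpha$ is, depending on the $\diamondsuit$-guess, either (a) the $A$-restricted sealing $\mathbb{S}_A(\vec{S})$ for a maximal antichain $\vec{S}$ of stationary subsets of $A$, whenever that sealing is $\Stat$-semiproper; (b) a coding forcing that, for each stationary $B\subseteq\omega_1$ appearing in the intermediate model, writes the characteristic function of $B$ into a canonical pattern of $\omega_1$-trees having or lacking cofinal branches, using triples of ordinals as in the Caicedo--Velickovic style of \cite{CF}; or (c) a collapse large enough to catch our tail. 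The coding forcings are set up so that the decoding of $B$'s pattern can be performed inside any suitable countable model possessing $(\omega_1,B)$ together with a cofinal sequence of codes for initial segments of $M_1$ of the kind produced by Fact \ref{axioms}(1). Verifying that each coding factor is $\Stat$-semiproper is where the stationarity of $\omega_1\setminus A$ gets used most essentially, since conditions in $\Stat$ are required to sit in models with trace outside $A$, leaving room to build semigenerics that do not commit the coding pattern against the sealing done so far.

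Verification then proceeds in two parts. For the $\aleph_2$-saturation of $\NSA$, I reproduce the Shelah argument from section 1.3 verbatim but restricted to antichains living in $A$: a would-be antichain of length $\omega_2$ is caught by an inaccessible $\kappa$ at which the $\diamondsuit$-sequence guesses a name for the antichain's initial segment and $\kappa$ is $\forceP\oplus\tau$-strong up to $\delta$; the preservation of $\Stat$ across the iteration, guaranteed because RCS iterations of $\Stat$-semiproper forcings are $\Stat$-semiproper, forces $\mathbb{S}_A(\vec{S}\!\upharpoonright\!\kappa)$ to have been chosen at stage $\kappa$, yielding the familiar contradiction with the assumption that the antichain is maximal of length $\omega_2$. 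For the $\Delta_1$-definability of $\NS$, observe that in $M_1[G]$ a set $B\subseteq\omega_1$ is stationary iff some suitable countable model $N$ witnesses the coded tree pattern around $B$ displays the ``stationary'' bit, and nonstationary iff some suitable $N$ produces a club of $\omega_1$ disjoint from $B$ decoded from that pattern; both sides are $\Sigma_1$ over $H(\omega_3)$ with parameter $\omega_1$, giving the desired $\Delta_1$ definition.

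The main obstacle I expect is the design of the coding forcings: they must simultaneously be $\Stat$-semiproper, not disturb antichains already sealed for $\NSA$, and produce a pattern decodable uniformly from $(\omega_1,B)$ alone together with the $\Pi^1_2$ set $I$ of codes for countable initial segments of $M_1$. The stationarity of $\omega_1\setminus A$ is precisely what provides enough ``room'' to build the semigeneric conditions against $\Stat$, which is why the theorem is stated for $\NSA$ rather than for full $\NS$; as the introduction notes, attempting the analogous proof for full $\NS$ breaks down exactly because the class of nice witness models loses its stationarity along a genuinely semiproper iteration, leaving no analogue of $A^c$ to exploit.
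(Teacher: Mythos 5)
Your proposal reproduces the paper's architecture: $M_1$ as ground model so Fact~\ref{axioms} is available, a $\diamondsuit$-guided iteration alternating tree-pattern codings of characteristic functions with $A$-restricted sealing forcings, $A^c$-properness of the sealings, a stationary class of nicely-collapsing submodels whose $\omega_1$-traces lie in $A^c$, the Shelah-style saturation argument, and a local $\Sigma_1$ witness for stationarity. That is the right route, and you also correctly identify the two genuine pressure points (the design of the coding factors and the preservation of $\Stat$).

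A few things are off in ways that would matter when the sketch is filled in. First, a naming confusion: the coding for this theorem is the Caicedo--Friedman tree-specialization method of \cite{CF}, with trees $T_\alpha(\xi) = ((\alpha^{+(\xi+1)})^{<(\alpha^{+\xi})})^K$ defined on $K$-cardinals; the ``triples of ordinals'' oscillation coding of Caicedo--Velickovic \cite{CV} is what Chapter~3 uses for the projective wellorder, not this theorem. The distinction is load-bearing: the $\Delta_1$-definability argument lives or dies on the trees being $K$-definable so that a nicely-collapsing countable model can reconstruct the pattern from its own version of $K$ and $K_{\omega_1}$. Second, your $\Stat$ needs more structure than Fact~\ref{axioms}(3) alone: the paper requires the transitive collapse to be a $K$-initial segment that is additionally $K$-correct and whose cardinals survive in larger $K$-initial segments sharing the same $\omega_1$; all three properties are used in the $\Stat$-properness proof of the localization forcing $\forceQ_{\alpha}^2$, which is the most delicate part of the argument and which your sketch leaves implicit. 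Third, you locate the role of $A^c$ in ``building semigenerics for the coding factors,'' but the coding factors are $\Stat$-proper with no appeal to $A^c$; the stationarity of $A^c$ enters through the $A^c$-properness of the sealing forcings, which makes the entire iteration $\Stat(A^c)$-proper, hence covering of countable sets holds, CS and RCS coincide, and $\Stat$ survives as a stationary class where the condensation arguments can be run. Finally, for the $\Delta_1$ definition one only needs the $\Sigma_1$ side for stationarity via the local witness $Y_\alpha$; nonstationarity is already $\Sigma_1$ over $H(\omega_2)$ with a club witness, so there is no need to decode a club from the tree pattern on that side.
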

Before starting the actual proof we outline its route roughly: The main idea is to use 
Shelah's proof of the saturation of $\NS$ from a Woodin cardinal as a starting point and try to add 
certain forcings which code additional information of the universe. This additional 
information can be used to obtain a nicer definition of stationarity on $\omega_1$.
The extra coding forcings should be chosen in such a way that they will not 
interfere with the usual sealing forcings which push the saturation of 
$\NS$ down to $\aleph_2$, and should be robust enough in that a once 
coded information should be preserved in all future further generic extensions of the universe.

To be a little more precise the proof is via a $\delta$-long ($\delta$ the Woodin cardinal) countable support iteration of semiproper
and $\Stat$-proper forcings. We will use a $\diamondsuit$-sequence $(a_{\alpha})_{\alpha < \delta}$
to guide the
iteration. At each stage $\alpha$ of the iteration we look at the $\alpha$-th entry of the sequence.
We will distinguish two different cases, a coding stage and a sealing stage and start to describe the first of 
the two stages:

If $a_{\alpha}$ is the $\forceP_{\alpha}$-name of a stationary subset 
$S\subset A$ then we code the characteristic function of $S$ into a 
pattern of nicely definable trees which should have a cofinal branch 
or be bounded, meaning in that particular context that
there are no cofinal branches through the tree in no $\omega_1$-preserving
outer model. This is followed by a bunch of forcings which add a 
more local version of this information to the universe. We shall 
see that the iteration of these forcings is $\Stat$-proper.

If $a_{\alpha}$ is a $\forceP_{\alpha}$-name of a long (i.e. of length $\aleph_2$) maximal 
antichain in the structure $P(A) / \NS $ then we seal off this 
antichain after collapsing its length to $\aleph_1$ but only if this forcing
is semiproper. This seemingly redundant move will enable us to 
argue for stationary set preservation for the resulting iteration, as, contrary to the semiproper case, there is no iteration theory of
stationary set preserving notions of forcing.
This forcing is seen to be $A^c$-proper and semiproper.
In the remaining case we just do nothing and force with the trivial forcing.

To summarize we arrive at a countable support iteration of $\Stat$-proper 
and $A^c$-proper forcings, therefore the iteration is
$\Stat(A^c)$-proper where $\Stat(A^c)$ should denote the
class $\{ M \in \Stat \, : \, M \cap \omega_1 \in A^c \}$.
We will see soon that the class $\Stat$ is projective stationary, i.e. $\Stat(A^c)$ of models remains 
everywhere stationary, hence the iteration with countable
support yields a $\Stat(A^c)$-proper extension of the ground model $M_1$.
Consequently all new countable sets of ordinals can 
be covered by countable sets of ordinals living in the ground model. 
We use this observation to argue that stationary subsets of $\omega_1$ are preserved in the 
just described iteration:

We can look at the iteration in a different way. 
As we only seal off when the sealing forcing is semiproper, and as the forcing 
from coding stages is $\Stat$-proper (as to be seen soon), hence $\Stat$-semiproper 
we drop the countable support iteration for a second and use an $RCS$-iteration 
instead, i.e. we use all the factors of the iteration but iterate them
using the revised countable support instead of the plain countable support.
We arrive at a $\Stat$-semiproper generic extension of the ground model $M_1$ which is stationary
set preserving as $\Stat$ is projective stationary. 
Yet, as $\Stat(A^c)=\{ M \in \Stat \, : \, M \cap \omega_1 \in A^c \}$
remains everywhere stationary, countable subsets of ordinals in the generic
extension can be covered by countable sets in the ground model.
Consequently the $RCS$-iteration is in fact 
just a countable support iteration. Indeed at every stage $\alpha < \delta$ 
we see that $V^{\forceP_{\alpha}}$ is a $\Stat(A^c)$-proper 
forcing extension of $V$, thus whenever there is an ordinal $\beta > \alpha$ 
such that $cof(\beta)$ is countable in $V^{\forceP_{\alpha}}$,
it has already been of countable cofinality in $V$. Thus the $RCS$-iteration is just a countable support
iteration, and we can iterate with countable support to end up with a $\Stat$-semiproper extension of the ground model.
As a consequence the final model, when using countable support iteration
for the iterands, preserves stationary subsets of $\omega_1$.

\section{The coding forcing}

As already mentioned we need a coding forcing which 
harmonizes with the stationary sealing forcings we 
need to keep the saturation of $\NS$ low at $\aleph_2$.
Our ground model for the iteration will be $M_1$, 
the canonical inner model with one Woodin cardinal. 
We cut $M_1$ at the Woodin cardinal $\delta$ to build 
Steel's $K$ there. We have already seen that $K^{\mathcal{J}^{M_1}_{\delta}} = \mathcal{J}_{\delta}^{M_1}$.
The move towards $K$ has the advantage that 
$K$ has a nice first order definition, as opposed 
to $M_1$, and generic absoluteness below $\delta$ is a well known fact for $K$.
We use $K$-trees $T_{\alpha}(\xi):= ((\alpha^{+\xi})^{< (\alpha^{+\xi} )})^{K}$, 
where $\beta$ is a cardinal from $K$, and either force a 
cofinal branch through it or we ensure that there can not 
be any cofinal branches in $\omega_1$-preserving outer models.
This way we can code arbitrary 0,1-patterns into sequences of cofinal or bounded trees.
The big advantage of this method is that once we decided to 
write a certain pattern using these trees this information 
will prevail in all $\omega_1$-preserving outer models. 
Thus we don't have to reconsider earlier information during 
the iteration. These sequences of cofinal or bounded trees 
will code characteristic functions of stationary subsets of 
$\omega_1$. Of course the process of specializing a tree off 
or shooting a cofinal branch should preserve stationary subsets  
of $\omega_1$ to be useful for our purpose. 
Moreover we should be able to iterate these 
forcings in a nice way. That this is indeed 
the case is the content of the next proposition.

\begin{Proposition}[Coding via specializing trees]\label{codinglemma} 
Assume GCH, $\beta > \omega_1$ regular, and let $\Stat$ be a 
stationary class which is projective stationary, i.e.
for every stationary $S \subset \omega_1$, 
$\Stat(S) = \{ M \in \Stat \, : \, M \cap \omega_1 \in S \}$ is 
everywhere stationary. 
Suppose that $\mathbb{Q}$ is an $\Stat$-proper notion of 
forcing of size less than $\beta$ and $G$ is V-generic. Then:

\begin{itemize}
\item[1.] $T(\beta):= ((\beta^+)^{(< \beta)})^{V}$ viewed as a forcing is $\Stat[G]$-proper over $V[G]$.
\item[2.] There is a proper forcing $\mathbb{R} \in V[G]$ of size $\beta^{++}$  
that destroys the properness of $T(\beta)$. More specifically
if $H$ is $\mathbb{R}$-generic over $V[G]$, then in any $\omega_1$ preserving outer model of $V[G][H]$ there is no
branch through $T(\beta)$ which is $T(\beta)$-generic over $V$.
\end{itemize}

\end{Proposition}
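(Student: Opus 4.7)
For Part 1, the natural approach is to exploit that $T(\beta)=((\beta^+)^{<\beta})^V$ is $<\beta$-closed (hence $\sigma$-closed) in $V$, since $\beta$ is regular and conditions are sequences of length $<\beta$. Combined with the $\Stat$-properness of $\forceQ$ and $|\forceQ|<\beta$, this should let me verify that the product $\forceQ\times T(\beta)$, equivalently the iteration $\forceQ\ast\check{T}(\beta)$ (since $T(\beta)\in V$), is $\Stat$-proper in $V$. Given a countable $N\in\Stat$ with $\forceQ,T(\beta)\in N$ and $(q,p)\in (\forceQ\times T(\beta))\cap N$, I would inductively construct a descending chain in $T(\beta)\cap N$ whose union in $V$ is again in $T(\beta)$ by $\sigma$-closure, while simultaneously selecting an $(N,\forceQ)$-generic $q^*\le q$ using $\Stat$-properness of $\forceQ$, and merge the two to obtain a joint $(N,\forceQ\times T(\beta))$-generic condition. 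A standard transfer lemma then yields $\Stat[G]$-properness of $T(\beta)$ in $V[G]$.

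For Part 2, I would define $\forceR\in V[G]$ as a specializing-style forcing whose generic function $f:T(\beta)\to\omega_1$ is designed so as to rule out $V$-generic branches through $T(\beta)$ in any $\omega_1$-preserving outer model of $V[G][H]$. The conditions of $\forceR$ would be countable partial functions $p:T(\beta)\rightharpoonup\omega_1$ together with a chain-injectivity constraint restricted to chains that meet all the $V$-dense subsets of $T(\beta)$ explicitly appearing in $p$; such a restriction is needed because the naive demand of injectivity on \emph{every} chain would be inconsistent with the many cofinal branches $T(\beta)$ already has in $V$ (e.g.\ the constant branch). The size estimate $|\forceR|\le (\beta^+)^{\omega_1}=\beta^{++}$ follows from $\GCH$. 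Properness of $\forceR$ in $V[G]$ I would prove by the standard $(N,\forceR)$-genericity construction: in a countable $N\prec H_\theta^{V[G]}$, enumerate the dense subsets of $\forceR$ lying in $N$, build a descending $\omega$-sequence of conditions in $\forceR\cap N$ meeting them, and verify the union still satisfies the restricted chain-injectivity clause.

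For the branch-killing conclusion: if $W\supseteq V[G][H]$ preserves $\omega_1$ and $b\subseteq T(\beta)$ is a $V$-generic branch in $W$, then $b$ is a cofinal chain of order-type $\beta$ meeting every $V$-dense subset of $T(\beta)$, in particular every such subset enumerated by some condition of $\forceR$ that lies in the generic filter $H$. The restricted chain-injectivity would then force $(\bigcup H)\!\upharpoonright\! b$ to be injective, yielding the impossible embedding $\beta\hookrightarrow\omega_1^W=\omega_1$, whence no such $b$ exists.

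The main obstacle is the calibration of $\forceR$ in Part 2: the chain-injectivity clause must be strong enough to produce the branch-killing contradiction, weak enough to be consistent with the preexisting $V$-branches in $T(\beta)$, and well-behaved enough for the properness argument to go through. The combinatorial verification that a descending countable chain of conditions admits a union satisfying the restricted injectivity clause is the crux, and it hinges on the fact that each condition constrains only countably many $V$-dense sets, so the limiting constraints remain countably enumerable and can be simultaneously satisfied using the abundance of color values in $\omega_1$.
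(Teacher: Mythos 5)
Your Part 1 is essentially the paper's argument and is fine in outline: the whole content is that a $\forceQ$-name $\dot{D}\in N$ for a dense subset of $T(\beta)$ can be reduced to a maximal antichain of $\forceQ$ of size at most $|\forceQ|<\beta$ together with a corresponding family of extensions in $T(\beta)\cap N$, and the ${<}\beta$-closure of $T(\beta)$ lets one amalgamate these into a single condition of $N$ handling $\dot{D}$ no matter what the generic is; iterating over all (names for) dense sets in $N$ and taking the union of the resulting $\omega$-chain gives the $(N[\dot{G}],T(\beta))$-generic condition. Make sure this antichain step is explicit, since it is the only place where $|\forceQ|<\beta$ is actually used.

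Part 2, however, has a genuine gap, and it is not just the calibration issue you flag. The decisive problem is the final contradiction: from injectivity of $\bigcup H$ on a generic branch $b$ (a chain of order type $\beta$) you conclude that $\beta\hookrightarrow\omega_1$ is ``impossible.'' It is not: the quantifier is over \emph{all} $\omega_1$-preserving outer models $W$ of $V[G][H]$, and such a $W$ (for instance a further extension by $Col(\omega_1,\beta)$, or conceivably $V[G][H]$ itself, since $|\forceR|=\beta^{++}$ and nothing you impose stops $\forceR$ from collapsing cardinals) may satisfy $|\beta|=\aleph_1$, in which case $\beta$ injects into $\omega_1$ and no contradiction arises. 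To get a contradiction from injectivity on an uncountable chain the colors must lie in $\omega$; but with colors in $\omega$ and countable conditions the domain-extension sets fail to be dense along branches on which a condition has already exhausted $\omega$, which is precisely why one cannot specialize a tree of height $>\omega_1$ in this direct way. (Secondarily, your restricted injectivity clause is not preserved under countable unions of conditions --- a chain in the union's domain may meet the listed dense sets only via witnesses scattered across the different $p_n$, so no single condition's constraint applies to it --- and it also does not yield injectivity of $\bigcup H$ on $b$, since a pair $\{s,t\}\subset b$ need not itself meet the listed dense sets.) The paper avoids all of this by first making the branch set small: it adds $\beta^{++}$ Cohen reals and then collapses $\beta^{++}$ to $\omega_1$, uses Silver's argument together with $\GCH$ to see that $T(\beta)$ then has at most $\aleph_1$ cofinal branches, none of them generic over $V$, and finally applies Baumgartner's ccc forcing specializing the (now height-$\omega_1$-sized) tree off this small set of branches, for which the ``no new cofinal branches in any $\omega_1$-preserving outer model'' conclusion is a known theorem. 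You would need either to adopt this reduction or to supply a genuinely different mechanism for excluding generic branches; the coloring into $\omega_1$ cannot do it.
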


\begin{proof}
We shall show first that $T(\beta):= ((\beta^+)^{(< \beta)})^{V}$ is $\Stat[G]$-proper over $V[G]$ for $G$ a generic
filter for the $\Stat$-proper forcing $\forceQ$. We will show that for any 
condition $p \in \forceQ$
\begin{multline*}
 p \Vdash \exists C \subset [H_{\theta}]^{\omega} (\forall M[\dot{G}] \in C \cap \Stat[\dot{G}] \, \forall t \in T(\beta) \cap M[\dot{G}]
\, \\ \exists
s < t (s \text{ is } (M[\dot{G}], T(\beta)) \text{-generic}))
\end{multline*}

So let us first fix $p \in \forceQ$. Then we let the club $C \subset [H_{\theta}]^{\omega}$ $(\in V[G])$ 
be the set of all models $M$ which contain $p$. We pick now an arbitrary $M \in C \cap S[\dot{G}]$ and assume,
using the $\Stat$-properness of $\forceQ$, that $M[\dot{G}]\cap V =M$.
We list all the dense subsets $D \subset T(\beta)$ in $M$. Additionally we list all the pairs in $M$ which are of the form
$(r, \dot{D})$ where $\dot{D}$ is a $\forceQ$-name, and $r\le p$ a condition such that $r \Vdash$\textquotedblleft
$\dot{D} \text{ is dense in } T(\beta)\textquotedblright$. 
Let $t \in T(\beta) \cap M$ be a condition. We shall show that $p$ forces that there is an $s < t$ which is 
$(M[\dot{G}], T(\beta))$-generic.
We recursively define a descending sequence of $T(\beta)$-conditions which are in $M$ like this.
First set $t_0:= t$. If $t_{n-1}$ is already defined we look at the $n$-th entry of our $\omega$-long list 
consisting of dense subset $D \subset T(\beta)$ and pairs $(r, \dot{D})$ of $\forceQ$-names for dense
subsets in $M$. We split into cases
\begin{enumerate}
 \item if the $n$-th entry of the list is some dense $D \subset T(\beta)$, $D \in M$, then we pick 
       a condition $t_n \in T(\beta) \cap M \cap D$ such that 
       $t_n < t_{n-1}$.
 \item if the $n$-th entry of the list is a pair $(r, \dot{D}) \in M$ such that 
       $r \Vdash$\textquotedblleft $\dot{D}$ is dense in $T(\beta)\textquotedblright$, then define recursively in $H_{\theta}$ a maximal 
       antichain $(a_{\gamma})_{\gamma < \kappa} =:A \subset \forceQ$ 
       below $r$ together with a descending sequence of $T(\beta)$-conditions $(t'_{\gamma})_{\gamma < \kappa}$ below $t_{n-1}$ as follows:
       Assume that $a_{\gamma}, t'_{\gamma}$ have already been constructed then 
       we let $a_{\gamma+1}$ be the $<$ least element below $r$ in $\forceQ \cap H_{\theta}$ (where $<$ denotes a previously
       fixed wellorder of $H_{\theta}$) 
       such that $a_{\gamma+1}$ is incompatible with all the $a_{\delta}, \delta < \gamma+1$,
       and such that there is a $t'_{\gamma+1} \in T(\beta)$ 
       such that $a_{\gamma+1} \Vdash_{\forceQ}$\textquotedblleft $t'_{\gamma+1} \in \dot{D} \land t'_{\gamma+1} \le t'_{\gamma}
\textquotedblright$.
       This antichain $A$ has size less that $\beta$ as $|\forceQ| < \beta$ thus we can always take 
       $t'_{\delta}$ to be the union of all the previous $t'_{i}$'s for limit stages $\delta < \kappa$.
       As $A$ is definable in $H_{\theta}$ and $M \prec H_{\theta}$, 
       $A \in M$ as well and finally we let the condition $t_n:= \bigcup_{\gamma < \kappa} t'_{\gamma}$
       which is an element of $M$ again. Note that by construction 
       \begin{itemize}
        \item[] for every $\gamma < \kappa$,
                $a_{\gamma} \Vdash_{\forceQ}$\textquotedblleft $t_n \Vdash_{T(\beta)} t'_{\gamma} \in \dot{H} \cap \dot{D}\textquotedblright$
       \end{itemize} 
        where $\dot{H}$ is the canonical name for the $T(\beta)$-generic filter. Therefore
    \begin{itemize}
     \item[($\ast$)] $r \Vdash_{\forceQ} \text{\textquotedblleft} t_n \Vdash_{T(\beta)} \dot{H} \cap \dot{D} \ne \emptyset.\textquotedblright$  
    \end{itemize}

\end{enumerate}
Finally we let $t_{\omega} := \bigcup_{n \in \omega} t_n$ and
we claim that indeed $$p \Vdash_{\forceQ} t_{\omega} \text{ is } (M[\dot{G}], T(\beta)) \text{-generic.}$$
But this is clear by construction of the $t_{\omega}$: 
if $\dot{D} \in M$ is the $\forceQ$-name of 
a dense subset of $T(\beta)$ and $p \Vdash_{\forceQ}$\textquotedblleft $\dot{D} \text{ is dense}\textquotedblright$
and so the pair $(p, \dot{D})$ will appear in our recursive construction from above,
say $\dot{D} = \dot{D}_{n}$ and then by ($\ast$) $p \Vdash_{\forceQ}$\textquotedblleft $
t_n \Vdash_{T(\beta)} \dot{H} \cap \dot{D} \ne \emptyset\textquotedblright$.
As $t_{\omega} < t_n$ we also have that $p \Vdash_{\forceQ}$\textquotedblleft $
t_{\omega} \Vdash_{T(\beta)} \dot{H} \cap \dot{D} \ne \emptyset\textquotedblright$,
so $p \Vdash_{\forceQ}$\textquotedblleft $t_{\omega} \text{ is } (M[\dot{G}], T(\beta)) \text{-generic.}\textquotedblright$

To prove the second statement, first add $\beta^{++}$ Cohen reals with a 
finite support product over $V[G]$, then L\'evy collapse $\beta^{++}$ to $\omega_1$ 
and let $V[G][H_1][H_2]$ denote the resulting model, which is a proper forcing 
extension of $V[G]$. By an observation of J.Silver we know that each 
$\beta$-branch through $T(\beta)$ which lies in $V[G][H_1][H_2]$, in 
fact lies already in $V[G][H_1]$. Indeed if $\dot{b}$ is a name in 
$V[G][H_1]$ for a new $T(\beta)$-branch, then we can build a binary 
$\omega$-tree of conditions in the L\'evy collapse and by its $\omega_1$ 
closure each branch has a lower bound, resulting in $2^{\aleph_0}=\beta^{++}$-many 
different interpretations of $\dot{b}$. Thus
$(T(\beta))^{V}$ has in $V[G][H_1]$ $2^{\aleph_0}=\beta^{++}$ many branches on a 
level $< \omega_1$, which is impossible as $V\models GCH$ and $|\forceQ|<\beta$.

Thus $T(\beta)$ has at most $\omega_1$-many branches in $V[G][H_1][H_2]$, and none of those branches is cofinal in $\beta^+$, therefore none
of the branches is $T(\beta)$-generic over $V$. Also every node is included in a $\beta$-branch. This enables us to use Baumgartners method
of ``specializing a tree off a small set of branches''.

\begin{Fact}
If T is tree of height $\omega_1$, such that each node is contained in a cofinal branch, and with at most $\aleph_1$ many cofinal branches,
then there is a ccc forcing $\mathbb{P}$ such that if G is $\mathbb{P}$-generic then whenever W is an outer model of V[G] with the same
$\omega_1$ then each cofinal branch through W belongs already to V.
\end{Fact}

Now we can use the forcing $\mathbb{P}$ from the Fact above to build the model $V[G][H_1][H_2][H_3]$ and obtain that whenever $W$ is an
$\omega_1$ preserving outer model of $V[G][H_1][H_2][H_3]$ then 
each branch through the tree $T(\beta)$ in $W$ is in fact in $V[G][H_1][H_2]$, and
therefore as already noted in $V[G][H_1]$. As no branch through $T(\beta)$ in $V[G][H_1]$ is cofinal in $\beta^+$ (by the ccc), and
$T(\beta)$ generic branches are necessarily cofinal in $\beta^+$, we are done.

\end{proof}

\section{The class $\Stat$}
Note that in the following we constantly use the fact 
that $M_1$ below the Woodin cardinal coincides with Steel's core model $K$ as build in $\mathcal{J}^{M_1}_{\delta}$.
The move towards $K$ has the advantage that, contrary to $M_1$ 
$K$ admits a first order definition which works uniformly in all size $<\delta$
generic extensions. The forcing iteration we will use in the 
proof of the theorem lives below the Woodin cardinal, thus every
intermediate model of the iteration is able to define its $K$ correctly. 
As $M_1$ is the ground model of our iteration, we do not have access to full condensation.
Knowing that a sufficiently elementary submodel $M$ of some 
$\mathcal{J}^{M_1}_{\eta}$ collapses to an $M_1$ initial 
segment is nevertheless of highest importance in our proof.
The reason for this is lies in the fact that we use $K$ 
definable trees, living on $K$ cardinals. It is therefore 
desirable that these trees still live on $K$ initial segments 
even after we transitively collapse, in order to not completely 
lose control of the things we are talking about. 
These considerations will become clearer once the 
everywhere stationary class $\Stat$, introduced for 
these very reasons is seen in action during the proof.

We introduce without much further ado:

\begin{Definition}
Let $\Stat$ be a class of countable sets. We say that $\Stat$ is everywhere stationary if
\begin{enumerate}
\item for every regular cardinal $\theta$, $\Stat \cap H(\theta)$ is a stationary subset of $[H(\theta)]^{\omega}$, and
\item $\Stat$ is closed under truncation, I.e $X \cap H_{\theta} \in \Stat$ whenever $X \in \Stat$ and $\theta$ a regular cardinal.
\end{enumerate}

\end{Definition}
\begin{Definition}

Let $M$ be a (sufficiently) elementary submodel
of some $K$ initial segment $\mathcal{J}^{K}_{\eta}$. 
We say that $M$ collapses nicely if the following demands are met: \begin{enumerate}
\item the transitive collapse
$\bar{M}$ is an initial segment of $K$, i.e. $\bar{M} \vartriangleleft K$, and
\item whenever $\omega_1^{\bar{M}}$ is
a cardinal in a $K$-initial segment $\mathcal{J}^{K}_{\eta}$ 
then already all cardinals of $\bar{M}$ remain cardinals in $\mathcal{J}^{K}_{\eta}$, and
\item $\bar{M}$ is $K$-correct, meaning that if $\Phi(x)$ denotes 
the first order formula defining $K$ and use it to define $K$ inside the transitive model $\bar{M}$  then
$K^{\bar{M}}= \bar{M}$.
\end{enumerate}
\end{Definition}
This list seems quite daring on first sight, nevertheless there are plenty of nicely collapsing submodels:
\begin{Lemma}
Let $\Stat$ denote the class of nicely collapsing submodels of $M_1$. Then $\Stat$ is everywhere stationary in $M_1$.
Moreover $\Stat$ is projective stationary, meaning that
for every stationary $X \subset \omega_1$, the set $\Stat(X):=\{ M \in \Stat \, : \, M \cap \omega_1 \in X \}$
remains an everywhere stationary class.
\end{Lemma}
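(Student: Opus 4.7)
The plan is to derive both properties from Fact \ref{axioms}(3) combined with a continuous-chain construction, using Fact \ref{axioms}(2) to handle $K$-correctness. Throughout I work in $M_1$ below the Woodin cardinal, so $K$ coincides with $\mathcal{J}^{M_1}_\delta$ and is defined by a uniformly absolute $\Sigma_2$-formula.

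For everywhere stationarity, I would first fix a regular $\theta$ and choose $\rho$ large enough that $H(\theta) \in \mathcal{J}^{M_1}_\rho$ and $\mathcal{J}^{M_1}_\rho$ is an $\omega$-sound, $(\omega,\omega_1,\omega_1+1)$-iterable premouse. By Fact \ref{axioms}(3), stationarily many countable $N \prec \mathcal{J}^{M_1}_\rho$ collapse to an initial segment $\bar N \vartriangleleft M_1$. Setting $M := N \cap H(\theta)$, its transitive collapse equals $H(\theta)^{\bar N}$, which is still an initial segment of $M_1$, giving clause (1). Clause (2) then follows from (1) together with the fact that nested $K$-initial segments preserve cardinal structure below any commonly-seen cardinal: if $\omega_1^{\bar M}$ is still a cardinal in some $\mathcal{J}^K_\eta$, then $\bar M \vartriangleleft \mathcal{J}^K_\eta$ and so the $\bar M$-cardinals are among the $\mathcal{J}^K_\eta$-cardinals. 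Clause (3) is immediate from Fact \ref{axioms}(2) and downward absoluteness of the $\Sigma_2$-definition of $K$ from $M_1$ to $\bar M$. Closure under truncation follows because for regular $\theta'$ the collapse of $M \cap H(\theta')$ is $H(\theta')^{\bar M} \vartriangleleft \bar M$, inheriting all three clauses.

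For projective stationarity, fix a stationary $X \subseteq \omega_1$, a regular $\theta$, and a function $F:[H(\theta)]^{<\omega}\to H(\theta)$; I must find a countable $M \prec (H(\theta),F,\dots)$ with $M \in \Stat$ and $M \cap \omega_1 \in X$. The plan is to take a very large $\rho$, let $\mathcal{N} \vartriangleright \mathcal{J}^{M_1}_\rho$ be a slightly larger $\omega$-sound iterable premouse holding all relevant parameters, and then define continuously the $\Sigma_1$-hull $N_\xi$ of $\{\mathcal{J}^{M_1}_\rho, F, X, \xi\}$ inside $\mathcal{N}$ for $\xi<\omega_1$. Each $N_\xi$ is by construction $\omega$-sound and $\Sigma_1$-elementary in $\mathcal{N}$, so the Steel--Neeman condensation theorem applies and yields that the transitive collapse of $N_\xi$ is a proper initial segment of $\mathcal{N}$, hence of $M_1$. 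The set $D := \{\xi<\omega_1 : N_\xi \cap \omega_1 = \xi\}$ contains a club, so for any $\xi_0 \in X \cap D$ the model $M := N_{\xi_0} \cap H(\theta)$ lies in any prescribed club of $[H(\theta)]^\omega$, collapses nicely by the first part, and satisfies $M \cap \omega_1 = \xi_0 \in X$.

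The principal obstacle is to arrange the chain so that the condensation theorem applies uniformly to every $N_\xi$: Lemma \ref{condensation} as stated only asserts stationarity of the nicely collapsing submodels via a contradiction argument, whereas projective stationarity demands a club along a prescribed chain. The trick is to construct the $N_\xi$ as $\Sigma_1$-hulls inside one fixed witness $\mathcal{N}$, which forces $\omega$-soundness and $\Sigma_1$-elementarity for each $N_\xi$ simultaneously. Verifying clauses (2) and (3) for the truncated models $N_\xi \cap H(\theta)$ again rests on Fact \ref{axioms}(2), since the first-order definability of $K = \mathcal{J}^{M_1}_\delta$ is what transports $K$-correctness and cardinal-preservation from $\bar N_\xi$ down to its truncation.
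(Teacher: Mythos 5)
Your route for everywhere stationarity matches the paper's: apply Fact~\ref{axioms}(3) (the condensation lemma) to $\mathcal{J}^{M_1}_\rho$, truncate at $H(\theta)$, and argue $K$-correctness via the first-order definability of $K$. For projective stationarity you construct a continuous chain of $\Sigma_1$-hulls in a fixed premouse $\mathcal{N}$ and intersect the club of good indices with $X$; the paper instead runs a contradiction argument with a single $\Sigma_n$-Skolem hull and a mouse comparison. The two are equivalent in substance, and your chain construction is a cleaner way to unpack the paper's remark that ``we can assume that $N \cap \omega_1 \in X$.''

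There are two real gaps. First, your justification of clause (2) of ``nicely collapsing'' does not hold: nested $K$-initial segments do \emph{not} in general preserve cardinal structure. From $\omega_1^{\bar M}$ being a cardinal in $\mathcal{J}^K_\eta$ one cannot infer $\bar M \vartriangleleft \mathcal{J}^K_\eta$ (the height of $\bar M$ may exceed $\eta$), and even when $\bar M \vartriangleleft \mathcal{J}^K_\eta$ does hold, the larger initial segment may contain a surjection collapsing a $\bar M$-cardinal that $\bar M$ itself does not see. What clause (2) actually needs is that the cardinals of $\bar M$ are genuine $K$-cardinals up to its height, which is an additional requirement on the hulls one selects, not a consequence of condensation alone. (The paper's own proof is silent on this clause, so you are not alone in skipping it, but the reason you assert is incorrect.)

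Second, the Steel--Neeman condensation theorem as stated in the paper requires a \emph{fully} elementary $\pi : \mathcal{H} \to \mathcal{M}$ with $cp(\pi) = \rho_\omega^{\mathcal{H}}$, while your $N_\xi$ are $\Sigma_1$-hulls, so the anticollapse map is only $\Sigma_1$-elementary. The paper's Lemma~\ref{condensation} and its projective-stationarity argument avoid this by running a comparison directly: compare the collapse of the $\omega$-sound, $\omega$-projecting hull against a countable $\omega$-sound, $\omega$-projecting initial segment of $M_1$ and observe that neither side moves. To repair your argument, either run that comparison for each $N_\xi$ in place of the direct appeal to condensation, or cite a version of condensation explicitly formulated for $\Sigma_1$-preserving embeddings of $\omega$-sound structures.
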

\begin{proof}
The first item is just the Lemma \ref{condensation} on Condensation we have proved already.
For the second part we have to show that club often $\bar{M}=\mathcal{J}^{M_1}_{\eta}$ is $K$-correct, 
but this is easy as we could have just picked an $M$ which is elementary 
in the universe with respect to the $K$ defining formula $\Phi$.

$\Stat$ is trivially closed under truncation and all the 
considerations so far hold for $H(\theta) \cap M$ as well, 
thus $H(\theta) \cap M \in C \cap \Stat$ and $\Stat$ is everywhere stationary.

Finally to show that $\Stat$ is projective stationary
we assume the opposite, thus there is a stationary $X \subset \omega_1$ and a fixed regular cardinal $\theta$ 
for which there 
is a club $C \subset [H_{\theta}]^{\omega}$ such that every
$M \in C$ does not collapse to an initial segment of $M_1$ or
satisfies $M \cap \omega_1 \notin X$. Let $C$ be the least such club in
the wellorder. We pick a regular $\lambda > \theta$ and 
the $\Sigma_n$-Skolem hull denoted by $N$ of $\{X, \theta\}$ in $H_{\lambda}$. 
We can assume that $N \cap \omega_1 \in X$. 
It is clear that $N \cap H_{\theta} \in C$, however
we can arrange that $N$ collapses to an initial segment of $M_1$ and 
so does $N \cap H_{\theta}$ which is a contradiction.
To see that we can always assume that $N$ collapses to an
initial segment of $M_1$ we first note that $N$ projects to $\omega$
as it is not fully elementary in $H_{\lambda}$ and can be chosen to be $\omega$-sound.
If we pick a countable initial segment 
of $M_1$, say $\mathcal{J}^{M_1}_{\eta}$
which projects to $\omega$ and is $\omega$-sound as well we can start to compare these two models.
But now both models will not move during the comparison
and so $\bar{N} \vartriangleleft \mathcal{J}^{M_1}_{\eta}$ as desired.

\end{proof}

The role of $\Stat$ and its somewhat peculiar definition will become clearer during the advance of the proof.
The reason we have to constantly fall back on $\Stat$ is that in the end 
we want suitable countable models to be able to see the patterns of cofinal 
or bounded trees we inscribed during the iteration. As these trees are 
defined in $M_1$ on $M_1$-cardinals, the countable models should be 
correct about their $M_1$ and their sequence of $M_1$-cardinals.
As condensation in general fails in $M_1$ we are compelled to work with $\Stat$.

\section{The definition of the iteration}
We now describe the iteration in detail which is used to prove the main theorem. 
It utilizes a $\delta$-long countable support iteration of $\Stat$-semiproper 
forcings, the factors of it will be explained in the following subsections.
Our ground model is the inner model $M_1$ with one Woodin cardinal $\delta$, which is also Woodin with $\diamondsuit$ as was shown.
Actually we are working all the time below the Woodin
cardinal $\delta$, and we emphasize again that $\mathcal{J}^{M_1}_{\delta} = K$.
We will therefore use $M_1$ and $K$ synonymously from now on and hope
that it will not confuse the reader. Remember that
in $M_1$ $\GCH$ does hold and there exists a well-order. denoted by $<$ of $M_1$.
Fix a stationary, co-stationary subset $A \subset \omega_1$.
The goal is to arrange the sealing stages, where we seal a maximal antichain 
of stationary subsets of $\omega_1$ off and the coding stages, where we code 
the characteristic function $\chi_S$ of a stationary $S \subset \omega_1$
into an according pattern of cofinal and bounded (i.e. trees which have no branch of the
height of the tree) trees in such a 
way that they will not interfere with each other.

We use the $\diamondsuit$-sequence
$(a_{\alpha})_{\alpha < \delta}$ to determine with which forcing we should force at stage $\alpha$.
Thus assume that $\alpha < \delta$
and we have already constructed $\mathbb{P}_{\beta}$ for $\beta \le \alpha$.
We define the forcing $\dot{\forceQ}_{\alpha}$ in $V^{\forceP_{\alpha}}$ as follows:
\begin{enumerate}
\item if $a_{\alpha}$ is a $\forceP_{\alpha}$-name of a stationary subset 
$S$ of $\omega_1$ then we code the characteristic function of $S$ into 
a pattern of trees, followed by forcings which will localize the inscribed 
information. These forcings will be specified below in a detailed description.
\item if $a_{\alpha}$ is a $\forceP_{\alpha}$-name of a maximal antichain 
of stationary subsets of our fixed stationary, co-stationary 
$A \subset \omega_1$, then use the sealing forcing to seal it off, provided the
sealing forcing is $\Stat$-semiproper. If it is not force with the usual Levy collapse $Col(\aleph_1, 2^{\aleph_2})$.
\item else collapse $2^{\aleph_2}$ to $\aleph_1$ or force to create a default pattern of cofinal trees
\end{enumerate}
The third point of the definition deserves an explanation: our goal is that we only create $\omega_1$-length patterns
of cofinal and bounded trees which correspond to characteristic functions of stationary subsets of $\omega_1$.
We allow a default pattern however to fill in gaps in the sequence of patterns.
The reason that we do so is to avoid the case of accidentally forced patterns during our iteration.
Indeed if there is a gap in between two characteristic functions of stationary sets, it could happen that
the noise of the forcings we use create an unwanted pattern of trees in the gap, which we cannot kill off once it is produced.
This potentially ruins our argument. To avoid this degenerated case we set up the iteration in such a way that
a default pattern is forced in all gaps between characteristic functions of stationary sets. This is cumbersome
to explain in the recursive way we chose above, therefore we used the blurry words in the third case and
hope that this remark makes the point clear.

\section{The sealing forcings}
Whenever $a_{\alpha}$ at stage $\alpha$ is the $\forceP_{\alpha}$-name 
of a long maximal antichain in $P(A)/ \NS$ then we have to seal it off using the following notion of forcing:

\begin{Definition}
Assume that $\vec{S}_{A}$ is a maximal antichain of stationary subsets 
of $A$, where $A \subset \omega_1$
is stationary, co-stationary. Conditions of the sealing forcing $\seal_A$ 
are pairs $(p,c)$ where $p: \alpha+1 \rightarrow \vec{S}_A $ is a function 
and $c: \alpha+1 \rightarrow \omega_1$ is a function with range a closed subset of $\omega_1$ such that
$$ \forall \xi \le \alpha (c(\xi) \in A \rightarrow  c(\xi) \in \bigcup_{i \in \xi} p(i))$$ \end{Definition}
It is well known that $\seal_A$ is $\omega$-distributive, preserves stationary subsets of $\omega_1$ but 
is not necessarily semiproper. The definition of $\seal_A$ still makes sense 
if the antichain $\vec{S}_A$ is not maximal. In that case forcing with 
$\seal_A$ turns the antichain into a maximal one. If $A$ is not 
stationary then the definition of $\seal_A$ is again meaningful 
and the forcing $\seal_A$ has a dense subset which is countably closed.

\section{The coding forcings}
We turn now to the forcings mentioned in case 1.
They are defined as a three step iteration
of forcings $\forceQ_{\alpha}^0 \ast \forceQ_{\alpha}^1 \ast \forceQ_{\alpha}^2$ which will be defined now:

\subsection{$\forceQ_{\alpha}^0$}

For brevity we use in the following often the notion $(K)_{\eta}$ for an ordinal
$\eta$ which just means $\mathcal{J}^K_{\eta}$.
Assume we arrived in our iteration at stage $\alpha$, $\alpha$ a cardinal of the ground model $K$ which
is $M_1$ below the Woodin cardinal $\delta$,
and the $\diamondsuit$-sequence 
$(a_{\beta})_{\beta < \delta}$ at stage $\alpha$ is the $\forceP_{\alpha}$-name 
of a stationary subset $S \subset \omega_1$. Then we want to write the 
characteristic function $\chi_S$ of $S$ into a pattern of canonical trees for 
which we either shoot a cofinal branch through, or make sure that 
there will never be a cofinal branch in each $\omega_1$-preserving outer model.

More specifically we let $\forceQ^0_{\alpha}$ be an $\omega_1$-length iteration of
forcings $(\forceP_{\xi} \, : \, \xi < \omega_1)$ 
defined as follows: For $\xi < \omega_1$ we consider the
forcing $T_{\alpha}(\xi)$ consisting of the tree of $K$-sequences of 
elements of $(\alpha^{+(\xi+1)})^K$ of length less than $(\alpha^{+\xi})^K$, i.e.
we let $T_{\alpha}(\xi)= ((\alpha^{+(\xi+1)})^{< (\alpha^{+\xi})})^{K}$. This forcing remains $\Stat$-proper
over $K[G_{\alpha}]$, as was shown in Lemma \ref{codinglemma}.
However, again by Lemma \ref{codinglemma}
there exists a forcing $\mathbb{R_{\alpha} (\xi)}$ of size $\alpha^{+(\xi +2)}$ 
such that after forcing with $\mathbb{R}_{\alpha}(\xi)$, if $H$ denotes the generic for $\forceR_{\alpha}(\xi)$,
each outer model of $K[G_{\alpha}][H]$ which preserves $\omega_1$ 
can not contain any branch through $T_{\alpha}(\xi)$ which is cofinal in $\alpha^{+(\xi+1)}$.
Now we take the stationary set $S \subset \omega_1$ and code the pattern
of $S$ into an $\omega_1$-block of trees $T_{\alpha}(\xi) := ((\alpha^{+(\xi+1)})^{< (\alpha^{+\xi})})^{K}$,
using only every third cardinal successor of $\alpha$ in $K$ for a nontrivial forcing in order to guarantee 
$\Stat$-properness.
\begin{enumerate}
\item if $\xi \in S$ then we let $\forceP_{\xi \cdot 3}$ be the 
forcing which shoots a cofinal branch through the tree $T_{\alpha}(\xi^{+++})$ using the tree forcing
with conditions that are nodes in the tree $T_{\alpha}(\xi^{+++})$.
 
\item if $\xi \notin S$ then pick $\forceP_{\xi \cdot 3}$ to 
specialize the tree $T_{\alpha}(\xi^{+++})$, using the already defined $\Stat$-proper
specialization forcing.

\item if the index $\xi$ of $\forceP_{\xi}$ is not of the form $\eta \cdot 3$ for an $\eta < \omega_1$
      we let $\forceP_{\xi}$ be the trivial forcing. 
\end{enumerate}
We define $\forceQ^0_{\alpha}$ to be the countable support iteration of
the just defined $\forceP_{\xi}$, $\xi < \omega_1$.

Note that as $K$ satisfies the $\GCH$ and as we only used every third $K$ cardinal
we ensure that the condition on the size of the forcings in Lemma \ref{codinglemma}
is met, thus each iterand $\forceP_{\xi}$ is an $\Stat$-proper forcing
and so is $\forceQ_{\alpha}^0$.
Note further that if $G^0$ is $\forceQ^0_{\alpha}$-generic
then in $M_1[G_{\alpha}] [G^0]$ the model sees the stationary 
set $S$ via the pattern of the trees $(T(\alpha))^{K}$, having a cofinal branch or not.

To avoid unwanted patterns we can without any problems demand that all $\omega_1$-blocks are used, meaning that each block has a
pattern (possibly a dummy pattern) written on it.

\subsection{$\forceQ_{\alpha}^1$}

To define the next forcing $\forceQ_{\alpha}^1$ 
we first fix a sufficiently big initial segment 
$\mathcal{J}^{K}_{\alpha^1}$ of $K$ such that 
the generic filter $G_{\alpha} \ast G^0$ is also 
generic over $\mathcal{J}^{K}_{\alpha^1}$. We then 
collapse the size of the structure 
$\mathcal{J}^{K}_{\alpha^1}[G_{\alpha}][G^0]$ to $\omega_1$ 
via the usual Levy collapse, $\forceQ_{\alpha}^1:= Coll(\lambda, \omega_1)$, 
where $\lambda = |\mathcal{J}^{K}_{\alpha^1}|$.
If $G^1$ is $\forceQ_{\alpha}^1$-generic then in $K[G_{\alpha}][G^0][G^1]$ 
there exists a subset $X_{\alpha} \subset \omega_1$ which codes the structure
$\mathcal{J}^{K}_{\alpha^1}[G_{\alpha}][G^0]$. Whenever $M$ 
is a transitive model of $\ZFC$, which is $K$-correct and $X_{\alpha} \in M$ 
then $M$ will say that:
\begin{enumerate}
\item[] there is a $K$-cardinal $\alpha$ such that the previously coded stationary 
set $S$ can be read off from a pattern of cofinal or bounded branches through 
canonical $K$-trees starting at $\alpha$.
    
\end{enumerate}

\subsection{$\forceQ_{\alpha}^2$ (Localization)}

Our goal is to add a set $Y_{\alpha} \subset \omega_1$ which codes the same information as the
just added set $X_{\alpha}$, and additionally reflects the 
desired property down to all suitable models of size $\aleph_0$. 
To be more specific we want a $Y_{\alpha} \subset \omega_1$ for 
which the following holds:

\begin{itemize}
\item[$\heartsuit$]  $\forall M$ countable, 
transitive model of
 $\ZFP$, $(Y_{\alpha} \cap \omega_1) \in M$, $(K)^M \subset \mathcal{J}^{K}_{\eta}$ 
for some $\eta < \omega_1$, $(\omega_1)^M=(\omega_1)^{(K)^M}$ and which satisfies that
 ($\forall \beta \in M \forall X$ (\textquotedblleft $X=\mathcal{J}^{K}_{\beta}$\textquotedblright 
$\rightarrow \mathcal{J}_{\beta}^K \subset K^M$)) then $M$ satisfies that 
it can decode out of $Y_{\alpha} \cap \omega_1$ an ordinal
$\bar{\alpha}$ such that for every $\xi < \omega_1$ $(\xi \in S$ if and only if $T_{\bar{\alpha}}(\xi^{+++})$ 
has a cofinal branch in $M$).
\end{itemize}

To force property $\heartsuit$ we just use approximations of size $\aleph_0$. Thus our forcing $\forceQ^2_{\alpha}$ consists
of the set of $\omega_1$-Cohen conditions $p: |p| \rightarrow 2$ in $K[G_{\alpha}][G^0][G^1]$ with the
properties that

\begin{itemize}
\item The domain of $p$ is a limit ordinal $< \omega_1$.
\item The even part of $p$ codes $X_{\alpha} \cap \omega_1$.
\item For any limit ordinal $\xi < \omega_1$, $\xi \le |p|$ and any transitive $\ZFP$-model $M$ of size $\omega_1$, with
$\omega_1^M = (\omega_1)^{{K}^{M}}$ and which contains $p \upharpoonright \xi$, 
moreover satisfies that $(K)^M \subset (K)_{\eta}$ for some $\eta < \omega_1$
we have that there is $\bar{\alpha}$ such that $\forall \xi < (\omega_1)^M$
$\xi \in S$ if and only if $(T_{\bar{\alpha}}(\xi^{+++}))^{(K)^M}$ has a branch in $M$ which is cofinal in $((\bar{\alpha})^{+(\xi+3)})^{K^{M}}$.

\end{itemize}

\begin{Claim}
The forcing
$\forceQ_{\alpha}^2$ is $\Stat[G_{\alpha}][G_0][G_1]$-proper.
\end{Claim}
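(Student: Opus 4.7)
The plan is the standard fusion-style argument for countably-supported coding forcings, together with a condensation argument to verify that the limit condition remains in $\forceQ^2_\alpha$. Fix a sufficiently large regular $\theta$, an $M \in \Stat[G_\alpha][G^0][G^1]$ with $M \cap V = M_0$, $M_0 \in \Stat$, containing $\forceQ^2_\alpha$ and all relevant parameters (in particular $\alpha$, $\alpha^1$, $X_\alpha$, the name for $S$, and the trees $T_\alpha(\xi)$). Let $\alpha^* := M \cap \omega_1$ and $\pi : M_0 \to \bar M_0$ be the transitive collapse; by niceness $\bar M_0 \triangleleft K$, $\omega_1^{\bar M_0}= \alpha^*$, and $\bar\alpha := \pi(\alpha)$ is a $K$-cardinal below $\alpha^*$.

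Given $p \in \forceQ^2_\alpha \cap M$, enumerate the dense open subsets of $\forceQ^2_\alpha$ that belong to $M$ as $D_0,D_1,\dots$. Inside $M$, build a descending sequence $p=p_{-1}\ge p_0 \ge p_1 \ge \cdots$ with $p_n \in D_n$ and with $|p_n|=\gamma_n$ strictly increasing and cofinal in $\alpha^*$. Let $p_\omega := \bigcup_n p_n$; its domain is the limit ordinal $\alpha^*$, and its even part equals $X_\alpha \cap \alpha^*$. Granted that $p_\omega \in \forceQ^2_\alpha$, it is automatically $(M,\forceQ^2_\alpha)$-generic, because for every dense $D_n \in M$ the condition $p_\omega \le p_n \in D_n$ forces $p_n \in \dot G \cap D_n \cap M$.

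The one nontrivial point is to verify that $p_\omega$ satisfies clause $\heartsuit$ at $\xi = \alpha^*$: for every transitive $\ZFP$-model $N$ of size $\aleph_1$ containing $p_\omega$, with $\omega_1^N = \omega_1^{K^N}$ and $K^N$ a $K$-initial segment of the prescribed form, one can decode from $p_\omega$ an ordinal $\bar\gamma$ such that $\xi \in S \iff T_{\bar\gamma}(\xi^{+++})^{K^N}$ has a cofinal branch in $N$. This is where the membership $M_0 \in \Stat$ is used. Since $\bar M_0$ is $K$-correct and since the even part of $p_\omega$ agrees with $X_\alpha \cap \alpha^*$, the coding used by $\forceQ^1_\alpha$ guarantees that inside any such $N$ one recovers precisely the collapsed structure $\pi(\mathcal{J}^K_{\alpha^1}[G_\alpha][G^0])$, from which the ordinal $\bar\gamma := \bar\alpha$ and the collapsed sequence of trees $T_{\bar\alpha}(\xi^{+++})^{\bar M_0}$ can be read off. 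By niceness, each $\bar\alpha^{+(\xi+j)}$ is a cardinal in $K^N$, so $T_{\bar\alpha}(\xi^{+++})^{\bar M_0} = T_{\bar\alpha}(\xi^{+++})^{K^N}$.

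It remains to match the branch pattern with $S$. If $\xi \in S$, the component $\forceP_{\xi\cdot 3}$ of $\forceQ^0_\alpha$ added a cofinal branch through $T_\alpha(\xi^{+++})^K$; its collapse under $\pi$ is a cofinal branch through $T_{\bar\alpha}(\xi^{+++})^{\bar M_0}$, and this branch is coded into $X_\alpha \cap \alpha^*$, hence recovered by $N$. If $\xi \notin S$, Proposition \ref{codinglemma} applied to $\mathbb{R}_\alpha(\xi)$ (forced at stage $\xi\cdot 3$) rules out any cofinal branch through $T_\alpha(\xi^{+++})$ in any $\omega_1$-preserving outer model; after collapse, the same holds for $T_{\bar\alpha}(\xi^{+++})^{\bar M_0}$ in $V[G_\alpha][G^0][G^1]$, and consequently in the sub-universe $N$. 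This establishes $\heartsuit$ at $\alpha^*$; the intermediate limit stages $\xi < \alpha^*$ are handled by passing to some $p_n$ with $\gamma_n \ge \xi$, which already lies in $\forceQ^2_\alpha$.

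The main obstacle is the verification of $\heartsuit$ at $\xi = \alpha^*$; this is precisely why $\Stat$ was tailored so that the collapse of $M_0$ sits inside $K$ as a genuine initial segment with $K$-cardinals preserved — without this, a putative witness $N$ could disagree with the ambient universe about heights of the trees $T_{\bar\alpha}(\xi^{+++})$ and spoil the equivalence.
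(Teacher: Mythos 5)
Your proposal follows essentially the same route as the paper: a fusion construction with $|p_n|$ cofinal in $M\cap\omega_1$, a check that the limit is a condition by verifying clause $\heartsuit$ at the new limit $\alpha^*$, and an appeal to the niceness of the $\Stat$-collapse to ensure a witnessing countable transitive $N$ computes the trees $T_{\bar\alpha}(\xi^{+++})$ the same way the decoded initial segment does. The one place you compress is the assertion ``by niceness, each $\bar\alpha^{+(\xi+j)}$ is a cardinal in $K^N$'' — niceness is a property of the collapse $\bar M_0$, not of $K^N$, and the paper cashes this out with a short case split comparing the height $\zeta$ of $\bar M_0$ against the height $\mu$ of the $K$-initial segment enveloping $K^N$, using clause (2) of the nice-collapse definition in the case $\zeta\le\mu$ and downward absoluteness of cardinality in the other; your plan would need this (or an equivalent sandwich argument) to be complete, but the idea is correctly identified.
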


\begin{proof}
Note that we can assume, via arguing by induction on the length of the 
iteration that $\Stat$ is stationary in the structure $[H_{\kappa} \cap K]^{\omega}$ evaluated in 
$K^{\forceP_{\alpha} \ast \forceQ^0_{\alpha} \ast \forceQ^1_{\alpha}}$ as $\Stat(A^c)$ is.
Note also that the forcing $\mathbb{Q}^2_{\alpha}$ has the extendibility property, meaning that given some condition
$p \in \mathbb{Q}^2_{\alpha}$
and some countable limit ordinal $\gamma > |p|$, we can always find a condition $q$ extending $p$, such that $|q|=\gamma$.
This is easily seen as given a condition $p \in \mathbb{Q}^2_{\alpha}$, one can extend its length to $\gamma$ in such
a way that the even entries still code $X_{\alpha}$ up to $\gamma$, and on the odd entries one codes an ordinal $\eta > \gamma$
into an $\omega$-block of the odd entries up to
$\gamma$. Let $q$ be such a sequence. Then no model $N$ which has $q$ as an element can have $\gamma$ as
its $\omega_1$ as it can use $q$ to see the countability of $\eta> \gamma$. Thus the third property is
automatically satisfied and the sequence $q$ of length $\gamma$ is a condition in $\mathbb{Q}^2_{\alpha}$.
We want to show the $\Stat[G_{\alpha}][G_0][G_1]$-properness now, thus we shall exhibit a club 
$C \subset [H(\theta)^{K[G_{\alpha}][G^0][G^1]}]^{\omega}$ such that 
whenever we pick an arbitrary countable elementary submodel $M \in C \cap \Stat[G_{\alpha}][G_0][G_1]$
and some condition $p \in M \cap \mathbb{Q}_{\alpha}^2$ we can show that there is a stronger $q$ which is
$(M,\mathbb{Q}_{\alpha}^2)$-generic.
Choose $C$ to be a club of elementary submodels of some $\mathcal{J}^{K}_{\sigma}[G_{\alpha}][G^0][G^1]$,
such that the set $X_{\alpha}$ is in every element of $C$, and let the ordinal $\sigma$ be 
large enough to enable $\mathcal{J}^{K}_{\sigma}[G_{\alpha}][G^0][G^1]$ to define its $K$ properly.
Decoding the information out of $X_{\alpha}$,  $(\mathcal{J}^{K}_{\sigma}[G_{\alpha}][G^0][G^1]$
will agree that \begin{enumerate}
\item the $K$-initial segment $\mathcal{J}^{K}_{\alpha^1}$ coded into $X_{\alpha}$ 
is an initial segment of the $K$-version which is computed in the model
$\mathcal{J}^{K}_{\sigma}[G_{\alpha}][G^0][G^1]$. Moreover the model 
$\mathcal{J}^{K}_{\sigma}[G_{\alpha}][G^0][G^1]$ will see that $\mathcal{J}^{K}_{\alpha^1}$
is an initial segment of its own version of $K$. To emphasize the fact that
we decode out of $X_{\alpha}$ we write 
$dec(X_{\alpha})$ to denote the mouse $\mathcal{J}^{K}_{\alpha^1}$ which is coded into $X_{\alpha}$. Thus 
$dec(X_{\alpha}) \vartriangleleft (K)^{(\mathcal{J}^{K}_{\sigma}[G_{\alpha}][G^0][G^1]}$ does hold.
 
\item As $\mathcal{J}^{K}_{\sigma}[G_{\alpha}][G^0][G^1]$ and $\mathcal{J}^{K}_{\alpha^1}$ do 
have the same $\omega_1$, the sequence of $K$-trees, as computed in 
$\mathcal{J}^{K}_{\sigma}[G_{\alpha}][G^0][G^1]$ and $\mathcal{J}^{K}_{\alpha^1}$ respectively, which starts at
the ordinal $\alpha$ (which is coded into $X_{\alpha}$), will coincide. 
To be more precise, the sequence of $K$-cardinals starting at $\alpha$, 
as computed in 
$(\mathcal{J}^{K}_{\sigma}[G_{\alpha}][G^0][G^1]$ and $\mathcal{J}^{K}_{\alpha^1}$ 
respectively coincide. The trees $T_{\alpha}(\xi^{+++})$ as computed 
in $\mathcal{J}^{K}_{\sigma}[G_{\alpha}][G^0][G^1]$ and $\mathcal{J}^{K}_{\alpha^1}$ 
respectively coincide, and they do or do not have cofinal branches simultaneously.
\end{enumerate}
Note that such a $\sigma$ always exists due to the Reflection Principle, and the definability of $K$ in small forcing extensions.

Now let $M$ be from $C \cap \Stat[G_{\alpha}][G^0][G^1]$. We list all the dense subsets $D_n$ which are elements of $M$ and construct
a descending
sequence of $\forceQ^2_{\alpha}$-conditions $p= q_0 > q_1 >...$.
We can demand that for every $k \in \omega$, $q_k$ lies in the corresponding dense set $D_k$.
Further we demand that $sup_{k \in \omega} |dom (q_k)|$ $=\xi= M \cap \omega_1$. If we can ensure 
that the limit of the $q_k$'s, denoted with $q_{\omega}$ is a condition
we would be finished, as $q_{\omega}$ is $(M,\mathbb{Q}_{\alpha}^2)$-generic.
As the first two properties of conditions in $\forceQ_{\alpha}^2$ hold automatically 
for $q_{\omega}$, we have to ensure the third property. Note first that if $\xi < |q_{\omega}|$ 
holds then property 3 will hold automatically as well.
Thus we shall show that the lower bound
$q_{\omega}$ satisfies
\begin{itemize}
\item[$(\ast)$] For any countable, transitive $\ZFP$ model $N$ such that $q_{\omega} \in N$ and $\mu$ an ordinal such that
$(K)^N \subset (K)_{\mu}$, and which is able to see that whenever a real $K$ initial segment $\mathcal{J}^K_{\beta}$ is an
element of $N$ then $N \models$ \textquotedblleft$\mathcal{J}^K_{\beta} \vartriangleleft K$\textquotedblright, further
if $\xi = \omega_1^{{N}} = (\omega_1^{K})^{{N}}$ and $(\omega_1)^{{N}}= (\omega_1)^{(K)_{\mu}}$ then
$q_{\omega}$ codes a $(K)^N$-cardinal $\bar{\alpha}$ and certain generic filters such that if one builds the sequence
of $(K)^N$-cardinals $(\bar{\alpha}^{+\tau})^{(K)^{N}}$, and the according trees 
$(T_{\bar{\alpha}}(\xi^{+++}))^{(K)^{N}}$ then it holds that $\xi \in \dot{S}[G_{\alpha}][G^0][G^1]$ iff
there is
a $\alpha^{+(\xi+3)}$-cofinal branch through $(T_{\bar{\alpha}}(\xi^{+++}))^{(K)^{N}}$ in $N$.
\end{itemize}
Now if we take our countable $M \prec \mathcal{J}^K_{\sigma}[G_{\alpha}][G^0][G^1]$, $M \in \Stat[G_{\alpha}][G_0][G_1]$ such that
$X_{\alpha} \in M$,
by elementarity we have
that the two properties of $\mathcal{J}^K_{\sigma}[G_{\alpha}][G^0][G^1]$ which we listed above still hold for M.
Thus
\begin{enumerate}
\item $M$ thinks that the real initial segment $\mathcal{J}^K_{\alpha^1}$ is coded into $X_{\alpha}$. Further $M$ is also
able to realize that this real initial segment is an initial segment of its own $K$.
\item  $(\omega_1)^{{M}}= ((\omega_1)^{K})^{{M}}$ and moreover $M$ computes the same pattern
of trees
having cofinal branches or not as its own version of $K$. To be more precise the sequence of $K$-cardinals 
starting at $\alpha$ does not depend on whether we compute it in $M$ or in $(K)^M$. 
And likewise for the pattern of trees having cofinal branches or not.
\end{enumerate}
Thus its transitive collapse $\bar{M}$ will decode the same information out of 
$\pi(X_{\alpha})= X_{\alpha} \cap (\omega_1)^{\bar{M}}$, where $\pi$ denotes the collapsing function.
I.e. $\bar{M}$ thinks that $X_{\alpha} \cap (\omega_1)^{\bar{M}}$ 
codes an initial segment of its own version of $K$ together 
with an ordinal $\bar{\alpha}$ and some generically added branches through 
trees, which have all the information to see the intended pattern of 
cofinal or bounded trees $T_{\bar{\alpha}}(\xi^{+++})$ as computed 
in $(K)^{\bar{M}}$. But as $M$ was assumed to be in $\Stat[G_{\alpha}][G^0][G^1]$, and the
iteration so far has been $\Stat(A^c)$-proper, $M \cap K$ will be an element of $\Stat$ and hence collapse to an
initial segment of $K$. 
Thus $\bar{M}$ can compute its own version of $K$ correctly, 
$K^{\bar{M}}= \pi(M \cap K) \in \Stat$ and $X_{\alpha} \cap (\omega_1)^{\bar{M}}$ codes an initial segment $\mathcal{J}^K_{\eta}$ of 
$K$.

Whenever $N$ is a countable, transitive $\ZFP$-model, which satisfies that $N$ 
is elementary with respect to the $K$ defining first order formula $\Phi(x)$,
$K^N \subset \mathcal{J}^K_{\mu}$ for a $\mu < \omega_1$,
$q_{\omega} \in N$, and $\xi = (\omega_1)^N= ((\omega_1)^{(K)})^{N}$, then it 
contains, as the even entries of $q_{\omega}$ code $X_{\alpha}$, the set $X_{\alpha} \cap \xi$.
So if ${N}$ decodes the information packed into
$X_{\alpha} \cap \xi$, it will obtain the $\ZFP$-model from above, namely 
$\mathcal{J}^K_{\eta} \subset (K)^{\bar{M}}$, the $\mathcal{J}^K_{\eta}$-cardinal $\bar{\alpha}$, 
and certain generically added cofinal branches through trees, such that $\mathcal{J}^K_{\eta}$ 
together with the generically added branches read off the intended pattern of bounded and 
cofinal trees. As $N$ was assumed to be able to see each
$K$ initial segment in it as such, $N$ will be able to realize that $\mathcal{J}^K_{\eta}$
 is in fact also an initial segment of its own $K$, thus $\mathcal{J}^K_{\eta} \subset (K)^N$.

The goal is to show that the model $(K)^N$ which the countable model $N$ creates will 
define exactly the same sequence of cardinals starting at $\bar{\alpha}$ and exactly the same 
pattern of cofinal or bounded trees, as the model $\mathcal{J}^K_{\eta}$.
For the pattern which is seen by $\mathcal{J}^K_{\eta}$ is the right one and we would be finished.
In order to show this, we crucially use the definition of $\Stat$.
First note that from our assumptions on $N$ we get that there is a $\mu < \omega_1$ such that 
$(K)^N \subset \mathcal{J}^K_{\mu}$ and $(\omega_1)^N = (\omega_1)^{\mathcal{J}^K_{\mu}}$.
Thus, lining up already shown things, we have that $$\mathcal{J}^K_{\eta} \subset (K)^N \subset \mathcal{J}^K_{\mu}$$ 
and their versions of $\omega_1$ all coincide.
Note now that as $M \in \Stat[G_{\alpha}][G^0][G^1]$, $K$ of the transitive collapse $\bar{M}$ is of the form  
$\mathcal{J}^K_{\zeta}$  for some $\zeta < \omega_1$.
By the definition of $\Stat$ we have that $(K)^{\mathcal{J}^K_{\zeta}} = \mathcal{J}^K_{\zeta}$, and as seen above we also have
$$\mathcal{J}^K_{\eta} \subset \mathcal{J}^K_{\zeta}.$$
Comparing $\zeta$ and $\mu$ we split into cases.

First assume that $\zeta \le \mu$.
Then remember that $\mathcal{J}^K_{\zeta}$ has the property that whenever a larger $K$-initial 
segment has the same $\omega_1$ then already all $\mathcal{J}^K_{\zeta}$-cardinals remain cardinals 
in the larger $K$-initial segment.
As $\mathcal{J}^K_{\mu}$ is of such a form and shares the same $\omega_1$, we infer that all 
cardinals of $\mathcal{J}^K_{\zeta}$ are still cardinals in $\mathcal{J}^K_{\mu}$. But by elementarity 
we have that all cardinals of $\mathcal{J}^K_{\eta}$ are still cardinals in $\mathcal{J}^K_{\zeta}$, 
thus the sequence of cardinals after $\bar{\alpha}$ is the same as evaluated in $\mathcal{J}^K_{\eta}$ and $\mathcal{J}^K_{\mu}$.
And as $\mathcal{J}^K_{\eta} \subset (K)^N \subset \mathcal{J}^K_{\mu}$, the sequence of cardinals after $\bar{\alpha}$
in $(K)^N$
is the same as the sequence of cardinals after $\bar{\alpha}$ in $\mathcal{J}^K_{\eta}$. Thus the sequence of trees
in both models live on the same ordinals. As $(\omega_1)^{\mathcal{J}^K_{\eta}} = (\omega_1)^{(K)^N}$
the pattern of cofinal or bounded trees must coincide as well as otherwise the $(\omega_1)^{(K^N}$ would have been collapsed.

Secondly assume that $\mu > \zeta$. Then $\mathcal{J}^K_{\eta} \subset (K)^N \subset \mathcal{J}^K_{\mu} \subset \mathcal{J}^K_{\zeta}$
and as already noted above, the sequence of cardinals after $\bar{\alpha}$ does not 
depend on the model $\mathcal{J}^K_{\eta}$ or $\mathcal{J}^K_{\zeta}$. But this also implies that 
the sequence of cardinals does not depend on whether we compute them in $(K)^N$ 
or $\mathcal{J}^K_{\eta}$. Thus the trees live on the same ordinals, independently from 
where we construct them, and again, even the pattern of bounded or cofinal 
trees agrees as $(\omega_1)^N = (\omega_1)^{\mathcal{J}^K_{\mu}}$.
This finishes the proof.

\end{proof}

This finishes the definition of the three step iteration we use whenever the $\alpha$-th entry of the $\diamondsuit$-sequence
$a_{\alpha}$ is the name of a stationary subset of $\omega_1$.
What is left is to show that
in the resulting model, the nonstationary ideal $\NS$ is indeed $\Delta_1$-definable and $\NSA$ is $\aleph_2$-saturated.

\section{The definability of $\NS$}
Goal of this section is the proof that in our final model $M_1[G_{\delta}]$ $\NS$ is $\undertilde{\Delta}_1$-definable
over. The parameter will be $K_{\omega_1}$. 
Remember that the only patterns of length $\omega_1$ which occur on 
trees $T(\xi)^K$ in $M_1[G_{\delta}]$ are the ones which code stationary sets.
\begin{Lemma}
If $G$ denotes the generic for the forcing 
notion defined at the beginning of the last 
section. Then in $M_1[G]$, the nonstationary 
ideal is $\Delta_1$-definable over $H(\omega_2)$ using the parameter $K_{\omega_1}$.
\end{Lemma}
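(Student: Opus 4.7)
I would split the $\Delta_1$-definability into two $\Sigma_1$-definitions. The direction ``$X \in \NS$ is $\Sigma_1$'' is classical: ``there exists a club $C \subseteq \omega_1$ with $C \cap X = \emptyset$'' is $\Sigma_1$ over $H(\omega_2)$ in the parameter $\omega_1$ alone. All the work goes into making ``$X \notin \NS$'' (i.e.\ $X$ is stationary) $\Sigma_1$ in the parameter $K_{\omega_1}$; this is where the coding forcings $\forceQ_\alpha^0 \ast \forceQ_\alpha^1 \ast \forceQ_\alpha^2$ pay off. My candidate $\Sigma_1$-formula asserts the existence of a set $Y \subseteq \omega_1$, a club $D \subseteq \omega_1$, and a function $f$ with domain $D$ such that for every $\beta \in D$ the value $M_\beta := f(\beta)$ is a countable transitive $\ZFP$-model satisfying $\omega_1^{M_\beta} = \beta = \omega_1^{K^{M_\beta}}$, $K^{M_\beta}$ is an initial segment of $K_{\omega_1}$, both $Y \cap \beta$ and $X \cap \beta$ lie in $M_\beta$, and $M_\beta$ decodes $Y \cap \beta$ into a $K^{M_\beta}$-cardinal $\bar\alpha$ with the property that for every $\xi < \beta$, $\xi \in X$ iff $(T_{\bar\alpha}(\xi^{+++}))^{K^{M_\beta}}$ has a cofinal branch in $M_\beta$. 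Since $Y$ and $D$ trivially lie in $H(\omega_2)$, the function $f$ of size $\aleph_1$ with hereditarily countable values also lies in $H(\omega_2)$, and the clause on $f(\beta)$ is a $\Delta_0$-statement quantified boundedly over $\beta \in D$, the whole formula is $\Sigma_1$ over $H(\omega_2)$ with parameter $K_{\omega_1}$.

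Soundness: if $X$ is stationary in $M_1[G]$, the Shelah-style catch-your-tail argument using $\delta$ Woodin with $\diamondsuit$ produces a stage $\alpha < \delta$ at which the $\diamondsuit$-sequence value is a $\forceP_\alpha$-name for $X$. The coding forcing $\forceQ_\alpha^0$ then writes the pattern of cofinal or bounded branches through the trees $T_\alpha(\xi^{+++})$ matching the characteristic function of $X$, and $\forceQ_\alpha^2$ adds a localization code $Y_\alpha$ satisfying property $\heartsuit$. Taking $Y := Y_\alpha$, property $\heartsuit$ together with the projective stationarity of $\Stat$ yields a club $D$ of $\beta < \omega_1$ at which suitable $M_\beta$ exist and decode $Y \cap \beta$ correctly to $X \cap \beta$; $\aleph_1$-choice packages them into the required $f$. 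Completeness: if $Y, D, f$ witness the formula, then suitability of $M_\beta$ forces $K^{M_\beta} \vartriangleleft K_{\omega_1}$, so the local trees $(T_{\bar\alpha}(\xi^{+++}))^{K^{M_\beta}}$ and their branches in $M_\beta$ faithfully reflect the global tree pattern in $K_{\omega_1}$ restricted to $\beta$. Coherence over the unbounded $D$ then identifies $X$ with the global pattern starting at the unique $K_{\omega_1}$-cardinal $\alpha$ to which the $\bar\alpha_\beta$ correspond; by the observation immediately preceding the lemma, the only $\omega_1$-length patterns occurring on the $K$-trees in $M_1[G]$ are those coding stationary sets, so $X$ must be stationary.

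The main obstacle is keeping the complexity down to $\Sigma_1$. A naive rendition of stationarity as ``$\exists Y$ such that the tree pattern in $K_{\omega_1}$ matches $X$'' contains the $\Pi_1$-clause ``no cofinal branch for $\xi \notin X$'', which would push the definition into $\Pi_1$. The localization forcing $\forceQ_\alpha^2$ is engineered precisely to bypass this obstruction: by property $\heartsuit$, the single positive witness $Y$ forces every suitable countable $M_\beta$ to verify both positive and negative branch patterns internally, so the offending $\Pi_1$-clause is absorbed into a $\Delta_0$-check inside each countable $M_\beta$. The remaining technicality is to collapse the quantifier alternation ``for unboundedly many $\beta$ there exist suitable $M_\beta$'' into a single existential by packaging the $M_\beta$'s into a function $f \in H(\omega_2)$, as above.
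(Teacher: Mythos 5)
Your overall architecture agrees with the paper's: ``$X$ is nonstationary'' is trivially $\Sigma_1$ over $H(\omega_2)$, and all the work is in showing that ``$X$ is stationary'' is also $\Sigma_1$ via the localized tree-coding, after which the observation that only stationary sets ever get coded closes the circle. But your candidate $\Sigma_1$-formula differs from the paper's $\heartsuit$ in a way that creates two genuine gaps. The paper's formula quantifies \emph{universally} over all suitable countable transitive models (the quantifier is bounded inside the set $L_{\omega_1}[Y_\alpha][K_{\omega_1}]$, so the formula stays $\Sigma_1$), and it additionally contains a global clause asserting that the tree pattern computed in $L[Y_\alpha][K_{\omega_1}]$ agrees with the pattern in $H(\omega_2)$. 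You replace this by the \emph{existence} of one suitable decoding model $M_\beta$ for every $\beta$ in a club $D$. For soundness this is already problematic: the only supply of suitable models (countable transitive $\ZFP$-models whose internal $K$ is a true initial segment of $K_{\omega_1}$ with matching $\omega_1$) comes from transitive collapses of members of the class $\Stat$ of nicely collapsing submodels, and $\Stat$ is only \emph{stationary}, not club; nothing in the paper gives club-many $\beta < \omega_1$ carrying such a model, so your formula may simply be false for a stationary $X$. The paper's universal formulation sidesteps this because levels without suitable models are satisfied vacuously.

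For completeness the existential formulation is worse. Whether the countable tree $(T_{\bar\alpha}(\xi^{+++}))^{K^{M_\beta}}$ has a cofinal branch \emph{inside} $M_\beta$ depends on which $M_\beta$ you choose: two suitable models with the same $K^{M_\beta}$ can disagree, since one may be a generic extension of the other containing additional branches through these countable trees. An adversarial choice of $Y$, $D$ and $f$ could therefore realize a local pattern that does not match the global pattern on the real $K$-trees, so the formula could hold of a nonstationary $X$; your appeal to the $M_\beta$'s ``faithfully reflecting'' the global pattern is exactly the point that needs proof and is false for arbitrary witnesses. The paper's completeness argument needs the universal quantifier essentially: assuming the conclusion of $\heartsuit$ fails for some \emph{uncountable} transitive model, one reflects to a countable $N \prec M$ with $N \in \Stat$, collapses it to an $M_1$-initial segment, and contradicts the universally quantified clause; this lifts the local pattern to large $\mathcal{J}^{M_1}_{\eta}$'s, hence to the real $K$, and only then does the ``only stationary sets get coded'' observation apply. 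Your ``coherence over the unbounded $D$'' step has no analogous mechanism. The repair is to use the paper's shape of formula: existential only in $Y$, with the model quantifier universal and bounded, plus the global agreement clause.
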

\begin{proof}
Let $S \subset \omega_1$ be an arbitrary stationary subset 
of $M_1[G]$. By the inaccessibility of the Woodin cardinal 
$\delta$, which represents the length of the iteration, we 
know that the $\forceP_{\delta}$-name $\dot{S}$ of $S$ is in 
fact a $\forceP_{\beta}$-name for some $\beta < \delta$.
Thus there is a stage $\alpha < \delta$ such
that at stage $\alpha$ the name $\dot{S}$ is considered by the 
$\diamondsuit$-sequence. The rules of our iteration then force 
the characteristic function of $S$ into the sequence of $K$-trees 
starting at the $K$-cardinal $\alpha$, followed by a localization of this information.

We therefore know that there is a set $Y_{\alpha} \subset \omega_1$ such that

\begin{enumerate}
\item[$\heartsuit$] the pattern written on the trees
as computed in $K^{L[Y_{\alpha}][K_{\omega_1}]}$ is the same
for both $L[Y_{\alpha}][K_{\omega_1}]$ and the real world $H(\omega_2)$.
Additionally for every countable, transitive model $M (\in L_{\omega_1}[Y_{\alpha}][K_{\omega_1}])$ of $\ZFP$ which has 
$Y_{\alpha} \cap \omega_1^{M}$ as an element and which satisfies that
$K^M \subset \mathcal{J}^{K}_{\eta}$ and $\omega_1^{M} = \omega_1^{K^M}$ and finally satisfies that 
every $\mathcal{J}^K_{\gamma}$ which is in $M$ is seen by $M$ to be an initial segment of $K^M$, 
there exists an ordinal $\beta \in M$ such that for
every $\xi < \omega_1^{M}:$ $\xi \in S$ if and only if $M \models T_{\beta}(\xi^{+++})$ has a $\beta^{+(\xi+3)}$-cofinal branch.
\end{enumerate}
Note that property $\heartsuit$ can be written as $\Sigma_1$ over 
$H(\omega_2)$ with parameter $K_{\omega_1}$, as saying that the patters are the same
is in our situation equivalent to saying that in $L[Y_{\alpha}][K_{\omega_1}]$ every
$\omega_1$-block of $K$-cardinals has been used to code a subset of $\omega_1$.
As stationarity 
is automatically $\Pi_1$ over $H(\omega_2)$ we see that in $M_1[G_{\delta}]$ 
stationary sets have a $\undertilde{\Delta}_1$ definition over $H(\omega_2)$.
What is left is to show that the definition $\heartsuit$ characterizes stationarity in $M_1[G_{\delta}]$.

For that assume that $S$ is an arbitrary subset of $\omega_1$ and assume that 
there is a set $Y_{\alpha} \subset \omega_1$ such that $\heartsuit$ holds. We want to conclude that
$S$ is indeed stationary.
Note first that $\heartsuit$ also holds for transitive models $M$ which are 
uncountable. Indeed if $M$ would be an uncountable, transitive model such 
that the antecedens of $\heartsuit$ does hold but not its conclusion, 
then we can consider a countable elementary $N \prec M$ which is an element of $\Stat$ 
and collapse it to obtain a countable transitive $\bar{N}$. By the definition of $\Stat$, $\bar{N}$ collapses to a 
$M_1$-initial segment $\bar{N} = \mathcal{J}^{M_1}_{\eta}$ for 
some countable $\eta$. Thus $\bar{N}$ is a countable, transitive 
model $M$ for which $\heartsuit$ fails by elementarity, which is a contradiction. to our assumption that $\heartsuit$ is true.

Hence as $\heartsuit$ holds for models of arbitrary size we can consider large initial segments $\mathcal{J}^{M_1}_{\eta}$
of $M_1$, for which the antecedens of $\heartsuit$ holds trivially. Thus $\mathcal{J}^{M_1}_{\eta}$
witnesses that the pattern of the characteristic function of $S$ is written into trees which are build using $K$ of
$\mathcal{J}^{M_1}_{\eta}$, I.e. the real $K$. But remember that we have set up the iteration $\forceP_{\delta}$ in
such a way that the only patterns of cofinal or bounded trees $T_{\beta}(\xi)^K$ which can arise are the ones
which code stationary subsets of $\omega_1$. Thus the pattern which codes $S$ must stem from a stationary set which
is what we wanted.

\end{proof}

\section{$\NSA$ is $\aleph_2$-saturated}
What is left is to show that we indeed have the restricted nonstationary ideal saturated after forcing with the iteration.
\begin{Lemma}
 In the final model $M_1[G_{\delta}]$ of the iteration the restricted nonstationary ideal $\NSA$ on 
$\omega_1$ is $\aleph_2$-saturated.
\end{Lemma}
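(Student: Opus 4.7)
The argument will follow Shelah's saturation proof (Theorem 23 in Section 1.3) almost verbatim, with the crucial new point that the $A$-sealing forcing $\mathbb{S}_A(\vec{S}\upharpoonright\kappa)$ is automatically $A^c$-proper, which localizes every obstruction to $\Stat$-semiproperness onto $A$.

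Assume for contradiction that in $M_1[G_\delta]$ there is a maximal antichain $\vec{S}=(S_i:i<\omega_2)$ in $P(A)/\NSA$, and let $\tau$ be a $\forceP_\delta$-name for $\vec{S}$. First I would use the Woodin with $\diamondsuit$ property of $\delta$ in $M_1$ to find an inaccessible $\kappa<\delta$ with (i) $\kappa$ is $\forceP_\delta\oplus\tau$-strong up to $\delta$; (ii) $\kappa=\omega_2^{M_1[G\upharpoonright\kappa]}$; and (iii) $\vec{S}\upharpoonright\kappa=(\tau\cap V_\kappa)^{G\upharpoonright\kappa}$ is the diamond guess $a_\kappa$ and is a maximal antichain of stationary subsets of $A$ in $M_1[G\upharpoonright\kappa]$. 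The existence of $\kappa$ is by intersecting the diamond-stationary set with the two clubs capturing (ii) and local maximality at inaccessibles, exactly as in Shelah's proof; the coding forcings do not disturb this because they have size bounded in $\delta$, so the direct-limit argument at the inaccessible $\kappa$ still shows that every stationary subset of $\omega_1$ in $M_1[G\upharpoonright\kappa]$ already appears at some bounded stage.

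Since $\vec{S}$ is strictly longer than $\vec{S}\upharpoonright\kappa$ in $M_1[G_\delta]$, the forcing $\mathbb{S}_A(\vec{S}\upharpoonright\kappa)$ cannot have been used at stage $\kappa$ — otherwise $\vec{S}\upharpoonright\kappa$ would be maximal in every $\omega_1$-preserving outer model — so it fails to be $\Stat$-semiproper in $M_1[G\upharpoonright\kappa]$. But $\mathbb{S}_A$ is always $A^c$-proper (for $X\prec H_\theta$ with $X\cap\omega_1\in A^c$ the clause $c(\xi)\in A\to c(\xi)\in\bigcup_{i\in\xi}p(i)$ is vacuous at $\xi=X\cap\omega_1$, and one builds a fully $(X,\mathbb{S}_A)$-generic extension of any $(p,c)\in X$). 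Hence the failure must concentrate on $A$: there is some $(p,c)$ such that
\begin{multline*}
\bar{T}=\{X\prec H_{\kappa^+}^{M_1[G\upharpoonright\kappa]}:|X|=\aleph_0,\,X\in\Stat(A),\,(p,c)\in X,\\
\nexists Y\supset X \text{ with } X\cap\omega_1=Y\cap\omega_1 \text{ and an } (Y,\mathbb{S}_A)\text{-semigeneric } (q,d)\le(p,c)\}
\end{multline*}
is $\Stat(A)$-stationary. At stage $\kappa$ we force with the proper $Col(\omega_1,2^{\aleph_2})$, yielding a surjection $f\colon\omega_1\to H_{\kappa^+}^{M_1[G\upharpoonright\kappa]}$, and
$$T:=\{\alpha<\omega_1:f\text{\textquotedblright}\alpha\in\bar{T},\ \alpha=f\text{\textquotedblright}\alpha\cap\omega_1\}\subset A$$
is stationary in $M_1[G\upharpoonright\kappa+1]$. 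Viewing the tail $\forceP_{[\kappa+1,\delta)}$ as an RCS iteration of $\Stat$-proper coding factors and $\Stat$-semiproper sealing/collapse factors makes it $\Stat$-semiproper, and projective stationarity of $\Stat$ then gives preservation of stationary subsets of $\omega_1$, so $T$ stays stationary in $M_1[G_\delta]$. Since $T\subset A$ and $\vec{S}$ is maximal in $P(A)/\NSA$, pick $i_0<\omega_2$ with $T\cap S_{i_0}$ stationary.

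From here I would run Shelah's embedding argument unchanged: fix $\lambda\in(\max(i_0,\kappa+1),\delta)$ with $(\tau\cap V_\lambda)^{G\upharpoonright\lambda}=\vec{S}\upharpoonright\lambda$; apply the strongness of $\kappa$ to obtain $j\colon V\to M$ with critical point $\kappa$, $V_{\lambda+\omega}\subset M$, $j(\forceP_\delta)\cap V_\lambda=\forceP_\delta\cap V_\lambda$, $j(\tau)\cap V_\lambda=\tau\cap V_\lambda$; lift to $j^*\colon M_1[G\upharpoonright\kappa]\to M[G\upharpoonright\lambda,H]$; build an increasing continuous chain $(X_i)_{i<\omega_1}$ of countable elementary submodels of $H_{j(\kappa)^+}^{M[G\upharpoonright\lambda]}$ closed under $f$ and $j^*$, with $\{\tau\cap V_\lambda,i_0\}\subset X_0$; find $i$ with $\alpha:=X_i\cap\omega_1\in T\cap S_{i_0}$; use $\Stat$-semiproperness of $(\forceP_{[\lambda+1,j(\kappa)]})^{M[G\upharpoonright\lambda]}$ to pick $H$ with an $(X_i[\bar{G}],\forceP_{[\lambda+1,j(\kappa)]})$-semigeneric condition in it; and form $(p',c')\le j^*(p,c)$ with $\mathrm{dom}(p')=\mathrm{dom}(c')=\alpha+1$, $c'(\alpha)=\alpha\in S_{i_0}\subset A$, and $p'(\iota)=S_{i_0}$ for some $\iota<\alpha$. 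This $(p',c')$ is $(X_i[\bar{G},H],j^*(\mathbb{S}_A(\vec{S}\upharpoonright\kappa)))$-semigeneric below $j^*(p,c)$, contradicting $j^*(X_i\cap H_{\kappa^+}^{M_1[G\upharpoonright\kappa]})\in j^*(\bar{T})$. The main obstacle is the $A$-localization of $\bar{T}$ via $A^c$-properness of $\mathbb{S}_A$; without it the resulting $T$ would only be stationary in $\omega_1$ and we would need $\vec{S}$ maximal in $P(\omega_1)/\NS$, a hypothesis we do not have, since we only know maximality in the restricted structure.
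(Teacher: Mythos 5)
Your proposal is correct and follows essentially the same route as the paper's proof of this lemma. The only organizational difference is cosmetic: the paper first defines $\bar{T}$ as a stationary subset of $\Stat[G\upharpoonright\kappa]$ with no reference to $A$, then does a case split on whether $A^c\cap T$ is stationary (and rules out the $A^c$ case by explicitly constructing a semigeneric $(p',c')$ for a model $Z$ with $Z\cap\omega_1\in A^c$), whereas you invoke $A^c$-properness of $\mathbb{S}_A$ up front to conclude directly that $\bar{T}\subset\Stat(A)$ and hence $T\subset A$ — the paper's case-A elimination is precisely the proof of the $A^c$-properness fact you appeal to, so the two arguments coincide.
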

\begin{proof}
Note first that for every $\alpha < \delta$ the forcing $\forceP_{\alpha}$ is $\Stat[G_{\alpha}]$-semiproper.
Indeed each factor of the iteration is $\Stat[G_{\alpha}]$-proper, or $\Stat[G_{\alpha}]$-semiproper, 
thus if we iterate with countable support we obtain an $\Stat$-semiproper 
notion of forcing as we have argued at the beginning of the proof in section 2.1.
This suffices to see that the iteration is stationary sets preserving, which
is of highest importance for the following argument. 

Assume now for a contradiction that in $M_1[G_{\delta}]$ there is a 
$\delta =\omega_2^{M_1[G_{\delta}]}$-long antichain of stationary 
subsets of $A$. Denote this antichain with $S$ and let $\tau$ denote its $\forceP_{\delta}$-name.
We claim that it is possible to find
an inaccessible $\kappa$ below $\delta$ such that the following three properties hold: \begin{enumerate}
\item $\kappa$ is $\mathbb{P} \oplus \tau$-strong up to $\delta$ 
in $M_1$ \item $\kappa = \omega_2^{M_1[G\upharpoonright \kappa]}$ 
\item $\vec{S}\upharpoonright \kappa = (S_i \, : \, i < \kappa) = (\tau \cap (M_1)_{\kappa})^{G\upharpoonright \kappa}$
is the maximal antichain
in $M_1[G\upharpoonright \kappa]$ which is picked by the $\diamondsuit$-sequence at stage $\kappa$.
\end{enumerate}

This is clear as we can assume that our $\diamondsuit$-sequence lives on the stationary subset of inaccessible cardinals below $\delta$, and
for all inaccessible $\kappa$ property $2$ automatically holds. Moreover the sets $$C_1:=\{ \kappa < \delta \, :\,
\vec{S}\upharpoonright \kappa
= (S_i \, :\, i< \kappa)= (\tau \cap (M_1)_{\kappa})^{M_1[G\upharpoonright \kappa} \}$$ 
and $$C_2:=\{ \kappa < \delta \, : \forall \alpha < \kappa \forall S \in P(\omega_1) 
\cap M_1^{\mathbb{P_{\alpha}}} \text{ stationary } \exists \bar{S} \in \vec{S}\upharpoonright
\kappa (S \cap \bar{S} \notin NS\}$$ are both clubs, therefore hitting the stationary set $T$ consisting of the points $\kappa < \delta$
where $\tau \cap (M_1)_{\kappa} = a_{\kappa}$ (remember: $a_{\kappa}$ is the $\kappa$-th element of the $\diamond$-sequence
$(a_{\alpha} : \alpha
< \delta )$) and $\kappa$ is $\tau$-strong up to $\delta$. Thus if $\kappa$ is in the nonempty intersection
$C_1 \cap C_2 \cap T$ then $1$ and $2$ are satisfied, and the recursive definition of our forcing $\mathbb{P}$ yields that at stage
$\kappa$, as $a_{\kappa}= \tau \cap (M_1)_{\kappa}$, the sealing forcing 
$\mathbb{S} ((\tau \cap (M_1)_{\kappa} )^{G\upharpoonright \kappa})$ is 
at least considered, and in order to show property $3$, it suffices to 
show that $(\tau \cap (M_1)_{\kappa} )^{G\upharpoonright \kappa})= \bar{S}\upharpoonright \kappa$ 
is maximal in $M_1[G\upharpoonright \kappa]$. But this is clear as by the RCS iteration properties 
we take at inaccessible $\kappa$'s the direct limit of the $\mathbb{P_{\alpha}}$'s, thus 
each stationary $S\subset \omega_1$ in $M_1^{\mathbb{P_{\kappa}}}$ is already included in 
a $M_1^{\mathbb{P_{\alpha}}}$ for $\alpha < \kappa$. So we have ensured the existence of 
a $\kappa$ with all the 3, above stated properties.

Now the forcing $\mathbb{S}(\vec{S}\upharpoonright \kappa)$ can not be $\Stat[G \upharpoonright\kappa]$-semiproper 
at stage $\kappa$, as otherwise we would have to
force with it, therefore killing the antichain $\vec{S}$. So there exists a 
condition $(p,c) \in \mathbb{S}(\vec{S}\upharpoonright \kappa)$
such that the set
\begin{multline*}
\bar{T}:= \{ X \prec (H_{\kappa^+})^{M_1[G\upharpoonright \kappa]} \,:\, 
|X|=\aleph_0 \land (p,c) \in X \land \nexists Y \supset X (Y \prec
(H_{\kappa^+})^{M_1[G\upharpoonright \kappa]} \\ \land |Y|= \aleph_0 
\land (X \cap \omega_1 = Y \cap \omega_1) \land \exists (q,d) \le (p,c)
\, ((q,d) \text{ is Y-semigeneric }))\} \\
\cap \Stat[G \upharpoonright\kappa].
\end{multline*}
is a stationary subset of $\Stat[G \upharpoonright\kappa]$ in
$M_1[G\upharpoonright \kappa]$, and by construction of our iteration, 
the $\kappa$-th forcing in $\mathbb{P}$ is
$Col(\omega_1, 2^{\aleph_2})$, so in $M_1[G\upharpoonright \kappa+1]$ 
there is a surjection $f: \omega_1 \rightarrow (H_{\kappa^+})^{M_1[G\upharpoonright \kappa]}$.
As $Col(\omega_1, 2^{\aleph{_2}})$ is proper the set $\bar{T}$ remains 
stationary in $M_1[G\upharpoonright \kappa+1]$ which implies that 
$$ T:= \{ \alpha < \omega_1 \, : \, f \text{\textquotedblright}\alpha 
\in \bar{T} \land \alpha= f\text{\textquotedblright}\alpha \cap \omega_1\}$$ 
is stationary in $M_1[G\upharpoonright \kappa+1]$. As the tail 
$\mathbb{P}_{[\kappa +2 , \delta]}$ remains $\Stat[G \upharpoonright\kappa+1]$-semiproper, seen as an iteration
with $M_1[G\upharpoonright \kappa+1]$ as ground model, we can infer that $T$ remains stationary in $M_1[G]$ and therefore either
$\lnot A \cap T$ is stationary or there
exists an $i_0< \delta$ such that
$$(\ast\ast) \quad T \cap S_{i_0} \text{ is stationary in } M_1[G].$$

Assume first that $\lnot A \cap T$ is stationary. We want to derive a contradiction.
Fix an $\alpha \in \lnot A \cap T$, and let 
$Z:= f$\textquotedblright$ \alpha \in \bar{T}$. 
We can list inside $Z \prec (H_{\kappa^+})^{M_1[G\upharpoonright \kappa]}$ all 
the $\mathbb{S}(\vec{S}\upharpoonright \kappa)$-names for countable ordinals and 
a descending sequence of $\mathbb{S}(\vec{S}\upharpoonright \kappa)$-conditions 
$(p,c)>(p_0,c_0)>(p_1,c_1),..$ deciding more and more of them.
We can assume that $sup (dom(p_i)) = \alpha$ and thus the lower bound of the 
sequence $(p_i,c_i)_{i \in \omega}$ denoted by $(p',c')$ with
$dom(p')= \alpha +1$ and $p'(\alpha) \in \vec{S} \upharpoonright \kappa$ is in fact already a condition in
$\mathbb{S}(\vec{S}\upharpoonright \kappa)$ as $\alpha \notin A$. Thus we have found a 
$(Z,\mathbb{S}(\vec{S}\upharpoonright \kappa))$-semigeneric condition below $(p,c)$ contradicting $Z \in \bar{T}$.

Thus $(\ast\ast)$ must hold. Let us shortly reflect the situation we are in. The idea is to mimic the short argument from above in
our new case, i.e. we want to find a model $X \in \bar{T}$ such that we $can$ find a $(X,\mathbb{S}(\vec{S}\upharpoonright \kappa))$-
semigeneric condition $(q,d) < (p,c)$, thus arriving at a contradiction.
For that it would be sufficient to find an $X$ such that \begin{enumerate}

\item $X \cap \omega_1 \in T \cap S_{i_0}$ for some $i_0 < \kappa$ 
\item 
$f \textquotedblright (X \cap \omega_1) \subset X$ \item $S_{i_0} \in X$
\end{enumerate}
The first thing to note here is that already the first item can be impossible to fulfill, 
for we can not restrict the size of $i_0$ and it might as well happen that $i_0 > \kappa$,
 and thus there is no chance of finding an $\mathbb{S}(\vec{S}\upharpoonright \kappa)$-condition 
which is $(X, \mathbb{S}(\vec{S}\upharpoonright \kappa))$-semigeneric in the 
way described above.
Here we use the large cardinal property of $\delta$ being Woodin for rescue.
We can find an
elementary embedding $j: M_1 \rightarrow N$ such that $j(\kappa) > i_0$, and which fixes 
the predicate $\vec{S}$ up to a cardinal $\lambda > i_0$.
The task has now changed to find a $(j(X), j(\mathbb{S}(\vec{S}\upharpoonright \kappa))$-semigeneric condition
below $(c,p)$ in order to get a contradiction.

First let $\lambda < \delta$, $\lambda >$ max$(i_0, \kappa +1)$ be such that 
$(\tau \cap (M_1)_{\lambda})^{G\upharpoonright \lambda}= \vec{S}
\upharpoonright \lambda$, so we have $(\tau \cap (M_1)_{\lambda})^{G\upharpoonright \lambda} (i_0) = S_{i_0}$. As $\kappa$
was chosen to be $\mathbb{P}\oplus \tau$-strong up to $\delta$ we let $j: M_1 \rightarrow N$ be an
elementary embedding with critical point $\kappa$, such that $N$ is transitive, $N^{\kappa} \subset N$, $(M_1)_{\lambda + \omega} \subset
N$, $j(\mathbb{P}) \cap (M_1)_{\lambda} = \mathbb{P} \cap (M_1)_{\lambda}$, and $j(\tau) \cap (M_1)_{\lambda} = \tau \cap (M_1)_{\lambda}$.

$H$ should denote the generic filter for the segment 
$(\mathbb{P}_{[\lambda +1, j(\kappa)]})^{N[G\upharpoonright \lambda]}$ of $j(\mathbb{P})$ 
over $N[G\upharpoonright \lambda]$. Then we lift $j$ to an elementary embedding 
$$j^{\ast} : M_1[G\upharpoonright \kappa] \rightarrow N[G\upharpoonright \lambda, H].$$ 
Notice that $((M_1)_{\lambda + \omega})^{M_1[G\upharpoonright \lambda]} = ((M_1)_{\lambda+ \omega})^{N[G\upharpoonright \lambda]}$.

Now we let $(X_k \,:\, k < \omega_1) \in M_1[G\upharpoonright \kappa +1]$ be an increasing continuous chain of
countable elementary substructures
of $(H_{j(\kappa)^{+}})^{N[G\upharpoonright \kappa +1]}$ with $\{ \tau \cap (M_1)_{\lambda}, i_0\} \subset X_0$ satisfying for all
$k < \omega_1$ the following three properties:
\begin{enumerate}
\item[(a)] $k \in X_{k+1}$
\item[(b)] $f$\textquotedblright $(X_k \cap \omega_1) \subset X_k$ \item[(c)] 
$j^{\ast}$\textquotedblright$(X_k \cap (H_{\kappa^{+}})^{M_1[G\upharpoonright \kappa]} \subset X_k$ \end{enumerate}
Let $\bar{G} := G \upharpoonright [\kappa +2, \lambda]$, and consider the set
$$ \{ X_k[\bar{G}] \cap \omega_1 \, : \, k < \omega_1\}$$ which is a club in $M_1[G\upharpoonright \lambda]$.
We can demand that for all $k < \omega_1$, $X[\bar{G}] \cap \omega_1 = X \cap \omega_1$.
And by the stationarity of $T \cap S_{i_0}$ we
obtain an index $k \in \omega_1$ such that
$$X_k[\bar{G}] \cap \omega_1 = X_k \cap \omega_1 \in T \cap S_{i_0}.$$

Write $X:=X_k, \alpha:= X \cap \omega_1$. As at stage $\kappa$ we had to 
force with the $\omega$-closed $Col(2^{\aleph_2}, \aleph_1)$
we know that $X \cap (H_{\kappa^{+}})^{M_1[G\upharpoonright \kappa]} \in M_1[G\upharpoonright \kappa]$.
Remember that $f \in M_1[G\upharpoonright \kappa+1]$ was chosen as a surjection of $\omega_1$ onto 
$(H_{\kappa^{+}})^{M_1[G\upharpoonright \kappa]}$, so as $\alpha \in T$ by definition of 
$T$ $f$\textquotedblright $\alpha
\in \bar{T}$ and $\alpha = f$\textquotedblright $\alpha \cap \omega_1$, and hence by (b) 
$$f\text{\textquotedblright}\alpha \subset X \cap (H_{\kappa^{+}})^{M_1[G\upharpoonright \kappa]} \in M_1[G\upharpoonright \kappa].$$
As $\alpha = f$\textquotedblright$\alpha \cap \omega_1$, $f$\textquotedblright$\alpha \in \bar{T}$ and
$f$\textquotedblright$\alpha \subset X \cap (H_{\kappa^{+}})^{M_1[G\upharpoonright \kappa]}$ we get that
$X \cap (H_{\kappa^{+}})^{M_1[G\upharpoonright \kappa]} \in \bar{T} \subset \Stat[G \upharpoonright\kappa]$ and therefore

$$(\ast\ast\ast) \quad j^{\ast}(X \cap H_{\kappa^{+}})^{M_1[G\upharpoonright \kappa]}) \in j^{\ast} (\bar{T})\subset 
j^{\ast}(\Stat[G \upharpoonright\kappa]).$$

Note that our second generic $H$, denoting the generic filter for the segment 
$(\mathbb{P}_{[\lambda +1, j(\kappa)]})^{N[G\upharpoonright \lambda]}$ of 
$j(\mathbb{P})$ over $N[G\upharpoonright \lambda]$ has not been specified yet.
The segment $(\mathbb{P}_{[\lambda+1, j(\kappa)]})^{N[G\upharpoonright \lambda]}$ 
of $j(\mathbb{P}\upharpoonright \kappa)[G \upharpoonright \lambda]$ over $N[G\upharpoonright \lambda]$ 
is $j^{\ast}(\Stat)[G\upharpoonright \lambda]$-semiproper.
By $(\ast \ast \ast)$, $j^{\ast}(X \cap H_{\kappa^{+}}^{M_1[G\upharpoonright \kappa]}) [\bar{G}] \in j^{\ast}(\bar{T})[\bar{G}]$.
So we can say that as $\bar{T}$ is closed under supersets with the same intersection 
with $\omega_1$ so is $j(\bar{T})$. Hence $X \cap (H_{\kappa^{+}})^{M_1[G\upharpoonright \kappa]} \in \bar{T}$ 
implies that $j^{\ast}(X \cap (H_{\kappa^{+}})^{M_1[G\upharpoonright \kappa]}) \in j^{\ast} (\bar{T})$ 
which in turn implies that $X \in j^{\ast} (\bar{T})$ by property (c), and finally we see 
that $X[\bar{G}] \in j^{\ast}(\bar{T})[G \upharpoonright [\kappa+1, \lambda]] \subset j (\Stat) [G \upharpoonright \lambda]$.
By the $j(\Stat)[G\upharpoonright \lambda]$-semiproperness 
of $(\mathbb{P}_{[\lambda+1, j(\kappa)]})^{N[G\upharpoonright \lambda]}$ we can now 
conclude that any condition $r \in \forceP_{[\lambda+1,j(\kappa)]} \cap X[\bar{G}]$ has 
a stronger condition $q \in \forceP_{[\lambda+1,j(\kappa)]})$ which is $(X[\bar{G}], \forceP_{[\lambda+1,j(\kappa)]})$-semigeneric.
If we pick $H$ such that $q \in H$ then by semigenericity of $q$ we obtain
$X[\bar{G},H] \cap \omega_1 = X[\bar{G}] \cap \omega_1 = X \cap \omega_1 = \alpha \in S_{i_0}= (\tau \cap (M_1)_{\lambda})^{G\upharpoonright
\lambda} (i_0) \in X[\bar{G}, H]$.
But also due to (c) we have that

$$j^{\ast} (X \cap (H_{\kappa^{+}})^{M_1[G\upharpoonright \kappa]}=  j^{\ast} \text{\textquotedblright} (X \cap
(H_{\kappa^{+}})^{M_1[G\upharpoonright \kappa]} \subset X[\bar{G},H].$$

This gives us the desired contradiction as we can find an $(X[\bar{G},H],j(\mathbb{S}(\vec{S}\upharpoonright \kappa)))$-semigeneric
condition below $j(p,c)=(p,c)$. Indeed we can just list the countably many names for countable ordinals in $X[\bar{G},H]$ along with
conditions of $j(\mathbb{S}(\vec{S}\upharpoonright \kappa))$ deciding them below $(p,c)$ and let $(p',c') \in
j(\mathbb{S}(\vec{S}\upharpoonright \kappa))$ be just the condition with dom$(c')=$dom$(d')= \alpha + 1$, $c'(\alpha)=\alpha$ and
$p'(i) = S_{i_0}$ for some $i< \alpha$. So $X[\bar{G},H]$ together with $(p',c')<(p,c)$ witness that
$j^{\ast}(X \cap H_{\kappa^{+}})^{M_1[G\upharpoonright \kappa]}) \notin j^{\ast} (\bar{T})$, contradicting $(\ast\ast\ast)$.
\end{proof}

\chapter{$\NS$ saturated and a projective well-order.}

Goal of this section is the proof of the following result:

\begin{thm}
 Assume that $M_1^{\#}$ exists, then there exists a generic extension $M_1[G]$ of $M_1$ such that
in $M_1[G]$ $\NS$ is $\aleph_2$-saturated and there is a lightface $\Sigma^1_{4}$-definable wellorder 
on the reals.
\end{thm}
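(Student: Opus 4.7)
The plan is to carry out a $\delta$-long RCS-iteration over $M_1$, with $\delta$ the Woodin cardinal made Woodin with $\diamondsuit$, interleaving the Shelah sealing factors from Section 1.3 with coding factors modeled on the three-step apparatus of Chapter 2. At stage $\alpha < \delta$ the $\diamondsuit$-sequence $(a_{\alpha})_{\alpha < \delta}$ dictates the factor $\dot{\forceQ}_{\alpha}$. If $a_{\alpha}$ names a maximal antichain $\vec{S}$ of stationary subsets of $\omega_1$ and the sealing forcing $\seal$ is semiproper, do the sealing; this keeps $\NS$ eventually $\aleph_2$-saturated by the argument of Theorem 1.20. If $a_{\alpha}$ names a real $r$, instead of writing the characteristic function of a stationary set into $K$-trees (as in Chapter 2), code the bits of $r$ into an $\omega_1$-length pattern of $K$-trees $T_{\alpha}(\xi) = ((\alpha^{+(\xi+1)})^{<(\alpha^{+\xi})})^K$ by shooting cofinal branches or specializing them according to the bits of $r$, followed by a Lévy collapse $\forceQ^1_{\alpha}$ of a big $K$-initial segment and a localization forcing $\forceQ^2_{\alpha}$ which produces a subset $Y_{\alpha} \subset \omega_1$. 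In remaining gap stages, use an $\omega$-closed collapse or write a default pattern, exactly as in Chapter 2.

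The twist on the Caicedo--Velickovic coding I would use associates to each real $r$ a triple $(\alpha_r, \beta_r, \gamma_r)$ of $K$-cardinals arising from the stage, block, and sub-block at which $r$ is coded, in such a way that the whole triple is reconstructible from $Y_{\alpha} \cap \omega_1$ alone via an inspection of the $K$-trees computed inside any suitable countable transitive model. The localization step $\forceQ^2_{\alpha}$, built by $\aleph_0$-approximations as in Chapter 2, guarantees the analogue of clause $\heartsuit$: for every countable transitive $\ZFP$-model $M$ containing $Y_{\alpha} \cap \omega_1^M$ with $(K)^M \vartriangleleft \mathcal{J}^K_{\eta}$ and $\omega_1^M = (\omega_1^K)^M$, the model $M$ decodes the correct triple for $r$ and reads off the tree-pattern inside its own $K$. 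Its $\Stat$-properness follows exactly as in Claim 2.8, using the condensation recorded in Fact 1.19(3) and the projective stationarity of $\Stat$.

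The well-order $<^{\ast}$ on the reals of $M_1[G]$ is then defined by: $r <^{\ast} s$ iff there exists a real $Y$ which is a localization witness (in the sense of $\heartsuit$, applied to reals instead of stationary sets) such that the triple decoded from $Y$ for $r$ precedes the triple decoded for $s$ lexicographically. The clause ``$Y$ is a localization witness'' is a universal statement over countable transitive $\ZFP$-models which can be taken $\Pi^1_2$-iterable, via the set $B$ from Lemma 1.18; this replaces $L$-condensation by the $\Pi^1_2$ definability of iterable $M_1$-initial segments. Counting quantifiers, the matrix of the definition is $\Pi^1_3$ and the outer existential real quantifier raises the complexity to lightface $\Sigma^1_4$. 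Saturation of $\NS$ in $M_1[G]$ is inherited from Shelah's argument: the coding factors are $\Stat$-proper, hence $\Stat$-semiproper, and the sealing factors are only activated when they are semiproper, so the RCS-iteration is $\Stat$-semiproper and preserves stationary subsets of $\omega_1$.

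The hard part will be harmonizing the coding factors with the sealing factors so that Shelah's catching-your-tail argument of Theorem 1.20 still produces the necessary contradiction at the stage $\kappa < \delta$ where a hypothetical $\omega_2$-long antichain first reflects: at such $\kappa$ the iteration rule must select the sealing factor, so the partition of stages by the $\diamondsuit$-sequence must leave a cofinal club of sealing stages that escapes being hijacked by coding instructions. A secondary obstacle, forced by the \emph{lightface} requirement, is arranging the Caicedo--Velickovic-style triple so that the starting $K$-cardinal $\alpha$ at which $r$ was coded is recognizable from $Y$ alone and not from any external parameter; this is why the localization step must embed the relevant $K$-initial segment into $Y_{\alpha}$ verbatim and why the $\Pi^1_2$-uniform definition of iterable $M_1$-initial segments from Fact 1.19(1) is indispensable in place of $L$-absoluteness.
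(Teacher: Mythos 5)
Your proposal runs into precisely the obstacle the paper addresses in its closing remarks, and it is a genuine gap rather than a cosmetic one. The $K$-tree coding together with the localization forcing $\forceQ^2_{\alpha}$ from Chapter 2 is \emph{not} transportable to the full-$\NS$ setting, because the whole Chapter~2 machinery hinges on every factor of the iteration being $\Stat(A^c)$-proper: the coding and localization steps are $\Stat$-proper, and the sealing forcings $\seal_A$ for antichains of a fixed stationary co-stationary $A$ are $A^c$-proper. It is the $A^c$-properness of the sealing factors that keeps $\Stat(A^c)$ an everywhere stationary class of the required form (countable $M\prec H_\theta$ with $M\cap K$ collapsing to a $K$-initial segment) throughout the iteration, which is in turn what makes the localization claim go through: when you take a countable $M\in\Stat[G_\alpha][G^0][G^1]$, the proof needs $M\cap K$ to collapse to a $K$-initial segment, and this persists under the iteration only because no factor ever damaged $\Stat$.

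When you seal off maximal antichains of the \emph{full} ideal $\NS$, the sealing forcing $\seal$ is merely semiproper; there is no stationary co-stationary $A$ in the background making it $A^c$-proper, and it need not be $\Stat$-semiproper for the $K$-condensation class $\Stat$. Your sentence ``the coding factors are $\Stat$-proper, hence $\Stat$-semiproper, and the sealing factors are only activated when they are semiproper, so the RCS-iteration is $\Stat$-semiproper'' is a non sequitur: a mix of $\Stat$-semiproper and plain semiproper factors under RCS is only guaranteed to be semiproper, not $\Stat$-semiproper. After a single sealing stage that fails to be $\Stat$-semiproper, the class $\Stat[G]$ can lose stationarity as a subclass of sets of the form $M\cap K$, and the subsequent localization argument (Claim~2.8 in the paper's numbering) collapses: $M[G]\cap K$ is no longer $M\cap K$ for the relevant club of models, so the countable transitive models you quantify over in your $\heartsuit$-clause cannot reliably reconstruct $K$ nor read off the tree pattern. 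This is exactly the phenomenon the author describes as ``trying to define $K$ causes utter chaos,'' and it is why the paper abandons $K$-tree coding for the Chapter~3 theorem in favor of the Caicedo--Velickovic oscillation coding into triples $(\beta,\gamma,\delta)<\omega_2$ relative to a $\Sigma^1_3$-definable ladder system, with witness sequences added by genuinely proper (not just $\Stat$-proper) forcings. That coding is insensitive to whether the intermediate models can identify $K$ inside themselves, so it tolerates the semiproper sealing stages. To repair your proposal you would need either a replacement for $\Stat$ that survives the semiproper sealing stages, or a different coding that does not route through $K$-condensation in countable submodels.
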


Its proof is organized as follows. First we introduce a method which codes reals into triples of ordinals $(\beta, \gamma, \delta)$.
This method was invented by A. Caicedo and B. Velickovic \cite{CV} building on the work of J. Moore on his set mapping reflection principle
(see \cite{Moore}), 
which he used to show that $\BPFA$ implies that the continuum is $\aleph_2$. Their coding is introduced thoroughly in the first
section of this chapter. The downside of this coding is that, as opposed to the coding we used in the last chapter, we do not have any 
control of when exactly a certain code gets written down in the universe. However this difficulty does not prevent
a nice definition of a wellorder on the reals which is localizable, meaning that already small fragments of the universe,
provided they contain a certain amount of information, are able to witness the coding. The relevant defintions
for that are done in the second section and
the theorem is finally proved in the last section, where we combine the coding forcings with 
forcings which seal off the long antichains in $P(\omega_1) \slash \NS$.

\section{The coding method}

The definition of the coding is a rather convoluted one. We have to introduce a couple of definitions and start with

\begin{Definition}
A $\vec{C}$-sequence, or a ladder system, is a sequence $(C_{\alpha} \, : \, \alpha \in \omega_1, \alpha \text{ a limit ordinal })$, such
that for every $\alpha$, $C_{\alpha} \subset \alpha$ is cofinal and the ordertype of $C_{\alpha}$ is $\omega$.
\end{Definition}

For three subsets $x,y,z \subset \omega$ we can consider the oscillation function. First turn use the set $x$ 
into an equivalence relation $\sim_x$, defined on the set $\omega- x$ as follows: for natural numbers in the 
complement of $x$ satisfying $n \le m$ let $n \sim_x m$ if and only if $[n,m] \cap x = \emptyset$.
This enables us to define:
\begin{Definition}
For a triple of subset of natural numbers $(x,y,z)$ list the intervals $(I_n \, :\, n \in k \le \omega)$ of equivalence classes of
$\sim_x$ which have nonempty intersection with both $y$ and $z$. Then the oscillation map $o(x,y,z,):
k \rightarrow 2$ is defined to be the function satisfying

\begin{equation*}
o(x,y,z)(n) = \begin{cases}
0  & \text{ if min}(I_n \cap y) \le \text{min}(I_n \cap z) \\ 1 & \text{ else}
\end{cases}
\end{equation*}

\end{Definition}

Next we want to define how suitable countable subsets of ordinals can be used to code reals. 
For that suppose that $\omega_1 < \beta < \gamma < \delta$ are fixed limit ordinals, and that 
$N \subset M$ are countable subsets of $\delta$.
Assume further that $\{ \omega_1, \beta, \gamma\} \subset N$ and that for every 
$\eta \in \{ \omega_1, \beta, \gamma\}$, $M \cap \eta$ is a limit ordinal and $N \cap \eta < M \cap \eta$.
We can use $(N,M)$ to code a finite binary string. Namely let $\bar{M}$ denote the transitive collapse of 
$M$, let $\pi : M \rightarrow \bar{M}$ be the collapsing map and let 
$\alpha_M := \pi(\omega_1)$, $\beta_M := \pi(\beta), \, \gamma_M := \pi(\gamma) \, \delta_M:= \bar{M}$. 
These are all countable limit ordinals.
Further set $\alpha_N:= sup(\pi``(\omega_1 \cap N))$ and let the height $n(N,M)$ of $\alpha_N$ in $\alpha_M$ 
be the natural number defined by

$$n(N,M):= card (\alpha_N \cap C_{\alpha_M})$$ where $C_{\alpha_M}$ is an element of our previously fixed ladder system. 
As $n(N,M)$ will appear quite often in the following we write shortly $n$ for $n(N,M)$. Note that
as the ordertype of each $C_{\alpha}$ is $\omega$, and as $N \cap \omega_1$ is bounded below $M \cap \omega_1$,
$n(N,M)$ is indeed a natural number.
Now we can assign to the pair $(N,M)$ a triple $(x,y,z)$ of finite subsets of natural numbers as follows:
$$x:= \{ card(\pi(\xi) \cap C_{\beta_M}) \, : \, \xi \in \beta \cap N \}.$$ Note that $x$ again is finite as $\beta \cap N$ is 
bounded in the cofinal in $\beta_M$-set $C_{\beta_M}$, which has ordertype $\omega$. Similarly we define 
$$y:= \{ card(\pi(\xi) \cap C_{\gamma_M}) \, : \, \xi \in \gamma \cap N \}$$ and
$$z:= \{ card(\pi(\xi) \cap C_{\delta_M} \, : \, \xi \in \delta \cap N \}.$$ Again it is easily 
seen that these sets are finite subsets of the natural numbers.
We can look at the oscillation $o(x \backslash n, y \backslash n, z \backslash n)$ (remember we let $n:= n(N,M)$)
and if the oscillation function at these points has a domain bigger or equal to $n$ then we write
\begin{equation*}
s_{\beta, \gamma, \delta} (N,M):= \begin{cases}
o(x \backslash n, y \backslash n, z \backslash n)\upharpoonright n & \text{ if defined } \\ \ast \text{ else}
\end{cases}
\end{equation*}
Similarly we let $s_{\beta, \gamma, \delta} (N,M) \upharpoonright l = \ast$ when $l > n$.
The seemingly arbitrary move in considering the oscillation of the triple $(x \backslash n, y \backslash n, z\backslash n)$ 
instead of just the oscillation of $(x,y,z)$ will become clear after the next definition.
Finally we are able to define what it means for a triple of ordinals $(\beta, \gamma, \delta)$ to code a real $r$.

\begin{Definition}
For a triple of limit ordinals $(\beta, \gamma, \delta)$, we say that it codes a real $r \in 2^{\omega}$
if there is a continuous increasing sequence $(N_{\xi} \, : \, \xi < \omega_1$ of countable sets of ordinals whose 
union is $\delta$ and which satisfies that whenever $\xi < \omega_1$ is a limit ordinal then there is a $\nu < \xi$ such that
$$ r = \bigcup_{\nu < \eta < \xi} s_{\beta, \gamma, \delta} (N_{\eta}, N_{\xi}) $$ \end{Definition}

As a short remark, note that if we would define the $s_{\beta, \gamma, \delta}$ function with the full oscillation 
of $(x,y,z)$ instead, then our above definition would become useless. Indeed if $N_{\xi}$ is a continuous increasing 
sequence whose union is $\delta$ then for some fixed limit $\xi < \omega_1$, and if we construct the sets $x,y,z$ for 
the pair $(N_{\nu}, N_{\xi})$, $\nu < \xi$, the set $x$ would for increasing $\nu$ eventually
just become an enumeration of $card( \eta \cap C_{\beta_M})$ for $\eta \in \beta \cap N_{\nu}$. Likewise the sets $y$ 
and $z$ would just become eventually an enumeration of the natural numbers, and thus completely useless for any coding purpose.
The idea to overcome this, is to constantly throw away already coded information, namely the $n$, and code
the real $r$ over and over again into the oscillation pattern.

Our next task is to combine the forcings which make $\NS$ $\aleph_2$-saturated, and the forcings which will 
generically add witnesses for ordinal triples coding reals $r$ in such a way that in the end a projective well-order. 
of the reals is possible.
For that we should take a closer look at the forcings which add coding witnesses.
These forcings have in common that their generics exist, assuming the so called Mapping Reflection Principle ($\MRP$), 
which was introduced by Justin Moore. The $\MRP$ is a forcing axiom whose consistency strength lies in between the proper 
forcing axiom $\PFA$ and the bounded proper forcing axiom $\BPFA$. Moore famously used the $\MRP$ to show that already the 
bounded proper forcing axiom suffices to decide the value of the continuum, namely $\ZFC + \BPFA \vdash 2^{\aleph_0} = \aleph_2$.
We follow Caicedo Velickovic in proving that
$\MRP$ implies that for every real $r$ there is a triple of ordinals $(\beta, \gamma, \delta)$ such that 
$\omega_1 < \beta < \gamma < \delta < \omega_2$ which code the real $r$. In fact the witnesses for the
 coding can be added by small proper forcings, which will allow us to combine these with the forcings 
which seal off long antichains in $P(\omega_1) \backslash \NS$.
For the definition of the $\MRP$ we need the following local version of stationarity: \begin{Definition}
Let $\theta$ be a regular cardinal, $X$ be an uncountable set, let $M \prec H_{\theta}$ be a countable 
elementary submodel which contains $[X]^{\omega}$. Then $S \subset [X]^{\omega}$ is $M$-stationary if 
for every club subset $C$ of $[X]^{\omega}$, $C \in M$ it holds that $$C \cap S \cap M \ne \emptyset.$$
\end{Definition}

\begin{Definition}
Let $X$ be an uncountable set, $N \in [X]^{\omega}$ and $x \subset N$ finite. Then the Ellentuck 
topology on the set $[X]^{\omega}$ is generated by base sets of the form
$$ [x,N]:= \{ Y \in [X]^{\omega} \, : \, x \subset Y \subset N \}.$$ From now on whenever we say 
open we mean open with respect to the Ellentuck topology.
\end{Definition}

\begin{Definition}
Let $X$ be an uncountable set, let $\theta$ be a large enough regular cardinal so that
 $[X]^{\omega} \in H_{\theta}$. Then a function $\Sigma$ is said to be open stationary 
if and only if its domain is a club $C \subset [H_{\theta}]^{\omega}$ and for every 
countable $M \in C$, $\Sigma(M) \subset [X]^{\omega}$ is open and $M$-stationary.
\end{Definition}
Equipped with these notions we can introduce the mapping reflection principle:

\begin{Definition}
Let $\Sigma$ be an open stationary function with domain some club $C \subset [H_{\theta}]^{\omega}$
and range $P([X]^{\omega})$.
Then there is a continuous sequence of models $(N_{\xi} \, : \, \xi < \omega_1)$ in $dom(\Sigma)$ 
such that for every limit ordinal $\xi$ there is a $\nu < \xi$ such that for every $\eta$ with 
$\nu < \eta < \xi$, $N_{\eta} \cap X \in \Sigma(N_{\xi})$.
\end{Definition}

Key here is that these sequences of models which witness the truth of the $\MRP$ can always be forced with a proper forcing:

\begin{Proposition}
$\PFA$ proves $\MRP$.
\end{Proposition}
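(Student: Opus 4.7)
The plan is to define a natural forcing $\forceP_\Sigma$ whose conditions are countable approximations to the required reflecting sequence, prove that $\forceP_\Sigma$ is proper, and then apply $\PFA$ to $\forceP_\Sigma$ to produce a generic filter whose union is an $\omega_1$-sequence witnessing $\MRP$ for $\Sigma$. Concretely, a condition in $\forceP_\Sigma$ is a continuous $\in$-increasing chain $\vec{N} = (N_\xi \, : \, \xi \le \alpha)$ of models in $\mathrm{dom}(\Sigma)$ indexed by a countable ordinal $\alpha$, such that at every limit $\xi \le \alpha$ there is a $\nu_\xi < \xi$ with $N_\eta \cap X \in \Sigma(N_\xi)$ for all $\eta$ with $\nu_\xi < \eta < \xi$; the ordering is end-extension. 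A routine density argument shows that for every $\beta < \omega_1$ the set $D_\beta = \{p \in \forceP_\Sigma \, : \, \mathrm{dom}(p) \ge \beta\}$ is dense, since successor extensions are trivially possible by appending any suitable $N$ above the current top, and limit extensions are handled by invoking the $M$-stationarity clause along a Skolem chain. Granted properness, applying $\PFA$ to $\{D_\beta \, : \, \beta < \omega_1\}$ then yields a filter whose union is the required reflecting $\omega_1$-chain.

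The main obstacle is the properness of $\forceP_\Sigma$. Fix a countable $M \prec H_\lambda$ with $\Sigma, \forceP_\Sigma \in M$ for $\lambda$ sufficiently large, and let $N_* := M \cap H_\theta$; since every club in $[H_\theta]^\omega$ captures such intersections $N_*$ for $M$ in a corresponding club of $[H_\lambda]^\omega$, we may assume $N_* \in \mathrm{dom}(\Sigma)$. Given $p \in \forceP_\Sigma \cap M$, the aim is to construct $q \le p$ whose top model is $N_*$ and which is $(M, \forceP_\Sigma)$-generic. For $q$ to be a legal condition we must exhibit a witness $\nu^* < \mathrm{dom}(q)$ such that every model of $q$ occurring beyond stage $\nu^*$ has $X$-intersection in $\Sigma(N_*)$; for genericity we must meet every dense set $D \in M$ via an $\omega$-sequence of extensions carried out inside $M$. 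The tension is that meeting a dense set $D \in M$ may force the addition of many intermediate models, each of which must independently have $X$-intersection in $\Sigma(N_*)$.

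The resolution uses the two features of an open stationary map in tandem. $M$-stationarity of $\Sigma(N_*)$ guarantees that for any $\forceP_\Sigma$-condition $p' \in M$, the club (definable in $M$) of those $N \in \mathrm{dom}(\Sigma)$ for which extending $p'$ by $N$ yields a legal condition meets $\Sigma(N_*)$ inside $M$, so there is such an $N \in M$ with $N \cap X \in \Sigma(N_*)$. Openness of $\Sigma(N_*)$ in the Ellentuck topology provides, whenever $Y \in \Sigma(N_*)$, a finite $a \subset Y$ and a countable $Y^+ \supseteq Y$ with $[a, Y^+] \subseteq \Sigma(N_*)$, giving a ``safe zone'' within which further extensions stay inside $\Sigma(N_*)$. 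Enumerating the dense sets in $M$ as $(D_n)_{n<\omega}$ and alternating these two moves, one builds a descending sequence $(p_n)_{n<\omega}$ in $M$ with $p_{n+1} \in D_n$ and every new top model having $X$-intersection in $\Sigma(N_*)$; taking the union and appending $N_*$ on top, with $\nu^*$ chosen as the length of $p_0$, produces the desired $(M, \forceP_\Sigma)$-generic extension of $p$ and completes the proof.
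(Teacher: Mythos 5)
Your proof takes essentially the same route as the paper: conditions are countable continuous $\in$-increasing chains through $\mathrm{dom}(\Sigma)$, and properness is shown by fixing a countable $M \prec H_\lambda$, taking $N_* = M \cap H_\theta$, and interleaving $M$-stationarity of $\Sigma(N_*)$ (to find inside $M$ a new model with $X$-trace in $\Sigma(N_*)$) with Ellentuck-openness (to obtain a basic-open ``safe zone'' $[a,N]$ within which all models added while meeting the next dense set remain), finally capping with $N_*$ on top. Two phrasings should be tightened before this is fully rigorous: openness only yields $[a,Y]\subseteq\Sigma(N_*)$, not $[a,Y^+]$ for some $Y^+\supsetneq Y$, and the safe-zone mechanism needs to be made explicit by performing each dense-set extension entirely inside an auxiliary $N'\prec H_{|\forceP_\Sigma|^+}$ with $N'\cap X = N$, so that every intermediate model (not merely each new top model) has $X$-trace in $[a,N]\subseteq\Sigma(N_*)$.
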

\begin{proof}

The goal is to show that the natural forcing which adds a continuous sequence 
of models witnessing the $\MRP$ for a stationary set mapping $\Sigma$ is 
always proper. So given such a function $\Sigma$ with domain 
$C \subset [H_{\theta}]^{\omega}$ and range $P([X]^{\omega})$ 
we let $\forceP_{\Sigma}$ be the partial order whose elements 
are functions $p: \alpha +1 \rightarrow dom(\Sigma)$, $\alpha$ countable, 
which are continuous and $\in$-increasing, and which additionally satisfy the 
$\MRP$-condition on its limit points, namely that for every $0< \nu < \alpha$ 
there is a $\nu_0 < \nu$ such that $p(\xi) \cap X \in \Sigma(p(\nu))$ for every 
$\xi$ in the interval $(\nu_0, \nu)$. The order is by extension.
The first thing to note is that sets of the form
$ D_{\alpha} := \{ p \in \forceP_{\Sigma} \, : \, \alpha \in dom(p) \}$ are always dense. This is true as the trivially dense
sets $D_{x} := \{ p \in \forceP_{\Sigma} \, : \, \exists \beta \in dom(p) (x \in p(\beta))$ 
ensure that whenever we force with $\forceP_{\Sigma}$ there will be a surjection from 
$\{ \alpha \, : \, \exists p \in G (\alpha \in dom(p) \}$ onto the uncountable $X$. 
Thus once we show that the forcing $\forceP_{\Sigma}$ is proper, and 
therefore $\omega_1$-preserving the $\omega_1$-many dense sets 
$D_{\alpha}$ and $\PFA$ will give the desired reflecting sequence. 
Note that we will not use the density of the $D_{\alpha}$'s to show that $\forceP_{\Sigma}$ is proper, so we avoid a circular reasoning.

To see that $\forceP_{\Sigma}$ is proper we fix a large enough 
cardinal $\lambda$ and a countable, elementary submodel 
$M \prec H_{\lambda}$ which contains $\Sigma$, $\forceP_{\Sigma}$, a 
condition $p \in \forceP_{\Sigma}$, and the structure $H_{|\forceP_{\Sigma}|^+}$. 
We list all the dense subsets $(D_0, D_1,..)$ of $\forceP_{\Sigma}$
which we can find in $M$ and build by recursion a descending sequence of conditions 
$(p_i \, : \, i \in \omega)$ in $M$, starting at $p_0:= p$ hitting the corresponding $D_{i-1}$. 
Assume that we have already built conditions up to $i \in \omega$. 
We let $N'_i$ be a countable elementary submodel of $H_{|\forceP_{\Sigma}|^+}$ 
containing $H_{\theta}$, $D_i$, $\forceP_{\Sigma}$ and $p_i$, and 
build the club of countable structures $C_i:= \{ N'_i \cap X \, : \, N'_i \text{ as just described} \}$. 
Note that this club will be in $M$, and further that for every club on $[H_{\theta}]^{\omega}$ 
which is in $M$, the set $M \cap H_{\theta}$ will be in the club.
Thus the set $M\cap H_{\theta}$ will be in
the domain of $\Sigma$ and by the definition of 
$\Sigma$, the set $\Sigma(M \cap H_{\theta})$ is $M\cap H_{\theta}$-stationary and open. 
So there is an $N_i \in C_i \cap \Sigma(M \cap H_{\theta}) \cap M$, and by the definition of the Ellentuck topology, 
there is a finite subset of $N_i$ called $x_i$ such that $[x_i, N_i] \subset \Sigma(M \cap H_{\theta})$.
We first extend the condition $p_i$ to $q_i := p_i \cup \{ \zeta_i +1, hull^{H_{\theta}} (p_i(\zeta_i) \cup x_i))\}$, 
for $\zeta_i$ the maximum of the domain of $p_i$. This condition $q_i$ will also be in $N'_i$ as all its defining 
parameters are, thus as $N'_i$ also contains $D_i$ we can extend the condition $q_i$ to a $p_{i+1} \in N'_i \cap D_i$. 
Note that as we are working in $N'_i$, no matter how we extend $q_i$, the range of the extended condition intersected with $X$ will always
be contained in $N_i=N'_i \cap X$, and as $\Sigma(M \cap H_{\theta} \supset [x_i, N_i]$, 
it will also be contained in $\Sigma(M \cap H_{\theta}$. 
Then if we set $p_{\omega} := \bigcup_{i \in \omega} p_{i} \cup (\omega, (M\cap H_{\theta}))$ 
then this will be a condition in $\forceP_{\Sigma}$, which is by construction 
below $p$ and $(M,\forceP_{\Sigma})$-generic, thus the forcing is proper.

\end{proof}
We can use the $\MRP$ to show that in its presence the function $s_{\beta \gamma \delta}$ eventually becomes stable in the following way:

\begin{Proposition}
Assume that the $\MRP$ does hold and let $\omega_1<\beta< \gamma< \delta<\omega_2$ be limit ordinals with
uncountable cofinality. Then there is a continuous sequence of countable sets of ordinals $(N_{\xi} \, : \, \xi < \omega_1)$ 
such that $\bigcup_{\xi < \omega_1} N_{\xi} = \delta$ and such that for every limit ordinal $\xi < \omega_1$ and 
every $n \in \omega$ there is a $s_{\xi}^{n} \in 2^n \cup \{ \ast \}$ and
there is a $\nu < \xi$ such that
$$s_{\beta \gamma \delta} (N_{\eta}, N_{\xi}) \upharpoonright n = s_{\xi}^n$$ for every $\eta$ in the interval $(\nu, \xi)$.
Moreover already $\BPFA$ suffices for the conclusion.
\end{Proposition}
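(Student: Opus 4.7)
The plan is to apply the mapping reflection principle to a carefully designed open stationary set mapping $\Sigma$ whose reflecting sequence will, after ensuring it exhausts $\delta$, be the required $(N_\xi:\xi<\omega_1)$. Fix a regular $\theta$ with $H_{\omega_2}\in H_\theta$ and let the domain of $\Sigma$ be the club of countable $M\prec H_\theta$ that contain $\{\beta,\gamma,\delta,\vec{C}\}$ and satisfy the comparison conditions required for $s_{\beta\gamma\delta}(\cdot,M\cap\delta)$ to be well defined on a cofinal family in $[M\cap\delta]^\omega$. The central observation is that for each such $M$ and each $n\in\omega$, the map $N\mapsto s_{\beta\gamma\delta}(N,M\cap\delta)\upharpoonright n$ has finite range $2^n\cup\{\ast\}$, and because the first $n$ bits of the oscillation depend only on finitely many entries of the triple $(x,y,z)$ and on the height $n(N,M\cap\delta)$, each preimage is Ellentuck-open on a cofinal piece. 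A pigeonhole argument then produces a value $s^n_M$ and an open $M$-stationary set $\Sigma_n(M)$ on which the map is constantly $s^n_M$.

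The main obstacle is combining the countably many $\Sigma_n(M)$ into a single open $M$-stationary $\Sigma(M)\subseteq\bigcap_n\Sigma_n(M)$. I would do this by a fusion argument in the Ellentuck topology: inductively choose a nested sequence of basic open sets $[x_n,N_n]$ refining $\Sigma_n(M)$ at stage $n$, choosing the values $s^n_M$ compatibly (with $s^{n+1}_M\upharpoonright n\in\{s^n_M,\ast\}$) so the induction never becomes vacuous, and using at each step that any finite partition of an open $M$-stationary set into open pieces contains an open $M$-stationary piece. The resulting diagonal open $M$-stationary set serves as $\Sigma(M)$. This step is where all the delicate combinatorics of the oscillation function enters and is essentially the central lemma of Caicedo--Velickovic \cite{CV}.

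Applying $\MRP$ to $\Sigma$ then hands us a continuous $\in$-chain $(N_\xi:\xi<\omega_1)$ such that at every limit $\xi$ some $\nu<\xi$ satisfies $N_\eta\cap\delta\in\Sigma(N_\xi)\subseteq\Sigma_n(N_\xi)$ for all $\eta\in(\nu,\xi)$ and all $n$. Setting $s^n_\xi:=s^n_{N_\xi}$ then yields the stabilization claimed in the proposition, with the same $\nu$ working simultaneously for every $n$. That $\bigcup_\xi N_\xi=\delta$ can be arranged by incorporating the $\omega_1$ many dense sets $D_\alpha=\{p:\alpha\in\bigcup\mathrm{range}(p)\}$ of the natural proper forcing $\forceP_\Sigma$, exactly as in the proof of $\PFA\Rightarrow\MRP$ recorded earlier.

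For the $\BPFA$ strengthening, observe that the existence of such a sequence is a $\Sigma_1$ assertion over $H_{\omega_2}$ with parameters $\beta,\gamma,\delta,\vec{C}$, while the forcing $\forceP_\Sigma$ that adds it is proper. Hence $\BPFA$, which asserts $\Sigma_1$-absoluteness of $H_{\omega_2}$ under proper forcing, transfers the conclusion back to the ground model.
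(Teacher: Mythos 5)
Your route is genuinely different from the paper's, and the place where it differs is exactly where the gap is. The paper does not try to run $\MRP$ once with a single open stationary map. Instead it isolates a partition lemma: if each $\alpha<\omega_1$ is partitioned into finitely many clopen pieces, then $\MRP$ gives a club $C$ such that at every limit point $\xi$ of $C$ the trace $C\cap\xi$ is eventually contained in one piece of the partition of $\xi$. Fixing the sequence $(N_\xi)$ in advance, the paper applies this lemma once for each $n\in\omega$ to the partition of $\xi$ induced by $\eta\mapsto s_{\beta\gamma\delta}(N_\eta,N_\xi)\upharpoonright n$, obtaining clubs $C_n$, and then takes $C=\bigcap_n C_n$. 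The combination over $n$ therefore happens entirely at the level of clubs, and the club filter is $\sigma$-complete, so the intersection presents no difficulty. Your plan moves the combination over $n$ inside the definition of a single set mapping $\Sigma$, and that is where you owe a real argument.

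Concretely, the step you describe as a ``fusion argument'' is not justified as written, and I do not see how to make it work directly. You want, for each $M$, an open $M$-stationary $\Sigma(M)\subseteq\bigcap_n\Sigma_n(M)$. The filter of $M$-clubs is generated by sets closed under a function and is $\sigma$-complete, but $M$-stationarity itself is not preserved under countable nested intersections, and in particular the limit $\bigl[\bigcup_n x_n,\,\bigcap_n N_n\bigr]$ of a descending chain of $M$-stationary basic open sets $[x_n,N_n]$ has no reason to remain $M$-stationary (nor even to be a basic open set unless the $x_n$ stabilize, which you do not arrange). The intermediate step is also unjustified: from ``one open piece of a finite open partition of an $M$-stationary open set is $M$-stationary'' you pass to ``we may choose a basic open $M$-stationary $[x_{n+1},N_{n+1}]$ inside that piece'', but an open $M$-stationary set need not contain a basic open $M$-stationary subset. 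The only evidence you offer is an appeal to the ``central lemma of Caicedo--Velickovic'', but the Caicedo--Velickovic argument (and the paper's) is precisely the partition-lemma-plus-club-intersection route, not the fusion you sketch; so this appeal does not fill the gap. To repair your proof you would either need to establish a genuine diagonalization lemma for open $M$-stationary sets (which, if true, would be a nontrivial result in its own right), or retreat to the paper's method of iterating $\MRP$ over $n$ and intersecting clubs. The remaining parts of your write-up --- the identification of the preimages as controlled by finite data, the use of the dense sets $D_\alpha$ to make $\bigcup_\xi N_\xi=\delta$, and the $\Sigma_1$-over-$H_{\omega_2}$ argument for the $\BPFA$ strengthening --- match the paper and are fine.
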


The proof is a consequence of the following
\begin{Lemma}
Again assume the $\MRP$ and suppose that for every $\alpha < \omega_1$ we have a partition of 
$\alpha$ into two sets $K^i_{\alpha}$, $i \in 2$ such that both are clopen sets in 
the usual order topology on $\omega_1$. Then there is a club $C \subset \omega_1$ such 
that for every limit point $\xi$ of $C$ there is an $i \in 2$ such that $C \backslash K^i_{\xi}$ is bounded below $\xi$.
Already $\BPFA$ does suffice for the conclusion.
\end{Lemma}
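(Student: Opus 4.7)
The plan is to apply the Mapping Reflection Principle to a carefully chosen open set-mapping and to read the desired club off the resulting reflecting sequence; the $\BPFA$ sharpening then follows from the observation that the whole assertion is $\Sigma_1$ over $H_{\omega_2}$ in the parameter $\vec{K} := (K^0_\alpha, K^1_\alpha)_{\alpha < \omega_1}$.

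Fix a sufficiently large regular $\theta$ with $\vec{K} \in H_\theta$ and set $X := \omega_1$. For each countable $M \prec H_\theta$ with $\vec{K} \in M$ I would write $\delta_M := M \cap \omega_1$ and define, for $i \in \{0,1\}$,
$$S^i(M) := \{ N \in [\omega_1]^\omega : \sup(N) \in K^i_{\delta_M} \}.$$
Each $S^i(M)$ is Ellentuck-open: given $N \in S^i(M)$, openness of $K^i_{\delta_M}$ at the point $\sup(N) \in \delta_M$ supplies either $\{\sup(N)\}$ as a neighborhood (when $\sup(N)$ is a successor of $\delta_M$) or a half-open interval $(\gamma, \sup(N)] \subseteq K^i_{\delta_M}$ (when $\sup(N)$ is a limit); then any finite $x \subset N$ with $\max(x)$ in that neighborhood witnesses $[x, N] \subseteq S^i(M)$, because $\max(x) \le \sup(N') \le \sup(N)$ for $N' \in [x,N]$ forces $\sup(N') \in K^i_{\delta_M}$.

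The crucial dichotomy is that for every such $M$ at least one of $S^0(M), S^1(M)$ is $M$-stationary. Suppose not, with clubs $D^0, D^1 \in M$ witnessing failure. Pick any $N \in D^0 \cap D^1 \cap M$; this exists because $D^0 \cap D^1$ is a club definable from elements of $M$, hence a club of $M$ by elementarity. Now $\sup \colon [\omega_1]^\omega \to \omega_1$ is definable in $M$, so $\sup(N) \in M \cap \omega_1 = \delta_M$, in particular $\sup(N) < \delta_M$. But then $\sup(N) \in K^0_{\delta_M} \cup K^1_{\delta_M}$, placing $N$ in one of the sets $D^i \cap S^i(M) \cap M$ and contradicting the choice of $D^i$. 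I then set $i(M)$ to be the least $i$ for which $S^i(M)$ is $M$-stationary and $\Sigma(M) := S^{i(M)}(M)$, obtaining an open set-mapping to which $\MRP$ applies.

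Applying $\MRP$ to $\Sigma$ produces a continuous, $\in$-increasing reflecting sequence $(N_\xi)_{\xi < \omega_1}$, and I set $C := \{ \delta_{N_\xi} : \xi < \omega_1 \}$, which is a club in $\omega_1$ by continuity of $\xi \mapsto \delta_{N_\xi}$. For a limit $\xi < \omega_1$, the reflection clause provides $\nu < \xi$ such that $N_\eta \cap \omega_1 \in \Sigma(N_\xi)$ for every $\eta \in (\nu, \xi)$; unfolding, $\delta_{N_\eta} = \sup(N_\eta \cap \omega_1) \in K^{i(N_\xi)}_{\delta_{N_\xi}}$, so $C \cap (\delta_{N_\nu}, \delta_{N_\xi}) \subseteq K^{i(N_\xi)}_{\delta_{N_\xi}}$, which is the required bounded-complement property with $i := i(N_\xi)$. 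For the $\BPFA$ version, the statement ``there is a club $C$ as in the conclusion'' is $\Sigma_1$ over $H_{\omega_2}$ in the parameter $\vec{K}$, and the above argument really describes a proper poset — the $\MRP$-poset associated to $\Sigma$, proper by the same argument used earlier for $\PFA \Rightarrow \MRP$ — that forces this statement, so $\BPFA$ suffices. The main obstacle, and the real content of the argument, is the stationarity dichotomy for $S^0(M)$ and $S^1(M)$; its resolution turns entirely on the elementarity remark $\sup(N) \in \delta_M$ for $N \in M$.
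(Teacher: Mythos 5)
Your proof is correct and follows essentially the same route as the paper: establish the dichotomy that one of the two pieces is $M$-stationary, feed the resulting open stationary set-mapping into $\MRP$, read the club off the traces $N_\xi \cap \omega_1$ of the reflecting sequence, and obtain the $\BPFA$ version from the $\Sigma_1$-ness of the conclusion together with the properness of the $\MRP$ poset. The only (cosmetic) difference is that you take $\Sigma(M)$ to be the $\sup$-preimage of the $M$-stationary piece $K^{i}_{\delta_M}$ and verify Ellentuck-openness directly, whereas the paper takes the complement of the non-stationary piece, using that $\omega_1$ is Ellentuck-closed in $[\omega_1]^{\omega}$; both yield the same argument.
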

\begin{proof}
Consider $\omega_1$ as a subset of $[\omega_1]^{\omega}$, then it is easy to see that the property ''not an ordinal 
in $[\omega_1]^{\omega}$`` is an open set in the Ellentuck topology, hence $\omega_1$ is closed in $[\omega_1]^{\omega}$.
The restriction of the Ellentuck topology to $\omega_1$ yields the usual order topology on $\omega_1$.

If $M \prec H_{\theta}$ is countable and $\alpha = M \cap \omega_1$ then 
one of the $K^i_{\alpha}$'s is $M$-stationary. Indeed assume not then there 
are two clubs $C_0$ and $C_1$ in $M$ which have empty intersection with 
$K^0_{\alpha}$ and $K^1_{\alpha}$ respectively. But then $C_0 \cap C_1 = \emptyset$ in $M$ and 
by elementarity in $H_{\theta}$ which is a contradiction.
Now set
$$\Sigma(M) := [\omega_1]^{\omega} \backslash K^i_{\alpha}$$ where $K^{1-i}_{\alpha}$ is the stationary part of the partition.
As this is an open stationary map we can apply the $\MRP$ to obtain a sequence $(N_{\xi} \, : \, \xi < \omega_1)$ 
such that for all limits $\xi$ there is a $\nu < \xi $ such that
$N_{\eta} \cap \omega_1 \in \Sigma(N_{\xi})$ for every $\eta \in (\nu, \xi)$.
Then $C:=\{ N_{\xi} \cap \omega_1 \, : \, \xi < \omega_1 \}$ is a club as required.

To see that already $\BPFA$ suffices for the conclusion note that the statement of the Lemma is of the 
form $\Sigma_1$ with parameters $K^i_{\alpha} \subset \omega_1$. We have already seen that each 
instance of the $\MRP$ can be forced by a proper forcing.
As $\BPFA$ is equivalent to the assertion that
$H_{\aleph_2} \prec V^{\forceP}$ for every proper $\forceP$ we see that already $\BPFA$ is sufficient for the statement.
\end{proof}
\begin{proof}[proof of the Proposition]
Consider a triple $(\omega_1 < \beta< \gamma < \delta< \omega_2)$ of cofinality $\omega_1$ and 
let $(N_{\xi} \, : \, \xi < \omega_1)$ be a continuous sequence of countable sets of ordinals whose union is $\delta$.
We can assume without loss of
generality that $\{ \omega_1, \beta, \gamma\} \subset N_0$ and that $N_{\eta} \cap \omega_1 < N_{\xi} \cap \omega_1$ 
holds for $\eta < \xi$ and similar inequalities hold for $\beta$ and $\gamma$.
For a fixed natural number $n \in \omega$ and an ordinal $\xi < \omega_1$ we can apply the previous Lemma 
to obtain a partition of $\xi$ into $2^n +1$ many
sets $\{ K_{\xi}^{i} \, : \, i \in 2^{n+1} \}$
via setting
$$ \eta \in K^{i}_{\xi} \text{ if and only if } s_{\beta \gamma \delta}(N_{\eta}, N_{\xi}) \upharpoonright n = s_i$$
for $s_i$ the $i$-th element of $2^{n} \cup \{\ast\}$ in some fixed well-order.
This partition consists even of clopen sets as already finitely many elements $\{n_0. n_1,...,n_k\}$ of 
$N_{\xi}$ suffice that $s_{\beta \gamma \delta} (\{n_0,...,n_k \}, N_{\xi}) = s_{\beta \gamma \delta} (N_{\eta}, N_{\xi})$ 
and thus each $K^{i}_{\xi}$ is clopen.
We apply the previous Lemma to obtain for every $n \in \omega$ a club $C_n \subset \omega_1$ such that for every 
limit point $\xi \in C$ eventually all points of $C$ below $\xi$ are in some fixed $K^{i}_{\xi}$. To $K^{i}_{\xi}$ 
corresponds an $s_{\xi}^{n} \in 2^{n} \cup \{\ast\}$ and we have that for every limit $\xi \in C_n$ there is an 
$\eta < \xi$ such that $s_{\beta \gamma \delta} (N_{\eta}, N_{\xi}) \upharpoonright n = s_{\xi}^{n}$ for every $\eta \in (\nu,\xi)$.
If we set $C:= \bigcap_{n \in \omega} C_n$ then the sequence $(N_{\xi} \, : \, \xi \in C )$ is as desired.
\end{proof}

We will use the $\MRP$ to show that for every real $r$ there is a proper notion of forcing which will introduce a 
sequence of countable models which witnesses that the real $r$ is coded by a triple $(\beta, \gamma, \delta)$ 
of limit ordinals below $\aleph_2$. The procedure is the same as before, namely we show first that $\MRP$ 
implies that every real $r$ is coded by a triple of ordinals, and then show that in fact $\BPFA$ suffices for the conclusion.
We start with fixing an $F : [\omega_4]^{<\omega} \rightarrow \omega_4$.
We will define a sequence of two player games $\mathcal{G}_{\nu}^{F}$ for $\nu < \omega_1$.

\begin{table}[h]
\begin{tabular}{l|l}
I & $\beta_0$ \qquad \qquad $\gamma_0$ \qquad \qquad $\delta_0$ \qquad \qquad $\beta_1$ \qquad \qquad  $\gamma_1$... \\
\hline
II & \qquad  $\kappa_0, \epsilon_0$ \qquad \quad  $\lambda_0, \vartheta_0$ \qquad $\mu_0, \varpi_0$ \qquad  $\kappa_1, \epsilon_1$...
   
\end{tabular}
\end{table}
The rules of the game $\mathcal{G}_{\nu}^{F}$ are as follows: player I starts by playing an ordinal
$\beta_0 < \omega_2$, then player II responds with a pair of ordinals $\kappa_0 \le \epsilon_0 < \omega_2$,
$\beta_0 \le \kappa_0$.
In the next round player I plays an ordinal $\gamma_0 < \omega_3$ and II responds with playing 
ordinals $\lambda_0 \le \vartheta_0 < \omega_3$ such that $\gamma_0 \le \lambda_0$.
This is followed by player I picking
a $\delta_0 < \omega_4$ and II playing $\mu_0 \le \varpi_0 < \omega_4$ with $\delta_0 \le \mu_0$.
Then I goes back to play below $\omega_2$: he picks a $\beta_1< \omega_2$ greater than $\epsilon_0$
and II responds with a pair $\kappa_1, \epsilon_1$ such that $\beta_1 \le \kappa_1$, and the 
game continues with I playing below $\omega_3$, II responding, I playing below $\omega_4$, II 
responding and so on. The first player who violates the rules loses. Otherwise the game will 
determine an infinite sequence of ordinals and we set
$$ X := cl_{F} (\{ \kappa_n, \lambda_n, \mu_n \, : \, n\in \omega \} \cup \nu \})$$ where 
$cl_{F}$ should denote the closure under the function $F$ and $\nu < \omega_1$ is the index of the $\nu$-th game $\mathcal{G}_{\nu}^{F}$.
We say that player II wins if the following conditions hold: \begin{itemize}
\item $X\cap \omega_1 = \nu$
\item $X \cap [\beta_n, \beta_{n+1}) \subset [\beta_n, \epsilon_n)$ for all $n \in \omega$.
\item $X \cap [\gamma_n, \gamma_{n+1}) \subset [\gamma_n, \vartheta_n)$ for all $n \in \omega$.
\item $X \cap[\delta_n, \delta_{n+1}) \subset [\delta_n, \varpi_{n})$ for all $n \in \omega$.
\end{itemize}
If player I wins then he has won already after finitely many stages so by the Gale-Stewart 
theorem $\mathcal{G}_{\nu}^{F}$ is determined for every $\nu < \omega_1$.
Player I wins $\mathcal{G}_{\nu}^{F}$ only for few $\nu < \omega_1$ as is shown now.
Let
$$A_{F}:= \{ \nu < \omega_1 \, : \, \text{ player $\I$ has a winning strategy in } \mathcal{G}_{\nu}^{F}  \}$$
then
\begin{Lemma}
$A_F$ is nonstationary.
\end{Lemma}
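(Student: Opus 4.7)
The plan is to show that for every $\nu$ in some club $C \subseteq \omega_1$, player II has a winning strategy in $\mathcal{G}_{\nu}^{F}$; combined with the determinacy noted above this gives $C \cap A_F = \emptyset$, proving $A_F$ is nonstationary. Fix a regular cardinal $\theta$ large enough that $F \in H_{\theta}$ (for instance $\theta = (2^{\omega_4})^+$), and consider the club $\mathcal{C}$ of countable elementary submodels $M \prec H_{\theta}$ with $\{F, \omega_1, \omega_2, \omega_3, \omega_4\} \subseteq M$. The set $C := \{ M \cap \omega_1 : M \in \mathcal{C} \}$ contains a club in $\omega_1$, and it suffices to exhibit, for each $\nu \in C$, a winning strategy for II in $\mathcal{G}_{\nu}^{F}$.

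Given $\nu \in C$, fix $M \in \mathcal{C}$ with $M \cap \omega_1 = \nu$. The idea is for II to play so as to keep the seed set $\{\kappa_n, \lambda_n, \mu_n : n < \omega\} \cup \nu$ inside a single countable $F$-closed elementary submodel $M_{\omega}$ of $H_{\theta}$ with $M_{\omega} \cap \omega_1 = \nu$; once this is achieved, the closure $X = cl_{F}(\{\kappa_n, \lambda_n, \mu_n : n < \omega\} \cup \nu)$ is automatically contained in $M_{\omega}$, so $X \cap \omega_1 \subseteq M_{\omega} \cap \omega_1 = \nu$, which gives the first winning condition (the reverse containment is immediate). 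Concretely, II maintains an increasing chain $M = M_{0} \subseteq M_{1} \subseteq \cdots$ of countable elementary submodels of $H_{\theta}$, each containing $F$, with $M_{n} \cap \omega_1 = \nu$ at every stage --- this is possible because one only enlarges $M_{n-1}$ by ordinals of rank $\geq \omega_1$ (namely I's moves $\beta_n, \gamma_n, \delta_n$), so the resulting Skolem hull picks up no new countable ordinals. In round $n$, after I plays $\beta_n, \gamma_n, \delta_n$, player II selects such an enlargement $M_{n} \ni \beta_n, \gamma_n, \delta_n$ and plays $\kappa_n, \lambda_n, \mu_n \in M_{n}$ above I's moves, together with bounds $\epsilon_n, \vartheta_n, \varpi_n$ chosen inside $M_{n}$.

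The main obstacle is verifying the interval conditions $X \cap [\beta_n, \beta_{n+1}) \subseteq [\beta_n, \epsilon_n)$ and their $\omega_3$- and $\omega_4$-analogues: player II must pick $\epsilon_n, \vartheta_n, \varpi_n$ knowing only I's moves up to round $n$, yet these choices must anticipate where the remaining $F$-closure ordinals end up during the entire infinite play. The plan to handle this is to choose $\epsilon_n$ as an appropriate ordinal in $M_{n}$ above $\kappa_n$ that absorbs the $M_{n}$-contribution to $X$ in the range $[\beta_n, \omega_2)$, and to control the enlargements $M_{n+1}$ so that any element of $M_{n+1}$ lying in $[\epsilon_n, \beta_{n+1})$ is precluded by elementarity applied to the Skolem hull of $M_{n} \cup \{\beta_{n+1}, \gamma_{n+1}, \delta_{n+1}\}$. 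This careful bookkeeping is the technical heart of the argument and mirrors the strategy used by J.\ Moore in establishing the properness of the forcings that witness $\MRP$.
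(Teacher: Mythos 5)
Your overall plan --- exhibit a winning strategy for II for every $\nu$ in a club and invoke determinacy --- is a genuinely different route from the paper, which argues by contradiction: it amalgamates the strategies $\sigma_\nu$ of player I for $\nu\in A_F$ into a single strategy $\sigma$, builds an elementary chain $P_0\prec\dots\prec H_\theta$ of $\aleph_1$-sized models of length $\omega_1\cdot\omega$ with $N_n:=P_{\omega_1\cdot n}$, picks a countable $M\prec\bigcup_n N_n$ with $M\cap\omega_1\in A_F$, and defeats $\sigma$. That detour is not cosmetic: because I follows a strategy lying in $N_0$, every move $\beta_{n+1}$ is forced to lie in $N_{n+1}$, hence below $\zeta_{n+1}=\sup(N_{n+1}\cap\omega_2)$, and this a priori bound on I's \emph{next} move is exactly what legitimizes the choice $\epsilon_n=\sup(M\cap N_{n+1}\cap\omega_2)$. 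In your direct construction I is an arbitrary opponent and no such bound is available, which is where the proposal breaks down.

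Concretely there are two gaps. First, the assertion that the Skolem hull of $M_{n-1}\cup\{\beta_n,\gamma_n,\delta_n\}$ ``picks up no new countable ordinals'' because one only adds ordinals above $\omega_1$ is false: $H_\theta$ carries definable (from a wellorder in $M_0$) bijections $f_\beta\colon\omega_1\to\beta$ for $\beta\in[\omega_1,\omega_2)$, so the hull of $M_{n-1}\cup\{\beta_n\}$ contains $f_{\beta_n}^{-1}(\alpha)$ for $\alpha\in M_{n-1}\cap\beta_n$, and these are new countable ordinals in general; thus $M_\omega\cap\omega_1=\nu$ is not maintained and your argument for the first winning condition collapses. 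Second, the interval conditions, which you explicitly defer as ``careful bookkeeping,'' are the entire content of the lemma and cannot be handled by the mechanism you sketch: $\epsilon_n$ must be committed before I plays $\beta_{n+1}$, and even before II plays $\lambda_n,\mu_n$; since I may then play $\beta_{n+1}$ anywhere in $(\epsilon_n,\omega_2)$, any element of $cl_F(\{\kappa_i,\lambda_i,\mu_i\}\cup\nu)\cap\omega_2$ lying in $[\epsilon_n,\omega_2)$ that is generated at a later round (e.g.\ $F(\{\kappa_{n+1}\})$ for the $\kappa_{n+1}\ge\beta_{n+1}$ that II is then obliged to play) can be trapped in $[\epsilon_n,\beta_{n+1})$, and no appeal to elementarity of the hulls $M_{n+1}$ rules this out. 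A correct proof along your lines would have to produce II's strategy with a genuinely new idea for anticipating I's unbounded moves; the standard solution (Moore, Caicedo--Velickovic, and this paper) is precisely to give up the direct construction and run the integration-plus-contradiction argument.
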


\begin{proof}
The proof is by contradiction, thus assume that $A_F$ is stationary for some function 
$F: [\omega_4]^{\omega} \rightarrow \omega_4$. First notice that there is a winning 
strategy for $\I$ which works for all $\nu \in A_F$ simultaneously.
Indeed we can define a strategy $\sigma$ for $\I$ which maps a given state $s$ of the game to the supremum of all the ordinals of
the form $\sigma_{\nu}(s)$, $\nu \in A_F$, where $\sigma_{\nu}$ is a winning strategy for $\I$. 
This strategy $\sigma$ is again a winning strategy for $\I$ as a supposed play of the 
game where $\I$ follows $\sigma$ but nevertheless yields a victory of $\II$ will 
immediately give $\II$ a way of winning against any $\sigma_{\nu}$, contradicting 
the fact that $\sigma_{\nu}$ is a winning strategy for $\I$.

A similar consideration shows that we can further assume that whenever player $\I$ 
has to pick an ordinal $\gamma_{n+1} < \omega_3$, he can do so without looking at 
what player $\II$ decided to play for $\kappa_n, \epsilon_n$. Indeed note that after 
all there are only $\aleph_2$-many choices for $\II$ to make and $\I$ can simply play 
the supremum of all the answers, which again is a winning strategy for $\I$.
Likewise for playing $\delta_{n+1} < \omega_4$, $\I$ does not have to care about the choices of $\II$ for $\lambda_n$ and $\vartheta_n$.
We will assume that the winning strategy $\sigma$ for player $\II$ has the above described properties

Now we describe a way for $\II$ to win against the strategy $\sigma$, thus giving us the desired contradiction.
We first fix a large enough regular $\theta$
and an increasing, continuous elementary chain of
$\aleph_1$-sized models of length $\omega_1 \cdotp \omega$

$$P_0 \prec P_1 \prec ...P_{\xi} \prec...\prec H_{\theta}, \, \, \,\,\, \xi < \omega_1 \cdotp \omega$$ 
such that $F, A_F, \sigma \in P_0$. Let $N_n := P_{\omega_1 \cdotp n}$ and let $N$ be the union of the $N_n$'s. Let
\begin{enumerate}
\item[] $\zeta_n := sup(N_n \cap \omega_2)$
\item[] $\eta_n := sup(N_n \cap \omega_3)$
\item[] $\theta_n := sup(N_n \cap \omega_4)$
\end{enumerate}
then all of these ordinals have cofinality $\omega_1$ and the club subsets of $N_n$ which 
climb up to them are elements of $N_{n+1}$ as the sequence
is an elementary chain. We fix a countable $M \prec N$ containing all the just mentioned clubs and such that
$\{\zeta_n \, : \, n \in \omega\} \subset M$,
$\{ \eta_n  \, : \, n \in \omega\} \subset M$,
$\{ \theta_n   \, : \, n \in \omega\} \subset M$ and
$F, A_F, \sigma \in M$ holds. Further by the
assumed stationarity of $A_F$ we can demand that $\alpha := M \cap \omega_1 \in A_F$.
We will describe now a game where $\I$ follows $\sigma$ which will nevertheless result in a victory for $\II$.
Assume that we are in the $n$-th round and the
position $p_{n-1}$ of the game looks like this
$p_{n-1} = (\beta_0, (\kappa_0, \epsilon_0), \gamma_0,...,\delta_{n-1}, (\mu_{n-1}, \varpi_{n-1}))$.
Assume further that $p_{n-1} \in N_n$.
Then $\I$ plays according to $\sigma$, thus $\beta_n=\sigma(p_{n-1})$ 
and as both $\sigma$ and $p_{n-1}$ are in $N_n$, $\beta_n$ will also be in $N_n$. Then $\II$ will follow by choosing
$\kappa_n \in M \cap N_{n+1}$ such that $\zeta_n \le \kappa_n < \omega_2$.
Let $\epsilon_{n} := sup (M \cap N_{n+1} \cap \omega_2)$. Recall that 
$\zeta_{n+1} = sup (N_{n+1} \cap \omega_2)$ has cofinality $\omega_1$ and as $M$ is countable, $\epsilon_{n} < \zeta_{n+1}$.
As $N_{n+1}$ contains a club set which converges to $\zeta_{n+1}$ and this club is in $M$ 
we conclude that $\epsilon_n \in N_{n+1}$.
Player $\II$ plays $(\kappa_n, \epsilon_n)$.
Player $\I$ then answers with $\sigma(p_{n-1}\, (\beta_n, (\kappa_n, \epsilon_n))) = \gamma_n$.
As $\sigma$ was assumed to be not dependent on the previous choice of player $\I$ we know that
$\gamma_n \in N_{n}$. $\II$ responds
with picking a $\lambda_n \in M \cap N_{n+1}$ with $\eta_n \le \lambda_n < \omega_3$ and 
$\vartheta_n := sup(M \cap N_{n+1} \cap \omega_3).$ The same reasoning as above yields that
$\vartheta_n < \eta_{n+1}$ and $\vartheta_n \in N_{n}$.
Player $\I$ the answers with $\delta_{n}$ with
the help of the strategy $\sigma$. Again, as $\sigma \in N_n$, and as $\sigma$ does 
not depend on player $\I$'s choice of $\lambda_n$ and $\vartheta_n$, $\delta_n \in N_n$.
Player $\II$ then follows with playing $\mu_n \in M \cap N_{n+1}$ such that 
$\theta_n \le \mu_n < \omega_4$. If we let $\varpi_n := sup (M \cap N_{n+1} \cap \omega_4)$ 
then again $\varpi_n < \theta_{n+1}$ and $\varpi_n \in N_{n+1}$. $\II$ then picks $(\mu_n, \varpi_n)$.
This defines the next stage $p_n$ of the game and by summing up already shown things we see that
$p_n \in N_{n+1}$.
If we let
$$X := cl_F (\{ \kappa_i, \lambda_i, \mu_i \, : \, I \in \omega \} \cup  \alpha )$$ 
then $X \subset M$ as all the relevant information is present in $M$.
This implies that $X \cap \omega_1 = \alpha$. By construction we also have that 
$\beta_{n+1} \in N_{n+1}$ and $\epsilon_n= sup(M \cap N_{n+1} \cap \omega_2)$, 
thus $X \cap [\beta_n, \beta_{n+1}) \subset X \cap [\beta_n, \epsilon_n)$ for every $n$.
Likewise we have that for every $n$, $X \cap [\gamma_n, \gamma_{n+1}) = X \cap [\gamma_n, \vartheta_n)$
and $X \cap [\delta_n, \delta_{n+1}) = X \cap [\delta_n, \varpi_n)$.
Thus $\II$ has won the game $\mathcal{G}_{\alpha}^{F}$, however $\alpha \in A_F$ which is a contradiction.

\end{proof}

If we assume that $\MRP$ holds then arbitrary reals $r$ can be coded into triples of ordinals, as is shown now:

\begin{Lemma}
If $\MRP$ holds then every real $r$ is coded into a triple $(\beta, \gamma, \delta)$ such 
that the each element of the triple has cofinality $\omega_1$ and $\omega_1 < \beta < \gamma < \delta < \omega_2$.
\end{Lemma}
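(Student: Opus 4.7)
The plan is to invoke $\MRP$ with a carefully designed open stationary mapping $\Sigma_r$ that exploits the games $\mathcal{G}_\alpha^F$ from the previous lemma, then extract the triple $(\beta,\gamma,\delta)$ from the suprema of the resulting reflecting sequence. The games at the three levels $\omega_2,\omega_3,\omega_4$ are precisely the tool that lets us control the three coordinates of the oscillation function $s_{\beta\gamma\delta}$ independently while maintaining a single elementarity constraint on the countable models.

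Fix $r\in 2^\omega$ and a Skolem-type function $F:[\omega_4]^{<\omega}\to\omega_4$. By the previous lemma $A_F$ is nonstationary, so for club many $\alpha<\omega_1$ player II has a winning strategy in $\mathcal{G}_\alpha^F$; pick the $<$-least such strategy $\tau_\alpha$ for every such $\alpha$. For large regular $\theta$ and a countable $M\prec H_\theta$ containing $r,F$, and the assignment $\alpha\mapsto\tau_\alpha$, with $\alpha=M\cap\omega_1\notin A_F$, I would define $\Sigma_r(M)\subset[\omega_4]^\omega$ to consist of those $X$ of the form $cl_F(\{\kappa_n,\lambda_n,\mu_n:n\in\omega\}\cup\alpha)$ arising from a complete play of $\mathcal{G}_\alpha^F$ in which II follows $\tau_\alpha$, such that the oscillation pattern $s_{\beta^*\gamma^*\delta^*}$ (where $\beta^*,\gamma^*,\delta^*$ are the suprema of II's coordinates in the play) agrees with an appropriately long initial segment of $r$. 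Openness in the Ellentuck topology is automatic because oscillation depends on only finitely many coordinates; $M$-stationarity is the essential content, and should follow by combining II's winning strategy with the elementarity of $M$: given a club $C\in M$, one constructs a play by letting I pick moves inside $C\cap M$ while II is forced by $\tau_\alpha$ to stay in $M$, and the finitary nature of $s$ allows one to steer the pattern onto any prescribed initial segment of $r$.

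Applying $\MRP$ to $\Sigma_r$ produces a continuous $\in$-increasing sequence $(N_\xi)_{\xi<\omega_1}$ of countable elementary submodels such that for every limit $\xi$ there is $\nu<\xi$ with $N_\eta\cap\omega_4\in\Sigma_r(N_\xi)$ for all $\eta\in(\nu,\xi)$. Reading $\beta,\gamma,\delta$ off the reflecting sequence as the suprema of the appropriate coordinate projections (with the usual trick of restricting attention to a cofinal subsequence so that the triple lies at the desired level and each entry has cofinality $\omega_1$), one obtains $\omega_1<\beta<\gamma<\delta$ of cofinality $\omega_1$. The preceding stabilization proposition ensures that at each limit $\xi$ the value $s_{\beta\gamma\delta}(N_\eta,N_\xi)$ stabilizes on a tail $\eta\nearrow\xi$; by design of $\Sigma_r$ the stable value at stage $\xi$ extends an initial segment of $r$ whose length tends to infinity with $\xi$. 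Hence the stable values assemble into $r$ itself and the triple $(\beta,\gamma,\delta)$ codes $r$.

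The main obstacle will be verifying $M$-stationarity of $\Sigma_r$: one must construct, working inside $M$, an actual play against the externally given strategy $\tau_\alpha$ whose closure lies in a prescribed club $C\in M$, while simultaneously forcing the oscillation to match a prescribed finite string $r\upharpoonright n$. This is where the three-level structure of $\mathcal{G}_\alpha^F$ does its real work, since it decouples the contributions to $s$ from the $\omega_2$-, $\omega_3$-, and $\omega_4$-coordinates. A secondary point, needed for the strengthening to $\BPFA$, is to check that the existence of such a coding triple is a $\Sigma_1$ statement in the right parameters, so that it persists under $H_{\aleph_2}\prec (H_{\aleph_2})^{V^{\forceP}}$ for the proper forcing $\forceP_{\Sigma_r}$ witnessing the instance of $\MRP$.
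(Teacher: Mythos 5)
Your overall strategy is the right one, but two of its key steps are off in ways that break the argument.

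First, you bake the game structure into the \emph{definition} of $\Sigma_r$: you want $\Sigma_r(M)$ to consist only of sets $X$ of the form $cl_F(\{\kappa_n,\lambda_n,\mu_n\}\cup\alpha)$ arising from a play of $\mathcal{G}_\alpha^F$ in which $\mathrm{II}$ follows $\tau_\alpha$. This breaks the openness claim. You assert openness is ``automatic because oscillation depends on only finitely many coordinates,'' but that covers only the oscillation clause; the restriction to $F$-closures of game plays is not an open condition in the Ellentuck topology (deleting or varying countably many moves destroys it), so the resulting $\Sigma_r$ is not visibly an open set mapping and $\MRP$ cannot be applied. The paper's $\Sigma^r$ is deliberately simpler: $\Sigma^r(M) := \{N \in [M\cap\omega_4]^\omega : s_{\omega_2\omega_3\omega_4}(N, M\cap\omega_4) \text{ is an initial segment of } r\}$. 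That condition is local (determined by a finite subset of $N$), so openness really is automatic, and the games enter only in the separate proof that $\Sigma^r$ is $M$-stationary. Moving the game machinery from the stationarity proof into the definition is not a cosmetic change: it costs you openness.

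Second, your extraction of $(\beta,\gamma,\delta)$ as ``suprema of the appropriate coordinate projections'' of the reflecting sequence cannot produce a triple with $\omega_1<\beta<\gamma<\delta<\omega_2$: taking suprema of $(N_\xi\cap\omega_3)$ and $(N_\xi\cap\omega_4)$ yields ordinals of size $\aleph_1$ living well above $\omega_2$. The step you are missing is to transitively collapse the union $M := \bigcup_{\xi<\omega_1} M_\xi$ via $\pi: M\to\bar M$ and set $\beta := \pi(\omega_2)$, $\gamma := \pi(\omega_3)$, $\delta := \pi(\omega_4)$. Since $|M| = \aleph_1$, these collapsed ordinals all lie in $(\omega_1,\omega_2)$ with cofinality $\omega_1$, and with $N_\xi := \pi(M_\xi\cap\omega_4)$ one has $s_{\beta\gamma\delta}(N_\eta,N_\xi) = s_{\omega_2\omega_3\omega_4}(M_\eta\cap\omega_4, M_\xi\cap\omega_4)$ because the oscillation function is an invariant of transitive collapses. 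The reflecting property then directly gives, for each limit $\xi$, a tail on which the oscillation is an initial segment of $r$; combined with the observation that $n(N_\eta,N_\xi)\to\omega$ as $\eta\nearrow\xi$ (by continuity of the sequence), this assembles to $r$. You do not need to invoke the separate stabilization proposition at all here; the reflection for $\Sigma^r$ already gives the stronger statement that the stable value is an initial segment of $r$.

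In short: keep the definition of $\Sigma^r$ as the bare oscillation condition (openness then is genuine), relegate the games to the stationarity lemma, and insert the transitive-collapse step before reading off $(\beta,\gamma,\delta)$. With those repairs your plan becomes the paper's proof.
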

\begin{proof}
Let $r$ be an arbitrary real. We let $\theta$ be
a regular cardinal which is large enough, and let the open stationary set mapping $\Sigma^r$ to be
defined on the club of countable elementary submodels $M \prec H_{\theta}$ with values set as follows:

$$ \Sigma^r (M) := \{ N \in [M \cap \omega_4]^{\omega} \, :\, s_{\omega_2,\omega_3,\omega_4}(N, M \cap
\omega_4) \text{ is an initial segment of } r \} $$
We have to check that $\Sigma^r$ is indeed open and stationary. Openness 
is clear as already finite information  $T \subset N$ suffices to determine the value of $s_{\omega_2,\omega_3,\omega_4}(N, M \cap
\omega_4)$. Thus if $N \in \Sigma^r (M)$ then
already an open set of the form $[m,N]$ will be a subset of $\Sigma^r (M)$.
To show that $\Sigma^r$ is stationary needs more work and the proof of this will be postponed. 
Instead we finish the proof of the lemma under the assumption that $\Sigma^r$ is open stationary.

Using $\MRP$ we obtain a reflecting sequence $(M_{\xi} \, :\, \xi < \omega_1 )$ for $\Sigma^r$. 
Let $M := \bigcup_{\xi < \omega_1} M_{\xi}$ and let $\bar{M}$ be its transitive collapse with $\pi$ the collapsing map. 
Let $\beta= \pi(\omega_2), \gamma = \pi(\omega_3), \delta= \pi(\omega_4)$. All these ordinals are in the interval 
$(\omega_1, \omega_2)$ and their cofinalities are $\omega_1$. Let $N_{\xi}:= \pi(M_{\xi} \cap \omega_4)$ and we get 
that $$s_{\beta, \gamma, \delta} (N_{\eta}, N_{\xi}) = s_{\omega_2, \omega_3, \omega_4} (M_{\eta} \cap \omega_4, M_{\xi} \cap \omega_4)$$ 
for every $\eta < \xi <\omega_1$.

Moreover for every $\xi$ limit we have continuity in that $N_{\xi} = \bigcup_{\eta < \xi} N_{\eta}$.
Which implies that the $n(N_{\eta}, N_{\xi})$ converges to $\omega$ for $\eta \rightarrow \xi$, where as a 
short reminder $n(N_{\eta}, N_{\xi})$ was defined to be the size of the set 
$C_{\pi(\omega_1)} \cap \pi \textquotedblright (\omega_1 \cap N_{\eta})$ for $\pi$ the collapsing function of $N_{\xi}$. As
$s_{\beta, \gamma, \delta} (N_{\eta}, N_{\xi}) =
s_{\omega_2, \omega_3, \omega_4} (M_{\eta} \cap \omega_4, M_{\xi} \cap \omega_4)$, and as 
$(M_{\xi} \, : \, \xi < \omega_1$ is a reflecting sequence for $\Sigma^r$ we know that there is a $\nu < \xi$ such that
$$r = \bigcup_{\nu < \eta < \xi} s_{\beta, \gamma, \delta} (N_{\eta}, N_{\xi}). $$ 
This ends our proof as the triple $(\beta, \gamma, \delta)$ code $r$ as desired.

\end{proof}

\begin{Lemma}
Already $\BPFA$ suffices for the conclusion of the last Lemma \end{Lemma}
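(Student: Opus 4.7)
The plan is to argue by $\Sigma_1$-absoluteness, in exactly the same spirit as the earlier strengthening of the clopen-partition lemma from $\MRP$ to $\BPFA$. Recall that $\BPFA$ is equivalent (Bagaria) to the assertion that $H(\omega_2)^V \prec_{\Sigma_1} H(\omega_2)^{V[G]}$ for every generic extension $V[G]$ by a proper forcing. It therefore suffices, for each fixed real $r$, to produce a proper forcing in whose extension a coding triple for $r$ exists, and to verify that the existence of such a triple is expressible as a $\Sigma_1$ formula over $H(\omega_2)$ with parameters in $H(\omega_2)$.

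For the proper forcing I take $\mathbb{P}_{\Sigma^r}$, the $\MRP$-forcing associated to the open stationary set mapping $\Sigma^r$ constructed in the proof of the previous lemma; by the proof of the proposition $\PFA \Rightarrow \MRP$ established above, $\mathbb{P}_{\Sigma^r}$ is proper. If $G$ is $\mathbb{P}_{\Sigma^r}$-generic, then the generic sequence $(M_\xi)_{\xi<\omega_1}$ is a reflecting sequence for $\Sigma^r$ in $V[G]$, and the argument of the preceding lemma, applied internally to $V[G]$ (which only uses the existence of a reflecting sequence for $\Sigma^r$, not the full $\MRP$), produces in $V[G]$ limit ordinals $\omega_1 < \beta < \gamma < \delta < \omega_2^{V[G]}$ of cofinality $\omega_1$ together with a continuous $\in$-increasing $\omega_1$-sequence $(N_\xi)_{\xi<\omega_1}$ of countable sets whose union is $\delta$ and which witnesses that $(\beta,\gamma,\delta)$ codes $r$.

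The step that needs attention is the complexity bookkeeping. Existence of $(\beta,\gamma,\delta)$ together with the witnessing sequence $(N_\xi)$ and explicit strictly increasing continuous $\omega_1$-cofinal sequences in each of $\beta,\gamma,\delta$ is an existential quantification over an object of cardinality $\aleph_1$, hence over $H(\omega_2)$; and the body of the formula --- the finite-combinatorial recipe for $s_{\beta\gamma\delta}(N_\eta,N_\xi)$ from the ladder system $\vec{C}$, together with the assertion that for every countable limit $\xi$ there is some $\nu<\xi$ on whose tail $(\nu,\xi)$ the resulting oscillation pattern equals $r$ --- is bounded and absolute between $H(\omega_2)^V$ and $H(\omega_2)^{V[G]}$. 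Asking for ``cofinality $\omega_1$'' positively via these cofinal sequences is legitimate because the existence of a strictly increasing continuous $\omega_1$-sequence with supremum $\delta$ already forces $\mathrm{cf}(\delta)=\omega_1$ (if $\mathrm{cf}(\delta)=\omega$, cofinality of $\omega_1$ would force the indices $\xi_n$ of the sequence at which one passes a fixed $\omega$-cofinal ladder in $\delta$ to be unbounded in $\omega_1$, contradicting the regularity of $\omega_1$), so no $\Pi_1$ clause is needed. The coding statement is therefore $\Sigma_1$ over $H(\omega_2)$ in the parameters $r$ and $\vec{C}$, and $\BPFA$ reflects it from $H(\omega_2)^{V[G]}$ to $H(\omega_2)^V$, producing the desired triple already in $V$.
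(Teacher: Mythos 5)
Your proof is correct and follows the same route as the paper: Bagaria's characterization of $\BPFA$, properness of the $\MRP$-forcing for $\Sigma^r$, and the observation that the existence of a coding triple is $\Sigma_1$ over $H(\omega_2)$ in the parameters $r$ and $\vec{C}$. You simply spell out the complexity bookkeeping (in particular how to witness cofinality $\omega_1$ positively) that the paper leaves implicit.
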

\begin{proof}
We will use Bagaria's characterization of $\BPFA$ again, namely that $\BPFA$ is equivalent 
to the assertion that $H_{\aleph_2} \prec_1 V^{\forceP}$ for every proper $\forceP$. 
Note that in the proof of the previous Lemma a proper forcing will add a reflecting 
sequence for the open stationary function $\Sigma^r$ and the existence of a triple of 
ordinals coding the reals $r$ is a $\Sigma_1$ statement with parameters the ladder system 
$C$ and the real $r$. Thus already $\BPFA$ suffices to guarantee such a triple.

\end{proof}

What is still left to show is that the function $\Sigma^r$ is $M$-stationary for every $M$ in its domain.

\begin{Lemma}
Let $\Sigma^r$ be the function from the proof of Lemma 54,
$$\Sigma^r (M) := \{ N \in [M \cap \omega_4]^{\omega} \, :\, s_{\omega_2,\omega_3,\omega_4}(N, M \cap
\omega_4) \text{ is an initial segment of } r \}.$$
Then $\Sigma^r$ is $M$-stationary for every $M \prec H_{\theta}$, where $\theta$ is a sufficiently large regular cardinal.
\end{Lemma}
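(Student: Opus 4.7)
The plan is to derive $M$-stationarity of $\Sigma^r(M)$ from the non-stationarity of the sets $A_F$ established in the previous Lemma, by choosing $F$ to encode the club $C \in M$ that we need to hit. Fix a countable $M \prec H_\theta$ in the domain of $\Sigma^r$, write $\nu := M \cap \omega_1$, and fix an arbitrary club $C \subset [\omega_4]^\omega$ with $C \in M$. We must produce $N \in M \cap C$ such that $s_{\omega_2,\omega_3,\omega_4}(N, M \cap \omega_4)$ is an initial segment of $r$.

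First I would choose a function $F:[\omega_4]^{<\omega} \to \omega_4$ coding simultaneously (i) Skolem terms for the structure $(H_\theta, \in, <, r, C, \nu)$, and (ii) a term that, given a finite tuple, produces an element of $C$ extending it; $F$ can be arranged so that any countable $F$-closed subset of $\omega_4$ automatically belongs to $C$ and to every club of $[\omega_4]^\omega$ anticipated by $F$. By the previous Lemma $A_F$ is nonstationary, so after intersecting the domain of $\Sigma^r$ with the club $\{\,M : M \cap \omega_1 \notin A_F\,\}$ we may further assume $\nu \notin A_F$. Hence player $\II$ has a winning strategy $\tau$ in $\mathcal{G}_\nu^F$, and by elementarity of $M$ the $<$-least such $\tau$ lies in $M$.

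Next I would play $\mathcal{G}_\nu^F$ with $\II$ following $\tau$ while we drive player $\I$ so as to enforce the desired oscillation pattern. In the transitive collapse $\pi:M \to \bar M$ with $\alpha_M = \nu$, $\beta_M = \pi(\omega_2)$, $\gamma_M = \pi(\omega_3)$, $\delta_M = \pi(\omega_4)$, the finite sets $x, y, z$ attached to a candidate $N \subset M \cap \omega_4$ record the ranks of $\pi(N \cap \omega_2)$, $\pi(N \cap \omega_3)$, $\pi(N \cap \omega_4)$ in the ladders $C_{\beta_M}, C_{\gamma_M}, C_{\delta_M}$. Since each ladder has order-type $\omega$, at each round player $\I$ has enough freedom to place his next choice of $\beta_n, \gamma_n, \delta_n$ so that the corresponding new entry in $x, y, z$ lands at a prescribed natural number; $\II$'s responses $[\beta_n, \epsilon_n), [\gamma_n, \vartheta_n), [\delta_n, \varpi_n)$ only restrict the window in which $\I$ must pick his \emph{subsequent} move, never forbidding a specified rank. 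Iterating this bit by bit, $\I$ can force $o(x \setminus n, y \setminus n, z \setminus n)\!\upharpoonright\! n$ to agree with $r \!\upharpoonright\! n$ for arbitrarily large $n$, and thus to be an initial segment of $r$.

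Since $\II$ wins, the resulting closure $X := \mathrm{cl}_F(\{\kappa_n, \lambda_n, \mu_n : n \in \omega\} \cup \nu)$ satisfies $X \cap \omega_1 = \nu$ together with the interval conditions of the game; by the design of $F$ it moreover lies in $M$ and in $C$. Setting $N := X$ yields the desired witness $N \in M \cap C \cap \Sigma^r(M)$. The hard part will be the book-keeping in the oscillation-design step: one must realise prescribed ranks in three parallel ladders against $\II$'s shrinking bounds, and simultaneously arrange that the discarded initial segment $n = n(N_{\eta}, N_\xi)$ grows, so that the construction actually recovers $r$ rather than a polluted stabilised pattern. This is precisely where Moore's device of truncating $x, y, z$ by $n$ in the definition of $s_{\beta\gamma\delta}$ becomes indispensable, since it allows player $\I$ to overwrite the early noise created while he was setting up his coding moves.
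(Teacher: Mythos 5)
Your proposal starts from the right building block---the games $\mathcal{G}^F_\nu$ and the nonstationarity of $A_F$---but it fails at a structural point. The definition of $M$-stationarity requires a witness $N \in C \cap \Sigma^r(M) \cap M$, so the produced set must itself be an \emph{element} of $M$. You set $\nu := M \cap \omega_1$ and take $X := \mathrm{cl}_F(\{\kappa_n,\lambda_n,\mu_n : n \in \omega\} \cup \nu)$ from an infinite play; since II wins, $X \cap \omega_1 = \nu = M \cap \omega_1$. But no such $X$ can belong to $M$: if $X \in M$ then $\sup(X \cap \omega_1) \in M$, yet $\sup(X \cap \omega_1) = M \cap \omega_1 \notin M$. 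Relatedly, the assertion that by elementarity the $<$-least winning strategy for $\mathcal{G}^F_\nu$ lies in $M$ is unjustified: $\nu \notin M$, so the game $\mathcal{G}^F_\nu$ is not an object $M$ can see, and elementarity says nothing about it.

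The paper's proof instead works inside $M$ throughout and, crucially, stops after finitely many rounds. Since $F \in M$, also $A_F \in M$, and by the previous lemma a club $C' \subset \omega_1 \setminus A_F$ lies in $M$; one picks $\nu \in C' \cap M$ (so $\nu < M \cap \omega_1$), and the associated winning strategy $\sigma_\nu$ can be taken in $M$ by elementarity. After finitely many rounds, the finite tuple $T$ of II's moves lies in $M$, hence $X := \mathrm{cl}_F(T \cup \nu) \in M$ with $X \cap \omega_1 = \nu$, giving a genuine witness. The ``hard part'' you flag at the end---arranging that the discarded prefix $n$ is large enough so the oscillation pattern has room to code an initial segment of $r$---is exactly what the free choice of $\nu$ buys: the paper computes a bound $k$ on the heights, in the ladders $C_{\beta_M}$, $C_{\gamma_M}$, $C_{\delta_M}$, of II's round-zero responses as $\nu$ ranges over $C'$, and then chooses $\nu \in C' \cap M$ whose own height in $C_{\alpha_M}$ exceeds $k$, so the first $n$ oscillation digits are unpolluted. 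Your sketch leaves this open, but it is not an optional bookkeeping step; without the choice of $\nu$ strictly below $M \cap \omega_1$ and the truncation to a finite play, the argument cannot produce a witness in $M$ at all.
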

\begin{proof}
The set $X_{\Sigma^r}$ which comes along with $\Sigma^r$ is $\omega_4$. As the closed 
filter on $[\omega_4]^{\omega}$ is generated by sets which are closed under functions 
$F:[\omega_4]^{<\omega} \rightarrow \omega_4$ it suffices to show that whenever 
$M \prec H_{\theta}$ and $F:[\omega_4]^{<\omega} \rightarrow \omega_4$ is a function in $M$, 
then there is an $X \in M$ which is closed under $F$ and for which $X \in \Sigma^r(M)$ does hold. 
We will do this using the previously defined games $\mathcal{G}_{\nu}^{F}$ and the already proved fact 
that there is a club $C\subset \omega_1$ of ordinals $\nu$ for which player II has a winning strategy.

Thus let $M$ be an arbitrary elementary submodel of $H_{\theta}$. We will argue almost 
entirely inside $M$. Let $\bar{M}$ as always denote the transitive collapse of $M$, let $\pi$ be the 
collapsing function and set $\alpha_M:= \pi(\omega_1)$,
$\beta_M:= \pi(\omega_2)$, $\gamma_M := \pi(\omega_3)$ and $\delta_M:= \pi(\omega_4)$. 
Let $C$ be our fixed ladder system, and for an ordinal $\rho < \omega_4$ let
$ht_{\omega_i}(\rho)$, the height of $\rho$ in $\omega_i$ be defined as 
$ht_{\omega_i}(\rho) := card(\pi(\rho) \cap C_{\pi(\omega_i)})$.
Our goal is to show that we can find a $\nu \in C \cap M$ and play 
finitely many rounds of the game $\mathcal{G}^{F}_{\nu}$ inside $M$ such 
that if $T$ is the finite set of relevant ordinals played by II and $X:= cl_F(T\cup \nu)$
then $s_{\omega_2 \omega_3 \omega_4} (X, M \cap \omega_4)$ is an initial segment of $r$ and thus in $\Sigma^r(M)$.
The two players I and II will collaborate to achieve this.

We consider the set of winning strategies $\sigma_{\nu}$ for II and what 
they do at stage one. Let us assume that I plays 0 in all of his three first moves.
II will respond with pairs 
$(\kappa_0^{\nu}, \epsilon_{0}^{\nu})$, $(\lambda_{0}^{\nu}, \vartheta_{0}^{\nu})$, and $(\mu_{0}^{\nu}, \varpi_{0}^{\mu})$.
We can consider the suprema of the sets $\{\epsilon_{0}^{\nu} \, : \, \nu \in C \}$ 
$\{ \vartheta_{0}^{\nu} \, : \, \nu \in C \}$ and $\{ \varpi_{0}^{\nu} \, : \, \nu \in C \}$ and 
call them $\epsilon$, $\vartheta$ and $\varpi$ respectively.
These suprema are elements of $M$ but we leave $M$ for a moment to compare the height of $\epsilon$ 
below $\omega_2$, the height of $\vartheta$ below $\omega_3$ and the height of $\varpi$ below $\omega_4$. 
We pick the maximum of these three natural numbers, call it $k$ and fix a $\nu \in C \cap M$ be such 
that its height below $\omega_1$, say $n$ is bigger than $k$.
We ensured this way that for the set $X$ we will construct inductively $s_{\omega_2 \omega_3 \omega_4} (X, M \cap \omega_4)$ 
will be defined on the first $n$ digits.

The first round of the game is played as follows:
player I simply plays 0 three times and player II responds using his winning strategy $\sigma_{\nu}$. 
His answers, say $(\kappa_0, \epsilon_0)$, $(\lambda_0, \vartheta_0)$ and $(\mu_0, \varpi_0)$ are such that the height of
$\epsilon_0$ below $\omega_2$, the height of
$\vartheta_0$ below $\omega_3$ and the height
of $\varpi_0$ below $\omega_4$ are less than
$n$ and thus if we let $X_0$ be
the $cl_F (\{ \kappa_0, \lambda_0, \mu_0 \} \cup \nu)$ then 
$s_{\omega_2 \omega_3 \omega_4} (X_0, M\cap \omega_4)= o(x\backslash n, y\backslash n, z\backslash n)\upharpoonright n$ 
(where the sets $x,y$ and $z $ are defined as usual) will be $\ast$. Thus we start our construction 
without accidentally having coded some information already in the first step.

Assume now inductively that rounds $0,1,...,i-1$ have already been played and that 
$X_{i-1}:= cl_{F} (\{ \kappa_0, \lambda_0, \mu_0,..., \kappa_{i-1}, \lambda_{i-1}, \mu_{i-1}\} \cup \nu)$ 
is such that $s_{\omega_2 \omega_3 \omega_4}(X_{n-1}, M\cap \omega_4)$ codes the previously fixed real $r$ on the first
$i-2$ many digits. Our goal is to code $r$'s $i-1$-th digit.
For that purpose I plays an ordinal $\beta_i$ in $M$ such that 
$ht_{\omega_2} (\beta_i) > max \{ ht_{\omega_3} (\vartheta_{i-1}), ht_{\omega_4} (\varpi_{i-1}), n \}$.
Player II responds according to his winning strategy $\sigma_{\nu}$ with the pair $(\kappa_{i}, \epsilon_{i})$
We have by the rules of the game that $\beta_i \le \kappa_i \le \epsilon_i$.
We now split into cases according the $(i-1)$-th entry of $r$.
\par
\begin{flushleft}
\textbf{Case 1.}
\end{flushleft}
$r(i-1) =0$. Then player I picks a $\gamma_i$ such that 
$ht_{\omega_3}(\gamma_i) > ht_{\omega_2} (\epsilon_i)$ and player II 
answers with $(\lambda_i, \vartheta_i)$ according to $\sigma_{\nu}$.
Then I plays $\delta_i$ such that $ht_{\omega_4} (\delta_i) > ht_{\omega_3} (\vartheta_i)$ 
and II plays again what $\sigma_{\nu}$ tells him to, say $(\mu_i, \varpi_i)$.
By the rules of the game $\mathcal{G}_{\nu}^{F}$ we have that $\gamma_i \le \lambda_i \le \vartheta_i$ and
$\delta_i \le \mu_i \le \varpi_i$. Hence the height of any point in the interval $(\beta_i, \epsilon_i)$ 
is less than the height of any point in the interval $(\gamma_i, \vartheta_i)$ which are less than 
the height of any point of the interval $(\delta_i, \varpi_i)$.
Thus when going to the images under the $M$ collapse and constructing the according sets $x,y$ and
$z$ we see that
for $X_i :=X_{i-1} \cup \{ \kappa_i, \lambda_i, \mu_i \} $, $s_{\omega_2 \omega_3 \omega_4} (cl_F (X_{i} \cup \nu) (i-1) = 0$ as desired.
As $\nu$ is a winning strategy for II the already coded information remains untouched when passing from $X_{i-1}$ to $X_i$.
\\

\begin{flushleft}\textbf{Case 2.}\end{flushleft}
If $r(i-1) =1 $ then we argue very similar to case 1, we just have to ensure that we switch 
the heights of the ordinals in the intervals. To be more precise we make player I and II 
play in such a way that the height below $\omega_2$ of any point in the interval 
$(\beta_i, \epsilon_i)$ is smaller than the height below $\omega_4$ of any point 
in the interval $(\delta_i, \varpi_i)$ which in turn is smaller than the height 
below $\omega_3$ of any point in the interval $\gamma_i, \vartheta_i)$.
It is easy to see that such a play exists.

\par

If I and II play the game as described above they will code the real $r$ up to 
$n$ in $n+1$ many rounds. Let $X := X_{n+1}= cl_F (\{\kappa_i, \lambda_i, \mu_i \, : \, i \le n+1 \} \cup \nu)$ 
then as $\nu$ is a winning strategy in $\mathcal{G}^F_{\nu}$ it 
follows that $s_{\omega_2 \omega_3 \omega_4} (X, M \cap \omega_4) \upharpoonright n = r \upharpoonright n$ 
and as we played entirely within the elementary submodel $M \prec H_{\theta}$ and since $X$ is 
closed under $F$ we have an $F$-closed witness for $\Sigma^r(M) \cap M  \ne \emptyset$.
Thus $\Sigma^r$ is $M$ stationary.

\end{proof}
Note that the coding method just described does
go well with the sealing forcings we use to seal off long antichains in $P(\omega_1) \backslash \NS$
as we only use them whenever the forcing is
semiproper.
We finally have the techniques to prove that,
assuming the existence of $M_1^{\sharp}$, there is a model of $\ZFC$ where $\NS$ is $\aleph_2$-saturated 
and which has a $\Sigma_{4}^{1}$-well-order on the reals.

\section{Coding the reals}

We briefly summarize the main results of the last section:

\begin{enumerate}
\item[$(\dagger)$] Given ordinals $\omega_1 < \beta < \gamma < \delta < \omega_2$ of cofinality $\omega_1$,
there exists a proper notion of forcing $\forceP_{\beta \gamma \delta}$ such that after forcing with it the following holds:
There is an increasing continuous sequence $(N_{\xi} \, : \, \xi < \omega_1)$ such that $N_{\xi} \in [\delta]^{\omega}$ 
whose union is $\delta$ such that for every limit $\xi < \omega_1$ and every $n \in \omega$ there 
is $\nu < \xi$ and $s_{\xi}^n \in 2^n$ such that 
$$s_{\beta \gamma \delta}(N_{\eta}, N_{\xi}) \upharpoonright n = s_{\xi}^{n}$$ holds for every $\eta$ in the interval $(\nu, \xi)$.
We say then that the triple $(\beta, \gamma, \delta)$ is stabilized.
 
\item[$(\ddagger)$] Further if we fix a real $r$
there is a proper notion of forcing $\forceP_r$ such that the forcing will produce for a triple of ordinals 
$(\beta_r, \gamma_r, \delta_r)$ of size and cofinality $\aleph_1$ a continuous, increasing sequence 
$(P_{\xi} \, : \, \xi < \omega_1)$, $P_{\xi} \in [\delta_r]^{\omega}$ such that $\bigcup P_{\xi} = \delta_r$ 
and such that for every limit $\xi < \omega_1$ there is a $\nu < \xi$ such that 
$$\bigcup_{\nu < \eta < \xi} s_{\beta_r \gamma_r \delta_r} (P_{\eta}, P_{\xi}) = r.$$ 
We say then that the real $r$ is determined by the triple $(\beta_r, \gamma_r, \delta_r)$.
\end{enumerate}
We can use these two forcing notions to set up a well-order on the reals, which is suitable for our purposes.
First we define a partial function $f$ assigning triples of ordinals to reals: \begin{Definition}
For a real r we let $f(r)$ be the antilexicographically least triple $(\beta, \gamma, \delta)$  
for which there is a sequence of models 
$(N_{\xi} \, : \, \xi < \omega_1)$, $N_{\xi} \in [\delta]^{\omega}$, $\bigcup (N_{\xi}) = \delta$ such that the set
$$\{ \xi < \omega_1 \, : \, \exists \nu <\xi (\bigcup_{\nu < \eta< \xi} s_{\beta \gamma \delta} (N_{\eta}, N_{\xi}) = r ) \}$$ 
is club containing (if there is such a triple).
\end{Definition}
Note that this function $f$ is only a partial function. However we can force any given 
real to be in the domain of $f$ using the proper forcing described in $(\ddagger)$. We use
the function $f$ to define a partial well-founded order, which will become a well-order during our iteration :
\begin{Definition}
Let $r,s$ be two reals for which $f(r)$ and $f(s)$ is defined. Then we write $$r<s \text{ iff } f(r) <_{antilex} f(s).$$
\end{Definition}
It is clear that $<$ will become a well-order as soon as every real $r\in \omega^{\omega}$ is in the domain of $f$.
The crucial property of $<$ is the following:
\begin{Lemma}
Suppose that $M$ is a transitive model
such that
every triple of ordinals $\omega_1< (\beta, \gamma, \delta) < \omega_2$ is stabilized. Suppose further that  
$M\models x<y$, for two reals $x,y \in M$ then $x<y$ will hold in every stationary subset-preserving forcing extension $M[G]$ of $M$.
\end{Lemma}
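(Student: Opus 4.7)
The plan is to show that the triples $f^M(x)$ and $f^M(y)$ are also the triples $f^{M[G]}(x)$ and $f^{M[G]}(y)$; since the antilexicographic order on triples of ordinals is absolute, this immediately gives $M[G]\models x<y$. By symmetry I will concentrate on $x$, writing $(\beta,\gamma,\delta):=f^M(x)$ with witnessing continuous increasing sequence $(N_\xi)_{\xi<\omega_1}\in M$ and $M$-club $C$ on which $\bigcup_{\nu<\eta<\xi}s_{\beta\gamma\delta}(N_\eta,N_\xi)=x$ for suitable $\nu<\xi$. First I would observe that this same witnessing data still works in $M[G]$: the oscillation values $s_{\beta\gamma\delta}(N_\eta,N_\xi)$ depend only on the sets, on the fixed ladder system $\vec C$, and on transitive collapses, so they are absolute; since the extension preserves $\omega_1$ and $C$ is closed in the order topology, $C$ remains club in $M[G]$. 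Hence $(\beta,\gamma,\delta)$ still determines $x$ in $M[G]$.

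The heart of the proof is to rule out a strictly smaller competitor. Suppose toward contradiction that $(\beta',\gamma',\delta')<_{\mathrm{antilex}}(\beta,\gamma,\delta)$ and a sequence $(P_\xi)\in M[G]$ witness that $(\beta',\gamma',\delta')$ determines $x$ in $M[G]$, via some $M[G]$-club $D$. By the hypothesis that every triple below $\omega_2$ is stabilized in $M$, fix $(N'_\xi)\in M$ stabilizing $(\beta',\gamma',\delta')$, and let $s^\ast_\xi\in 2^{\le\omega}$ denote the (absolute) stabilized value at each limit $\xi$. Since $(N'_\xi)$ and $(P_\xi)$ are both continuous increasing sequences of countable sets with union $\delta'$, a standard back-and-forth argument yields an $M[G]$-club $C'=\{\xi<\omega_1: N'_\xi=P_\xi\}$. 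For $\xi\in C'\cap D$ (possibly thinned to a further club to align the two ''eventually'' thresholds), the oscillation values along the two sequences coincide for cofinally many $\eta<\xi$, so the stabilization $s^\ast_\xi$ agrees with the tail union $\bigcup_{\nu<\eta<\xi}s_{\beta'\gamma'\delta'}(P_\eta,P_\xi)=x$, giving $s^\ast_\xi=x$.

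Thus the set $E:=\{\xi<\omega_1:s^\ast_\xi=x\}$, which lies in $M$, contains the $M[G]$-club $C'\cap D$; its complement is nonstationary in $M[G]$, and since the extension preserves stationary subsets of $\omega_1$, the complement is already nonstationary in $M$. Hence $E$ contains a club in $M$, so $(N'_\xi)$ witnesses in $M$ that $(\beta',\gamma',\delta')$ determines $x$, contradicting the antilexicographic minimality of $(\beta,\gamma,\delta)=f^M(x)$. Applying the same argument to $y$ gives $f^{M[G]}(y)=f^M(y)$, and then $M[G]\models x<y$ follows from absoluteness of $<_{\mathrm{antilex}}$. The main technical obstacle is the alignment step in the middle paragraph: one must carefully match the threshold $\nu$ coming from stabilization of $(N'_\xi)$ with the one coming from the definition of determination along $(P_\xi)$, and must handle potential competitor triples $(\beta',\gamma',\delta')$ whose cofinality $\omega_1$ appears only in $M[G]$ by interpreting the standing hypothesis broadly enough (or by noting that in the intended applications $\omega_2^M=\omega_2^{M[G]}$, so no genuinely new triples arise).
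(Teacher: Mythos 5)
Your proposal is correct and takes essentially the same approach as the paper: fix an $M$-sequence stabilizing the competitor triple, use a back-and-forth argument to obtain an $M[G]$-club on which it agrees with the $M[G]$-witness, then use absoluteness plus stationary set preservation to pull the ``club containing'' property back to $M$ and contradict antilexicographic minimality. The only cosmetic difference is that you prove outright that $f^{M[G]}(x)=f^M(x)$ and $f^{M[G]}(y)=f^M(y)$, whereas the paper assumes $M[G]\models y<x$ and derives a contradiction directly; the key steps and the stationary-preservation reflection are identical.
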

\begin{proof}
Assume for a contradiction that
there is a stationary set preserving notion of forcing $\forceP$ such that $M[G] \models y<x$, for a generic $G$ for
$\forceP$.
Thus $G$ must have added a continuous, increasing sequence of countable sets of ordinals $(N_{\xi} \, : \, \xi < \omega_1)$ 
such that for a triple $(\beta, \gamma, \delta)$ the set 
$\{ \xi < \omega_1 \, : \, \exists \nu < \xi (\bigcup_{\nu < \eta< \xi} s_{\beta \gamma \delta} (N_{\eta}, N_{\xi}) = y ) \}$ 
is club containing, and $(\beta, \gamma, \delta)$ is antilexicographically less than the least triple 
$(\beta_x, \gamma_x, \delta_x)$ for $x$. But we have assumed that every triple of ordinals $< \omega_2$ in $M$ 
is already stabilized, I.e. there exists in $M$ an increasing continuous sequence of countable sets of 
ordinals $(P_{\xi} \, : \, \xi < \omega_1 )$ such that $\bigcup_{\xi < \omega_1} P_{\xi} = \delta$. 
By a hand by hand argument carried out in $M[G]$ we obtain a club $C= \{ \xi < \omega_1 \, : \, P_{\xi}= N_{\xi} \}$ 
and for every limit point $\xi$ of $C$ we have that $\bigcup_{\nu < \eta < \xi} s_{\beta \gamma \delta} (N_{\eta}, N_{\xi})$
is the same as $\bigcup_{\nu< \eta< \xi} s_{\beta \gamma \delta} (P_{\eta}, P_{\xi})$.
Thus $$M[G] \models \{ \xi< \omega_1 \, : \, \bigcup s_{\beta \gamma \delta} (P_{\eta}, P_{\xi}) = y \}
\text{ is club containing}.$$ But as $y$ and $(P_{\xi} \, : \, \xi < \omega_1)$ are objects in $M$ and the 
function $s_{\beta \gamma \delta}$ is absolute we can consider the set also in $M$. As $M[G]$ is a 
stationary set-preserving generic extension of $M$ we have that already $M$ sees that the set 
$\{ \xi < \omega_1 \, : \, \exists \nu < \xi ( \bigcup_{\nu < \eta < \xi} s(P_{\eta}, P_{\xi})= y) \}$ 
is club containing in $M$ as otherwise the complement would be stationary and therefore stationary in the extension.
This is a contradiction to $M \models x< y$.
\end{proof}
Thus the order $<$, once witnessed in a transitive model such that every triple of ordinals is stabilized, 
will not change in all outer models which preserve stationary subsets of $\omega_1$. This has as a 
consequence that we can build up the well-order $<$ gradually during a forcing iteration and 
once we have a model of the just described type which sees that $x<y$, then $x<y$ in our final 
model of the iteration, as long as the iteration preserves stationary subsets of $\omega_1$.
This enables us to localize the well-order $<$, thus arriving at a projective well-order of small complexity.

\section{Definition of the iteration}

Next we describe how to code reals nicely while making $\NS$ $\aleph_2$-saturated.
In order to get $\NS$ $\aleph_2$-saturated we need an $RCS$-iteration of length $\kappa$, where $\kappa$ is the Woodin cardinal.
Again we let a $\diamondsuit$-sequence decide what to do in our forcing 
iteration. We fix such a $\diamondsuit$-sequence $(a_{\alpha} \, : \, \alpha < \kappa)$ in the ground model
$V=M_1$, and let $\vec{C}$ be the ladder system we obtain using the $\Sigma^1_3$-well-order of the reals of $M_1$, 
which will still be a ladder system in small forcing extensions of $M_1$ which preserve $\omega_1$.
We describe first informally how the iteration looks like.
As always we have stages which are used to code information yielding the definable well-order and stages where
we seal off long antichains in $P(\omega_1)/ \NS$. We ensure that we code all the reals we generate 
during the iteration into triples of ordinals $(\beta, \gamma, \delta)$ using the proper forcing of $(\ddagger)$. At the same time
we ensure that all the triples of ordinals below $\omega_2$ stabilize using the forcing described in $(\dagger)$. 
Additionally whenever our $\diamondsuit$-sequence hits the name of a long antichain in $P(\omega_1)/\NS$ we seal it off.
As we have stationarily many inaccessible cardinals below the Woodin $\kappa$ we will hit stationarily often 
inaccessible stages $\alpha$ such that the model $(M_1)_{\alpha}[G_{\alpha}]$ satisfies the following: \begin{enumerate}
\item[(1)] $(M_1)_{\alpha}[G_{\alpha}] \models \ZFP$ 
\item[(2)] $(M_1)_{\alpha}[G_{\alpha}] \models \forall \beta \gamma \delta <\omega_2 ((\beta, \gamma, \delta) \text{ is stabilized}$)
\item[(3)] $(M_1)_{\alpha}[G_{\alpha}] \models \forall r\in \omega^{\omega} \exists (\beta_r, \gamma_r, \delta_r)$
($r$ is determined by $(\beta_r, \gamma_r, \delta_r))$.
\end{enumerate}
Whenever we hit such a stage everything
$(M_1)_{\alpha}[G_{\alpha}]$ sees about $<$ will be preserved in all future extensions in our iteration 
by Lemma 61. Thus we will additionally localize the information which is seen by $(M_1)_{\alpha}[G_{\alpha}]$ 
concerning $<$, I.e. for every pair of reals $x<y$ in $(M_1)_{\alpha}[G_{\alpha}]$ we add a subset $Y_{x,y}$ of $\omega_1$ such that every
countable transitive model $N$ which contains $Y_{x,y} \cap \omega_1^{N}$ will also see that $x<y$. This uses a proper forcing again.
As all the iterands are proper or semiproper, using
an $RCS$-iteration will yield a semiproper notion of forcing.
In the end we will argue that indeed $\NS$ is saturated and there is a $\Sigma_4^{1}$-definable well-order on the reals.

We start now with a more detailed description of how the iteration should look like. We will 
construct the iteration recursively, so assume that $\alpha < \kappa$
and we have already constructed $\mathbb{P}_{\beta}$ for $\beta \le \alpha$.
We define the forcing $\dot{\forceQ}_{\alpha}$ in $V^{\forceP_{\alpha}}$ as follows:

\begin{itemize}
\item[(i)] Assume that $a_{\alpha}$ is the $\forceP_{\alpha}$-name of a real $r_{\alpha}$. 
Then we let $\dot{\forceQ}_{\alpha}$ be the $\forceP_{\alpha}$-name of the forcing
which codes $r_{\alpha}$ into a triple of
ordinals $(\beta_{r_{\alpha}}, \gamma_{r_{\alpha}}, \delta_{r_{\alpha}})$, such 
that $\beta_{r_{\alpha}}, \gamma_{r_{\alpha}}, \delta_{r_{\alpha}} < \omega_2$ and
using the already fixed $\vec{C}$-sequence.
This forcing is followed by considering all the triples of ordinals $(\beta', \gamma', \delta')$ 
which are
antilexicographically below  $(\beta_{r_{\alpha}}, \gamma_{r_{\alpha}}, \delta_{r_{\alpha}})$ and 
which have not been stabilized yet. We use an $RCS$-iteration of forcings which stabilize each 
such triple $(\beta', \gamma', \delta')$.
As a summary $\dot{\forceQ}_{\alpha}$ is an $\omega_1$-long $RCS$ iteration of proper forcings 
resulting in a proper forcing, and we obtain a model where the real $r_{\alpha}$ is coded into 
the triple $(\beta_{r_{\alpha}}, \gamma_{r_{\alpha}}, \delta_{r_{\alpha}})$ with the help of the 
ladder system $\vec{C}$, and each other triple of ordinals below it will be stabilized.

Otherwise force with $Col(2^{\aleph_2}, \aleph_1)$, the usual L\'evy collapse which collapses $2^{\aleph_2}$ down to $\aleph_1$.

\item[(ii)] Assume that $\alpha$ is an inaccessible, further that $a_{\alpha}$ is the 
$\mathbb{P}_{\alpha}$-name of a maximal antichain $S_{\alpha}$ in $P(\omega_1)$/$\NS$, 
and assume that the sealing forcing $\mathbb{S}(S_{\alpha})$ is semiproper. 
Then force with it, I.e. let $\dot{\forceQ}_{\alpha}$ be $\mathbb{S}(S_{\alpha})$.

Otherwise force with $Col(2^{\aleph_2}, \aleph_1)$.

\item[(iii)] If $\alpha$ is an inaccessible and if $(M_1)_{\alpha}[G_{\alpha}]$ 
is a model such that property $(1)$ $(2)$ and $(3)$ from above holds, then we first collapse its size down to
$\aleph_1$. We consider
a pair of reals $x,y \in (M_1)_{\alpha}[G_{\alpha}]$ such that $(M_1)_{\alpha} [G_{\alpha}] \models x<y$ and 
construct a set $Y_{x,y}\subset \omega_1$ which should code in a nice way the information
that $(M_1)_{\alpha} [G_{\alpha}] \models x<y$ (this will be specified below).
Now code the set $Y_{x,y}$ into a real $s_{x,y}$ using almost disjoint coding forcing relative to 
an almost disjoint family of reals in $M_1$.
We pick the pair $(x<y)$ in such a way that every such pair we create during our iteration will be
considered at some inaccessible stage $\alpha$. This can be easily achieved using some bookkeeping function.

\end{itemize}
The points (i) and (ii) are clear, thus we
shall discuss (iii) in detail:
Note first that it is straightforward
to code the previously fixed ladder system $\vec{C}$ into a subset $X$ of $\omega_1$ such that for every limit ordinal
$\xi < \omega_1$, if we decode $X \cap \xi$ we end up with $\vec{C} \upharpoonright \xi$.
Assume now that we are in the situation described in (iii), thus $\alpha$ is an inaccessible and
$(M_1)_{\alpha}[G_{\alpha}]$ is a model of $\ZFP$, (1),(2) and (3).
We collapse its size to $\aleph_1$ using L\'evy-collapse and let $H$ be the generic filter. Let $x,y$ be two reals in
$(M_1)_{\alpha}[G_{\alpha}]$ such that
$(M_1)_{\alpha}[G_{\alpha}] \models x<y$. We describe now the set $X_{x,y} \subset \omega_1$ which codes the information:

\begin{Claim}
In $M_1[G_{\alpha}][H]$ there is a set $X_{x,y} \subset \omega_1$ such that $X_{x,y}$ can 
be recursively partitioned into 6 subsets such that the following holds:
\begin{enumerate}
\item $dec_1(X_{x,y})= \vec{C}$
\item $dec_2(X_{x,y})= x$
\item $dec_3(X_{x,y})=y$
\item $dec_4(X_{x,y})$ is the set consisting of the triple of ordinals 
$(\beta_x, \gamma_x, \delta_x)$ and the continuous sequence of 
models $(N_{\xi}^{x} \, : \, \xi < \omega_1)$ such that $(\beta_x, \gamma_x, \delta_x)$ is the 
antilexicographically least triple which codes $x$ with the help of $\vec{C}$ and witnessing 
sequence $(N_{\xi}^{x} \, : \, \xi < \omega_1)$
\item $dec_5(X_{x,y})$ is the set similar defined as in the previous point but with $y$ instead of $x$.
\item $dec_6(X_{x,y})$ is the set of all the continuous increasing sequences $(N_{\xi}^{(\beta',\gamma', \delta')}
\, : \, \xi < \omega_1)$ for $(\beta', \gamma', \delta')< (\beta_x,\gamma_x,\delta_x)$ of sets of ordinals
which witness that the triple $(\beta', \gamma', \delta')$ stabilizes at a real which is neither $x$ nor $y$.
       
\end{enumerate}

\end{Claim}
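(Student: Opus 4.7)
The plan is essentially routine coding. Since $H$ is the generic for the L\'evy collapse that makes the cardinality of $(M_1)_{\alpha}[G_{\alpha}]$ equal to $\aleph_1$, fix in $M_1[G_{\alpha}][H]$ a bijection $\pi : \omega_1 \to (M_1)_{\alpha}[G_{\alpha}]$. The first step is to observe that each of the six objects to be coded already lives in $(M_1)_{\alpha}[G_{\alpha}]$: the ladder system $\vec{C}$ was fixed back in $M_1$; the reals $x,y$ are assumed to lie in $(M_1)_{\alpha}[G_{\alpha}]$; the antilexicographically least triples $(\beta_x,\gamma_x,\delta_x)$, $(\beta_y,\gamma_y,\delta_y)$ together with their $\omega_1$-sequences of witnessing models $(N_\xi^x)$ and $(N_\xi^y)$ exist there by property $(3)$; and for every triple $(\beta',\gamma',\delta')$ antilexicographically below $(\beta_x,\gamma_x,\delta_x)$ a continuous stabilising sequence exists in $(M_1)_{\alpha}[G_{\alpha}]$ by property $(2)$.

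Each such object is hereditarily of cardinality at most $\aleph_1^{(M_1)_{\alpha}[G_{\alpha}]}$, hence, using $\pi$ together with a fixed recursive bijection $\omega_1 \times \omega_1 \to \omega_1$, is represented by a subset of $\omega_1$ in $M_1[G_{\alpha}][H]$; for instance a continuous sequence $(N_\xi \,:\, \xi<\omega_1)$ of countable sets of ordinals below $\delta_x$ becomes the set $\{\langle \xi,\eta \rangle \,:\, \pi(\eta) \in N_\xi\}$, while the full collection in item~(6) is indexed by the $\le\aleph_1$ many triples below $(\beta_x,\gamma_x,\delta_x)$ and is likewise codable. This produces six subsets $A_1,\dots,A_6 \subset \omega_1$. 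To amalgamate them I would fix once and for all a recursive partition $\omega_1 = \bigsqcup_{i=1}^{6} P_i$ together with recursive order-isomorphisms $\iota_i : \omega_1 \to P_i$ (e.g.\ using residues modulo $6$ on a canonical enumeration) and set
\[ X_{x,y} \;:=\; \bigcup_{i=1}^{6} \iota_i[A_i]. \]
The six decoding functions are then defined by $\mathrm{dec}_i(X_{x,y}) := \iota_i^{-1}[X_{x,y} \cap P_i]$, reinterpreted via $\pi$, and by construction recover the intended objects.

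I do not expect any serious obstacle in the proof of this claim per se: once $\alpha$ has been collapsed to $\aleph_1$, every set in $(M_1)_{\alpha}[G_{\alpha}]$ of hereditary cardinality at most $\aleph_1$ is codable as a subset of $\omega_1$, and finitely many such codes can be packed into one in a recursive fashion. The genuinely delicate work is postponed to the subsequent step of the iteration, where one has to almost-disjointly code $X_{x,y}$ into a real and \emph{localise} the information, so that a sufficiently closed countable transitive model containing this real can correctly decode $X_{x,y}$ and reconstruct the witnesses that $x<y$; that is where properness, rather than mere cardinality counting, will be needed.
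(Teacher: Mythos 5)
Your proposal is correct and takes essentially the same approach the paper intends: the paper itself merely asserts ``The construction of such a set $X_{x,y}$ is straightforward'' with no further argument, and your write-up supplies exactly the routine details — all six objects already live in $(M_1)_{\alpha}[G_{\alpha}]$ by properties (2) and (3), the L\'evy collapse makes that model of size $\aleph_1$ so each object codes as a subset of $\omega_1$, and a fixed recursive partition of $\omega_1$ into six pieces packs the codes together.
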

The construction of such a set $X_{x,y}$ is straightforward.
As a consequence, for every transitive model $M$ of $\ZFC$ which contains $X_{x,y}$, $M$ 
will see that $x<y$ as it contains all the relevant information.
The goal now is to rewrite $X_{x,y}$ into a set $Y_{x,y} \subset \omega_1$ such that 
every countable, transitive $M$ which contains $Y_{x,y} \cap \omega_1^{M}$ already 
sees that $x<y$ using the local ladder $\vec{C}\upharpoonright \omega_1^{M}$.
We can force the existence of such a set $Y_{x,y}$ with a proper notion of forcing.

\begin{Lemma}
There is a proper notion of forcing $\forceR$ which introduces a set $Y_{x,y} \subset \omega_1$
such that if $H'$ is an $\forceR$ generic filter
$M_1[G_{\alpha}][H][H']$ satisfies that there is subset $Y_{x,y}$ of $\omega_1$ such that 
if $\xi = \omega_1^N$ for a countable transitive model $N$ then there are recursively definable
decoding functions $dec_i$ such that the following holds: \begin{enumerate}
\item $dec_1(Y_{x,y})= \vec{C}$ and for every
limit ordinal $\xi < \omega_1$, with $\xi \omega=\xi$, $dec_1(Y_{x,y} \upharpoonright \xi) = \vec{C}\upharpoonright \xi$.
\item $dec_2(Y_{x,y})= x$
\item $dec_3(Y_{x,y})= y$
\item $dec_4(Y_{x,y} \upharpoonright \xi)$ is the set consisting of a triple of ordinals 
$(\beta_x^{\xi}, \gamma_x^{\xi}, \delta_x^{\xi})$ and a continuous sequence of sets of 
ordinals $(N_{I}^{x,\xi} \, : \, I \in \xi)$ such that $x$ is coded by the triple 
$(\beta_x^{\xi}, \gamma_x^{\xi}, \delta_x^{\xi})$ using the local ladder system 
$\vec{C} \upharpoonright \xi$ and witnessed by $(N_{I}^{x,\xi} \, : \, I \in \xi)$.
\item $dec_5(Y_{x,y})$ is defined similar as 4 but with x replaced by y.
\item $dec_6(X_{x,y})$ is the set of all the continuous increasing sequences $(N_{I}^{\xi,(\beta',\gamma', \delta')}
\, : \, I < \xi)$ for $(\beta', \gamma', \delta')< (\beta_x^{\xi},\gamma_x^{\xi},\delta_x^{\xi})$ of sets of ordinals
which witness that the triple $(\beta', \gamma', \delta')$ stabilizes at a real 
which is neither $x$ nor $y$ using the local ladder $\vec{C} \upharpoonright \xi$.
\end{enumerate}

\end{Lemma}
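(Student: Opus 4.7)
The plan is to mimic the localization forcing $\forceQ^2_{\alpha}$ from Chapter 2. Conditions of $\forceR$ will be $\omega_1$-Cohen type approximations $p : |p| \to 2$, where $|p|$ is a countable limit ordinal, subject to the following three constraints. First, the even bits of $p$ code the full set $X_{x,y}$ up to $|p|$. Second, the odd bits are reserved for auxiliary coding used to block unwanted countable models (see below). Third, for every countable limit $\xi \le |p|$ such that there is a countable transitive $\ZFP$-model $N$ with $p \upharpoonright \xi \in N$, $\omega_1^N = \xi$ and $N$ containing enough of $\mathcal{J}^{K}_{\omega_1^N}$ to define the local ladder $\vec{C}\upharpoonright \xi$, the initial segment $p \upharpoonright \xi$ decodes into the six pieces of data relativized to $\xi$ as demanded by items $(1)$--$(6)$.

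The first step is extendibility: given any $p \in \forceR$ of length $\beta$ and any countable limit $\gamma > \beta$, we produce an extension $q$ of length $\gamma$ by continuing the even coordinates to code $X_{x,y} \cap \gamma$ and using an $\omega$-block of the odd coordinates near each new threshold to inscribe a countable ordinal $\eta > \gamma$. Any transitive model $N$ containing $q$ will then observe the countability of $\eta$ and therefore cannot satisfy $\omega_1^N = \gamma$, so the third clause is vacuously satisfied for all new relevant $\xi$ below $\gamma$, and at $\xi = \gamma$ itself it is taken care of by the explicit coding on the even bits (which by choice of $X_{x,y}$ already packages the data in items $(1)$--$(6)$).

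The second step is properness. Fix a countable $M \prec H_{\theta}$ containing $X_{x,y}$, $\vec{C}$, $\forceR$ and a condition $p \in \forceR \cap M$. Enumerate the dense sets $D_n \in M$ and build, inside $M$, a descending sequence $p = p_0 > p_1 > \dots$ with $p_n \in D_n$ and $\sup_n |p_n| = \xi := M \cap \omega_1$, using extendibility at each stage to push $|p_n|$ up inside $M$. Let $p_{\omega} := \bigcup_n p_n$; this has length $\xi$. The only nontrivial thing to verify is clause $(3)$ at the single new point $\xi$. But by elementarity of $M$, the transitive collapse $\bar M$ already carries witnesses to items $(1)$--$(6)$ with respect to the local ladder $\vec{C}\upharpoonright \xi$ (the collapse sends $(\beta_x,\gamma_x,\delta_x)$ and its witnessing sequence to a local triple and a local sequence that still code $x$ by absoluteness of the function $s_{\beta\gamma\delta}$ on finite data), and these collapsed objects are exactly what even-bit coding of $X_{x,y} \cap \xi$ decodes. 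Any countable transitive $N$ with $p_{\omega}\upharpoonright \xi \in N$ and $\omega_1^N = \xi$ then reads off the same witnesses, giving clause $(3)$. Hence $p_{\omega}$ is a condition, and it is $(M,\forceR)$-generic by construction.

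The main obstacle is the last verification: one must be sure that the countable model $N$ really reconstructs the \emph{same} local triples and sequences as the collapse of $M$, rather than spurious ones. This is handled exactly as in the argument for $\forceQ^2_{\alpha}$ in Chapter 2, using the class $\Stat$ and the fact that $K$-correctness together with agreement of $\omega_1$ forces the cardinal structure (and hence the decoded triples $(\beta_x^{\xi},\gamma_x^{\xi},\delta_x^{\xi})$ and the corresponding sequences of models) to coincide across $\bar M$ and $N$. With this the third clause is locally checkable, $\forceR$ is proper, and a generic filter adds the desired $Y_{x,y}$.
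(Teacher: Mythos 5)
Your overall architecture — $\omega_1$-Cohen conditions $p:|p|\to 2$ with the even bits coding $X_{x,y}\cap |p|$, extendibility by writing a surjection of a new threshold onto $\omega$ into an odd $\omega$-block, and properness via a descending $\omega$-chain in a countable $M\prec H_\theta$ whose limit $p_\omega$ of length $\xi = M\cap\omega_1$ is verified to be a condition — is the same as the paper's. Up to that point you have the right forcing and the right skeleton.

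The gap is in your closing paragraph, where you handle the ``same local triples and sequences'' worry by appealing to the class $\Stat$ and $K$-correctness exactly as in the Chapter~2 argument for $\forceQ^2_\alpha$. That move is not available in the Chapter~3 iteration: the paper explicitly remarks at the end of this chapter that the $\Stat$-based method fails here, because the sealing forcings are only semiproper and $\Stat$ loses its stationarity along the iteration. If your properness proof depended on $\Stat$, the lemma would not survive in context.

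Fortunately, the appeal to $\Stat$ is also unnecessary, and the paper's actual argument is simpler. The worry about ``spurious'' triples does not arise because, unlike the tree-pattern coding of Chapter~2, the relevant objects here are not reconstructed internally from $K$'s cardinal structure: the triples $(\beta_x^{\xi},\gamma_x^{\xi},\delta_x^{\xi})$, the witnessing continuous sequences, and the local ladder $\vec{C}\upharpoonright\xi$ are \emph{literally packaged as sets of ordinals} inside $X_{x,y}$, and the oscillation/coding predicate $s_{\beta\gamma\delta}$ is computed from finite data and is therefore $\Delta_0$ and absolute for transitive models. Consequently any transitive $N$ with $\omega_1^N=\xi$ and $Y_{x,y}\cap\xi\in N$ decodes exactly the same objects by definition of the decoding functions; there is no matching problem between $\bar M$ and $N$ to solve. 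What does need to be established — and this is the step your sketch under-emphasizes — is that $X_{x,y}\upharpoonright\xi$ satisfies the relativized clauses $(1)$--$(6)$ in the first place, and this is done by fixing a club $C$ of countable elementary submodels $M\prec (M_1)_\eta[G_\alpha][H]$ containing $X_{x,y}$, observing that $M\models$ ``$X_{x,y}$ satisfies $(1)$--$(6)$'' and collapsing, so that $\bar M$ sees $\bar X_{x,y}=X_{x,y}\upharpoonright\omega_1^M$ with the relativized statements holding; absoluteness then transfers this to any other transitive $N$ with $\omega_1^N=\omega_1^M$. Your properness argument then works once the third clause of the condition definition is checked at $\xi = |p_\omega|$ using this club, with no reference to $\Stat$ or to $K$-correctness.
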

\begin{proof}
 
Working in $M_1[G_{\alpha}][H]$ we have that $(M_1)_{\alpha}[G_{\alpha}]$ is a 
model of $\ZFP$ and (1) and (2) of size $\aleph_1$. By the previous Claim we 
know that there is a set $X_{x,y}$ such that all the points in the claim are true.
Fix a model of the form $(M_1)_{\eta}[G_{\alpha}][H]$ for $\eta > \alpha$ which 
contains $X_{x,y}$ and consider the club $C$ of countable, elementary submodels containing $X_{x,y}$.
We can easily arrange
that $X_{x,y}$ satisfies already (1) of the Claim.
For $M$ an arbitrary element of $C$ the following holds: 
$$(M_1)_{\eta} [G_{\alpha}][H] \models X_{x,y} \text{ satisfies }(1)-(6)\text{ of the previous Claim}$$ thus for each $M \in C$:
$$M \models X_{x,y} \text{ satisfies } (1)-(6)$$
thus
$$\bar{M} \models \bar{X}_{x,y} \text{ satisfies } (1)-(6)$$ But 
$\bar{X}_{x,y}=X_{x,y} \upharpoonright \omega_1^{M}$ and as the decoding functions 
$dec_i$ are absolute for transitive models we get that $X_{x,y} \upharpoonright \xi$ 
satisfies (1)-(6) for $\xi$'s which are the $\omega_1$ of some $M \in C $. 
In order to get the full statement of the Claim we add additional information 
to $X_{x,y}$ which yields $Y_{x,y}$ such that any countable transitive model 
$N$ which contains $Y_{x,y} \cap \omega_1^{N}$ must have its $\omega_1$ to be an $\omega_1^{M}$ for some $M \in C$.
To achieve this we use forcing.

Let $\forceR$ be the following partial order:
conditions $p\in \forceR$ are $\omega_1$-Cohen conditions, i.e.
functions from limit ordinals $\xi < \omega_1$
to $2$ which satisfy:
\begin{enumerate}
\item the even ordinals of $\{ \eta< \xi \, : \, p(\eta)=1\}$, where $\xi = dom(p)$ code the set $X_{x,y} \cap \xi$.

\item for every limit ordinal $\xi$, if $M$ is countable and transitive 
and $\xi= \omega_1^M$ and $(p \upharpoonright \xi) \in M$ then $p(\xi)$ 
seen as a subset of $\xi$ satisfies points $(1)-(6)$ for every limit $\eta < \xi$.

\end{enumerate}

Note that whenever we do have a condition $p \in \forceR$, and $\xi < \omega_1$ 
is a limit ordinal
we can extend $p$ to $q<p$ such that $\xi \in dom(q)$.
This is clear as we can write into the first odd $\omega$-block of $q$ following 
$dom(p)$ a surjection of $\xi$ to $\omega$. Then no countable transitive model $M$ 
which contains $q$ can have its $\omega_1$ at $\xi$, thus the second property for 
being a condition in $\forceR$ is satisfied automatically.
Thus the set $D_{\eta}:= \{ p \in \forceR \, : \, \eta \in dom(p) \}$ is dense for 
every $\eta < \omega_1$ and the generic will produce a subset of $\omega_1$, $Y_{x,y}$ 
with the desired properties $(1)-(6)$ for countable, transitive models in $M_1[G_{\alpha}][H]$. 
This already suffices as we will see below that the forcing $\forceR$ is also $\omega$-distributive.

What is left is to show that the forcing $\forceR$ is proper: for that we pick the
$(M_1)_{\eta}[G_{\alpha}][H]$ from above and let the club $C$ be the set of all countable 
elementary submodels of it which contain the set $X_{x,y}$ from above.
If $M \in C$,  and $p \in \forceR \cap M$ then
we shall construct a $q< p$ which is $(M,\forceR)$-generic.
We list all the dense sets $D_n$ in $M$ and
recursively construct a descending sequence of conditions starting at $p= p_0>p_1>...$ 
such that $p_n \in D_n$ and such that $sup (dom(p_n)) = \omega_1 \cap M$.
If we can show that the limit $p_{\omega}$ is a condition in $\forceR$, we are done. 
Thus we have to show that whenever $p_{\omega} \cap \xi$ is contained in a countable, 
transitive model $N$ such that $\xi = \omega_1^{N}$
then it will satisfy conditions $(1)-(6)$. This is clear by definition of $\forceR$ for every 
$\xi < dom(p_{\omega})$. If $\xi = dom(p_{\omega})$ then as $\xi = \omega_1^{\bar{M}}$, $M \in C$ 
and $p_{\omega}$ codes $X_{x,y} \cap \xi$ we know by the above that $X_{x,y} \cap \xi$ 
satisfies $(1)-(6)$, and so does $Y_{x,y}\cap \xi$ if we change the decoding functions 
in the obvious way. Thus $p_{\omega}$ is a condition in $\forceR$ as desired and the forcing $\forceR$ is proper.

Note that the same argument shows that $\forceR$ is also $\omega$-distributive, 
and so the added set $Y_{x,y}$ has the desired properties not only for countable sets in $M_1[G_{\alpha}][H]$ but also for sets
in $M_1[G_{\alpha}][H][H']$ as desired.

\end{proof}
Now that we have constructed the localized set $Y_{x,y}$ for two reals $x<y$ in 
$(M_1)_{\alpha}[G_{\alpha}]$ we code $Y_{x,y} \subset \omega_1$ into a real 
$r_{x,y}$ using almost disjoint coding. We fix an 
$M_1$-family $(r_{\alpha} \, : \, \alpha < \omega_1)$ of almost disjoint reals and add a real 
$r_{x,y}$ such that the following holds:
$$\forall \xi < \omega_1 \,( \xi \in Y_{x,y}
\text{ iff } r_{x,y} \cap r_{\xi} \text{ is almost disjoint.})$$ This forcing is ccc, 
therefore proper and for the real $r_{x,y}$ the following holds: \begin{enumerate}
\item[$\heartsuit$] Every countable, transitive model $M$ which contains $r_{x,y}$ 
and $(M_1)_{\omega_1^{M}}$ sees that $x<y$ using the ladder system $\vec{C} \upharpoonright \omega_1^{M}$.
\end{enumerate}
This ends the discussion of the forcing we use in case (iii).

\section{$\NS$ is saturated and a projective well-order. Open Questions.}

In this section we show that the model we obtain following the definition 
given in the previous section will indeed satisfy that $\NS$ is saturated and 
there is a $\Sigma_4^{1}$ definable well-order on the reals. We start with the well-order first.
\begin{Lemma}
Let $G$ be a generic filter for the
forcing defined in the last section.
Then $$M_1[G] \models \text{ there is a } \Sigma_4^{1} \text{ definable well-order on the reals.}$$
\end{Lemma}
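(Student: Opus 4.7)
The plan is to exhibit a $\Sigma^1_4$ formula $\Phi(x,y)$ in parameter-free reals that defines the wellorder $<$ constructed in the previous section on the reals of $M_1[G]$. Recall that by the bookkeeping in case (iii) and property $\heartsuit$, whenever $x<y$ holds between two reals of $M_1[G]$ there is a coding real $r_{x,y}$ with the property that every countable transitive model $M\models\ZFP$ containing $r_{x,y},x,y$ together with all of $M_1$'s initial segments below its own $\omega_1^M$ satisfies $x<y$ evaluated via the local ladder $\vec{C}\!\upharpoonright\!\omega_1^M$. The natural candidate for $\Phi(x,y)$ is therefore
\[
\exists r\,\forall M\bigl(\psi(M,r,x,y)\rightarrow M\models (x<y)^{\vec{C}\upharpoonright\omega_1^M}\bigr),
\]
where $\psi(M,r,x,y)$ asserts that the real coding $M$ decodes to a countable transitive model of $\ZFP$ containing $r,x,y$ and satisfying $\omega_1^M=(\omega_1^K)^M$, and moreover that for every ordinal $\gamma<\omega_1^M$ some $\mathcal{N}\in M$ belongs to the $\Pi^1_2$-iterable set $I$ from Fact~\ref{axioms}(1) and has height at least $\gamma$.

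For the forward direction, if $x<y$ holds in $M_1[G]$ I would first identify the inaccessible stage $\alpha<\delta$ at which the pair $(x,y)$ is processed in case (iii); such a stage exists by the bookkeeping together with the stationarity of those inaccessibles for which $(M_1)_\alpha[G_\alpha]$ satisfies closure properties (1)--(3). At that stage $(M_1)_\alpha[G_\alpha]\models x<y$ because $f(x)$ and $f(y)$ are both computed there, and the preservation lemma of the previous section (using that the tail of the iteration is stationary set preserving) transfers this fact to $M_1[G]$. The almost disjoint coding then produces $r_{x,y}$ with $\heartsuit$, which witnesses the $\exists r$ in $\Phi(x,y)$. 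For the backward direction, given a witness $r$ I would pick $\eta$ large enough so that $r,x,y\in\mathcal{J}^{M_1}_\eta[G\!\upharpoonright\!\eta]$ and invoke Fact~\ref{axioms}(3) to obtain a countable $H\prec\mathcal{J}^{M_1}_\eta[G\!\upharpoonright\!\eta]$ whose transitive collapse $\bar{H}$ is of the form $\mathcal{J}^{M_1}_{\bar\eta}[G\!\upharpoonright\!\bar\eta]$; such $\bar{H}$ automatically satisfies $\psi(\bar H,r,x,y)$, so by the witness $\bar H\models x<y$, and absoluteness of the oscillation function $s_{\beta\gamma\delta}$ together with the preservation lemma delivers $x<y$ in $M_1[G]$.

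Finally for the complexity count: membership $\mathcal{N}\in I$ is $\Pi^1_2$ in the real coding $\mathcal{N}$; the quantifier $\forall\gamma<\omega_1^M\,\exists\mathcal{N}\in M$ is bounded inside the countable $M$ and thus does not raise complexity, leaving $\psi$ at $\Pi^1_2$. The conclusion $M\models(x<y)^{\vec{C}\upharpoonright\omega_1^M}$ is first order in the real coding $M$, so the implication is $\Sigma^1_2$; the inner universal quantifier over codes for countable $M$ yields $\Pi^1_3$, and the outer existential quantifier over $r$ produces $\Sigma^1_4$ as desired. The main obstacle I expect is the backward direction, and in particular matching the condensation in Fact~\ref{axioms}(3) to the localization property $\heartsuit$: one needs that the transitive collapse of a countable elementary substructure of a sufficiently large $M_1$-initial segment actually contains all ordinal triples and witnessing sequences needed to evaluate $<$ locally, which is precisely what was packed into $Y_{x,y}$ by the proper forcing $\forceR$ in the previous lemma and then transported into $r_{x,y}$ by almost disjoint coding; the bookkeeping at case (iii) has to be set up carefully enough that this matching is clean for every pair $(x,y)$ appearing in $M_1[G]$.
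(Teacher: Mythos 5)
Your proposal takes essentially the same approach as the paper: the defining formula has the shape $\exists r\,\forall M(\Pi^1_2\text{-antecedent}\rightarrow M\models x<y)$, the forward direction identifies the inaccessible coding stage and appeals to the almost disjoint coding of $Y_{x,y}$, the backward direction collapses to a countable witness model and then invokes the preservation lemma (Lemma~61), and the complexity count is the same $\Sigma^1_4$ computation.

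There is one concrete over-reach in your backward direction that you should be aware of. You invoke Fact~\ref{axioms}(3) to produce a countable $H\prec\mathcal{J}^{M_1}_\eta[G\upharpoonright\eta]$ whose transitive collapse is literally of the form $\mathcal{J}^{M_1}_{\bar\eta}[G\upharpoonright\bar\eta]$. Fact~\ref{axioms}(3) only applies to unextended $M_1$-initial segments, not to their forcing extensions, so it does not directly give this; to get it one would need the $\Stat[G]$-style arguments from Chapter~2, which the paper deliberately avoids in Chapter~3 (see the discussion at the end of the paper: $\Stat$ loses its stationarity under the semiproper sealing steps). More importantly, you don't need anything this strong: the paper's formula only requires the countable transitive $\bar{N}$ to contain $\mathcal{J}^{M_1}_{\omega_1^{\bar{N}}}$ as an element (equivalently, cofinally many $\Pi^1_2$-iterable premice below its $\omega_1$, exactly as in your $\psi$), a condition that is easy to arrange directly without Fact~\ref{axioms}(3). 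Concretely, the paper's version of the backward direction works with a large (uncountable) $M$ containing $r$, $\mathcal{J}^{M_1}_{\omega_1}$, and all stabilized triples, and shows by a contradiction argument (take a countable $N\prec M$, collapse, and let the witness force $\bar{N}\models x<y$ against the hypothetical $M\models y<x$) that such $M$ must decide $x<y$; Lemma~61 then lifts this to $M_1[G]$. Your direct variant (take the countable $\bar H$, get $\bar H\models x<y$ from the witness, pass up via elementarity to $\mathcal{J}^{M_1}_\eta[G\upharpoonright\eta]$, then apply Lemma~61 with $\eta$ a good inaccessible stage where all triples are stabilized) is also fine in outline, provided you replace the Fact~\ref{axioms}(3) step with the weaker, directly-arrangeable condensation condition and make explicit the elementarity lift, since Lemma~61 cannot be applied to the countable $\bar H$ itself ($M_1[G]$ is not a forcing extension of it).
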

\begin{proof}
We can use the cofinal set of $M_1$-initial segments in $M_1[G]$ which is $\Pi^{1}_{2}$ definable to construct a
$\Sigma^{1}_{3}$-definable ladder system $\vec{C}$ in $M_1[G]$.
Simply let $(\alpha, C_{\alpha}) \in \vec{C}$ if and only if there is a countable 
$M_1$ initial segment $M$ which contains $(\alpha,C_{\alpha}) $ and which sees that 
$C_{\alpha}$ is the $<_M$-least set in $M$ (where $<_M$ denotes the usual definable well-order 
on the mouse $M$) which is cofinal and has ordertype $\omega$. This definition is $\Sigma^{1}_{3}$ and
$vec{C} \upharpoonright \alpha$ is uniformly definable over $\mathcal{J}^{M_1}_{\alpha}$. 
We let $\psi(x)$ be the defining
formula, i.e. for every $C \subset \omega_1$ we have that $\psi(C)$ if and only if $C \in \vec{C}$.  
We fix the ladder system $\vec{C}$ defined via $\psi$ and code, using the machinery described and 
explained above, everything relative to it.
We claim that the following defines the well-order $<$ in $M_1[G]$: \begin{enumerate}
\item[($\ast$)] $x<y$ if and only if $\exists r \forall M$($M$ countable and transitive and $\mathcal{J}^{M_1}_{\omega_1} \in M$
then $M \models (\psi(dec(r,(M_1)_{\omega_1^{M}}))^{\mathcal{J}^{M_1}_{\omega_1}}$ and
$M \models$ $x<y$ using the local ladder system $\vec{C} \cap \omega_1^{M})$.
\end{enumerate}
Here we write $(dec(r,(M_1)_{\omega_1^{M}})$ for the set we obtain when we decode the real using 
the almost disjoint family of $(M_1)_{\omega_1^{M}}$-reals and obtain a
subset of $\omega_1^{M}$. Thus the statement $\psi(dec(r,(M_1)_{\omega_1^{M}})$ tells us that 
the ladder system coded into the real is the previously fixed ladder system $\vec{C}$ relativized to $\omega_1^{M}$.
The direction from left to right is clear as whenever $x<y$ in $M_1[G]$ then the names of $x$ 
and $y$ will appear at some earlier stage $\beta < \kappa$ in our iteration.
Thus $x<y$ will be witnessed already at some inaccessible stage $\alpha < \kappa$ as we used a 
bookkeeping function which looks at each name of a pair of reals unboundedly often on the 
inaccessibles below $\kappa$.
Note that in this case the definition of our iteration guarantees us that we have added a 
real $r_{x,y}$ such that $\heartsuit$ holds which is exactly what we want.

For the direction from right to left note first that
the right hand side of
$(\ast)$ holds also for large enough uncountable models $M$ containing $r$ and $(M_1)_{\omega_1}$
and which satisfy that each triple of ordinals
below $\omega_2^{M}$ is stabilized.
Indeed if we assume that $M$ is uncountable, $r\in M$ and $(M_1)_{\omega_1}$ $\in M$ and every triple of ordinals is stabilized and
$M$ does decide that $y<x$, then a countable elementary submodel $N \prec M$ containing $r$ will see as well
that $y<x$ using the ladder system which is coded into $r$.
This ladder system must be our fixed $\vec{C}$. But now the transitive collapse $\pi(N)=\bar{N}$ 
contains $r$ and we can assume that also $(M_1)_{\omega_1^{\bar{N}}}$ is in $\bar{N}$.
It will still think that $y<x$ using the
ladder system $\pi(\vec{C})=\vec{C} \cap \omega_1^{\bar{N}}$, but $\bar{N}$ is just as in 
the assumption for the right hand side for $(\ast)$. This gives us the contradiction. as $\bar{N}$ cannot see both $x<y$ and $y<x$.
Thus if we let $M$ be big enough such that every triple of ordinals below $\omega_2$ is stabilized then
it will see that $x<y$. But by Lemma 61 $x<y$ must hold in $M_1[G]$.

Note further that the definition of the well-order is of the form 
$\exists r \forall M( \Pi_{2}^1 \rightarrow \Pi_{2}^1)$ and therefore $\Sigma^{1}_4$.

\end{proof}

What is left is to show that in $M_1[G]$ the nonstationary ideal $\NS$ is 
indeed $\aleph_2$-saturated. But this does not cause any problems as the 
coding forcings were all seen to be proper, the sealing forcings were 
only used when semiproper and we used $RCS$-iteration for the limit steps.
Therefore the iteration yields a semiproper,
thus stationary set preserving extension of $M_1$
and we can just repeat Shelah's proof that
$\NS$ is $\aleph_2$-saturated in the final model.
Hence the following is true, which ends the proof of the theorem:
\begin{Lemma}
If $G$ denotes the generic filter for the
iteration then in $M_1[G]$ the nonstationary
ideal $\NS$ is $\aleph_2$-saturated.
\end{Lemma}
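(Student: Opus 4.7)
The plan is to essentially repeat Shelah's argument from the preliminaries, with two items to verify that are specific to our setting: first, that the entire iteration $\forceP_\delta$ is semiproper (and hence preserves stationary subsets of $\omega_1$ along every tail), and second, that the coding and localization machinery of cases (i) and (iii) does not obstruct the reflection of a hypothetical bad antichain to a stage at which the sealing forcing should have been applied. The semiproperness of $\forceP_\delta$ is routine: the case-(i) factors are RCS iterations of the proper forcings $\forceP_r$ and $\forceP_{\beta\gamma\delta}$ supplied by $(\dagger)$ and $(\ddagger)$, hence proper; the case-(iii) factors are a L\'evy collapse followed by the proper forcing producing $Y_{x,y}$ and a ccc almost-disjoint coding, hence proper; the case-(ii) factors are semiproper sealings when used and a $\sigma$-closed L\'evy collapse otherwise. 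Since all iterands are semiproper and we iterate with revised countable support, $\forceP_\delta$ is semiproper.

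Suppose for a contradiction that in $M_1[G]$ there is a maximal antichain $\vec S = (S_i : i<\omega_2)$ of stationary subsets of $\omega_1$ with $\forceP_\delta$-name $\tau$. Imitating the $\diamondsuit$-on-Woodin argument from the preliminaries, I would find an inaccessible $\kappa<\delta$ with (i) $\kappa$ is $\forceP\oplus\tau$-strong up to $\delta$, (ii) $\kappa=\omega_2^{M_1[G\upharpoonright\kappa]}$, and (iii) $a_\kappa=\tau\cap(M_1)_\kappa$ is forced to be the maximal antichain $\vec S\upharpoonright\kappa$ in $M_1[G\upharpoonright\kappa]$; maximality at $\kappa$ follows from the RCS direct-limit step at inaccessibles. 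By the bookkeeping of the iteration, at stage $\kappa$ the forcing $\mathbb{S}(\vec S\upharpoonright\kappa)$ cannot have been semiproper, since otherwise we would have sealed off $\vec S$. So there is $(p,c)\in\mathbb{S}(\vec S\upharpoonright\kappa)$ together with a stationary set $\bar T\subset[(H_{\kappa^+})^{M_1[G\upharpoonright\kappa]}]^\omega$ of countable models below $(p,c)$ admitting no $(X,\mathbb{S}(\vec S\upharpoonright\kappa))$-semigeneric extension.

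The L\'evy collapse of $2^{\aleph_2}$ to $\aleph_1$ (appearing in the ``else'' branches at or near $\kappa$) supplies a surjection $f:\omega_1\to(H_{\kappa^+})^{M_1[G\upharpoonright\kappa]}$; pulling $\bar T$ back through $f$ yields a stationary set $T=\{\alpha<\omega_1:f[\alpha]\in\bar T\wedge f[\alpha]\cap\omega_1=\alpha\}$, which remains stationary in $M_1[G]$ because every tail of the iteration is semiproper. Maximality of $\vec S$ then provides $i_0<\delta$ with $T\cap S_{i_0}$ stationary in $M_1[G]$.

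The hard step, as in Shelah's original proof, is the lifting argument: $i_0$ may lie far above $\kappa$, so we cannot directly feed a condition mentioning $S_{i_0}$ back into $\mathbb{S}(\vec S\upharpoonright\kappa)$. To get around this, fix $\lambda<\delta$ with $\lambda>\max(i_0,\kappa+1)$ and $(\tau\cap(M_1)_\lambda)^{G\upharpoonright\lambda}=\vec S\upharpoonright\lambda$; use $\forceP\oplus\tau$-strength to choose $j:M_1\to N$ with $\mathrm{crit}(j)=\kappa$, $N^\kappa\subset N$, $(M_1)_{\lambda+\omega}\subset N$, $j(\forceP)\cap(M_1)_\lambda=\forceP\cap(M_1)_\lambda$, and $j(\tau)\cap(M_1)_\lambda=\tau\cap(M_1)_\lambda$. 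Lift to $j^*:M_1[G\upharpoonright\kappa]\to N[G\upharpoonright\lambda,H]$ where $H$ is generic for the segment $\forceP_{[\lambda+1,j(\kappa)]}$ over $N[G\upharpoonright\lambda]$, build a continuous chain $(X_i)_{i<\omega_1}$ of countable elementary submodels of $(H_{j(\kappa)^+})^{N[G\upharpoonright\kappa+1]}$ containing $\tau\cap(M_1)_\lambda$ and $i_0$ and closed under $f$ and $j^*$, and use the stationarity of $T\cap S_{i_0}$ to pick $X:=X_i$ with $X\cap\omega_1\in T\cap S_{i_0}$. Semiproperness of the tail $(\forceP_{[\lambda+1,j(\kappa)]})^{N[G\upharpoonright\lambda]}$ then produces an $H$-condition that makes $X[\bar G,H]$ semigeneric, and the explicit condition $(p',c')\le j(p,c)$ in $j(\mathbb{S}(\vec S\upharpoonright\kappa))$ with $\mathrm{dom}(c')=\alpha+1$, $c'(\alpha)=\alpha$, and $p'(\alpha)=S_{i_0}$ is $(X[\bar G,H],j(\mathbb{S}(\vec S\upharpoonright\kappa)))$-semigeneric below $j(p,c)$, contradicting $j^*(X\cap(H_{\kappa^+})^{M_1[G\upharpoonright\kappa]})\in j^*(\bar T)$. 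The only novelty over Shelah's preliminary proof is checking that the proper coding and localization steps neither destroy the stationarity of $T$ nor interfere with the closure properties of the chain $(X_i)$; both are automatic since proper forcings do not add countable sequences of ordinals witnessing elementarity failures.
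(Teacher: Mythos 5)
Your proposal is correct and takes the same route as the paper, which offers no proof for this Lemma beyond the remark that the iteration is semiproper and one can simply repeat Shelah's argument from the preliminaries; your write-up supplies that repetition in detail, with the only setting-specific check being that the proper coding/localization factors preserve stationarity, exactly as you observe. One small slip in the final step: the sealing condition demands $c'(\alpha)=\alpha\in\bigcup_{i<\alpha}p'(i)$, so you want $p'(i)=S_{i_0}$ for some $i<\alpha$ (as in the paper's preliminary proof), not $p'(\alpha)=S_{i_0}$.
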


We end this thesis with a couple of remarks. At first glimpse it might seem that
the techniques introduced in the second chapter, i.e. isolating a suitable class of models
$\Stat$ and use $K$-definable objects to code patterns would as well yield a solution to the 
problem of projectively definable wellorders on the reals in the presence of $\NS$ saturated. Indeed this was our 
strategy for quite some time, it turned out however that this approach is fruitless.
The reason for this is that sealing the antichains in $P(\omega_1) \slash \NS$
is a semiproper forcing only, hence our stationary class $\Stat$
will lose its stationarity along the iteration as seen as a subclass of sets of the form
$M \cap K$ where $M\prec H_{\theta}$ for regular $\theta$. This fact makes the quest for
$K$ inside countable transitive models of the form $M[G]$ impossible, even though $M \in \Stat$ itself
can define its $K$ properly. $M[G] \cap K$ will not be $M$ anymore, thus
trying to define $K$ causes utter chaos.
This problems led me to consider other methods of coding which finally resulted in the 
proof described in this chapter.
Note however that the methods developed here do not seem to get an easy proof of the following 
natural extension of the problem in chapter 2:

\begin{Question}
How to get a model of $\ZFC$ in which $\NS$ is $\aleph_2$-saturated and
the full nonstationary ideal $\NS$ is $\delta_1$-definable with parameter $K_{\omega_1}$?
\end{Question}

Another natural question is whether the $\delta^{1}_4$ wellorder is optimal while having 
$\NS$ saturated. By the results of Hjorth and Woodin, 
under the assumption that there is a measurable
cardinal, the $\Sigma^{1}_4$ wellorder is optimal but are there ways to do better in the
absence of a measurable?

\begin{Question}
 Is there a model of $\ZFC$ in which $\NS$ is saturated and there is 
a $\Delta^1_{3}$-definable wellorder on the reals?
\end{Question}

and finally the very interesting problem

\begin{Question}
 Is it possible to have a model of $\ZFC$ in which $\NS$ is saturated and $\CH$ holds?
\end{Question}

\end{document}